\numberwithin{equation}{section}
\newcommand{\ssp}{\hspace{1pt}}
\renewcommand{\mid}{\,|\,}
\renewcommand{\Re}{\operatorname{Re}}
\newcommand{\bigzero}{\mbox{\normalfont\Large\bfseries 0}}
\newcommand{\rvline}{\hspace*{-\arraycolsep}\vline\hspace*{-\arraycolsep}}
\newcommand{\muI}{\mu_{\hspace{0.5pt}\mathrm{I}}}
\newenvironment{Blockmatrix}{
    \newcommand{\block}[6][draw]{
        \path[##1] (##3-1, -##2+1) rectangle (##5, -##4) node[pos=0.5] {$##6$};
    }
    \tikzpicture[
        x=4pt+width("C"), y=4pt+height("C"),
        baseline={([yshift=-1ex]current bounding box.center)}
    ]
}{
    \endtikzpicture
}
\newtheorem{proposition}{Proposition}[section]
\newtheorem{lemma}[proposition]{Lemma}
\newtheorem{corollary}[proposition]{Corollary}
\newtheorem{theorem}[proposition]{Theorem}
\newtheorem{conjecture}[proposition]{Conjecture}
\theoremstyle{definition}
\newtheorem{definition}[proposition]{Definition}
\newtheorem{remark}[proposition]{Remark}
\newtheorem{example}[proposition]{Example}
\newtheorem{problem}[proposition]{Problem}
\renewcommand\part{%
  \clearpage
  \@startsection{part}{0}{\z@}%
  {3\linespacing\@plus\linespacing}%
  {2.5\linespacing}%
  {\centering\large\bfseries\scshape}}
\def\tocsection#1#2#3{%
  \indentlabel{\@ifnotempty{#2}{\makebox[2em][l]{\ignorespaces#1 #2.\hfil}}}\ignorespaces#3}
\begin{document}
\title{Random Fibonacci Words via Clone Schur Functions}

\author{Leonid Petrov and Jeanne Scott}

\date{}

\begin{abstract}
	We investigate positivity and probabilistic properties
	arising from the Young--Fibo\-nacci lattice $\mathbb{YF}$,
	a 1-differential poset on words composed of 1's and 2's
	(Fibonacci words) and graded by the sum of the digits.
	Building on Okada's theory of clone Schur functions
	\cite{okada1994algebras}, we introduce clone coherent
	measures on $\mathbb{YF}$ which give rise to random
	Fibonacci words of increasing length.  Unlike coherent
	systems associated to classical Schur functions on the
	Young lattice of integer partitions, clone coherent
	measures are generally not extremal on~$\mathbb{YF}$.

	Our first main result is a complete characterization of
	Fibonacci positive specializations ---
	parameter sequences which yield positive clone Schur functions on $\mathbb{YF}$.

	Second, we establish a broad array of correspondences that
	connect Fibonacci positivity with: (i)~the total
	positivity of tridiagonal matrices; (ii)~Stieltjes moment
	sequences; (iii)~the combinatorics of set partitions; and
	(iv)~families of univariate orthogonal polynomials from the ($q$-)Askey scheme.
	We further link the moment sequences of broad classes of
	orthogonal polynomials to combinatorial structures on
	Fibonacci words, a connection that may be of independent
	interest.

	Our third family of results concerns
	the asymptotic behavior
	of random Fibonacci words derived from various Fibonacci
	positive specializations.
	We analyze several limiting regimes for specific examples,
	revealing stick-breaking-like processes (connected to GEM
	distributions), dependent stick-breaking processes of a
	new type,
	or limits supported on the discrete component of the Martin boundary of the Young--Fibonacci lattice.
	Our stick-breaking-like
	scaling limits significantly extend the
	result of Gnedin--Kerov \cite{gnedin2000plancherel} on
	asymptotics of the Plancherel measure
	on~$\mathbb{YF}$.

	We also establish Cauchy-like identities for clone Schur functions
	whose right-hand side is presented as a quadridiagonal determinant rather than a product,
	as in the case of classical Schur functions.
	We construct and analyze models of random permutations and involutions based on
	Fibonacci positive specializations along with a version of the
	Robinson--Schensted correspondence for~$\mathbb{YF}$.
\end{abstract}

\maketitle

\setcounter{tocdepth}{1}
\tableofcontents
\setcounter{tocdepth}{4}

\section{Introduction}

\subsection{Overview}

Branching graphs --- particularly the Young
lattice $\mathbb{Y}$ of integer partitions --- have long
held a central position at the crossroads of representation
theory, combinatorics, and probability. Indeed, the Young
lattice powers the representation theory of symmetric and
(by Schur-Weyl duality) general linear groups, giving rise to Schur
functions and driving profound connections to random matrix
theory and statistical mechanics.  In this landscape, the
\emph{Plancherel measure} on $\mathbb{Y}$ and its
generalization, the \emph{Schur measures}, have emerged as
foundational objects. These probability measures
(in the terminology of statistical mechanics,
\emph{ensembles of random partitions})
serve as a framework for
exploring phenomena such as limit shapes, random tilings,
and universal distributions governing eigenvalues in random
matrix ensembles.

\smallskip

Despite the prominence of the Young lattice,
kindred combinatorial structures remain relatively
underexplored from a probabilistic perspective.  One notable
example is the \textit{Young--Fibonacci lattice}
\(\mathbb{YF}\) \cite{Fomin1986},
\cite{stanley1988differential}, \cite{okada1994algebras},
\cite{KerovGoodman1997}.  Like the Young lattice,
\(\mathbb{YF}\) possesses a 1-differential poset structure defined, not on integer
partitions, but rather binary words
composed of the symbols \(1\) and \(2\)
(known as \emph{Fibonacci words}).
As a 1-differential poset, \(\mathbb{YF}\) carries
a \emph{Plancherel measure} on Fibonacci words.
The Young--Fibonacci
lattice also mirrors other structures found
in the classical Young lattice, such as versions of the
Robinson-Schensted (RS) correspondence,
multiparameter analogues of Schur functions, and a
representation-theoretic framework introduced by Okada
\cite{okada1994algebras}.  At the same time, \(\mathbb{YF}\)
exhibits novel combinatorial and probabilistic behaviors,
which are the main focus of the present work.

\smallskip

Our starting point is the theory of \textit{biserial clone Schur
functions} \cite{okada1994algebras}, a family of functions
$s_w(\vec{x}\mid \vec{y} \ssp)$
indexed by Fibonacci words \(w\in \mathbb{YF}\)
and involving two sequences of parameters \(\vec{x}= (x_1, x_2,
x_3, \ldots)\) and \(\vec{y}= (y_1, y_2, y_3, \ldots)\).
Clone Schur functions, first introduced by Okada, parallel the
classical Schur functions $s_\lambda(\vec z\ssp)$, $\lambda\in \mathbb{Y}$,
$\vec z=(z_1, z_2, z_3, \ldots)$,
in several
aspects.
Both Schur and
biserial clone Schur functions
branch according to a {\it Pieri rule}
and, more generally,
obey a {\it Littlewood-Richardson rule} which
reflects the
structure of covering
relations in
the $\Bbb{Y}$
and $\Bbb{YF}$ lattices.
As a consequence,
both Schur and biserial clone Schur functions give rise to \textit{harmonic} functions on $\mathbb{Y}$ and $\mathbb{YF}$, defined respectively by
\[
\lambda \mapsto
\frac{s_\lambda(\vec{z} \ssp)}{s_{\scriptscriptstyle \Box}^n(\vec{z} \ssp)},
\qquad
w \mapsto
\frac{s_w(\vec{x} \mid \vec{y} \ssp)}{x_1 \cdots x_n},
\]
where \(n\) is the \textit{rank} of \(\lambda \in \mathbb{Y}\) or \(w \in \mathbb{YF}\).
The Plancherel harmonic function (associated with the
Plancherel measure) for each lattice arises from a special
choice of parameters in Schur and clone Schur functions,
respectively.

\smallskip

Positive specializations of classical Schur
functions (sequences $\vec z$ for which
$s_\lambda(\vec z\ssp)$ is positive for all $\lambda\in \mathbb{Y}$)
are central in the study of the Young lattice.
They are related to total positivity
\cite{AESW51}, \cite{ASW52}, \cite{Edrei1952},
\cite{Edrei53}, characters of the infinite symmetric group
\cite{Thoma1964}, and asymptotic theory of characters of
symmetric groups of increasing order
\cite{VK81AsymptoticTheory}.

\smallskip

One important distinction with the $\mathbb{YF}$-lattice is
that the harmonic functions on $\mathbb{Y}$ associated with
positive specializations of classical Schur
functions are {\it extremal} (this property is also often called
\emph{ergodic}, or \emph{minimal}).
Extremality means that the functions $\lambda \mapsto
s_\lambda(\vec{z}\ssp ) / s_{\scriptscriptstyle
\Box}^n(\vec{z}\ssp)$ cannot be expressed as a nontrivial
convex combination of other nonnegative harmonic functions.
In contrast, this extremality property \emph{does not generally
hold} for positive harmonic functions arising from
specializations of biserial clone Schur
functions.
One of the initial motivations for our work was to
investigate the broad question of
how clone harmonic functions on \(\mathbb{YF}\)
decompose into extremal components. The classification of
extremal harmonic functions on \(\mathbb{YF}\) was
established in \cite{KerovGoodman1997}, and results
on the boundary of $\mathbb{YF}$ were strengthened
in the preprints \cite{BochkovEvtushevsky2020},
\cite{Evtushevsky2020PartII}.

\smallskip

Our first goal
is to investigate conditions
for which a specialization
$(\vec{x}, \vec{y} \ssp)$
is \textit{Fibonacci positive}
--- in the sense
that the biserial clone Schur functions
$s_w(\vec{x}\mid \vec{y} \ssp)$
are strictly postive for
all Fibonacci words $w \in \Bbb{YF}$.
Subsequently, we explore probabilistic and combinatorial properties
of Fibonacci positive specializations and related
ensembles of random Fibonacci words.
The present work extends and completes several well-studied classical topics
associated with the Young lattice $\mathbb{Y}$ and Schur functions,
adapting them to the Young--Fibonacci lattice $\mathbb{YF}$ and clone Schur functions.
Our four main contributions can be summarized as follows.

\smallskip\noindent
\textbf{1.~Characterization of Fibonacci positivity.}
		We establish a complete classification of
		specializations \((\vec{x}, \vec{y} \ssp)\)
		for which the clone Schur functions
		$s_w(\vec{x}\mid \vec{y} \ssp)$ are positive for all
		\(w\in \mathbb{YF}\).
		The concept of Fibonacci positivity strengthens the
		notion of total positivity of tridiagonal matrices whose
		subdiagonal consist entirely
					of 1's.
		We identify two
		classes of Fibonacci positive
		specializations, of \emph{divergent} and
		\emph{convergent} type.

\smallskip\noindent
\textbf{2.~Stieltjes moment sequences and orthogonal polynomials.}
		The connection to tridiagonal matrices
		places the Fibonacci positivity problem within the context of
		the Stieltjes moment
		problem and Jacobi continued fractions associated to totally
		positive tridiagonal matrices.
        We obtain two new combinatorial formulae for the
		moments of
		a Borel measure on $[0,\infty)$
        in terms of the recursion parameters
        of the corresponding family of orthogonal
        polynomials. The first
        is a general formula that involves non-crossing
        set partitions, while the second
        requires a Fibonacci positive specialization of divergent type and is expressed as a sum
        over all set partitions. The first formula also
        exhibits a novel {\it splitting} of an integer composition into a unique pair of Fibonacci words.

		For many examples of Fibonacci positive specializations,
		the Borel measure on $[0,\infty)$
		is (after performing a change of variables)
		the orthogonality measure for a family of orthogonal polynomials from the ($q$-)Askey scheme
		\cite{Koekoek1996}, including
		the Charlier, Type-I Al-Salam--Carlitz,
		Al-Salam--Chihara, and $q$-Charlier polynomials.
		In the latter two examples, the Fibonacci positivity enforces the
		atypical condition $q>1$ on the deformation parameter,
		in contrast to the usual
		assumption $|q|<1$ in the $q$-Askey scheme.

\smallskip\noindent
\textbf{3.~Asymptotics of random Fibonacci words.}
		We investigate the behavior of random Fibonacci words,
		(with respect to various Fibonacci positive
		specializations) in the limit as the word length
		grows. We find examples when growing random words $w\in \mathbb{YF}_n$, $n\to \infty$,
		exhibit one of the following three patterns:
		\begin{enumerate}[$\bullet$]
			\item $w=1^{r_1}21^{r_2}2\ldots $, where $r_i$
				scale proportionally to $n$;
			\item $w=2^{h_1}12^{h_2}1\ldots $, where $h_i$
				scale proportionally to $n$;
			\item $w=1^\infty v$,
				that is, the word has a single growing prefix of 1's,
				followed by a finite (random) Fibonacci word $v\in \mathbb{YF}$.
		\end{enumerate}
		In the first two cases, the joint scaling limit
		of either $(r_1,r_2,\ldots )$ or
		$(h_1,h_2,\ldots )$ displays a
		``stick-breaking''-type behavior.
		More precisely, we extend the result of Gnedin--Kerov
		\cite{gnedin2000plancherel} who showed that,
        in the case of the Plancherel measure on $\mathbb{YF}$,
		the sequence $(h_1,h_2,\ldots )$
		converges to the GEM distribution with parameter $\theta=\frac{1}{2}$.

        The Robinson-Schensted
        correspondence for the $\Bbb{YF}$-lattice
        introduced by \cite{nzeutchap2009young}
        (which we revisit in \Cref{sec:random_Fibonacci_words_to_random_permutations}),
        the approach using the Stieltjes moment sequences mentioned above, and
		the results of \cite{gnedin2000plancherel}
        connect four distinguished probability
        distributions:
        \begin{enumerate}[$\bullet$]
			\item the uniform measures on permutations of
				$\{1,2,\ldots,n\}$,
			\item the Plancherel measures on $\mathbb{YF}_n$ determined by the differential poset structure,
			\item the Poisson distribution on $\mathbb{Z}_{\ge0}$ with
            parameter $\rho=1$, and
			\item the stick-breaking GEM scheme powered by the Beta distributions.
		\end{enumerate}


		We extend and deform all these interrelated
		distributions. In particular, on the
		Beta distributions side, we find a new family of
		\emph{dependent} stick-breaking schemes.

		For random Fibonacci words almost surely behaving as $w=1^\infty v$ (the third pattern mentioned above),
		we determine the probability law
		of the random finite suffix $v \in \Bbb{YF}$ in the limit
		in terms of the parameters
		$(\vec{x}, \vec{y} \ssp)$
		of the Fibonacci positive specialization.
		This distribution on the $v$'s
		gives the desired decomposition
		of the clone harmonic function
		$w\mapsto s_w(\vec{x}\mid \vec{y} \ssp)/(x_1\cdots x_{|w|})$
		into the extremal components.

\smallskip\noindent
\textbf{4.~Clone Cauchy identities and random
		permutations.}
		We establish \emph{clone Cauchy identities}
		which are summation identities involving clone Schur functions,
		in parallel to the celebrated Cauchy identities for Schur functions.
		In the Young--Fibonacci setting, the right hand side of each
		Cauchy identity is expressed by a quadridiagonal determinant
		(and not a product, like for classical Schur functions).
		We employ clone Cauchy identities to
		study models of random permutations coming from
		random Fibonacci words and a Robinson--Schensted
		correspondence for the
		$\mathbb{YF}$-lattice introduced in
		\cite{nzeutchap2009young}. In particular, we compute the moment
		generating function for the number of two-cycles in a
		certain ensemble of random involutions,
		and explore its asymptotic behavior under a specific
		Fibonacci positive specialization.
		Other specializations may lead to interesting models of
		random permutations with pattern avoidance properties.

\medskip\noindent
In the remainder of the Introduction,
we formulate our main results in more detail.
Further discussion of possible extensions and open problems
is postponed to the last \Cref{sec:prospectives}.

\subsection{Clone Schur functions and Fibonacci positivity}

A Fibonacci word $w$ is a binary word composed of the symbols 1 and 2.
Its weight $|w|$ is the sum of the symbols. For example,
$|12112|=7$.
By $\mathbb{YF}_n$ we denote the set of Fibonacci words of weight $n$.
The lattice structure on $\mathbb{YF}$ is defined through
branching (covering) relations $w \nearrow w'$ on
pairs of Fibonacci words, where $|w'| = |w| + 1$. This
relation is recursively defined to hold if and only if
either $w' = 1w$, or $w' = 2v$ with $v \nearrow w$. The base
case is given by $\varnothing \nearrow 1$.
Let $\dim(w)$ denote the number of
saturated chains
$$\varnothing=w_0\nearrow w_1\nearrow \cdots \nearrow w_n=w$$
in the Young--Fibonacci lattice starting at
$\varnothing$ and ending at $w$.
See \Cref{fig:YF_lattice} for an illustration of the
$\mathbb{YF}$ up to level $n=5$.
A function $\varphi$ on $\mathbb{YF}$
is called {\it harmonic} if
$$\varphi(w)=\sum\limits_{w'\colon w \nearrow w'} \varphi(w')$$
for all $w\in \mathbb{YF}$.

Let $\vec x=(x_1, x_2, x_3, \ldots)$ and $\vec y=(y_1, y_2, y_3, \ldots)$
be two sequences of parameters.
Define the $\ell\times \ell$ tridiagonal determinants
by
\begin{equation*}
	\begin{split}
		A_\ell (\vec{x} \mid \vec{y}\ssp) &\coloneqq
		\det
		\underbrace{\begin{pmatrix}
		x_1 & y_1 & 0 & 0 & \cdots\\
		1 & x_2 & y_2 & 0 & \\
		0 & 1 & x_3 & y_3 & \\
		\vdots & & & & \ddots
		\end{pmatrix}}_{\text{$\ell \times \ell $ tridiagonal matrix}} ,
		\\[10pt]
		B_{\ell -1} (\vec{x} +r \mid \vec{y} +r ) &\coloneqq
		\det
		\underbrace{\begin{pmatrix}
		y_{r+1} & x_{r+1}y_{r+2} & 0 & 0 & \cdots\\
		1 & x_{r+3} & y_{r+3} & 0 &\\
		0 & 1 & x_{r+4} & y_{r+4} &\\
		\vdots & & & & \ddots
		\end{pmatrix}}_{\text{$\ell \times \ell$ tridiagonal matrix}} ,
	\end{split}
\end{equation*}
where nonzero elements in all rows in $A_{\ell}$ and all rows in
$B_{\ell-1}$ except for the first one
follow the pattern $(1,x_j,y_j)$.
Here and throughout the paper, $\vec x+r$ and $\vec y+r$ denote the
sequences with indices shifted by
$r \in \Bbb{Z}_{\geq 0}$.

The clone Schur function $s_w(\vec{x}\mid \vec{y} \ssp)$ is
defined by the following recurrence:
\begin{equation*}
		s_w (\vec{x} \mid \vec{y}\ssp)
		\coloneqq
		\begin{cases}
			A_k(\vec{x} \mid \vec{y}\ssp), & \textnormal{if $w=1^{k}$ for some $k\ge 0$},\\
			B_k(\vec{x} +r \mid \vec{y} +r )\cdot s_u (\vec{x} \mid \vec{y}\ssp)
			, & \textnormal{if $w=1^k2u$ for some $k\ge 0$ and $|u|=r$}.
		\end{cases}
\end{equation*}
The function
\begin{equation*}
	\varphi_{\vec x,\vec y} \ssp (w)\coloneqq
	\frac{
	s_w(\vec x\mid \vec y\ssp)}{x_1 x_2 \cdots x_{|w|} }
\end{equation*}
is harmonic on $\mathbb{YF}$.
It is normalized so that $\varphi_{\vec x,\vec y} \ssp (\varnothing)=1$.

Our first main result is a complete characterization of
the \emph{Fibonacci positive}
sequences $(\vec{x}, \vec{y} \ssp)$ for which the clone Schur functions
$s_w(\vec{x}\mid \vec{y} \ssp)$ are positive for all
\(w\in \mathbb{YF}\):
\begin{theorem}[{\Cref{thm:Fibonacci_positivity}}]
	\label{thm:positivity_introduction}
	All Fibonacci positive sequences
	$(\vec x,\vec{y}\ssp)$
	have the form
	\begin{equation*}
		x_k=c_k\ssp (1+t_{k-1}),\qquad y_k=c_k\ssp c_{k+1}\ssp t_k, \qquad k\ge1,
	\end{equation*}
	where $\vec c$ is an
	arbitrary positive sequence,
	and $\vec t=(t_1,t_2,\ldots )$ (with $t_0=0$, for convenience) is
	a positive real sequence of one of the two types:
	\begin{enumerate}[$\bullet$]
		\item (divergent type)
			The infinite series
			\begin{equation}
				\label{eq:divergent_type_A1}
				1+t_1+t_1t_2+t_1t_2t_3+\ldots
			\end{equation}
			diverges, and
			$t_{m+1}\ge 1+t_m$ for all $m\ge 1$;
		\item (convergent type)
			The series \eqref{eq:divergent_type_A1} converges, and
			\begin{equation*}
				 1+t_{m+3}+t_{m+3}t_{m+4}+
				t_{m+3}t_{m+4}t_{m+5}+\ldots
				\ge
				\frac{t_{m+1}}{t_{m+2}(1+t_m-t_{m+1})}
				,
				\qquad
				\textnormal{for all }
				m\ge0.
			\end{equation*}
	\end{enumerate}
	The sequences $\vec c$ and $\vec t$ are determined by $(\vec x,\vec{y}\ssp)$ uniquely.
\end{theorem}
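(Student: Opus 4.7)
The plan is to introduce a bijective reparametrization of $(\vec x,\vec y)$ by a Stieltjes-type continued fraction and then reduce Fibonacci positivity to inequalities on the sequence $\vec t$ alone. I would set $c_1\coloneqq x_1$ and define $c_{k+1},t_k$ recursively by $c_{k+1}\coloneqq x_{k+1}-y_k/c_k$ and $t_k\coloneqq y_k/(c_k c_{k+1})$. A direct induction gives $x_k=c_k(1+t_{k-1})$ (with $t_0=0$) and $y_k=c_k c_{k+1}t_k$; the same induction, applied to the three-term recurrence for tridiagonal determinants, yields the compact identity $A_\ell(\vec x\mid\vec y)=c_1c_2\cdots c_\ell$. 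Since $s_{1^\ell}=A_\ell$, strict positivity of all values $s_{1^\ell}$ is equivalent to strict positivity of $\vec c$, which both establishes the stated product form of $(\vec x,\vec y)$ and proves uniqueness of the decomposition.

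Next I would reduce Fibonacci positivity of the full family $\{s_w\}$ to positivity of the shifted $B$-determinants. Iterating the defining recursion $s_{1^k 2u}=B_k(\vec x+|u|\mid\vec y+|u|)\,s_u$ decomposes any $s_w$ into a product of factors of the form $B_{\ell-1}(\vec x+r\mid\vec y+r)$ (times a closing $A$ when $w$ ends in $1$'s), so Fibonacci positivity is equivalent to $B_{\ell-1}(\vec x+r\mid\vec y+r)>0$ for all $\ell\ge 1$ and $r\ge 0$. Expanding such a determinant along its first row and substituting the parametrization, together with the shifted identity $A_k(\vec x+s\mid\vec y+s)=c_{s+1}\cdots c_{s+k}\,E_k(s)$ for $E_m(s)\coloneqq\sum_{j=0}^{m} t_s t_{s+1}\cdots t_{s+j-1}$ and the contraction $E_m(s)=1+t_s E_{m-1}(s+1)$, I would derive the closed form
\begin{equation*}
B_{\ell-1}(\vec x+r\mid\vec y+r)=c_{r+1}c_{r+2}\cdots c_{r+\ell+1}\,\bigl[t_{r+1}-E_{\ell-2}(r+3)\,t_{r+2}\,(1+t_r-t_{r+1})\bigr].
\end{equation*}

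With this closed form the problem splits into two natural cases at each index $r$. If $t_{r+1}\ge 1+t_r$, the bracket is automatically positive for every $\ell\ge1$, so no further constraint is imposed there. Otherwise the bracket is strictly monotone in $\ell$ (since $E_{\ell-2}(r+3)$ is increasing in $\ell$), and uniform positivity across all $\ell$ reduces to a single comparison involving the tail series $S(r+3)\coloneqq 1+t_{r+3}+t_{r+3}t_{r+4}+\cdots$. Dichotomizing on whether the inequality $t_{m+1}\ge 1+t_m$ holds for all relevant $m$ sorts Fibonacci positive specializations into the divergent type (the inequality holds throughout, forcing the series \eqref{eq:divergent_type_A1} to diverge and making the bracket condition trivial) and the convergent type (where the series converges and the stated comparison between $S(m+3)$ and $t_{m+1}/\bigl(t_{m+2}(1+t_m-t_{m+1})\bigr)$ encodes uniform positivity of the bracket across all $\ell$). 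Sufficiency of each type follows by reading the same computations in reverse. The main obstacle I anticipate is the convergent case, where careful bookkeeping is needed to pass between strict positivity of the bracket at every finite $\ell$ and the single limiting inequality per $m$, and to treat the boundary index $r=0$, where the asymmetry $t_0=0$ makes the case distinction most delicate; the continued-fraction reparametrization and the contraction identity for $E_m$ are the structural tools that make the whole analysis tractable.
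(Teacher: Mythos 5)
Your proposal is correct and follows essentially the same route as the paper: reparametrize $(\vec x,\vec y)$ by $(\vec c,\vec t)$ so that $A_\ell=c_1\cdots c_\ell$, reduce Fibonacci positivity to positivity of the shifted $B$-determinants, expand $B_{\ell-1}(m)=t_{m+1}-(1+t_m-t_{m+1})\,t_{m+2}\,A_{\ell-2}(m+3)$ up to the $c$-prefactor, and pass between strict positivity at every finite $\ell$ and the nonstrict limiting inequality via the monotone increase of the partial sums. The only cosmetic differences are that the paper obtains the $(\vec c,\vec t)$-parametrization by invoking the Fomin--Zelevinsky factorization of totally positive tridiagonal matrices (with the explicit ratios $c_k=A_k/A_{k-1}$) rather than your direct continued-fraction recursion, and it organizes the final dichotomy by convergence versus divergence of the series rather than by whether $t_{m+1}\ge 1+t_m$ holds everywhere, which amounts to the same case split for Fibonacci positive sequences.
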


The distinguished Plancherel harmonic function
\begin{equation*}
	\varphi_{_\mathrm{PL}}(w)= \frac{\dim (w)}{n!},\qquad  w\in \mathbb{YF}_n,
\end{equation*}
is obtained from clone Schur functions by setting
$x_k=y_k=k$, $k\ge 1$.
Throughout the paper we
are primarily concerned with
two deformations of
the Plancherel specialization, of convergent and divergent type, respectively:
the \emph{shifted
Plancherel} specialization
$x_k=y_k=k+\sigma-1$, $\sigma\in[1,\infty)$,
and the \emph{Charlier} specialization
$x_k=k+\rho-1$, $y_k=k\rho$, $\rho\in(0,1]$.
In \Cref{sec:examples_of_Fibonacci_positivity}, we
describe other examples of Fibonacci positive specializations, both of
divergent and convergent type.

\subsection{Stieltjes moment sequences and orthogonal polynomials}

As a corollary of
the Fibonacci positivity of a specialization
$(\vec{x}, \vec{y} \ssp)$, we see that the
infinite tridiagonal matrix
with the diagonals
$(1,1,\ldots)$, $(x_1,x_2,\ldots)$, and $(y_1,y_2,\ldots)$
is totally positive, that is, all its minors
which are not identically zero are positive.
It is known from
\cite{flajolet1980combinatorial},
\cite{viennot1983theorie},
\cite{sokal2020euler},
\cite{petreolle2023lattice}
that totally positive tridiagonal matrices
correspond to \emph{Stieltjes moment sequences}
$a_n=\int t^n \upnu (dt)$, $n\ge 0$,
where $\upnu$ is a Borel measure on $[0,\infty)$.
Moreover, the monic polynomials $P_n(t)$, $n\ge0$,
orthogonal with respect to $\upnu$
can be determined directly in terms of the parameters $(\vec{x}, \vec{y} \ssp)$:
\begin{equation*}
	P_{n+1}(t) = (t - x_{n+1})P_n(t) - y_n P_{n-1}(t),
	\quad n \geq 1,
	\qquad
	P_0(t) = 1, \quad P_1(t) = t - x_1.
\end{equation*}
We refer to \Cref{sec:Stieltjes_moment_sequences}
for a detailed discussion of the connection between
total positivity of tridiagonal matrices and Stieltjes moment sequences.

According to the ($q$-)Askey nomenclature
\cite{Koekoek1996},
in \Cref{sec:Fibonacci_and_Stieltjes_examples}
we find several Fibonacci positive specializations
whose orthogonal polynomials
are (up to a change of variables and parameters):
\begin{enumerate}[$\bullet$]
	\item Charlier polynomials;
	\item Type-I Al-Salam--Carlitz polynomials;
	\item Al-Salam--Chihara polynomials;
	\item $q$-Charlier polynomials.
\end{enumerate}
In these cases, we also explicitly determine the
orthogonality measures $\upnu$. For example,
in the Charlier case, the orthogonality measure
is simply the Poisson distribution with the parameter~$\rho$.
For the
Al-Salam--Chihara and $q$-Charlier polynomials,
the Fibonacci positivity enforces the
atypical condition $q>1$, in contrast to the usual
assumption $|q|<1$ in the $q$-Askey scheme.

The shifted Plancherel specialization
$x_k=y_k=k+\sigma-1$
which we consider in \Cref{sec:shifted_Charlier_Stieltjes}
corresponds to the
so-called associated Charlier polynomials
\cite{ismail1988linear},
\cite{ahbli2023new}. The orthogonality measure
$\upnu$ in this case is not explicit, but we find its
moment generating function (\Cref{prop:shifted_Charlier_moments}).
Notably, all orthogonality measures $\upnu$ related to known orthogonal polynomials
turn out to be discrete. However, not all orthogonal polynomial families
with discrete measures correspond to Fibonacci positive specializations
(for example, Meixner polynomials do not).
An open question is whether there exists a family of orthogonal polynomials with a continuous measure that is Fibonacci positive.

We also find combinatorial interpretations of the Stieltjes
moments $a_n$ associated with Fibonacci positive
specializations. There is a rich literature addressing the
combinatorics of moments coming from orthogonal polynomials,
notably \cite{wachs1991pq}, \cite{Zeng1993},
\cite{anshelevich2005linearization},
\cite{KimStantonZeng2006}, \cite{kasraoui2006distribution},
\cite{josuat2011crossings}, \cite{CorteelKimStanton2016}.
Most of the results in Sections
\ref{sec:Stieltjes_moments_general},
\ref{sec:Fibonacci_positive_moments}, and
\ref{sec:shifted_Charlier_Stieltjes} follow from techniques
developed in these references. In the case of the Charlier
specialization, $a_n$ is the Bell polynomial in the
parameter $\rho$ (often called the Touchard polynomial)
\begin{equation*}
	a_n=B_n(\rho) \coloneqq \sum\nolimits_{\pi \in \Pi(n)} \rho^{\# \mathrm{blocks}(\pi)},
\end{equation*}
where the sum is over all set partitions $\pi$ of $\{1, \dots, n\}$. This example is representative of the general behavior of Stieltjes moments arising from Fibonacci positive specializations of divergent type. We prove the following:
\begin{proposition}[\Cref{prop:fp-divergent-moment-formula}]
\label{prop:fp-divergent-moment-formula-introduction}
Let $(\vec{x}, \vec{y} \ssp )$ be a Fibonacci positive specialization of divergent type
\begin{equation}
		x_k=c_k\ssp (k +\epsilon_1 + \cdots + \epsilon_{k-1}),\qquad y_k=c_k\ssp c_{k+1}\ssp (k + \epsilon_1 +
        \cdots + \epsilon_k), \qquad k\ge1,
\end{equation}
where $\vec{c}$ and $\vec{\epsilon}$ are sequences of positive real numbers uniquely determined by \eqref{eq:Fib_via_epsilons} and \Cref{corollary:coefficientwise-total-positivity}. Then the associated $n$-th Stieltjes moment is given by
\begin{equation*}
a_n = \sum_{\pi \ssp \in \ssp
\Pi(n)} \,
\prod_{k \ssp \geq \ssp 1} \,
c_k^{\ssp \ell_{k-1}(\pi)} \ssp (1 + \epsilon_k)^{\ssp g_k(\pi)}.
\end{equation*}
\end{proposition}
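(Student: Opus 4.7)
The plan is to combine the Flajolet--Viennot expansion of Stieltjes moments as a weighted sum over Motzkin paths with the classical labeled bijection to $\Pi(n)$; the divergent-type hypothesis is what ensures that every label weight is positive, so this bijection sweeps out all of $\Pi(n)$ (in contrast to the general moment formula, where only non-crossing partitions appear). As the starting point, I invoke the Jacobi continued fraction / Motzkin-path expansion used throughout \Cref{sec:Stieltjes_moments_general}: the $n$-th moment of the orthogonality measure of the tridiagonal matrix with diagonal $(x_k)$, superdiagonal $(y_k)$, and subdiagonal $(1,1,\ldots)$ equals
\begin{equation*}
	a_n \;=\; \sum_{P \ssp \in \ssp \mathrm{Motz}_n} \ \prod_{\textnormal{level steps of $P$ at height }h} x_{h+1} \ \prod_{\textnormal{down-steps of $P$ from height }h+1} y_{h+1}.
\end{equation*}

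Using the divergent-type form $x_{h+1} = c_{h+1}\bigl(1 + \sum_{j=1}^{h}(1+\epsilon_j)\bigr)$ and $y_{h+1} = c_{h+1}\ssp c_{h+2}\sum_{j=1}^{h+1}(1+\epsilon_j)$, I expand each weight additively. This refines every Motzkin step into a labeled step: a level step at height $h$ carries a label in $\{0,1,\ldots,h\}$, contributing $c_{h+1}$ for label $0$ and $c_{h+1}(1+\epsilon_j)$ for label $j\ge 1$; a down-step from height $h+1$ carries a label in $\{1,\ldots,h+1\}$ contributing $c_{h+1}c_{h+2}(1+\epsilon_j)$. The standard Flajolet--Viennot bijection now identifies labeled Motzkin paths of length $n$ with $\Pi(n)$ via the encoding: openers $\leftrightarrow$ up-steps; singletons $\leftrightarrow$ level steps labeled $0$; intermediate elements of the $j$-th currently open block $\leftrightarrow$ level steps labeled $j\ge 1$; closers of the $j$-th currently open block $\leftrightarrow$ down-steps labeled $j$. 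Reading off weights, each factor $c_{h+1}$ attaches to a step of the Motzkin path of $\pi$ at height $h$ and each factor $(1+\epsilon_j)$ attaches to an occurrence of the label $j$; regrouping by $k$ yields the displayed product formula, with $\ell_{k-1}(\pi)$ counting the steps of the Motzkin path of $\pi$ at height $k-1$ and $g_k(\pi)$ counting the occurrences of the label $k$.

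The main technical task is verifying that these intrinsically defined exponents agree with the statistics $\ell_{k-1}$ and $g_k$ introduced earlier via \eqref{eq:Fib_via_epsilons} and \Cref{corollary:coefficientwise-total-positivity}: this is a translation between the height/label data of a labeled Motzkin path and the equivalent statistic on the set partition (e.g.\ the number of elements belonging to blocks of a given relative depth, or the number of non-opener elements hosted by the $k$-th currently open block at the moment they are read). Apart from this bookkeeping there is no conceptual obstacle. The divergent-type hypothesis enters exactly here, guaranteeing $1+\epsilon_j>0$ for every $j$ so that no label is forbidden and every set partition in $\Pi(n)$ contributes with a genuinely positive weight; in the convergent regime some of the label weights can vanish or become sign-constrained, which is what restricts the general moment formula to non-crossing partitions.
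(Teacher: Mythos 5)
Your proposal is correct and follows essentially the same route as the paper's proof: expand the Motzkin-path moment formula by additively splitting $x_{h+1}$ and $y_{h+1}$ via $t_h=\sum_{j\le h}(1+\epsilon_j)$, pass to labeled Motzkin paths (the paper's ``Charlier histoires''), and use the Kasraoui--Zeng/Flajolet--Viennot bijection with all of $\Pi(n)$ to read off the exponents $\ell_{k-1}(\pi)$ and $g_k(\pi)$. The only cosmetic difference is that the paper splits $y_k=c_k\cdot c_{k+1}(k+\epsilon_1+\cdots+\epsilon_k)$ between the up- and down-steps so that the exponent of $c_k$ is immediately $\#\{i:\#\Gamma_i(\pi)=k-1\}=\ell_{k-1}(\pi)$, whereas you keep both $c$-factors on the down-step and defer that regrouping to bookkeeping.
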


As in the Charlier case, the sum is taken over all set partitions $\pi \in \Pi(n)$. The statistics $\ell_k(\pi)$ and $g_k(\pi)$ are defined in \Cref{def:g-ell-statistics} and are standard in the analysis of set partitions. Although the shifted-Charlier specialization is Fibonacci positive of convergent type (and not divergent type), its Stieltjes moments nevertheless obey a similar expansion formula involving all set partitions; see \Cref{prop:shifted_Charlier_moments_combinatorial_interpretation}.

\Cref{prop:tp-moment-formula} presents an expansion formula for the moments $a_n$ of a general Borel measure on $[0, \infty)$ involving {\it non-crossing} set partitions; this result does not require Fibonacci positivity as an assumption. \Cref{prop:tp-moment-formula-restatement} is a compressed version of \Cref{prop:tp-moment-formula} where $a_n$ is expressed instead as a sum of monomials indexed by integer compositions of $n$. This result exhibits an intriguing {\it splitting} which can be applied to an integer composition to obtain a unique pair of Fibonacci words; see \Cref{def:splitting+composition}.

While we have a complete description of Fibonacci positive specializations $(\vec{x}, \vec{y})$ and an understanding of their corresponding moment sequences $a_n$ (at least in the divergent case), characterizing their associated Borel measures $\upnu$ within the space of all Borel measures on $[0,\infty)$ remains an open problem.

\subsection{Asymptotics of random Fibonacci words}

We investigate asymptotic behavior of growing random
Fibonacci words
distributed according to clone coherent
probability measures $M_n$ on~$\mathbb{YF}_n$:
\begin{equation*}
  M_n(w) \coloneqq \dim(w)\,\varphi_{\vec x,\vec y}(w)
  = \dim(w)\,\frac{s_w(\vec x\mid \vec y\ssp)}
  {x_1 \ssp x_2\,\cdots\,x_n},
  \qquad w\in \mathbb{YF}_n.
\end{equation*}
The measures $M_n$ are called \emph{coherent} since they are
compatible for varying $n$; see \eqref{eq:coherence_property_standard_down_transitions}.

In
\Cref{sec:asymptotics_Charlier_specialization,sec:asymptotics_shifted_Plancherel_specialization},
we prove two limit theorems concerning the asymptotic
behavior of random Fibonacci words under the Charlier and the shifted Plancherel
specializations.
For the Charlier specialization
$x_k = k + \rho - 1$, $y_k = k\rho$  (which is of divergent type),
we decompose the random word
as $w=1^{r_1}21^{r_2}2\ldots $.

\begin{theorem}[{\Cref{thm:defomed_Plancherel_scaling}}]
	\label{thm:Charlier_scaling_introduction}
	Fix \(\rho \in (0,1)\). Let \(w\in\mathbb{YF}_n\) be distributed according to the
	Charlier clone coherent measure \(M_n\). Then for each fixed \(k\ge1\), the joint
distribution of runs \((r_1(w),\ldots,r_k(w))\) converges to
\[
  \frac{r_j(w)}{n - \sum_{i=1}^{j-1}r_i(w)}
  \xrightarrow[n\to\infty]{d}\eta_{\rho;j},
  \qquad j=1,\dots,k,
\]
where \(\eta_{\rho;1}, \eta_{\rho;2}, \dots\) are i.i.d.\ copies
of a random variable with the distribution
\begin{equation*}
			\rho \ssp\delta_0(\alpha) + (1 - \rho)\ssp \rho (1 - \alpha)^{\rho - 1} \ssp d\alpha,\qquad
			\alpha\in[0,1].
\end{equation*}
This distribution is a convex combination of the point mass at 0 and the Beta
random variable \(\mathrm{beta}(1, \rho)\), with weights
\(\rho\) and \(1 - \rho\).
\end{theorem}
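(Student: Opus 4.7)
The plan is to reduce the joint scaling limit to a one-variable asymptotic computation by exploiting a self-similarity (Markov) property of the Charlier coherent measures, which rests on a single combinatorial identity in the Young--Fibonacci lattice.

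The key identity is $\dim(2u) = (|u|+1)\ssp\dim(u)$ for every $u \in \mathbb{YF}$, which I would prove by induction on $|u|$: the parents of $2u$ are precisely the children of $u$, namely $1u$ and $\{2v : v \nearrow u\}$, so the inductive hypothesis together with $\dim(u) = \sum_{v\nearrow u}\dim(v)$ yields $\dim(2u) = \dim(u) + |u|\ssp\dim(u)$. Combining this with the trivial $\dim(1^k 2 u) = \dim(2u)$ and the clone Schur recursion $s_{1^k 2 u}(\vec{x}\mid\vec{y}) = B_k(\vec{x}+|u|\mid\vec{y}+|u|)\ssp s_u(\vec{x}\mid\vec{y})$ yields the marginal formula
\[
P\bigl(r_1(w) = k\bigr) = (n-k-1)\ssp B_k\bigl(\vec{x}+(n-k-2)\mid\vec{y}+(n-k-2)\bigr)\,\frac{x_1\cdots x_{n-k-2}}{x_1\cdots x_n}.
\]
Crucially, the same cancellation shows that the conditional law of the suffix $u$ (where $w = 1^k 2 u$) given $r_1(w) = k$ is again the Charlier coherent measure $M_{n-k-2}$ on $\mathbb{YF}_{n-k-2}$ with the \emph{same} parameter $\rho$. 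This Markov self-similarity will reduce joint convergence to the one-dimensional case.

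For the one-dimensional asymptotic I would substitute the Charlier parameters $x_j = j+\rho-1$, $y_j = j\rho$ into the tridiagonal determinant $B_k(\vec{x}+r\mid\vec{y}+r)$. The three-term recurrence $B_\ell = (r+\ell+1+\rho)\ssp B_{\ell-1} - (r+\ell+1)\rho\ssp B_{\ell-2}$ factors as $B_\ell - \rho B_{\ell-1} = (r+\ell+1)(B_{\ell-1} - \rho B_{\ell-2})$, telescoping to
\[
B_k\bigl(\vec{x}+r\mid\vec{y}+r\bigr) = \rho^{k+1}(r+1) + \rho(1-\rho)\sum_{j=1}^{k}\rho^{k-j}(r+2)_j,
\]
with $(r+2)_j$ the Pochhammer symbol. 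Taking $n\to\infty$ with $k=\lfloor\alpha n\rfloor$ and $r=n-k-2$, the $j=k$ term $\rho(1-\rho)(r+2)_k$ dominates, and Stirling's estimate $\Gamma(N+a)/\Gamma(N)\sim N^a$ applied to $(r+2)_k=\Gamma(n)/\Gamma(r+2)$ and to the ratio $x_1\cdots x_{n-k-2}/(x_1\cdots x_n) = \Gamma(\rho+r)/\Gamma(\rho+n)$ yields $n\ssp P(r_1(w)=k) \to (1-\rho)\ssp\rho\ssp(1-\alpha)^{\rho-1}$ for $\alpha\in(0,1)$, the continuous part of the density of $\eta_{\rho;1}$. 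Direct substitution gives $P(r_1(w)=0) \to \rho$, the atomic part.

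Joint convergence of $\bigl(r_1/n,\;r_2/(n-r_1),\;\ldots,\;r_k/(n-r_1-\cdots-r_{k-1})\bigr)$ then follows by iterating the Markov property: conditionally on $(r_1,\ldots,r_{j-1})=(a_1,\ldots,a_{j-1})$, the remaining suffix of length $n-a_1-\cdots-a_{j-1}-2(j-1)$ is again Charlier coherent with the same $\rho$, so the one-dimensional result produces $\eta_{\rho;j}$ independent of the past (the $O(1)$ discrepancy between this suffix length and the normalizer $n-\sum a_i$ in the theorem statement is negligible in the limit). The main obstacle will be the asymptotic step: the leading-term approximation $B_k\sim\rho(1-\rho)(r+2)_k$ fails at the boundary values $k=0,1$, where the subleading term $\rho^{k+1}(r+1)$ is comparable (causing, for instance, $n\,P(r_1=1)\to\rho$ instead of $(1-\rho)\rho$). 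One must argue that these anomalous values contribute only $O(1/n)$ per term and affect only finitely many $k$, hence do not disturb the distribution function at continuity points of $\eta_{\rho;1}$, and separately that the pointwise limit is uniform on compact subsets of $(0,1)$ so that Riemann-sum convergence of the CDF holds.
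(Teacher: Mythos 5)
Your proposal is correct and follows the same overall strategy as the paper: both rest on the product formula for the joint law of the initial runs (your ``Markov self-similarity'' of the suffix under conditioning is exactly equivalent to the paper's \Cref{prop:runs_distribution}, proved by the same cancellation of $\dim$ and clone Schur factors), followed by a one-dimensional asymptotic analysis of the determinants $B_k$ and an iteration/product argument for joint convergence. The one genuine difference lies in how the central asymptotic lemma is established. The paper solves the three-term recurrence $B_k(m)=(k+m+\rho+1)B_{k-1}(m)-\rho(k+m+1)B_{k-2}(m)$ by expressing $B_k(m)$ through analytically continued exponential integrals $E_r(-\rho)$ and then invoking their large-index asymptotics, which requires some care with the branch cut and the contiguous recurrence $rE_{r+1}(z)+zE_r(z)=e^{-z}$. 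You instead observe the telescoping factorization $B_\ell-\rho B_{\ell-1}=(r+\ell+1)(B_{\ell-1}-\rho B_{\ell-2})$, which yields the elementary closed form $B_k=\rho^{k+1}(r+1)+\rho(1-\rho)\sum_{j=1}^k\rho^{k-j}(r+2)_j$ (I checked this agrees with the paper's solution, e.g.\ at $k=0,1$), after which the $j=k$ term visibly dominates and Stirling gives the density $\rho(1-\rho)(1-\alpha)^{\rho-1}$ directly. This is a real simplification of the paper's Lemma on the scaling limit of $\eta^{(m)}_\rho$. Your remaining caveats --- the probability deficit $\rho^n\Gamma(\rho)/\Gamma(n+\rho)$ carried by $w=1^n$, the anomalous small-$k$ values, and uniformity on compacts of $(0,1)$ for the Riemann-sum step --- are exactly the points the paper also has to (and does) address, so nothing essential is missing.
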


Equivalently, we have \(\{r_j/n\}_{j\ge1}\to X_j\),
where $X_1=U_1$ and
$X_n=(1-U_1)\cdots(1-U_{n-1})U_n$ for $n\ge2$,
where $U_j=\eta_{\rho;j}$ are i.i.d.
The representation of the
vector $(X_1,X_2,\ldots )$ through the variables $U_j$
is called a \emph{stick-breaking} process.

Note that
if $U_j$ have the distribution \(\mathrm{beta}(1, \theta)\),
then the distribution of the vector $(X_1,X_2,\ldots )$
is called the Griffiths--Engen--McCloskey distribution $\mathrm{GEM}(\theta)$.
We refer to
\cite[Chapter~41]{Johnson1997}
for further discussion and applications of GEM distributions,
in particular, in population genetics.

We see that the runs of $1$'s under the Charlier specialization
scale to the $\mathrm{GEM}(\rho)$ vector with additional zero entries
inserted independently with density $1-\rho$.

\medskip

For the shifted Plancherel specialization
$x_k=y_k=k+\sigma-1$ (which is of convergent type), we decompose the random word
as $w=2^{h_1}12^{h_2}1\ldots $.
Denote $\tilde h_j=2h_j+1$.

\begin{theorem}[\Cref{thm:shifted_Plancherel_scaling}]
	\label{thm:shifted_Plancherel_scaling_introduction}
	Fix \(\sigma\ge1\). Under
	the shifted Plancherel clone coherent measure \(M_n\), we
	have for the joint distribution \((\tilde h_1(w),\ldots,\tilde h_k(w))\) for each fixed \(k\ge1\):
	\begin{equation*}
		\frac{\tilde h_j(w)}{n-\sum_{i=1}^{j-1}\tilde h_i(w)}\xrightarrow[n\to\infty]{d}\xi_{\sigma;j},\qquad j=1,\ldots,k.
	\end{equation*}
	The joint distribution of \((\xi_{\sigma;1},\xi_{\sigma;2},\ldots)\)
	can be described as follows.
	Toss a sequence of independent coins with probabilities of
	success $1,\sigma^{-1},\sigma^{-2},\ldots $.
	Let $N$ be the (random) number of successes until the first
	failure.
	Then, sample $N$ independent
	$\mathrm{beta}(1,\sigma/2)$ random variables.
	Set $\xi_{\sigma;k}$,
	$k=1,\ldots,N $, to be these random variables,
	while $\xi_{\sigma;k}=0$ for $k>N$.
\end{theorem}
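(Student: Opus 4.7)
My plan is to derive the joint law of the block lengths $(\tilde h_1,\dots,\tilde h_J)$ under $M_n$ in an exact, factorized form, and then extract the asymptotics blockwise.

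First, I would iterate the clone Schur recursion $s_{1^k 2 u} = B_k(\vec x + r \mid \vec y + r)\ssp s_u$ on the leading $2$'s of $w = 2^{h_1}1 \cdots 2^{h_J}1\,v$. For the shifted-Plancherel parameters $x_k = y_k = k + \sigma - 1$, a direct check shows that $B_1(\vec x + r \mid \vec y + r) = y_{r+1}$, so consecutive blocks merge cleanly. This yields the product formula
\[
s_{2^{h_1}1\,\cdots\, 2^{h_J}1\,v}(\vec x \mid \vec y\,) \;=\; \prod_{j=1}^{J}\prod_{k=1}^{h_j}\bigl(2k + m_j + \sigma - 1\bigr)\cdot s_{1v}(\vec x \mid \vec y\,),\qquad m_j \coloneqq n - \textstyle\sum_{i \le j}\tilde h_i.
\]
For the dimensions, I would use the identities $\dim(1 u) = \dim(u)$ and $\dim(2u) = \sum_{u':\, u \nearrow u'}\dim(u')$ (unfolding the branching rules at words beginning in $2$). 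Recursive expansion gives $\dim(2^h 1 u) = \sum_{k=0}^{h} c_{h,k}\,\dim(2^k u)$ with positive integer coefficients $c_{h,k}$ independent of $u$. Summing over $v \in \mathbb{YF}_{m_J}$ via the coherence identity $\sum_v \dim(v)\,s_v(\vec x\mid\vec y\,) = x_1\cdots x_{m_J}$ and its variants (using $s_{1v}/s_v = 1$ whenever $v$ starts with $2$, an immediate consequence of $B_1 = y_{r+1}$) produces a closed form for the joint marginal of $(\tilde h_1,\dots,\tilde h_J)$ that factors across~$j$.

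Second, I would perform the asymptotics. The product $\prod_{k=1}^{h_j}(2k + m_j + \sigma - 1)$ is a ratio of Gamma functions; scaling $h_j/R_j \to \xi_j/2$ with $R_j \coloneqq n - \sum_{i < j}\tilde h_i$, Stirling's formula combined with the normalization $x_1\cdots x_n = (\sigma)_n$ produces the $\mathrm{beta}(1, \sigma/2)$ density $(\sigma/2)(1-\xi_j)^{\sigma/2-1}\,d\xi_j$ on $(0,1)$ in the macroscopic regime $h_j = \Theta(R_j)$. Simultaneously, the microscopic regime $h_j = O(1)$ contributes an atom at $\xi_j = 0$ whose weight, after accounting for the dimension coefficients $c_{h_j, k}$ and the resulting linear functionals of $s_{1v}$, works out to $1 - \sigma^{-(j-1)}$ at the $j$-th step. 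This reproduces the coin-toss description: the $j$-th block is macroscopic with probability $\sigma^{-(j-1)}$, after which the process is absorbed into the $\xi_j = 0$ state for all subsequent indices.

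The main obstacle I anticipate is establishing the precise atom mass $\sigma^{-(j-1)}$, which requires balancing contributions from the coefficients $c_{h_j, k}$ against the Gamma-function ratios in the microscopic regime. The geometric decay $\sigma^{-(j-1)}$ (rather than a constant $\sigma^{-1}$) should emerge from the convergent-type nature of the specialization: each macroscopic block removed effectively reduces the mass available for future macroscopic blocks by a factor of $\sigma^{-1}$. Once this balancing is complete, joint convergence on cylinder sets $\{(\tilde h_1,\dots,\tilde h_k)\}$ for each fixed $k$ follows from the exact factorization, and the described joint law of $(\xi_{\sigma;1},\dots,\xi_{\sigma;k})$ is immediate.
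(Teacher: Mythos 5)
Your overall strategy is the same as the paper's: obtain an exact, factorized expression for the joint law of the initial hikes under the shifted Plancherel measure (the paper does this via \Cref{lemma:hikes_product_for_Schur} and \Cref{prop:hikes_distribution}, exploiting exactly the simplification you found, namely that $B_k(m)=m+\sigma$ for every $k$, see \eqref{eq:B_for_shifted_Plancherel_explicit}), and then run Stirling asymptotics on the resulting ratios of Gamma functions to produce the $\mathrm{beta}(1,\sigma/2)$ density. Two small remarks on your setup. First, you only verify $B_1$, but you need $B_k(m)=m+\sigma$ for \emph{all} $k$ to push the factorization through prefixes containing zero hikes (runs of $1$'s of arbitrary length); this is true and is what the paper uses. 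Second, your expansion $\dim(2^h1u)=\sum_k c_{h,k}\dim(2^ku)$ is an unnecessary detour: \eqref{eq:dimension_recursion} gives $\dim(2^h1u)=\prod_{k=1}^{h}(2k+|u|)\dim(u)$ directly, and the coefficients $c_{h,k}$ you posit would in any case depend on $|u|$, so they are not canonical. The one genuine difference from the paper is that you sum over unrestricted tails $v$ to get the unconditioned joint law of $(\tilde h_1,\dots,\tilde h_J)$, whereas the paper computes $M_n(h_1=a_1,\dots,h_k=a_k,\,h_{k+1}>0)$ and then accounts for the missing mass; your route is viable and arguably gives a cleaner product, at the cost of a correction term from the tails $v=1^{m_J}$ (since $s_{1v}\neq s_v$ exactly when $v$ is a pure run of $1$'s).

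The genuine gap is the step you yourself flag as "the main obstacle": you assert, but do not prove, the joint structure of the limit — that the atom at $\xi_j=0$ has mass $1-\sigma^{-(j-1)}$ \emph{and} that a microscopic hike absorbs all subsequent hikes into zero. The marginal atom masses alone do not determine the joint law of $(\xi_{\sigma;1},\dots,\xi_{\sigma;k})$, which is genuinely dependent for $\sigma>1$; the entire content of \Cref{def:xi_sigma} and \Cref{lemma:joint_cdf} is the joint cdf, not the marginals. Concretely, you must show that configurations in which some $h_j=o(n)$ is followed by a macroscopic $h_{j+1}$ carry vanishing probability; in the paper this comes from the suppression factors in \eqref{eq:hikes_distribution_shifted_Plancherel_proof_as1} (an extra $d_j^{-2}$ for each zero hike followed by a positive one), after which the full joint cdf is reconstructed by an iterative integration argument (Step 4 of the paper's proof). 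Your heuristic — that "each macroscopic block removed reduces the mass available for future macroscopic blocks by $\sigma^{-1}$" — points at the right answer but is not an argument, and the attribution to "the convergent-type nature of the specialization" does not by itself yield the absorption property. Until this accounting is carried out, the proposal establishes (at best) the marginal convergence of each scaled hike, not the joint statement of the theorem.
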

When $\sigma>1$, the random variables $\xi_{\sigma;k}$ are not
independent, but $\xi_{\sigma;1},\ldots,\xi_{\sigma;n} $
are conditionally independent given $N=n$.
Almost surely, the sequence $\xi_{\sigma;1},\xi_{\sigma;2},\ldots$
contains only finitely many nonzero terms.

At \(\sigma=1\) (Plancherel measure), we have $N=\infty$ almost surely, so the random variables
$\xi_{1;k}$ are i.i.d.\ $\mathrm{beta}(1,\sigma/2)$.
Thus,
we recover the convergence to GEM\((1/2)\)
obtained in \cite{gnedin2000plancherel}.

\Cref{thm:Charlier_scaling_introduction,thm:shifted_Plancherel_scaling_introduction}
follow from product-like formulas for the joint
distributions of $r_j(w)$ and $h_j(w)$, respectively.
The product-like formulas are valid for arbitrary Fibonacci positive specializations,
but they greatly simplify in the Charlier and shifted Plancherel cases.

\medskip

Consider now generic specializations of convergent type
with an additional property that the infinite product
$\prod_{i=1}^\infty (1+t_i)$ converges to a finite non-zero value.

\begin{theorem}[\Cref{prop:measure_on_type_I_words,prop:full_type_I_support}]
	For a sequence
	$\vec t$ of convergent type
    subject to the assumption above,
    the behavior of a random word
	$w\in \mathbb{YF}_n$, sampled with respect to the corresponding clone coherent measure, in the limit as $n\to\infty$ is
	as follows:
	\begin{enumerate}[$\bullet$]
		\item either
			$w \to 1^\infty$,
		\item
			or
			$w \to 1^\infty 2v$,
			where $v \in\mathbb{YF}$ is a finite random Fibonacci word.
	\end{enumerate}
	That is, the growing word $w$ stabilizes to a random element of the set
	\begin{equation*}
		1^\infty\mathbb{YF}\coloneqq
		\{1^\infty\}\cup \{1^\infty 2u\colon u\in \mathbb{YF}\}.
	\end{equation*}
	Moreover, the
	distribution $\muI$ of the limiting random word $w\in 1^\infty\mathbb{YF}$
	is given by
	\begin{equation*}
		\muI\left( 1^\infty \right)= \prod_{i=1}^\infty(1+t_i)^{-1},
		\qquad
		\muI\left( 1^\infty2u \right)=
		\Bigg(
			\, \prod_{i \ssp = \ssp 1}^{|u|-1}(1+t_i)
		\Bigg)
		\big(|u|+1 \big)\ssp
		M_{|u|}(u) \ssp
		\frac{B_{\infty}(|u|)}{\prod_{i=1}^{\infty}(1+t_i)},
	\end{equation*}
	where $u\in \mathbb{YF}$ is arbitrary, and
	$B_\infty(m)$, $m\ge0$, is an infinite series defined below
	in \eqref{eq:A_infty_B_infty_series_definitions}.
	Moreover, $\muI$ is a probability measure on $1^\infty\mathbb{YF}$.
\end{theorem}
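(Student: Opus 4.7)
The plan is to track the random word via the up Markov chain on $\mathbb{YF}$, which sends $w$ to $w'$ (with $w\nearrow w'$) with probability $p(w\to w') = \varphi_{\vec x,\vec y}(w')/\varphi_{\vec x,\vec y}(w)$. Under this chain, the event $\{w_n\to 1^\infty\}$ is exactly ``every up-step is of the form $w\mapsto 1w$'', while $\{w_n\to 1^\infty 2u\}$ is ``$w_{|u|+2}=2u$ and every subsequent up-step prepends a $1$''. Both events are decreasing intersections; by the Markov property their probabilities telescope to $\dim(v)\cdot\lim_{K\to\infty}\varphi_{\vec x,\vec y}(1^K v)$ with $v=\varnothing$ and $v=2u$ respectively.

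The first technical step is to evaluate $\varphi_{\vec x,\vec y}$ on these words. Substituting the parameterization $x_k = c_k(1+t_{k-1})$, $y_k = c_k c_{k+1} t_k$ of \Cref{thm:positivity_introduction} into the recursion $A_\ell = x_\ell A_{\ell-1} - y_{\ell-1} A_{\ell-2}$, a short induction yields $A_m(\vec x + s\mid \vec y + s) = c_{s+1}\cdots c_{s+m}\ssp\Phi_m^{(s)}$ with $\Phi_m^{(s)} := 1 + t_s + t_s t_{s+1} + \cdots + t_s \cdots t_{s+m-1}$. Because $t_0=0$, the $s=0$ specialization gives $A_m(\vec x\mid\vec y)=c_1\cdots c_m$, whence $\varphi_{\vec x,\vec y}(1^n)=\prod_{i=1}^{n-1}(1+t_i)^{-1}$ and therefore $\muI(1^\infty)=\prod_{i=1}^\infty(1+t_i)^{-1}$. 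Cofactor expansion of $B_K(\vec x+r\mid\vec y+r)$ along its first column (with $r:=|u|$) rewrites $B_K$ as a linear combination of shifted $A$-determinants; combined with the recursive identity $s_{1^K 2u} = B_K(\vec x + r\mid\vec y+r)\cdot s_u$ and $s_u = \varphi_{\vec x,\vec y}(u)\cdot x_1\cdots x_r$, one arrives at
\[
\varphi_{\vec x,\vec y}(1^K 2u) \;=\; \frac{B_K(|u|)\ssp\varphi_{\vec x,\vec y}(u)\prod_{i=1}^{|u|-1}(1+t_i)}{\prod_{i=1}^{K+|u|+1}(1+t_i)},
\]
where $B_K(r):=B_K(\vec x+r\mid\vec y+r)/(c_{r+1}\cdots c_{r+K+2})$. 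The convergence of $\prod_i(1+t_i)$ together with the convergent-type conditions of \Cref{thm:positivity_introduction} ensures that both numerator and denominator admit positive limits, producing the closed form for $\lim_K\varphi_{\vec x,\vec y}(1^K 2u)$ featuring $B_\infty(|u|)$.

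Combining this telescoping with the identities $\dim(1v)=\dim(v)$ and $\dim(2v)=(|v|+1)\dim(v)$ (the latter by $1$-differentiality: $DU - UD = I$ applied to $\dim$ shows that the up-covers of $v$ have $\dim$-sum equal to $(|v|+1)\dim(v)$, and these up-covers are exactly the parents of $2v$) converts the telescoped probability into the stated formula for $\muI(1^\infty 2u)$. The main remaining obstacle is to verify that $\muI$ has total mass $1$ --- which simultaneously forces the almost sure stabilization of $w_n$ in $1^\infty\mathbb{YF}$. Using $\sum_{|u|=r}M_r(u)=1$, this reduces to the identity
\[
\prod_{i=1}^\infty(1+t_i) \;=\; 1 + \sum_{r\ge 0}(r+1)\ssp B_\infty(r)\prod_{i=1}^{r-1}(1+t_i),
\]
a statement purely in the sequence $\vec t$. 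My plan is to establish it directly by exploiting the recursion $\Phi_K^{(s)} = 1 + t_s\Phi_{K-1}^{(s+1)}$ to rewrite $B_\infty(r)$ in a telescoping-friendly form, along the lines of the determinantal factorizations used earlier in the paper; should a direct proof prove obstinate, an alternative is to invoke the Martin boundary classification of $\mathbb{YF}$ \cite{KerovGoodman1997,BochkovEvtushevsky2020,Evtushevsky2020PartII} together with the convergent-type hypothesis to argue that the ergodic decomposition of $\varphi_{\vec x,\vec y}$ is supported on the discrete type-I component $1^\infty\mathbb{YF}$, so that $\muI$ exhausts the mass by construction. With this in hand, almost sure convergence of $w_n$ to the $\muI$-distributed limit follows by sigma-additivity applied to the disjoint events $\{w_n\to A\}$, $A\in 1^\infty\mathbb{YF}$.
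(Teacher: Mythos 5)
Your computation of the individual masses is sound and is essentially the paper's own route: your Markov-chain telescoping of the events $\{w_n\to 1^\infty v\}$ reproduces exactly the paper's definition $\muI(1^\infty v)=\lim_{n}M_{|v|+n}(1^n v)$, your evaluation $\varphi_{\vec x,\vec y}(1^n)=\prod_{i=1}^{n-1}(1+t_i)^{-1}$ matches \Cref{lemma:measure_of_all_ones}, and your formula for $\varphi_{\vec x,\vec y}(1^K2u)$ (together with $\dim(2u)=(|u|+1)\dim(u)$, which you justify correctly via the reflective property and the Weyl relation) recovers \eqref{eq:B_infty_proof2}. The final sigma-additivity step deducing almost-sure stabilization from $\muI(1^\infty\mathbb{YF})=1$ is also fine.

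The genuine gap is the normalization identity itself, which is the entire content of \Cref{prop:full_type_I_support} and which you leave as a ``plan'' with two unexecuted options. The finite-$n$ identity \eqref{eq:finite-tversion-Important-Identity} is elementary (it is just $\sum_{|w|=n}M_n(w)=1$ grouped by the position of the first $2$); the difficulty is entirely in interchanging $n\to\infty$ with the sum over $m$. Non-negativity plus Fatou gives only $\sum_m(m+1)B_\infty(m)\prod_{k=0}^{m-1}(1+t_k)^{-1}\cdots\le 1$; the reverse inequality is the assertion that no mass escapes, and it is \emph{not} a formal consequence of telescoping. A naive domination also fails: even under the hypothesis $\prod_i(1+t_i)<\infty$ one can have $\sum_m m\ssp t_m=\infty$ (e.g.\ $t_m\sim 1/(m\log^2 m)$), so bounding $B_{n-m-2}(m)$ by $B_0(m)=t_{m+1}$ does not yield a summable majorant. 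The paper's proof has to establish uniform-in-$m$ convergence of both $\prod_{k=m}^{n-1}(1+t_k)^{-1}$ and $(m+1)B_{n-m-2}(m)$, using $m\ssp t_m\to0$, the eventual monotone decay of $t_m$ (\Cref{proposition:cannot_nondecrease,proposition:limit_zero_of_convergent_type}), and a Cauchy-condensation argument; none of this is present in your sketch. Your fallback via the Martin boundary is worse than incomplete, it is circular: ``the ergodic decomposition of $\varphi_{\vec x,\vec y}$ is supported on the Type-I component'' is precisely the statement being proved, and it does not follow from convergent type alone --- the shifted Plancherel specialization is of convergent type yet has $\muI\equiv0$ (see \Cref{rmk:type_I_components_examples_discussion_no_type_I_support}), so the hypothesis $0<\prod_i(1+t_i)<\infty$ must enter through a genuine analytic estimate, not by citation of the boundary classification.
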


In \Cref{cor:typeI-expansion-convergent},
we obtain the following decomposition of the clone harmonic function
$\varphi_{\vec x, \vec y}$ for specializations of convergent type
satisfying $\prod_{i=1}^\infty (1+t_i)<\infty$:
\begin{equation*}
\varphi_{\vec x, \vec y}
=
\muI(1^\infty) \ssp \Phi_{1^\infty}
\, + \ssp
\sum_{u\in \mathbb{YF}}\muI(1^\infty 2u)\ssp \Phi_{1^\infty 2 u},\quad
\Phi_{1^\infty2u}(w) \coloneqq
\begin{cases}
	\displaystyle \frac{\dim(w, 1^k 2u)}{\dim(2u)}, & \text{if $w \unlhd 1^k u$, $k \geq 0$}, \\[5pt]
	0, & \text{otherwise}.
\end{cases}
\end{equation*}
Here,
$\unlhd$ denotes the partial
order on $\mathbb{YF}$ (induced
from the branching relation).
The functions
$\Phi_{1^\infty}$ and
$\Phi_{1^\infty 2u}$
for $v \in \Bbb{YF}$
are called \emph{Type-I harmonic functions}, and they are extremal.

\subsection{Clone Cauchy identities and random permutations}

In \Cref{sec:clone-cauchy-identities},
we
derive \emph{clone Cauchy identities}
generalizing the classical Cauchy-type summation formulas
for the usual Schur functions. Two identities are presented
in \Cref{prop:first-clone-cauchy,prop:second-clone-cauchy},
with the second being
\begin{equation}
	\label{eq:quadridiagonal_determinant_intro}
	\sum_{|w| = n}
	s_w (\vec{p} \mid \vec{q}\ssp)
	\ssp s_w (\vec{x} \mid \vec{y}\ssp)
	=
	\det \underbrace{\begin{pmatrix}
	\mathrm{A}_1 & \mathrm{B}_1 & \mathrm{C}_1 & 0 & \cdots \\
	1 & \mathrm{A}_2 & \mathrm{B}_2 & \mathrm{C}_2 & \\
	0 & 1 & \mathrm{A}_3 & \mathrm{B}_3 & \\
	0 & 0 & 1 & \mathrm{A}_4 & \\
	\vdots & & & & \ddots
	\end{pmatrix}}_{n \times n \ \mathrm{quadridiagonal \, matrix}},
\end{equation}
where
$\mathrm{A}_k = p_k x_k$, \,
$\mathrm{B}_k = q_k(x_k x_{k+1} - y_k) + y_k(p_k p_{k+1} - q_k)$, \,
$\mathrm{C}_k = p_k x_k q_{k+1} y_{k+1}$.

The identity in \eqref{eq:quadridiagonal_determinant_intro}
can be used to define clone analogues of \emph{Schur
measures},
extending the framework from harmonic functions on $\mathbb{YF}$.
Indeed, when
one of the specializations
in \eqref{eq:quadridiagonal_determinant_intro}
is Plancherel, \(p_k = q_k = k\),
identity \eqref{eq:quadridiagonal_determinant_intro}
reduces to the normalizing identity for the clone harmonic function
$\varphi_{\vec x, \vec y}$.
For the Young lattice, Schur
measures were
introduced in \cite{okounkov2001infinite} and
generalized to Schur processes (measures on sequences of partitions)
in
\cite{okounkov2003correlation}.
They found extensive applications in random matrices,
interacting particle systems,
random discrete structures like tilings,
geometry, and
other areas
\cite{okounkov2006gromov},
\cite{okounkov2006quantum},
\cite{BorFerr2008DF},
\cite{BorodinGorinSPB12}, \cite{BorodinPetrov2013Lect},
\cite{corwin2014brownian}.
We leave clone analogues of Schur measures and processes for future work.

\medskip

In \Cref{sec:random_Fibonacci_words_to_random_permutations},
we introduce ensembles of random permutations and involutions
by utilizing the
Young--Fibonacci RS
correspondence \cite{nzeutchap2009young} and positive
harmonic functions on $\mathbb{YF}$. In full generality, the
distribution of a permutation or involution depends, respectively, on a
triplet $(\uppi, \varphi, \psi)$ or a couple $(\uppi,
\varphi)$ of harmonic functions.
We do not treat the general case in the present work,
but
focus on the clone harmonic / Plancherel
random involutions, that is, corresponding to setting
$\uppi =
\varphi_{\vec x, \vec y}$
and
$\varphi =
\varphi_{_\mathrm{PL}}$, where
$(\vec x, \vec y)$ is a Fibonacci positive specialization.
Using clone Cauchy
identities,
in \Cref{sec:observables_from_Cauchy_identities}
we find the moment generating function for the
number of two-cycles in a random involution
$\boldsymbol\upsigma \in \mathfrak{S}_n$
(\Cref{proposition:expectation-cycles-involution}):
\begin{equation*}
		\operatorname{\mathbb{E}}\bigl[ \tau^{\# \ssp
		\mathrm{two \text{-} cycles}(\boldsymbol\upsigma) } \bigr]
		=
		(x_1 \cdots x_n)^{-1} \ssp
		\det \underbrace{\begin{pmatrix}
		x_1
		&(1 - \tau )  y_1
		& - \tau x_1 y_2
		&0
		&\cdots \\
		1
		&x_2
		&(1  - 2 \tau)y_2
		&- 2 \tau x_2 y_3
		& \\
		0
		&1
		&x_3
		&(1  - 3 \tau )y_3
		& \\
		0
		&0
		&1
		&x_4
		& \\
		\vdots & & & &\ddots
	\end{pmatrix}}_{n \times n \ \mathrm{quadridiagonal \, matrix}} ,
\end{equation*}
where $\tau$ is an auxiliary parameter.

When $(\vec{x}, \vec{y})$ is the shifted Plancherel specialization ($x_k = y_k = k + \sigma - 1$, $\sigma \in [1,\infty)$), the Young--Fibonacci shape
$w\in \mathbb{YF}_n$
of a random involution
$\boldsymbol\upsigma \in \mathfrak{S}_n$
under the RS correspondence
has the same distribution as a random Fibonacci word
considered in \Cref{thm:shifted_Plancherel_scaling_introduction} above.
In this way, we can compare the asymptotic behavior of the total
number of $2$'s in a random Fibonacci word (which is the same
as the number of two-cycles), and the
scaling limit of initial long sequences of $2$'s from
\Cref{thm:shifted_Plancherel_scaling_introduction}.
We establish
a law of large numbers
(\Cref{proposition:limit-two-cycles-shifted-plancherel})
for the total number of $2$'s:
\begin{equation}
	\label{eq:limit_two_cycles_shifted_plancherel_intro}
	\lim_{n\to\infty}
	\frac{\# \mathrm{two \text{-} cycles}(\boldsymbol\upsigma)}{n}
	=
	\frac{1}{\sigma+1}.
\end{equation}
For $\sigma > 1$, this value exceeds the expectation of
the sum of the
scaled quantities $h_j$ in
\Cref{thm:shifted_Plancherel_scaling_introduction}.
This
discrepancy
reveals
that additional digits of $2$ remain hidden
in the growing random Fibonacci word
after long sequences of $1$'s.
This behavior is
unaccounted for
in the scaling limit of
\Cref{thm:shifted_Plancherel_scaling_introduction} but
contributes to the law of large numbers
\eqref{eq:limit_two_cycles_shifted_plancherel_intro}.

\subsection*{Outline of the paper}

The paper is organized into three parts.

In \Cref{part:1} (\Cref{sec:YF_lattice_clone,sec:clone-cauchy-identities,sec:Fibonacci_positivity,sec:Fibonacci_positive_properties_part1,sec:examples_of_Fibonacci_positivity}),
we introduce the Young--Fibonacci lattice, the biserial clone Schur functions, and the associated clone coherent measures.
We prove the clone Cauchy summation identities, and give a complete characterization of Fibonacci positive specializations of both divergent and convergent type.
Representative examples include the shifted Plancherel and Charlier cases that will serve as running examples in later sections.

In \Cref{part:2} (\Cref{sec:Stieltjes_moments_general,sec:Stieltjes_moment_sequences,sec:Fibonacci_positive_moments,sec:Fibonacci_and_Stieltjes_examples,sec:shifted_Charlier_Stieltjes}),
we develop the link between Fibonacci positivity, total positivity of tridiagonal matrices, and Stieltjes moment sequences.
We obtain general combinatorial formulas for moments, identify families of orthogonal polynomials from the ($q$-)Askey scheme
(Charlier, Type-I Al-Salam--Carlitz, Al-Salam--Chihara, $q$-Charlier), and highlight the atypical $q>1$ regime enforced by Fibonacci positivity.
For these and other specializations, we determine the corresponding discrete Borel orthogonality measures and give explicit combinatorial interpretations of the moments.

In \Cref{part:3} (\Cref{sec:initial_part_of_random_Fibonacci_word,sec:asymptotics_Charlier_specialization,sec:asymptotics_shifted_Plancherel_specialization,sec:general_convergent_specializations,sec:random_Fibonacci_words_to_random_permutations,sec:observables_from_Cauchy_identities}),
we study the asymptotic behavior of random Fibonacci words under various Fibonacci positive specializations.
Our results include stick-breaking-type limit laws extending the GEM($\tfrac12$) scaling limit of~\cite{gnedin2000plancherel},
new dependent stick-breaking schemes, and the description of extremal components in terms of Type-I harmonic functions in the Martin boundary.
We further employ clone Cauchy identities to define and analyze models of random permutations and involutions, computing exact generating functions for cycle statistics and establishing laws of large numbers.

The paper concludes in \Cref{sec:prospectives} with a discussion of possible extensions and open problems.

\subsection*{Acknowledgements}

There a number of people we would like to thank for
discussions and input during the course of our research.

Part of the inspiration for this project stems from Philippe
Biane's lectures on asymptotic representation theory given
at the 2017 IHP trimester program {\it Combinatorics and
interactions}.  Much in this paper is an attempt to emulate
the approach to the Young lattice conveyed in these
lectures.  We thank him for his advice and hospitality while
visiting Laboratoire d'Informatique Gaspard-Monge
(Marne-la-Vall\'{e}e) in the Fall of 2021 and later in the
Winter of 2023, where preliminary results of our work were
discussed.  We also thank and acknowledge Florent Hivert at
Laboratoire Interdisciplinaire des Sciences du Num\'{e}rique
(Gif-sur-Yvette), who is working on an allied project which
focuses on the algebro-combinatorial side of the
$\Bbb{YF}$-lattice.

Institut Henri Poincar\'{e} (Paris) and Institut de Physique
Th\'{e}orique (Saclay) have been important locations where
progress has been made on this project.  JS is especially
grateful to Fran\c{c}ois David for his hospitality during
numerous visits to IPhT since 2017; in particular for the
opportunity to report on aspects of this project at the IPhT
Math-Physics seminar in 2023.  Many thanks to Ariane
Carrance and Olya Mandelshtam for helpful discussions at
IPhT and IHP in 2023.

Part of our research was made during the Spring 2024 program
``Geometry, Statistical Mechanics, and Integrability'' at
the Institute for Pure and Applied Mathematics (Los
Angeles), which is supported by the NSF grant DMS-1925919.
We were fortunate to have a number of fruitful discussions
with fellow participants at IPAM, including Philippe Di
Francesco, Rinat Kedem, Soichi Okada, Sri Tata, and Harriet
Walsh.

We would like to thank Natasha Blitvic, Mourad Ismail, and
Dennis Stanton, for their advice and input on moment
sequences and the combinatorics of set partitions, as well
as Michael Somos and Qiaochu Yuan for their insights offered
on the Math StackExchange.

JS would also like to thank LP for his hospitality while
visiting the University of Virginia (Charlottesville) in the
Fall of 2023.

%

LP was partially supported by the NSF grant DMS-2153869 and
by the Simons Collaboration Grant for Mathematicians 709055.

\part{Young--Fibonacci Lattice and Fibonacci Positivity}
\label{part:1}

In this part we recall, and when necessary introduce, the
principal objects of our study: the Young-Fibonacci lattice
$\mathbb{YF}$, the clone Schur functions, and the clone
coherent measures on Fibonacci words. We then characterize
those specializations of the clone Schur functions that give
rise to positive harmonic functions on $\mathbb{YF}$.
Finally, we present several examples of Fibonacci positive
specializations, including the ones whose asymptotics we
analyze in \Cref{part:3}.

\section{Young--Fibonacci Lattice and Clone Schur Functions}
\label{sec:YF_lattice_clone}

In this preliminary section we review the Young--Fibonacci lattice $\mathbb{YF}$ (also referred to as the Young--Fibonacci branching graph)~\cite{Fomin1986}, \cite{stanley1988differential}, \cite{KerovGoodman1997} and the clone Schur functions introduced in~\cite{okada1994algebras}. The biserial clone Schur functions are harmonic on $\mathbb{YF}$, and we use them to define coherent probability measures on Fibonacci words.

\subsection{Young--Fibonacci lattice and harmonic functions}
\label{sub:YF_lattice}

A \emph{Fibonacci word}
$w=w_1\ldots w_\ell$
is any binary word with letters $w_j\in \left\{ 1,2 \right\}$.
The integer $|w|\coloneqq w_1+\ldots+w_{\ell} =n$ is called the \emph{weight} of the word $w$.
The total number of Fibonacci words of weight $n$ is equal to the $n$-th Fibonacci number,\footnote{With the convention that $F_0=F_1=1$.}
hence the name.
Denote the set of all Fibonacci words of weight $n$ by $\mathbb{YF}_{n}$, where $n\ge0$.

\begin{definition}
	\label{def:YF_lattice}
	The \emph{Young--Fibonacci lattice} $\mathbb{YF}$ is the union of all sets $\mathbb{YF}_{n}$, $n\ge0$.
	In this lattice,
	$w\in \mathbb{YF}_n$ is connected to $w'\in \mathbb{YF}_{n+1}$
	if and only if $w'$ can be obtained from $w$ by one of the following three operations:
	\begin{enumerate}[{\bf{}F1.\/}]
		\item $w'=1w$.
		\item $w'=2^{k+1}v$ if $w=2^k1v$ for some $k\ge0$ and an arbitrary Fibonacci word $v$.
		\item $w'=2^\ell 1 2^{k-\ell}v$ if $w=2^kv$ for some $k\ge1$ and an arbitrary Fibonacci word $v$.
			While \textbf{F1} and \textbf{F2} each generate at most one edge,
			this
			rule generates $k$ edges indexed by $\ell=1,\ldots,k$.
	\end{enumerate}
	We denote this relation by $w\nearrow w'$ (equivalently, $w'\searrow w$).
	An example of the Young--Fibonacci lattice up to level $n=5$ is given in Figure~\ref{fig:YF_lattice}.
\end{definition}

\begin{figure}[htpb]
	\centering
	\begin{equation*}
		\xymatrixrowsep{0.3in}
		\xymatrixcolsep{0.15in}
		\xymatrix{ \varnothing \ar@{-}[d] & & & & & & &  \\
		1 \ar@{-}[d] \ar@{-}[drrrrrrr] & & & & & & & \\
		11  \ar@{-}[d] \ar@{-}[drrrr] & & & & & & & 2  \ar@{-}[d] \ar@{-}[dlll] \\
		111  \ar@{-}[d] \ar@{-}[drr] & & & & 21 \ar@{-}[d] \ar@{-}[dll] \ar@{-}[dr] & & & 12  \ar@{-}[d] \ar@{-}[dll] \\
		1111  \ar@{-}[d] \ar@{-}[dr]  & &211  \ar@{-}[d] \ar@{-}[dl] \ar@{-}[dr] & &121 \ar@{-}[d] \ar@{-}[dl]  &22 \ar@{-}[d] \ar@{-}[dll]|\hole \ar@{-}[dr]
		& &112  \ar@{-}[d] \ar@{-}[dl]  \\
		11111 & 2111 & 1211 & 221 & 1121 & 122 & 212 & 1112 }
	\end{equation*}
	\caption{The Young--Fibonacci lattice up to level $n=5$.}
	\label{fig:YF_lattice}
\end{figure}

\begin{definition}
	\label{def:harmonic_on_YF}
	A function $\varphi$ on $\mathbb{YF}$ is called \emph{harmonic} if it satisfies
	\begin{equation*}
		\varphi(w)=\sum_{w'\colon w' \searrow w} \varphi(w')\qquad
		\textnormal{for all } w\in \mathbb{YF}.
	\end{equation*}
	A harmonic function is called \emph{normalized} if $\varphi(\varnothing)=1$.
\end{definition}

For $w\in \mathbb{YF}$, denote by $\dim(w)$
the number of oriented paths
(also known as {\it saturated chains})
from $\varnothing$ to $w$ in the Young--Fibonacci lattice.
Let $I_2(w)$ be the sequence of all positions of the letter 2 in $w$, reading from left to right.
Then
\begin{equation}
	\label{eq:dimension_formula}
	\dim(w)
	=\prod_{i\in I_2(w)} \mathsf{d}_i(w),\qquad
	\textnormal{where } \mathsf{d}_i(w)=|v|+1 \textnormal{ if } w=u2v \textnormal{ is the splitting of $w$ at position $i$}.
\end{equation}
Equivalently, $\dim(w)$ obeys the following recursion:
\begin{equation}
	\label{eq:dimension_recursion}
	\dim(w)
	\, = \,
	\begin{cases}
		1, & \text{if } w = \varnothing ;\\
		\dim(v), & \text{if } w = 1v \text{ for a Fibonacci word } v ;\\
		(|v| + 1) \dim(v), & \text{if } w = 2v \text{ for a Fibonacci word } v.
	\end{cases}
\end{equation}
For example,
if $w=22121$, then $I_2(w)=( 1,2,4 )$, and
$\dim w=70$.
Since $\mathbb{YF}$ is a $1$-differential poset, we have
\cite[Corollary~3.9]{stanley1988differential},
(see also \cite{fomin1994duality}):
\begin{equation}
	\label{eq:dim_sum_squares}
	\sum_{w\in \mathbb{YF}_n} \dim^2 (w) = n!\ssp .
\end{equation}

With any \emph{nonnegative} normalized harmonic function we can associate a family of probability measures
$M_n$ on $\mathbb{YF}_n$ as follows:
\begin{equation}
	\label{eq:coherent_measure_for_harmonic_function}
	M_n(w)\coloneqq \dim(w) \cdot \varphi(w), \qquad  w\in \mathbb{YF}_n.
\end{equation}
The fact that $\sum_{w\in \mathbb{YF}_n} M_n(w)=1$ follows from the normalization of $\varphi$,
and the harmonicity of $\varphi$
translates into the \emph{coherence property} of the measures $M_n$:
\begin{equation}
	\label{eq:coherence_property_standard_down_transitions}
	M_n(w)=\sum_{w'\colon w'\searrow w} M_{n+1}(w')\ssp \frac{\dim (w)}{\dim (w')}, \qquad w\in \mathbb{YF}_n.
\end{equation}

The set of all nonnegative normalized harmonic functions on
$\mathbb{YF}$ forms a simplex $\Upsilon(\mathbb{YF})$.
The set of extreme points of this
simplex (the ones
not expressible as a nontrivial convex combination of
other points) is denoted by $\Upsilon_{\mathrm{ext}}(\mathbb{YF})$.
In general, $\Upsilon_{\mathrm{ext}}(\mathbb{YF})$ is
a subset of the \emph{Martin boundary}, denoted by $\Upsilon_{\mathrm{Martin}}(\mathbb{YF})$.
The latter consists of harmonic functions which can be obtained by finite rank approximation.
The Martin boundary of the Young--Fibonacci lattice is described in \cite{KerovGoodman1997}.
Recently, it was shown in the preprints \cite{BochkovEvtushevsky2020}, \cite{Evtushevsky2020PartII} that
the Martin boundary coincides with the set of extreme points $\Upsilon_{\mathrm{ext}}(\mathbb{YF})$.

For any coherent family of measures $M_n$ on $\mathbb{YF}_n$, $n=0,1,2,\ldots $,
there exists a unique probability measure
$\mu$
on $\Upsilon_{\mathrm{ext}}(\mathbb{YF})$
such that
\begin{equation}
	\label{eq:harmonic_function_as_integral_Choquet}
	M_n(w)=\int_{\Upsilon_{\mathrm{ext}}(\mathbb{YF})}
	\dim(w)\ssp \varphi_\omega(w)\ssp \mu(d\ssp \omega),\qquad w\in \mathbb{YF}_n.
\end{equation}
Here $\varphi_\omega$ is the extremal harmonic function
corresponding to $\omega\in
\Upsilon_{\mathrm{ext}}(\mathbb{YF})$.

\subsection{Plancherel measure and its scaling limit}
\label{sub:Plancherel_measure_and_scaling}

An important example of a harmonic function on $\mathbb{YF}$ is the \emph{Plancherel} function
defined as
\begin{equation}
	\label{eq:Plancherel_harmonic_function}
	\varphi_{_\mathrm{PL}}(w)\coloneqq \frac{\dim (w)}{n!},\qquad  w\in \mathbb{YF}_n.
\end{equation}
In \cite{gnedin2000plancherel} it is shown that
$\varphi_{_\mathrm{PL}}$ belongs to $\Upsilon_{\mathrm{ext}}(\mathbb{YF})$.
Moreover, for the Plancherel measure $M_n(w)=\dim^2 (w)/n!$ corresponding to $\varphi_{_\mathrm{PL}}$ as in \eqref{eq:coherent_measure_for_harmonic_function},
\cite{gnedin2000plancherel} establishes a $n\to\infty$ scaling limit theorem for the
positions of the $1$'s in the random Fibonacci word $w$ which we now describe.

Represent $w\in \mathbb{YF}$ as a
sequence of contiguous blocks of letters 2 separated by 1's.
For example, $w=122112=(1)(221)(1)(2)$.
Each block except possibly the rightmost one contains exactly one 1, which is its terminating letter.
Denote by $\tilde{h}_1, \tilde{h}_2, \ldots$ the sequence of weights of the blocks,
reading from left to right.
For the example above, $\tilde{h}_1=1, \tilde{h}_2=5, \tilde{h}_3=1, \tilde{h}_4=2$, and $\tilde{h}_j=0$ for $j\ge5$.
We have $\tilde{h}_1+\tilde{h}_2+\ldots =n$.

\begin{definition}
	\label{def:GEM}
	The \emph{GEM}
	(\emph{Griffiths--Engen--McCloskey})
	\emph{distribution}
	with parameter $\theta>0$
	(denoted $\mathrm{GEM}(\theta)$)
	is a
	probability measure on the infinite-dimensional simplex
	\begin{equation}
		\label{eq:Delta_infinite_dimensional_simplex}
		\Delta\coloneqq \Bigl\{ (x_1,x_2,\ldots)\colon x_j\ge0,\quad \sum_{j=1}^{\infty} x_j\le 1 \Bigr\}
	\end{equation}
	obtained
	from the residual allocation model (also called the stick-breaking construction) as follows.
	By definition, a random point $X=(X_1,X_2,\ldots )\in \Delta$ under $\mathrm{GEM}(\theta)$ is distributed as
	\begin{equation*}
		X_1=U_1,\qquad X_n=(1-U_1)(1-U_2)\cdots(1-U_{n-1})U_n,\quad n=2,3,\ldots,
	\end{equation*}
	where $U_1,U_2,\ldots$ are independent $\mathrm{beta(1,\theta)}$
	random variables
	(i.e.,
	with
	density $\theta(1-u)^{\theta-1}$ on the unit segment $[0,1]$).
	We refer to
	\cite[Chapter~41]{Johnson1997}
	for further discussion and applications of GEM distributions.
\end{definition}

Theorem~5.1 in
\cite{gnedin2000plancherel} establishes the convergence in distribution as $n\to\infty$:
\begin{equation}
	\label{eq:GEM_convergence}
	\Bigl( \frac{\tilde{h}_1(w)}{n}, \frac{\tilde{h}_2(w)}{n}, \ldots \Bigr)
	\longrightarrow X=(X_1,X_2,\ldots ),
	\qquad
	X\sim \mathrm{GEM}(1/2),
\end{equation}
where $\tilde{h}_j(w)$ are the block sizes (described above) of the random Fibonacci word $w$
distributed according to the Plancherel measure on $\mathbb{YF}_n$.
One of the aims of the present paper is to extend this scaling limit result to a wider class of harmonic functions on $\mathbb{YF}$ arising from the constructions in \Cref{sub:clone_Schur} below.

\subsection{Harmonic functions from clone Schur functions}
\label{sub:clone_Schur}

A rich family of non-extremal harmonic functions on $\mathbb{YF}$ comes from
clone Schur functions
\cite{okada1994algebras}
which we now describe.
Let $\vec x=(x_1,x_2,\ldots )$ and $\vec y=(y_1,y_2,\ldots)$ be two families of indeterminates.
Define two sequences of tridiagonal determinants as follows:
\begin{equation}
	\label{eq:A_B_dets}
	A_\ell (\vec{x} \mid \vec{y}\ssp) \coloneqq
	\det
	\underbrace{\begin{pmatrix}
	x_1 & y_1 & 0 & \cdots\\
	1 & x_2 & y_2 &\\
	0 & 1 & x_3  & \\
	\vdots & & & \ddots
	\end{pmatrix}}_{\text{$\ell \times \ell $ tridiagonal matrix}} ,
	\qquad
	B_{\ell -1} (\vec{x} \mid \vec{y}\ssp) \coloneqq
	\det
	\underbrace{\begin{pmatrix}
	y_1 & x_1y_2 & 0 & \cdots\\
	1 & x_3 & y_3 &\\
	0 & 1 & x_4 & \\
	\vdots & & & \ddots
	\end{pmatrix}}_{\text{$\ell \times \ell$ tridiagonal matrix}} .
\end{equation}
Here $\ell\ge0$.
For a sequence $\vec u=(u_1,u_2,\ldots)$,
denote its
\emph{shift} by $\vec u+ \ell = (u_{1+\ell} \, ,u_{2+\ell} \, ,\ldots)$, where $\ell\in \mathbb{Z}_{\ge0}$.

\begin{remark}
	\label{rmk:notation_Al_Blm}
	When there is no
	risk of ambiguity, we'll abbreviate
	$A_\ell (\vec{x} \mid \vec{y}\ssp)$ and
	$B_\ell (\vec{x} \mid \vec{y}\ssp)$
	as $A_\ell$ and $B_\ell$, respectively.
	Moreover, we will use the shorthand notation
	$A_{\ell}(m)\coloneqq A_{\ell}(\vec x+m\mid \vec y+m)$
	and
	$B_{\ell-1}(m)\coloneqq B_{\ell-1}(\vec x+m\mid \vec y+m)$
	for the shifted determinants.
\end{remark}

\begin{definition}
	\label{def:clone_Schur}
	For any Fibonacci word $w$,
	define the
	(\emph{biserial})
	\emph{clone Schur function} $s_w(\vec{x} \mid \vec{y}\ssp)$ through the following recurrence:
	\begin{equation}
		\label{eq:clone_Schur_recurrence_def}
		s_w (\vec{x} \mid \vec{y}\ssp)
		\coloneqq
		\begin{cases}
			A_k(\vec{x} \mid \vec{y}\ssp), & \textnormal{if $w=1^{k}$ for some $k\ge 0$},\\
			B_k\bigl( \vec{x}+|u| \mid \vec{y}+|u| \bigr)\cdot s_u (\vec{x} \mid \vec{y}\ssp)
			, & \textnormal{if $w=1^k2u$ for some $k\ge 0$}.
		\end{cases}
	\end{equation}
\end{definition}

Note that these functions are not symmetric in the variables,
and the order in the sequences $(x_1,x_2,\ldots)$ and $(y_1,y_2,\ldots)$ is important.
The clone Schur functions satisfy
a $\mathbb{YF}$-version of the Littlewood--Richardson identity, whose simplest form
is the following clone Pieri rule
established in \cite{okada1994algebras}:
\begin{equation}
\label{def:clone_pieri}
	x_{|w|+1} \cdot s_w (\vec x\mid \vec{y}\ssp)
\, = \, \sum_{w'\colon w'\searrow w}
s_{w'}(\vec x\mid \vec{y}\ssp)
\end{equation}

\subsection{Background on clone Schur functions}

Let us briefly mention the background
(developed in \cite{okada1994algebras})
behind the
clone Schur functions.
The biserial clone Schur functions
$s_w (\vec{x} \mid \vec{y}\ssp)$ arise
as evaluations (depending on $\vec x$ and $\vec y\, $) of Okada's
clone Schur functions $s_w( \mathbf{x} \mid \mathbf{y})$ which are
noncommutative polynomials
in the free algebra generated by two
symbols
$\mathbf{x}$ and $\mathbf{y}$.
Both the
clone and biserial clone Schur functions play a vital role
vis-\`a-vie the representation
theory of the Okada
algebra(s): the multiplicative
structure of
the noncommutative
clone Schur functions models the
{\it induction product} for
irreducible representations of Okada
algebras, while the biserial
clone Schur functions are
matrix entries for the action
of the generators in these
representations. This amplifies
the parallel with usual
Schur functions where the
Littlewood--Richardson rule
for multiplying Schur functions
describes the induction
product of representations of the
symmetric group.

To summarize, the usual Young lattice $\mathbb{Y}$
(or partitions ordered by inclusion)
is simultaneously responsible for the branching
of the representations of the symmetric groups
$S_n$, and for the Pieri rule for Schur functions (the simplest of the
Littlewood--Richardson rules).
Similarly, the Young--Fibonacci lattice $\mathbb{YF}$
is simultaneously the branching
lattice for Okada algebra representations,
and is responsible for the clone Pieri rule
\eqref{def:clone_pieri} for the biserial clone Schur functions.

\subsection{Properties of biserial clone Schur functions}

Let us proceed with a number of straightforward properties of the
biserial clone Schur functions.
For a complex-valued
sequence $\vec \gamma = (\gamma_1, \gamma_2, \gamma_3, \dots)$
one readily sees from \Cref{def:clone_Schur} that
\begin{equation}
	\label{eq:clone_Schur_action}
	s_w\big(\vec \gamma \cdot \vec x \, \ssp
 \big| \, \vec \gamma \cdot (\vec \gamma + 1) \cdot \vec{y} \ssp \big) \, = \, (\gamma_1 \cdots \gamma_{\ssp |w|} \big) \ssp s_w(\vec x\mid \vec{y}\ssp),
\end{equation}
where
$\vec \gamma \cdot \vec x = (\gamma_1 x_1, \, \gamma_2 x_2, \, \gamma_3 x_3, \, \dots )$ and
$\vec \gamma + 1 = (\gamma_2, \gamma_3, \gamma_4, \dots)$. In particular,
the biserial clone Schur functions
with the variables $(\vec x,\vec{y}\ssp)$
scale as follows:
\begin{equation}
	\label{eq:clone_Schur_scaling}
	s_w(\gamma \ssp \vec x\mid \gamma^2 \ssp \vec{y}\ssp)=\gamma^{|w|}\ssp s_w(\vec x\mid \vec{y}\ssp),
\end{equation}
where $\gamma \ssp \vec x$ means that we multiplied all the variables $x_i$ by $\gamma \in \mathbb{C}$,
and similarly for $\gamma^2 \ssp \vec y$.

Assume that $x_i\ne 0$ for all $i$, and define the following normalization:
\begin{equation}
	\label{eq:clone_Schur_normalization}
	\varphi_{\vec x, \vec y} \, (w)\coloneqq \frac{s_w(\vec x\mid \vec{y}\ssp)}{x_1\cdots x_{|w|} },
	\qquad w\in \mathbb{YF}.
\end{equation}
Formula \eqref{def:clone_pieri}
implies that these normalized clone Schur functions define a harmonic function
on $\mathbb{YF}$ (see \Cref{def:harmonic_on_YF}):
\begin{proposition}[\cite{okada1994algebras}]
	\label{prop:YF_clone_Schur_branching}
	Let the variables $\vec x$ and $\vec y$ be such that $x_i\ne 0$ for all $i$.
	Then
	\begin{equation}
		\label{eq:YF_clone_Schur_branching}
		\varphi_{\vec x, \vec y} \,(w) =
		\sum_{w'\colon w'\searrow w}
		\varphi_{\vec x, \vec y} \, (w') \qquad \textnormal{for all $w\in \mathbb{YF}$}.
	\end{equation}
\end{proposition}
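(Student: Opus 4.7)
The plan is to derive the harmonicity relation \eqref{eq:YF_clone_Schur_branching} as an immediate consequence of the clone Pieri rule \eqref{def:clone_pieri}, which is already stated (and attributed to Okada) in the excerpt. No new combinatorial input is required; the only work is bookkeeping with the normalizing factor $x_1\cdots x_{|w|}$.

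Concretely, I would start from the identity
\begin{equation*}
x_{|w|+1}\cdot s_w(\vec x\mid \vec y\ssp) \, = \sum_{w'\colon w'\searrow w} s_{w'}(\vec x\mid \vec y\ssp),
\end{equation*}
which holds as an identity of polynomials in the $\vec x,\vec y$ variables, and hence remains valid once we impose the hypothesis $x_i\neq 0$ for all $i$. Under this hypothesis the scalar $x_1\cdots x_{|w|+1}$ is nonzero, so I would divide both sides by it. Using $|w'|=|w|+1$ for every $w'$ covering $w$ in $\mathbb{YF}$, the left-hand side becomes $s_w(\vec x\mid \vec y\ssp)/(x_1\cdots x_{|w|})=\varphi_{\vec x,\vec y}(w)$, while each summand on the right becomes $s_{w'}(\vec x\mid \vec y\ssp)/(x_1\cdots x_{|w'|})=\varphi_{\vec x,\vec y}(w')$, yielding exactly \eqref{eq:YF_clone_Schur_branching}.

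Since the Pieri rule is given as an input, there is no genuine obstacle. If one wanted a self-contained argument, the only nontrivial step would be the Pieri rule itself, which can be proved by induction on $|w|$ together with the cofactor expansion of the tridiagonal determinants $A_\ell$ and $B_{\ell-1}$: one splits the sum $\sum_{w'\searrow w}$ according to the three covering types \textbf{F1}--\textbf{F3} from \Cref{def:YF_lattice}, matches each contribution to a term arising from expanding $A_{k+1}$ or $B_k$ along its first column, and uses the recursive definition \eqref{eq:clone_Schur_recurrence_def} to repackage everything as $x_{|w|+1}\cdot s_w$. But given that \eqref{def:clone_pieri} is quoted, the proof of \Cref{prop:YF_clone_Schur_branching} reduces to the one-line division above, and that is all I would write.
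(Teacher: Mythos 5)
Your proposal is correct and matches the paper exactly: the paper offers no separate proof, stating just before the proposition that the clone Pieri rule \eqref{def:clone_pieri} implies the harmonicity, which is precisely your division of both sides by the nonzero scalar $x_1\cdots x_{|w|+1}$. The extra sketch of how one would prove the Pieri rule itself is a bonus beyond what the paper records, but the core argument is the same one-line normalization.
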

We call $\varphi_{\vec x, \vec y}$ the \emph{clone harmonic function},
and the corresponding coherent probability measures \eqref{eq:coherent_measure_for_harmonic_function}
the \emph{clone measures}. At this point, we treat the measures as \emph{formal}
and do not require them to be nonnegative
(just need their individual ``probability'' weights to sum to $1$).
In \Cref{sec:Fibonacci_positivity} below we
characterize
specializations $(\vec x,\vec{y}\ssp)$ for which
the corresponding clone harmonic function is positive on the whole $\mathbb{YF}$.

\begin{example}
	\label{ex:Plancherel_clone}
	For the particular choice $x_k=y_k=k$, $k\ge1$, the clone harmonic function
	$\varphi_{\vec x, \vec y}$ turns into the
	Plancherel harmonic function $\varphi_{\scriptscriptstyle\mathrm{PL}}$ \eqref{eq:Plancherel_harmonic_function}.
	Indeed, this follows from
	\begin{equation}
		\label{eq:Plancherel_clone_determinants}
		A_{\ell}(\vec x\mid \vec{y}\ssp) = 1,
		\qquad
		B_{\ell-1}(\vec x+r\mid \vec y+r) =r+1,
	\end{equation}
	and so with these parameters we have $s_w(\vec x\mid \vec{y}\ssp)=\dim(w)$, see \eqref{eq:dimension_formula}.
	Denote this choice of parameters by
	$\Pi=(\vec x\mid \vec{y}\ssp)$
	and call it the \emph{Plancherel specialization}.
\end{example}

Note that the normalization in \eqref{eq:clone_Schur_normalization}
leads to the fact that multiple pairs of sequences $(\vec x,\vec{y}\ssp)$
correspond to the same clone harmonic function.
See
\Cref{rmk:connection_to_harmonic_functions_and_non_uniqueness_of_c_parameters}
below for more discussion.

\section{Clone Cauchy Identities}
\label{sec:clone-cauchy-identities}

In this section, we establish summation identities for the
clone Schur functions that parallel the classical summation
identities for the usual symmetric functions, including in
particular the celebrated Cauchy identity.

\subsection{Clone complete homogeneous functions and clone Kostka numbers}
\label{sub:clone-Kostka-numbers}

Throughout this subsection, we view $\vec{x} = (x_1, x_2, \ldots)$ and
$\vec{y} = (y_1, y_2, \ldots)$ as two families of indeterminates.

\begin{definition}[\cite{okada1994algebras}]
Given a Fibonacci word $w \in \mathbb{YF}$, the
biserial \emph{clone homogeneous function}
$h_w(\vec{x} \mid \vec{y}\ssp)$
is the monomial defined recursively by
\begin{equation}
	\label{eq:clone-homogeneous-function}
h_w(\vec{x} \mid \vec{y}\ssp)
\coloneqq
\begin{cases}
x_{|v|+1} \, h_v(\vec{x} \mid \vec{y}\ssp),
& \text{if $w = 1v$;} \\
y_{|v|+1} \,  h_v(\vec{x} \mid \vec{y}\ssp),
& \text{if $w = 2v$,}
\end{cases}
\end{equation}
starting with the base case
$h_\varnothing(\vec{x} \mid \vec{y}\ssp) \coloneqq 1$.
\end{definition}

The relationship between clone homogeneous and clone Schur functions
is explained by the following statement involving
a clone version of Kostka numbers:

\begin{proposition}[{\cite[Section~4]{okada1994algebras}}]
	\label{prop:clone-Kostka-numbers}
Given a Fibonacci word $v \in \mathbb{YF}$,
the clone homogeneous function
$h_v(\vec{x} \mid \vec{y} \ssp)$
has an expansion into clone Schur functions given by
\begin{equation}
\label{schur-expansion-h}
h_v(\vec{x} \mid \vec{y} \ssp)
= \sum_{|u| = |v|} K_{u,v} \,
s_u(\vec{x} \mid \vec{y} \ssp),
\end{equation}
where $K_{u,v}$ are nonnegative integers known as the \emph{clone Kostka numbers}.
They can
be calculated using the following four basic recursions:
\[
\begin{array}{c|c}
\displaystyle K_{2u, 2v} = K_{u, v} &
\displaystyle K_{2u, 1v} = \sum\nolimits_{u \nearrow w} K_{w, v} \\[10pt]
\hline
\rule{0pt}{3ex}
\displaystyle K_{1u, 2v} = 0 &
\displaystyle K_{1u, 1v} = K_{u, v}
\end{array}
\]
\smallskip
starting from the initial conditions $K_{\varnothing, \varnothing} = 1$ and $K_{1,1} = 1$.
\end{proposition}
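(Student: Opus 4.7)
\smallskip
\textbf{Proof plan.} The plan is to induct on $|v|$, matching each branch of the two-case recursion~\eqref{eq:clone-homogeneous-function} for $h_v$ to a recursion for the clone Schur functions. The base cases $h_\varnothing = 1 = s_\varnothing$ and $h_1 = x_1 = A_1 = s_1$ give $K_{\varnothing,\varnothing} = K_{1,1} = 1$. Throughout, I implicitly use that the clone Schur functions $\{s_u \colon |u| = n\}$ are linearly independent (so matching coefficients is unambiguous) and that $h_v$ is weight-homogeneous of degree $|v|$.

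For $v = 2v'$, I would first record the identity $s_{2u}(\vec{x} \mid \vec{y}\ssp) = y_{|u|+1}\, s_u(\vec{x} \mid \vec{y}\ssp)$, which comes directly from the second branch of~\eqref{eq:clone_Schur_recurrence_def} at $k = 0$, since $B_0(\vec{x} + |u| \mid \vec{y} + |u|\ssp)$ is the $1\times 1$ determinant $y_{|u|+1}$. Combining this with $h_{2v'} = y_{|v'|+1}\, h_{v'}$ and the inductive expansion of $h_{v'}$ yields
\[
h_{2v'} = \sum_{u'} K_{u', v'}\, y_{|u'|+1}\, s_{u'} = \sum_{u'} K_{u', v'}\, s_{2u'},
\]
from which I can read off $K_{2u, 2v'} = K_{u, v'}$ and $K_{1u, 2v'} = 0$.

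For $v = 1v'$, the inductive expansion combined with the clone Pieri rule~\eqref{def:clone_pieri} gives
\[
h_{1v'} = \sum_{u'} K_{u', v'}\, x_{|u'|+1}\, s_{u'} = \sum_w \biggl( \sum_{u' \colon u' \nearrow w} K_{u', v'} \biggr)\, s_w,
\]
so $K_{w, 1v'} = \sum_{u' \nearrow w} K_{u', v'}$. When $w = 1u$, only rule \textbf{F1} can produce $w$ (since \textbf{F2} and \textbf{F3} always output words starting with $2$), so $u$ is the unique predecessor and $K_{1u, 1v'} = K_{u, v'}$. When $w = 2u$, the target recursion $K_{2u, 1v'} = \sum_{u \nearrow w'} K_{w', v'}$ reduces to the set identity
\[
\{u' \in \mathbb{YF} \colon u' \nearrow 2u\} \;=\; \{w' \in \mathbb{YF} \colon u \nearrow w'\}.
\]

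Establishing this set equality is the main obstacle: it is a YF-specific coincidence between the predecessors of $2u$ and the successors of $u$ that has no analogue for a generic cover $w$. I would prove it by direct case analysis on \textbf{F1, F2, F3}. For $\subseteq$: a predecessor $u'$ of $2u$ arising from \textbf{F2} satisfies $2u = 2^{k+1} v$ and $u' = 2^k 1 v$, hence $u = 2^k v$, and $u \nearrow u'$ is realized via \textbf{F1} when $k = 0$ and via \textbf{F3} with $\ell = k$ when $k \ge 1$; a predecessor arising from \textbf{F3} satisfies $2u = 2^\ell 1 2^{k-\ell} v$ with $\ell \ge 1$ and $u' = 2^k v$, forcing $u = 2^{\ell - 1} 1 2^{k - \ell} v$, after which applying \textbf{F2} to the leading $1$ of $u$ gives $u \nearrow u'$. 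The reverse inclusion $\supseteq$ is handled symmetrically by running \textbf{F1, F2, F3} on $u$ and checking in each case that the resulting successor covers into $2u$ (via \textbf{F2} when $u$ starts with $1$, and via \textbf{F3} with $\ell = (\text{leading $2$'s of } \tilde w)$ otherwise). With this bijection in hand, the four recursions together with the initial conditions $K_{\varnothing,\varnothing} = K_{1,1} = 1$ uniquely determine all $K_{u,v}$, and the induction confirms that they are nonnegative integers.
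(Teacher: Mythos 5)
Your proof is correct, but note that the paper itself offers no proof of this proposition: it is imported verbatim from Okada's work via the citation, and the surrounding text only points the reader to \cite{okada1994algebras} for a combinatorial interpretation. So there is no in-paper argument to compare against; what you have written is a self-contained derivation that the paper chose not to include.

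On the merits: the induction is sound. The base cases, the $v=2v'$ branch (using $s_{2u'}=y_{|u'|+1}s_{u'}$, which correctly reads off $K_{2u,2v'}=K_{u,v'}$ and $K_{1u,2v'}=0$), and the $v=1v'$ branch via the clone Pieri rule \eqref{def:clone_pieri} all check out, as does the observation that $1u$ has the unique predecessor $u$. Your ``main obstacle,'' the set identity
\[
\{u' \colon u' \nearrow 2u\} \;=\; \{w' \colon u \nearrow w'\},
\]
is genuinely the crux, and your case analysis on \textbf{F1}--\textbf{F3} verifies it correctly in both directions. However, you could have saved yourself that entire analysis: the paper's introduction defines the covering relation recursively by declaring that $w \nearrow w'$ holds if and only if either $w'=1w$, or $w'=2v$ with $v \nearrow w$. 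The second clause \emph{is} your set identity, word for word, so relative to that formulation of $\mathbb{YF}$ the identity is true by definition; your case analysis amounts to a (correct) proof that \Cref{def:YF_lattice} agrees with the recursive definition. Finally, your caveat about linear independence of $\{s_u \colon |u|=n\}$ is the right instinct but is not actually needed for the statement as given: the induction directly \emph{constructs} an expansion whose coefficients satisfy the four recursions and are therefore nonnegative integers; linear independence is only needed to assert that these are the unique such coefficients (and it does hold, by unitriangularity of $\mathbf{K}_n$ against the monomial basis $\{h_u\}$).
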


We refer to \cite{okada1994algebras}
for a combinatorial
interpretation of these numbers
in terms of chains in the Young--Fibonacci lattice.

\begin{remark}
The recursions for \( K_{2u,1v} \) and \( K_{1u,1v} \)
imply that \( K_{w,1^n} = \dim(w) \)
for any Fibonacci word \( w \in \mathbb{YF}_n \).
This observation, together with the
expansion given in \eqref{schur-expansion-h},
allows us to get the following identity (familiar from the normalization
\eqref{eq:clone_Schur_normalization}
of the clone Schur functions):
\begin{equation}
\label{eq:familiar-identity}
h_{1^n} (\vec{x} \mid \vec{y} \ssp) :=
x_1 \cdots x_n =
\sum_{|w| = n} \dim(w) \ssp
s_w(\vec{x} \mid \vec{y} \ssp).
\end{equation}
\end{remark}

The next corollary allows us
to conveniently interpret formula
\eqref{eq:familiar-identity}.
\begin{corollary}
For any \( n \geq 0 \), we have
\begin{equation}
\label{eq:Important-Identity}
s_{1^n}(\vec{x} \mid \vec{y} \ssp)
+ \sum_{m = 0}^{n-2} (m+1) \ssp (x_1 \cdots x_m) \ssp
s_{1^{n-m-2}2}(\vec{x} + m \mid \vec{y} + m)
= x_1 \cdots x_n.
\end{equation}
\end{corollary}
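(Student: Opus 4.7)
The plan is to apply the identity \eqref{eq:familiar-identity} twice: once at level $n$, and once at a lower level $r$, by carefully partitioning the outer sum on the position of the first occurrence of the letter $2$.

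First, I would start from the identity
\[
x_1 \cdots x_n = \sum_{|w| = n} \dim(w) \ssp s_w(\vec{x} \mid \vec{y} \ssp)
\]
supplied by \eqref{eq:familiar-identity}. Every Fibonacci word $w \in \mathbb{YF}_n$ is either the all-ones word $1^n$, or else has the unique form $w = 1^k 2 u$ with $k \ge 0$ and $u \in \mathbb{YF}$ of weight $r = n - k - 2 \ge 0$ (hence $k \le n-2$). I would split the sum accordingly, noting that the $w = 1^n$ term contributes exactly $\dim(1^n) \ssp s_{1^n}(\vec{x} \mid \vec{y} \ssp) = s_{1^n}(\vec{x} \mid \vec{y} \ssp)$.

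Next, for the remaining words $w = 1^k 2 u$, I would apply the dimension recursion \eqref{eq:dimension_recursion} $k+1$ times to obtain $\dim(1^k 2 u) = \dim(2u) = (|u|+1)\dim(u)$, and simultaneously apply the clone Schur recursion \eqref{eq:clone_Schur_recurrence_def} to write
\[
s_{1^k 2 u}(\vec{x} \mid \vec{y} \ssp)
= B_k\bigl(\vec{x} + |u| \mid \vec{y} + |u|\bigr) \cdot s_u(\vec{x} \mid \vec{y} \ssp).
\]
Reindexing by $r = |u|$ (equivalently $k = n-r-2$) converts the double sum into
\[
\sum_{r=0}^{n-2} (r+1) \ssp B_{n-r-2}\bigl(\vec{x} + r \mid \vec{y} + r\bigr) \sum_{|u| = r} \dim(u) \ssp s_u(\vec{x} \mid \vec{y} \ssp).
\]
A second application of \eqref{eq:familiar-identity}, but at level $r$ rather than $n$, collapses the inner sum to $x_1 \cdots x_r$.

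Finally, I would observe that $1^{n-r-2}2 = 1^k 2 \varnothing$ with $|u| = 0$, so the clone Schur recurrence gives
\[
s_{1^{n-r-2} 2}(\vec{x} + r \mid \vec{y} + r)
= B_{n-r-2}\bigl(\vec{x} + r \mid \vec{y} + r\bigr) \cdot s_\varnothing(\vec{x} + r \mid \vec{y} + r)
= B_{n-r-2}\bigl(\vec{x} + r \mid \vec{y} + r\bigr).
\]
Substituting this back (with $m = r$) produces exactly \eqref{eq:Important-Identity}. There is no substantive obstacle; the only care required is with the index bookkeeping when extracting the $B_k$ determinant from $s_{1^k 2 u}$ and matching its shift with the claimed $s_{1^{n-m-2}2}(\vec x + m \mid \vec y + m)$ in the statement.
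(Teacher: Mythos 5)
Your proposal is correct and is essentially the paper's own argument run in reverse: the paper starts from the left-hand side, substitutes the expansion $x_1\cdots x_m=\sum_{|w|=m}\dim(w)\ssp s_w(\vec x\mid\vec y\ssp)$, and reassembles the terms into $\sum_{|w|=n}\dim(w)\ssp s_w(\vec x\mid\vec y\ssp)=x_1\cdots x_n$ using the same clone Schur recursion and the identity $(m+1)\dim(w)=\dim(1^{n-m-2}2w)$, whereas you expand $x_1\cdots x_n$ and decompose over the position of the first $2$. The ingredients and bookkeeping are identical, so there is no substantive difference.
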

\begin{proof}
Using the expansion
\[
x_1 \cdots x_m = \sum_{|w| = m} \dim(w) s_w(\vec{x} \mid \vec{y} \ssp ),
\]
we can rewrite
\begin{equation}
\label{eq:Important-Identity-step1}
\textnormal{LHS}\ssp\eqref{eq:Important-Identity}
=
s_{1^n}(\vec{x} \mid \vec{y} \ssp)
+ \sum_{m = 0}^{n-2} (m+1) \ssp
s_{1^{n-m-2}2}(\vec{x} + m \mid \vec{y} + m)
\sum_{|w| = m} \dim(w) s_w(\vec{x} \mid \vec{y} \ssp).
\end{equation}
By \Cref{def:clone_Schur}, we know that
\[
s_{1^{n-m-2}2w}(\vec{x} \mid \vec{y} \ssp)
= s_{1^{n-m-2}2}(\vec{x} + m \mid \vec{y} + m) s_w(\vec{x} \mid \vec{y} \ssp ),
\]
so substituting into \eqref{eq:Important-Identity-step1} gives
\begin{align*}
& s_{1^n}(\vec{x} \mid \vec{y} \ssp)
+ \sum_{m = 0}^{n-2}
\sum_{|w| = m} (m+1) \dim(w)
s_{1^{n-m-2}2w}(\vec{x} \mid \vec{y} \ssp) \\
&= s_{1^n}(\vec{x} \mid \vec{y} \ssp)
+ \sum_{m = 0}^{n-2}
\sum_{|w| = m} \dim\big(1^{n-m-2}2w\big)
s_{1^{n-m-2}2w}(\vec{x} \mid \vec{y} \ssp) \\
&= \sum_{|w| = n} \dim(w) s_w(\vec{x} \mid \vec{y} \ssp) \\
&= x_1 \cdots x_n,
\end{align*}
as desired.
\end{proof}

\begin{remark}
\label{rem:finite-tversion-Important-Identity}
Let us set $x_k = 1 + t_{k-1}$ and $y_k = t_k$
for all $k \geq 1$, where
$\vec t=(t_1,t_2,t_3,\ldots )$
is a sequence of auxiliary indeterminates
(with the agreement that $t_0 =0$).
Under this parametrization,
\eqref{eq:Important-Identity}
becomes
\begin{equation}
\label{eq:finite-tversion-Important-Identity}
1 \, + \,
\sum_{m \ssp = \ssp 0}^{n-2}
\ssp (m+1) \ssp B_{n-m-2}(m)
\prod_{k \ssp = \ssp 0}^{m-1}
\ssp (1 + t_k)
\ = \
\prod_{k \ssp = \, 0}^{n-1}
\ssp (1 + t_k),
\end{equation}
where the $B_\ell(m)$'s are the determinants defined in \Cref{sub:clone_Schur}
above.

\end{remark}
\begin{remark}
Continuing from the previous \Cref{rem:finite-tversion-Important-Identity},
if we introduce a regulating parameter \( z \)
and take the formal limit as \( n \to \infty \),
we obtain the following identity in the ring
\( \mathbb{C}[t_k \colon k \geq 1][\![z]\!] \)
of formal power series in \( z \) with
coefficients which are polynomials in
the \( t_k \)'s:
\begin{equation}
\label{eq:infinite-tversion-Important-Identity}
1 + \sum_{m \geq 0} (m+1) B_{\infty}(m; z)
\prod_{k = 0}^{m-1} (1 + t_k z^k)
= \prod_{k = 0}^\infty (1 + t_k z^k),
\end{equation}
where
\begin{equation*}
B_{\infty}(m; z) \coloneqq t_{m+1} z^{m+1}
+ \big(t_{m+1} z^{m+1} - t_m z^m - 1\big) t_{m+2} z^{m+2} A_{\infty}(m+3; z),
\end{equation*}
and
\begin{equation*}
A_{\infty}(m; z) \coloneqq 1 + \sum_{r=1}^\infty t_m t_{m+1} \cdots t_{m+r-1} z^{rm + \binom{r}{2}}.
\end{equation*}
\end{remark}

Let us now obtain a recursion for the (inverse) clone Kostka numbers.
Let $\mathrm{\bf K}_n \coloneqq (K_{u,v})_{u,v \in \mathbb{YF}_n}$ be the matrix whose entries are the clone Kostka numbers.
This matrix admits the following block decomposition, which follows from the four recursions in \Cref{prop:clone-Kostka-numbers}:
\begin{equation}
	\label{eq:Kostka-block-recursion}
\mathrm{\bf K}_n  =
\left( \,\, \begin{matrix}
\raisebox{0.3ex}{$\mathrm{\bf K}_{n-2}$}
& \rvline &  \raisebox{0.3ex}{$\mathrm{\bf D}_{n-1} \mathrm{\bf K}_{n-1}$} \\
\hline \raisebox{-0.3ex}{$\bigzero$} & \rvline &
\raisebox{-0.3ex}{$\mathrm{\bf K}_{n-1}$}
\end{matrix} \,\,  \right),
\end{equation}
\noindent
where $\mathrm{\bf D}_k$ is the $\mathbb{YF}_{k-1} \times
\mathbb{YF}_k$ matrix of the $k$-th \emph{down operator}
for the Young--Fibonacci lattice,
defined by
\begin{equation*}
	\boldsymbol{\mathcal{D}} \ssp\delta_v \, = \,
	\sum_{\scriptstyle u \ssp \nearrow \ssp v} \delta_u.
\end{equation*}
In \eqref{eq:Kostka-block-recursion},
we
order the Fibonacci words in \( \mathbb{YF}_n \)
lexicographically. For example,
the Fibonacci words for $n=5$ are ordered as follows:
\[
\begin{array}{|c|cccccccc|}
\hline
w & \mathrm{221} & \mathrm{212} & \mathrm{2111} & \mathrm{122} & \mathrm{1211} & \mathrm{1121}
& \mathrm{1112} & \mathrm{11111} \\
\hline
\text{position} & \mathrm{1} & \mathrm{2} & \mathrm{3} & \mathrm{4} & \mathrm{5}
& \mathrm{6} & \mathrm{7} & \mathrm{8} \\
\hline
\end{array}
\]
\begin{remark}
	In \Cref{sec:random_Fibonacci_words_to_random_permutations} below, we provide
	the necessary references and
	discussion around the operator~$\boldsymbol{\mathcal{D}}$
	(and its adjoint
	$\boldsymbol{\mathcal{U}}$)
	in connection with the
	Robinson--Schensted-like correspondence for the Young--Fibonacci lattice.
\end{remark}

The matrix $\mathrm{\bf K}_n$ is invertible,
and has an inverse given by the following recursion,
which is straightforward from
\eqref{eq:Kostka-block-recursion}:

\begin{lemma}[Recursion for inverse clone Kostka matrices]
\label{inverse-clone-kostka-block-recursion}
The inverse clone Kostka matrices \( \mathrm{\bf K}_n^{-1} = (K^{u,v}) \)
satisfy a three-step recursion with initial conditions
\[
\mathrm{\bf K}_0^{-1} = \mathrm{\bf K}_1^{-1} = (1) \quad \text{and} \quad
\mathrm{\bf K}_2^{-1} =
\begin{pmatrix}
1 & -1 \\
0 & 1
\end{pmatrix},
\]
and
\begin{equation*}
    \mathrm{\bf K}_n^{-1} \ = \
        \begin{Blockmatrix}
            \block{2}{1}{6}{5}{\mathrm{\bf K}_{n-2}^{-1}}
            \block{2}{6}{3}{8}{\bigzero}
            \block{4}{6}{6}{8}{-\mathrm{\bf K}_{n-3}^{-1}}
						\block{2}{9}{6}{13}{-\mathrm{\bf K}_{n-2}^{-1}}
            \block{7}{1}{14}{5}{\bigzero}
            \block{7}{6}{14}{13}{\mathrm{\bf K}_{n-1}^{-1}}
					\end{Blockmatrix}
\end{equation*}
\end{lemma}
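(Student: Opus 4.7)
The plan is to proceed by induction on $n$. The base cases $n = 0, 1, 2$ can be verified directly from \Cref{prop:clone-Kostka-numbers}. For $n \geq 3$, I first invert the block upper-triangular recursion \eqref{eq:Kostka-block-recursion} by the standard formula, producing the coarser two-block form
\begin{equation*}
\mathrm{\bf K}_n^{-1} = \begin{pmatrix} \mathrm{\bf K}_{n-2}^{-1} & -\mathrm{\bf K}_{n-2}^{-1} \mathrm{\bf D}_{n-1} \\ 0 & \mathrm{\bf K}_{n-1}^{-1} \end{pmatrix},
\end{equation*}
where the $2 \times 2$ row/column partition distinguishes $2$-start from $1$-start words in $\mathbb{YF}_n$. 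The remaining task is to refine the off-diagonal block $-\mathrm{\bf K}_{n-2}^{-1}\mathrm{\bf D}_{n-1}$ into the claimed $1 \times 2$ form by further splitting the columns of $\mathrm{\bf D}_{n-1}$ (indexed by $\mathbb{YF}_{n-1}$) into $1$-start and $2$-start.

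For the $1$-start columns of $\mathrm{\bf D}_{n-1}$, rule~\textbf{F1} of \Cref{def:YF_lattice} gives that each $v = 1v' \in \mathbb{YF}_{n-1}$ has the unique predecessor $v'$, so this sub-block of $\mathrm{\bf D}_{n-1}$ is the identity and contributes the rightmost block $-\mathrm{\bf K}_{n-2}^{-1}$ of the lemma. The main technical step is to identify the $2$-start columns of $\mathrm{\bf D}_{n-1}$ with $\begin{pmatrix} \mathrm{\bf D}_{n-3} \\ I \end{pmatrix}$, where the rows are partitioned by the first letter of $u \in \mathbb{YF}_{n-2}$ ($2$-start on top, $1$-start below). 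When $u = 1u''$, only rule~\textbf{F2} can produce $u \nearrow 2u'$, forcing $u' = u''$ and giving the identity block. When $u = 2u''$, I expect to prove the edge bijection $2u'' \nearrow 2u'$ iff $u'' \nearrow u'$, by matching \textbf{F2} edges of the former with \textbf{F2} edges of the latter, \textbf{F3} edges with $\ell = 1$ of the former with \textbf{F1} edges of the latter, and \textbf{F3} edges with $\ell \geq 2$ of the former with \textbf{F3} edges of the latter. This elementary but careful case analysis of the three covering rules is the main obstacle of the argument.

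Once this identification is in hand, a direct check using the block form of $\mathrm{\bf K}_{n-2}$ from \eqref{eq:Kostka-block-recursion} shows
\begin{equation*}
\mathrm{\bf K}_{n-2} \begin{pmatrix} 0 \\ \mathrm{\bf K}_{n-3}^{-1} \end{pmatrix} = \begin{pmatrix} \mathrm{\bf D}_{n-3} \\ I \end{pmatrix}.
\end{equation*}
Multiplying by $-\mathrm{\bf K}_{n-2}^{-1}$ therefore transforms the $2$-start column block of $\mathrm{\bf D}_{n-1}$ into $\begin{pmatrix} 0 \\ -\mathrm{\bf K}_{n-3}^{-1} \end{pmatrix}$, and assembling this with the $-\mathrm{\bf K}_{n-2}^{-1}$ block coming from the $1$-start columns yields the full three-block decomposition displayed in the lemma.
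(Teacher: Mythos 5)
Your proof is correct and takes essentially the same approach as the paper, which gives no details and simply asserts that the recursion for $\mathrm{\bf K}_n^{-1}$ is ``straightforward from'' the block decomposition \eqref{eq:Kostka-block-recursion}. Your argument supplies exactly the omitted steps --- the block-triangular inversion, the splitting of the columns of $\mathrm{\bf D}_{n-1}$ by first letter via the covering rules, and the identity $\mathrm{\bf K}_{n-2}\left(\begin{smallmatrix} 0 \\ \mathrm{\bf K}_{n-3}^{-1}\end{smallmatrix}\right) = \left(\begin{smallmatrix}\mathrm{\bf D}_{n-3} \\ I\end{smallmatrix}\right)$ --- and all of them check out.
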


\subsection{Clone Cauchy identities}
\label{sub:clone-cauchy}

We establish two fundamental summation identities for
biserial clone symmetric functions. The second provides a
clone analogue of the classical Cauchy identity for ordinary
Schur symmetric functions. These results are obtained as
biserial specializations of formulas arising in the broader,
noncommutative theory of clone symmetric functions. Although
the treatment in this subsection is self-contained, certain
features of the noncommutative theory are lost under
specialization. An investigation of the noncommutative
aspects will appear in future work.

We need
four families of indeterminates:
$\vec x=(x_1,x_2,\ldots )$ and $\vec y=(y_1,y_2,\ldots)$ together with
$\vec p=(p_1,p_2,\ldots )$ and $\vec q=(q_1,q_2,\ldots)$.
Recall the clone homogeneous functions
\eqref{eq:clone-homogeneous-function}
and the clone Schur functions
from \Cref{def:clone_Schur}.

\begin{proposition}[First clone Cauchy identity]
\label{prop:first-clone-cauchy}
We have
\begin{equation}
\label{eq:first-clone-cauchy}
H_n\big( \vec{x}, \ssp \vec{y} \, ; \vec{p}, \ssp \vec{q} \ssp \big)
\coloneqq \sum_{|w| \, = \, n} h_w (\vec{p} \mid \vec{q}\ssp)
\ssp s_w (\vec{x} \mid \vec{y}\ssp)
= \det
\underbrace{\begin{pmatrix}
\mathrm{A}'_1 & \mathrm{B}'_1 & - \mathrm{C}'_1 & 0 & \cdots \\
1 & \mathrm{A}'_2 & \mathrm{B}'_2 & - \mathrm{C}'_2 & \\
0 & 1 & \mathrm{A}'_3 & \mathrm{B}'_3 & \\
0 & 0 & 1 & \mathrm{A}'_4 & \\
\vdots & & & & \ddots
\end{pmatrix}}_{n \times n \ \mathrm{quadridiagonal \, matrix}},
\end{equation}
where $\mathrm{A}'_k = p_k x_k$,
$\mathrm{B}'_k = y_k (p_k p_{k+1} - q_k)$,
and $\mathrm{C}'_k = q_k x_k y_{k+1} p_{k+2}$ for all $k \geq 1$.
\end{proposition}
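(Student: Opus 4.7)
The plan is to induct on $n$, with base cases $n \in \{0, 1, 2\}$ verifying by direct computation. Denoting by $\Delta_n$ the quadridiagonal determinant on the right-hand side of~\eqref{eq:first-clone-cauchy}, its first-row expansion yields
\begin{equation*}
	\Delta_n = \mathrm{A}'_1 \ssp \Delta_{n-1}^{(1)} - \mathrm{B}'_1 \ssp \Delta_{n-2}^{(2)} - \mathrm{C}'_1 \ssp \Delta_{n-3}^{(3)},
\end{equation*}
where the superscript $(r)$ denotes a shift of all parameter indices by $r$. Setting $\widetilde H_m(r) \coloneqq \sum_{|w|=m} h_w(\vec p + r \mid \vec q + r) \ssp s_w(\vec x + r \mid \vec y + r)$, the inductive hypothesis gives $\Delta_m^{(r)} = \widetilde H_m(r)$ for $m < n$, so it suffices to establish the same recursion for $H_n = \widetilde H_n(0)$.

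First, I would derive a ``right-appending'' reformulation of the clone homogeneous recurrence: for $w = v1$, $h_w(\vec p \mid \vec q) = p_1 \ssp h_v(\vec p + 1 \mid \vec q + 1)$, and for $w = v2$, $h_w(\vec p \mid \vec q) = q_1 \ssp h_v(\vec p + 2 \mid \vec q + 2)$. Both follow from the closed-form expression of $h_w$ as a product of parameters indexed by the suffix weights of $w$. Since $p_1, q_1$ appear in $h_w(\vec p \mid \vec q)$ only through the terminal letter of $w$, splitting by this last letter gives $H_n = p_1 \ssp E_n + q_1 \ssp F_n$, where
\begin{equation*}
	E_n = \sum_{|v|=n-1} h_v(\vec p + 1 \mid \vec q + 1) \ssp s_{v1}(\vec x \mid \vec y), \qquad F_n = \sum_{|v|=n-2} h_v(\vec p + 2 \mid \vec q + 2) \ssp s_{v2}(\vec x \mid \vec y).
\end{equation*}

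The heart of the argument is the following pair of Pieri-type identities for clone Schur functions, where $v'$ denotes $v$ with its trailing $1$ removed and the correction term vanishes when $v$ does not end in $1$:
\begin{align*}
	s_{v1}(\vec x \mid \vec y) &= x_1 \ssp s_v(\vec x + 1 \mid \vec y + 1) - y_1 \ssp \mathbf{1}_{\{v \text{ ends in } 1\}} \ssp s_{v'}(\vec x + 2 \mid \vec y + 2), \\
	s_{v2}(\vec x \mid \vec y) &= y_1 \ssp s_v(\vec x + 2 \mid \vec y + 2) - x_1 y_2 \ssp \mathbf{1}_{\{v \text{ ends in } 1\}} \ssp s_{v'}(\vec x + 3 \mid \vec y + 3).
\end{align*}
Their proof proceeds by induction on $|v|$: one decomposes $v = 1^k 2 u$ or $v = 1^k$, applies the clone Schur recursion $s_v = B_k(\vec x + |u| \mid \vec y + |u|) \ssp s_u$, and reduces everything to the first-row expansions $A_\ell = x_1 \ssp A_{\ell - 1}(\vec x + 1 \mid \vec y + 1) - y_1 \ssp A_{\ell - 2}(\vec x + 2 \mid \vec y + 2)$ and $B_k(\vec x + r \mid \vec y + r) = y_{r+1} \ssp A_k(\vec x + r + 2 \mid \vec y + r + 2) - x_{r+1} y_{r+2} \ssp A_{k-1}(\vec x + r + 3 \mid \vec y + r + 3)$. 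Substituting these into $E_n, F_n$ and absorbing the factors $h_{v'1}(\vec p + r \mid \vec q + r) = p_{r+1} \ssp h_{v'}(\vec p + r + 1 \mid \vec q + r + 1)$ on the correction terms (via the right-appending recursion again) yields $E_n = x_1 \ssp \widetilde H_{n-1}(1) - y_1 p_2 \ssp \widetilde H_{n-2}(2)$ and $F_n = y_1 \ssp \widetilde H_{n-2}(2) - x_1 y_2 p_3 \ssp \widetilde H_{n-3}(3)$. Substituting back into $H_n = p_1 \ssp E_n + q_1 \ssp F_n$ assembles exactly the recursion $H_n = \mathrm{A}'_1 \ssp \widetilde H_{n-1}(1) - \mathrm{B}'_1 \ssp \widetilde H_{n-2}(2) - \mathrm{C}'_1 \ssp \widetilde H_{n-3}(3)$, closing the induction.

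The main obstacle is discovering and verifying the two Pieri-type identities for $s_{v1}$ and $s_{v2}$. Their precise shape---especially the coefficient $-x_1 y_2$ and shift by $3$ in the second identity, and the vanishing of the correction terms exactly when $v$ does not end in $1$---encodes how the innermost $A_{a_k}$ factor in the block decomposition of $s_v$ consumes the low-index variables under recursive unfolding. Once these identities are in hand, the remaining computation is routine index bookkeeping.
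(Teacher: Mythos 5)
Your proposal is correct, but it takes a genuinely different route from the paper. The paper expands $s_w(\vec x\mid\vec y)$ in the clone homogeneous basis via the inverse clone Kostka matrix, writes $H_n=\sum_{u,v}K^{u,v}h_u(\vec x\mid\vec y)\ssp h_v(\vec p\mid\vec q)$, and extracts the three-term recurrence $H_n=\mathrm{A}'_nH_{n-1}-\mathrm{B}'_{n-1}H_{n-2}-\mathrm{C}'_{n-2}H_{n-3}$ from the recursive block decomposition of $\mathbf K_n^{-1}$ together with the prepending rules for $h_{1u},h_{2u},h_{11u},h_{21u}$; this is a recursion ``at the top'' (highest-index variables), matching the last-row expansion of the determinant, and it reuses machinery (\Cref{inverse-clone-kostka-block-recursion}) that the paper needs anyway for the second Cauchy identity. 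You instead split the sum by the \emph{terminal} letter of $w$, which isolates $p_1$ and $q_1$ via the right-appending rules $h_{v1}(\vec p\mid\vec q)=p_1h_v(\vec p+1\mid\vec q+1)$ and $h_{v2}(\vec p\mid\vec q)=q_1h_v(\vec p+2\mid\vec q+2)$, and then push the appended letter through $s_{v1},s_{v2}$ using your two Pieri-type identities; this produces a recursion ``at the bottom'' in the shifted quantities $\widetilde H_m(r)$, matching the first-column expansion of the determinant. I checked your two key identities: they hold, and your inductive scheme for them is sound --- for $v=1^k$ they are literally the first-row expansions of $A_{k+1}$ and $B_k$, and for $v=1^k2u$ the outer factor $B_k(\vec x+|u|+1\mid\cdot)$ (resp.\ $B_k(\vec x+|u|+2\mid\cdot)$) is common to all three terms, so the identity for $v$ reduces to the identity for the strictly shorter word $u$. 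The assembly $H_n=p_1E_n+q_1F_n=\mathrm{A}'_1\widetilde H_{n-1}(1)-\mathrm{B}'_1\widetilde H_{n-2}(2)-\mathrm{C}'_1\widetilde H_{n-3}(3)$ then matches the first-column cofactor expansion of the quadridiagonal matrix (whose subdiagonal consists of $1$'s), so the induction closes. What your approach buys is a self-contained, elementary argument that never mentions Kostka numbers or their inverses; what it costs is that the inductive statement must be carried for all shifts $r$ simultaneously, and the two Pieri-type identities (with their indicator-function correction terms) must be discovered and verified --- the paper's route packages exactly this combinatorial content once and for all into \Cref{inverse-clone-kostka-block-recursion} and reuses it for \Cref{prop:second-clone-cauchy}.
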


\begin{proof}
For simplicity, let us use the shorthand
$s_w$, $h_w$, and
$h_w'$
for $s_w(\vec{x} \mid \vec{y}\ssp) $,
$h_w(\vec{x} \mid \vec{y}\ssp) $,
and $h_w(\vec{p} \mid \vec{q}\ssp) $,
respectively.
Begin by noticing that
\begin{equation*}
		H_0 =  1, \qquad
		H_1 =  p_1 x_1,  \qquad
		H_2 =   (x_1 x_2 - y_1) p_1 p_2 + q_1y_1.
\end{equation*}
The expansion
$
s_v = \sum_{|u| = |v|} K^{u,v} \, h_u
$,
where \( K^{u,v} \) is the \( u \times v \) entry of the inverse clone Kostka matrix
\( \mathbf{K}_n^{-1} \), leads to
\[
H_n = \sum_{u, v \in \mathbb{YF}_n} K^{u,v} \, h_u  h_v'.
\]
The recursive block-matrix decomposition of \( \mathbf{K}_n^{-1} \)
from \Cref{inverse-clone-kostka-block-recursion}, along with the following identities:
\begin{equation}
\label{eq:clone-homogeneous-clone-cauchy-recursion-for-h}
\begin{array}{ll}
h_{1u} &= x_{n+1} \, h_u, \\
h_{2u} &= y_{n+1} \, h_u, \\
h_{11u} &= x_{n+1} x_{n+2} \, h_u, \\
h_{21u} &= x_{n+1} y_{n+2} \, h_u,
\end{array}
\quad
\begin{array}{ll}
h_{1v}' &= p_{n+1} \, h_v', \\
h_{2v}' &= q_{n+1} \, h_v', \\
h_{11v}' &= p_{n+1} p_{n+2} \, h_v', \\
h_{12v}' &= p_{n+3} q_{n+1} \, h_v',
\end{array}
\end{equation}
with $|u| = |v| = n$,
imply that $H_n$ satisfies the following recursion:
\begin{equation}
\label{H-recurrence}
H_n \, = \, x_n p_n \, H_{n-1} \, + \,
y_{n-1} \big(q_{n-1} - x_{n-1}x_n \big)
H_{n-2} \, - \,
p_n q_{n-2} x_{n-2} y_{n-1}
\,
H_{n-3},
\end{equation}
for all $n\ge3$.
Equivalently, the $n$-th kernel $H_n$ can be
expressed as the quadridiagonal determinant
given by \eqref{eq:first-clone-cauchy}.
This completes the proof.
\end{proof}

\begin{proposition}[Second clone Cauchy identity]
\label{prop:second-clone-cauchy}
We have
\begin{equation}
\label{eq:second-clone-cauchy}
S_n
\big( \vec{x}, \ssp \vec{y} \, ;
\vec{p}, \ssp \vec{q} \ssp \big)
\coloneqq \sum_{|w| = n}
s_w (\vec{p} \mid \vec{q} \ssp)
\ssp s_w (\vec{x} \mid \vec{y} \ssp)
=
\det \underbrace{\begin{pmatrix}
\mathrm{A}_1 & \mathrm{B}_1 & \mathrm{C}_1 & 0 & \cdots \\
1 & \mathrm{A}_2 & \mathrm{B}_2 & \mathrm{C}_2 & \\
0 & 1 & \mathrm{A}_3 & \mathrm{B}_3 & \\
0 & 0 & 1 & \mathrm{A}_4 & \\
\vdots & & & & \ddots
\end{pmatrix}}_{n \times n \ \mathrm{quadridiagonal \, matrix}},
\end{equation}
where
\[
\mathrm{A}_k = p_k x_k, \quad
\mathrm{B}_k = q_k(x_k x_{k+1} - y_k) + y_k(p_k p_{k+1} - q_k), \quad
\mathrm{C}_k = p_k x_k q_{k+1} y_{k+1}.
\]
\end{proposition}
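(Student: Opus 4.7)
I will show that $S_n$ satisfies the same three-term recursion as the $n \times n$ quadridiagonal determinant $D_n$ on the right-hand side of \eqref{eq:second-clone-cauchy}. Expanding $D_n$ along its last row gives
\begin{equation*}
	D_n \, = \, \mathrm{A}_n\, D_{n-1} - \mathrm{B}_{n-1}\, D_{n-2} + \mathrm{C}_{n-2}\, D_{n-3} \qquad (n \geq 3),
\end{equation*}
with base cases $D_0 = 1$, $D_1 = \mathrm{A}_1$, $D_2 = \mathrm{A}_1 \mathrm{A}_2 - \mathrm{B}_1$. Matching $S_0, S_1, S_2$ against these directly and then establishing the same recursion for $S_n$ completes the proof by induction on $n$.

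\textbf{Key Lemma.} The main tool is the clone Schur recursion
\begin{equation*}
	s_{1u}(\vec{x}\mid\vec{y}) \, = \, x_n\, s_u(\vec{x}\mid\vec{y}) \, - \, y_{n-1} \!\!\sum_{u'' \,\nearrow\, u} s_{u''}(\vec{x}\mid\vec{y}), \qquad u \in \mathbb{YF}_{n-1},
\end{equation*}
where the sum runs over lower covers $u'' \in \mathbb{YF}_{n-2}$ of $u$. Using the clone Pieri rule \eqref{def:clone_pieri} applied to $s_u$, this is equivalent to $\sum_{u' \searrow u,\, u' \neq 1u} s_{u'} = y_{n-1} \sum_{u'' \nearrow u} s_{u''}$. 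I prove the lemma by case analysis on $u$: (i)~when $u = \varnothing$ or $u$ starts with $1$, $u$ has a unique lower cover and a single non-$\mathbf{F1}$ upper cover (of type $\mathbf{F2}$), and both sides collapse to the same single term; (ii)~when $u = 2^j u_0$ with $j \geq 1$ and $u_0$ either empty or starting with $1$, a careful enumeration of covers (respecting the maximal-$k$ convention for $\mathbf{F3}$) pairs the $j$ lower covers of type $\mathbf{F2}$ with the $j$ upper covers of type $\mathbf{F3}$ via the identity $s_{2v} = y_{|v|+1} s_v$, and (when $u_0 = 1 \tilde{u}_0$) the single $\mathbf{F3}$ lower cover with the single $\mathbf{F2}$ upper cover. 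In each case the algebra reduces to the three-term recurrences
\begin{equation*}
	A_\ell = x_\ell A_{\ell-1} - y_{\ell-1} A_{\ell-2}, \qquad B_\ell(r) = x_{r+\ell+1}\, B_{\ell-1}(r) - y_{r+\ell}\, B_{\ell-2}(r),
\end{equation*}
satisfied by the tridiagonal determinants of \eqref{eq:A_B_dets}. The analogous recursion holds with $(\vec{p}\mid\vec{q})$, and $s_{2u}(\vec{x}\mid\vec{y}) = y_{n-1}\, s_u(\vec{x}\mid\vec{y})$ is immediate from \Cref{def:clone_Schur} with $k = 0$.

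\textbf{Decomposition of $S_n$.} Splitting the sum by the first letter of $w$,
\begin{equation*}
	S_n \, = \!\sum_{|u| = n-1}\! s_{1u}(\vec{p}\mid\vec{q})\, s_{1u}(\vec{x}\mid\vec{y}) \, + \!\sum_{|u| = n-2}\! s_{2u}(\vec{p}\mid\vec{q})\, s_{2u}(\vec{x}\mid\vec{y}),
\end{equation*}
the second sum equals $q_{n-1} y_{n-1}\, S_{n-2}$ at once. For the first, substitute the key lemma into both factors and expand the resulting product into four pieces. The direct piece gives $p_n x_n\, S_{n-1}$. The two mixed cross terms, of the form $\sum_u s_u(\cdot) \sum_{u'' \nearrow u} s_{u''}(\cdot)$, simplify after swapping the order of summation and applying the Pieri rule to the inner sum, yielding $p_{n-1}\, S_{n-2}$ and $x_{n-1}\, S_{n-2}$. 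The double piece
\begin{equation*}
	\sum_{|u|=n-1}\!\bigg(\sum_{u''_1 \,\nearrow\, u}\! s_{u''_1}(\vec{p}\mid\vec{q})\bigg)\!\bigg(\sum_{u''_2 \,\nearrow\, u}\! s_{u''_2}(\vec{x}\mid\vec{y})\bigg)
\end{equation*}
counts, for each pair $(u''_1, u''_2) \in \mathbb{YF}_{n-2}^2$, the number of common upper covers in $\mathbb{YF}_{n-1}$. The $1$-differentiality of $\mathbb{YF}$, i.e.\ the commutation $\boldsymbol{\mathcal{D}}\boldsymbol{\mathcal{U}} - \boldsymbol{\mathcal{U}}\boldsymbol{\mathcal{D}} = I$, rewrites this count as $\delta_{u''_1, u''_2}$ plus the number of common lower covers in $\mathbb{YF}_{n-3}$. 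The first contribution produces $S_{n-2}$, and the second, after one more application of the Pieri rule at each common lower cover $v \in \mathbb{YF}_{n-3}$, produces $p_{n-2} x_{n-2}\, S_{n-3}$.

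\textbf{Conclusion.} Collecting the pieces yields precisely
\begin{equation*}
	S_n \, = \, p_n x_n\, S_{n-1} \, - \, \bigl[\,q_{n-1}(x_{n-1} x_n - y_{n-1}) + y_{n-1}(p_{n-1} p_n - q_{n-1})\,\bigr]\, S_{n-2} \, + \, p_{n-2} x_{n-2}\, q_{n-1} y_{n-1}\, S_{n-3},
\end{equation*}
which is exactly the recursion $S_n = \mathrm{A}_n S_{n-1} - \mathrm{B}_{n-1} S_{n-2} + \mathrm{C}_{n-2} S_{n-3}$ satisfied by $D_n$. Direct computation of $S_0, S_1, S_2$ (for instance $S_2 = (p_1 p_2 - q_1)(x_1 x_2 - y_1) + q_1 y_1 = \mathrm{A}_1 \mathrm{A}_2 - \mathrm{B}_1 = D_2$) matches $D_0, D_1, D_2$, and induction closes the argument. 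The principal obstacle is the key lemma, since the clone Schur recursion in \Cref{def:clone_Schur} peels leading $1^k 2$ blocks rather than single letters: deriving the single-letter prepend formula for $s_{1u}$ requires a tight pairing between the lower covers of $u$ and the non-$\mathbf{F1}$ upper covers of~$u$, especially in case~(ii) where both the $\mathbf{F2}$ and (when present) $\mathbf{F3}$ lower covers must be accounted for. Once this lemma is in hand, the remainder of the argument is a careful expansion orchestrated by the Pieri rule and the $\mathbb{YF}$-commutation relation.
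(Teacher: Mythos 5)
Your proof is correct, and it takes a genuinely different route from the paper. The paper expands both clone Schur functions into clone homogeneous functions via the inverse Kostka matrices, reduces $S_n$ to the bilinear form $\mathbf{h}_n\,\mathrm{\bf K}_n^{-1}\mathrm{\bf K}_n^{-\mathrm{T}}$, and extracts the three-term recurrence from a recursive block decomposition of $\mathrm{\bf L}_n=\mathrm{\bf K}_n^{-1}\mathrm{\bf K}_n^{-\mathrm{T}}$ together with the prepending rules for $h_{1u}, h_{2u}, h_{11u}$, etc. You instead split the sum by the first letter of $w$, use a single-letter prepend formula for $s_{1u}$, and invoke the $1$-differential commutation $[\boldsymbol{\mathcal{D}},\boldsymbol{\mathcal{U}}]=\mathbf{Id}$ to convert the count of common upper covers into $\delta_{u_1'',u_2''}$ plus common lower covers; I checked that the resulting coefficients of $S_{n-1},S_{n-2},S_{n-3}$ assemble exactly to $\mathrm{A}_n$, $-\mathrm{B}_{n-1}$, $\mathrm{C}_{n-2}$, matching the last-column expansion of the quadridiagonal determinant and the base cases. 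One remark on your key lemma: the case analysis over the shapes of $u$ is unnecessary. Every upper cover of $u$ other than $1u$ has the form $2u''$, and by the recursive definition of the covering relation, $u\nearrow 2u''$ holds if and only if $u''\nearrow u$; combined with $s_{2u''}=y_{|u''|+1}\,s_{u''}$ (the case $k=0$ of \Cref{def:clone_Schur}), the Pieri rule immediately gives $s_{1u}=x_n s_u-y_{n-1}\sum_{u''\nearrow u}s_{u''}$ without touching the rules \textbf{F1}--\textbf{F3} at all. What your approach buys is a self-contained argument using only the Pieri rule, the reflective structure of covers, and differentiality; what the paper's approach buys is uniformity with the first clone Cauchy identity and a structural statement (the block recursion for $\mathrm{\bf K}_n^{-1}$ and $\mathrm{\bf L}_n$) that is reused elsewhere.
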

Note that $\mathrm{A}_k,\mathrm{B}_k,\mathrm{C}_k$
differ from $\mathrm{A}'_k,\mathrm{B}'_k,\mathrm{C}'_k$
in \Cref{prop:first-clone-cauchy}, hence we use different notation.

\begin{proof}[Proof of \Cref{prop:second-clone-cauchy}]
We apply inverse clone Kostka expansion twice:
\begin{equation*}
\begin{split}
S_n
\big( \vec{x}, \ssp \vec{y} \, ;
\vec{p}, \ssp \vec{q} \ssp \big)
&= \sum_{w \, \in \, \mathbb{YF}_n}
s_w(\vec{x} \mid \vec{y}\ssp)
\ssp s_w(\vec{p} \mid \vec{q}\ssp) \\
&= \sum_{w \, \in \, \mathbb{YF}_n}
\sum_{u \, \in \, \mathbb{YF}_n}
\sum_{v \, \in \, \mathbb{YF}_n}
K^{u,w} K^{v,w} \,
h_u (\vec{x} \mid \vec{y}\ssp)
\ssp h_v (\vec{p} \mid \vec{q}\ssp)
= \mathbf{h}_n \,
\mathrm{\bf K}_n^{-1} \,
\mathrm{\bf K}_n^{-\mathrm{\scriptscriptstyle T}}.
\end{split}
\end{equation*}
Here $\mathrm{\bf K}_n^{\mathrm{\scriptscriptstyle -T}}$ is the inverse transpose of $\mathrm{\bf K}_n$, and
$\mathrm{\bf h}_n$ is the row vector with entries
$h_w (\vec{x} \mid \vec{y}\ssp)
\ssp h_w (\vec{p} \mid \vec{q}\ssp)$
indexed by Fibonacci words $w \in \mathbb{YF}_n$, listed in increasing lexicographic order.
Define $\mathrm{\bf L}_n \coloneqq \mathrm{\bf K}_n^{-1} \, \mathrm{\bf K}_n^{-\mathrm{\scriptscriptstyle T}}$.
Observe that:
\begin{equation*}
\mathrm{\bf L}_0 = \mathrm{\bf L}_1 = (1) \quad \text{and}
\quad \mathrm{\bf L}_2 =
\begin{pmatrix} 2 & -1 \\ -1 & 1
\end{pmatrix}.
\end{equation*}
For $n \geq 3$, we have the following recursive block-matrix decomposition
\begin{equation*}
\mathrm{\bf L}_n \, = \
\begin{Blockmatrix}
\block[]{1}{1}{2}{3}{2\mathrm{\bf L}_{n-2}}
\block{1}{1}{5}{5}{\,}
\block{3}{3}{5}{5}{3\mathrm{\bf L}_{n-3}}
\block{1}{6}{5}{8}{\bigzero}
\block{1}{9}{5}{13}{-\mathrm{\bf L}_{n-2}}
\block{6}{1}{8}{5}{\bigzero}
\block{9}{1}{13}{5}{-\mathrm{\bf L}_{n-2}}
\block{6}{6}{13}{13}{\mathrm{\bf L}_{n-1}}
\end{Blockmatrix}
\end{equation*}
It is important to emphasize that the rows and columns of $\mathrm{\bf L}_n$
correspond to Fibonacci words $w \in \mathbb{YF}_n$ which are ordered lexicographically. For example, the hooked-shaped region
labeled by $2\mathrm{\bf L}_{n-2}$ in the upper left-hand corner
corresponds to pairs of
Fibonacci words $u \times v \in \mathbb{YF}_n \times \mathbb{YF}_n$
of the form $u = 2u'$ and $v = 2v'$,
where $u', v' \in \mathbb{YF}_{n-2}$ and the prefixes
of both $u'$ and $v'$ are not simultaneously equal to $1$.
Using
\eqref{eq:clone-homogeneous-clone-cauchy-recursion-for-h}
together with the block-decomposition of $\mathrm{\bf L}_n$, we get the required three-step recurrence:
\begin{equation*}
S_n \big( \vec{x}, \ssp \vec{y} \, ;
\vec{p}, \ssp \vec{q} \ssp \big)
= \mathrm{A}_n \, S_{n-1}
\big( \vec{x}, \ssp \vec{y} \, ;
\vec{p}, \ssp \vec{q} \ssp \big)
+ \mathrm{B}_{n-1}
S_{n-2}
\big( \vec{x}, \ssp \vec{y} \, ;
\vec{p}, \ssp \vec{q} \ssp \big)
+ \mathrm{C}_{n-2} \,
S_{n-3}
\big( \vec{x}, \ssp \vec{y} \, ;
\vec{p}, \ssp \vec{q} \ssp \big)
\end{equation*}
for $n \geq 3$, where the initial values of
$S_n$ are given by:
\begin{equation*}
\begin{array}{ll}
S_0\big( \vec{x}, \ssp \vec{y} \, ;
\vec{p}, \ssp \vec{q} \ssp \big) &= 1, \\
S_1\big( \vec{x}, \ssp \vec{y} \, ;
\vec{p}, \ssp \vec{q} \ssp \big) &= p_1 x_1, \\
S_2 \big( \vec{x}, \ssp \vec{y} \, ;
\vec{p}, \ssp \vec{q} \ssp \big) &=
(p_1 p_2 - q_1) (x_1 x_2 - y_1)
+ q_1 y_1.
\end{array}
\end{equation*}
The results of the proposition are consequences of this recurrence formula.
\end{proof}

\section{Characterization of Fibonacci Positivity}
\label{sec:Fibonacci_positivity}

In this section, we characterize the specializations
$(\vec{x}, \vec{y}\ssp)$ under which the clone Schur functions
$s_w(\vec{x} \mid \vec{y}\ssp)$ are positive for all
$w \in \mathbb{YF}$ (referred to as \emph{Fibonacci positive specializations}).
This proves \Cref{thm:positivity_introduction} from the Introduction.

\subsection{Reduction to a single sequence parametrization}
\label{sub:reduction_to_one_sequence}

\begin{definition}
	\label{def:fibonacci_nonnegative_biserial}
	A specialization $(\vec x,\vec{y}\ssp)$,
	where $\vec x=(x_1,x_2,\ldots)$, $\vec y=(y_1,y_2,\ldots)$, and $x_i,y_j\in \mathbb{C}$,
	is called \emph{Fibonacci nonnegative} if the clone Schur functions
	$s_w(\vec x\mid \vec{y}\ssp)$ are nonnegative for all Fibonacci words $w\in \mathbb{YF}$.
	If $s_w(\vec x\mid \vec{y}\ssp)>0$ for all $w\in \mathbb{YF}$,
	we say that $(\vec x,\vec{y}\ssp)$ is \emph{Fibonacci positive}.
\end{definition}

One readily sees that Fibonacci positivity
is equivalent to the positivity of
the determinants
$A_{\ell}( \vec x\mid \vec{y}\ssp)$ and $B_{\ell}( \vec x+r\mid \vec y+r)$
for all $\ell,r\in \mathbb{Z}_{\ge0}$.
This, in turn, is equivalent to the \emph{total positivity} of the
following family of
semi-infinite tridiagonal matrices,
where $r\ge 0$:
\begin{equation}
	\label{eq:tridiagonal_matrices_A_Br}
		\mathcal{A} \big(\, \vec{x} \, \big| \, \vec{y} \, \big) \, \coloneqq \
		\begin{pmatrix}
		x_1 & y_1 & 0 & \cdots\\
		1 & x_2 & y_2 &\\
		0 & 1 & x_3  & \\
		\vdots & & & \ddots
	\end{pmatrix},\qquad
		\mathcal{B}_r \big( \, \vec{x} \, \big| \, \vec{y} \, \big) \, \coloneqq \
		\begin{pmatrix}
		y_{r+1} & x_{r+1} y_{r+2} & 0 & \cdots\\
		1 & x_{r+3} & y_{r+3} &\\
		0 & 1 & x_{r+4}  & \\
		\vdots & & & \ddots
	\end{pmatrix}.
\end{equation}
Indeed, it is known
(for example, see \cite{FominZelevinsky1999})
that
the total positivity of a tridiagonal matrix
is equivalent to the positivity of its leading principal minors, namely those
formed by several initial and
consecutive rows and columns.
The list of additional
references on total positivity is vast, and we mention only a few
sources here:
\cite{Edrei53}, \cite{Karlin1968}, \cite{Schoenberg1988}, \cite{FZTP2000}.

\begin{remark}
	We use the convention that a tridiagonal matrix is called
	\emph{totally positive} provided that all its minors are
	strictly positive except those forced to vanish by the
	tridiagonal structure. In the literature, the phrase
	(\emph{strictly}) \emph{totally positive} is sometimes
	used for matrices all of whose minors are positive. See,
	e.g., the first footnote in \cite{fallat2017total} for a
	comparison of terminology in references.  In the present
	paper, however, we need to adapt the terminology to the
	tridiagonal structure of the matrix, and require that all
	minors which are not identically vanishing are strictly
	positive.
\end{remark}

Since total positivity of $\mathcal{A} ( \vec{x} \, \big| \, \vec{y} \ssp)$
\eqref{eq:tridiagonal_matrices_A_Br}
is a necessary condition for a specialization $(\vec x,\vec y\ssp )$ to
be Fibonacci positive, we may restrict our attention to
pairs of sequences $( \vec x , \vec y \ssp)$ for which
 $\mathcal{A} ( \vec{x} \, \big| \, \vec{y} \ssp)$ is totally positive.
Using a general factorizaton ansatz
introduced in \cite{FominZelevinsky1999}
for elements in double Bruhat cells,
we know that
the matrix $\mathcal{A} ( \vec{x} \, \big| \, \vec{y} \ssp)$ is totally positive if
and only if there exist auxiliary real parameters
$c_k,d_k>0$, $k\ge1$,
such that
\begin{equation}
	\label{eq:xk_yk_from_cd}
	x_k = c_k + d_{k-1} \quad \text{and} \quad y_k = c_k \ssp d_k
	\quad \text{for all} \ k \geq 1,
\end{equation}
with the condition that $x_1 = c_1$.
Moreover, $\{c_k\}, \{d_k\}$ are uniquely
determined by $(\vec x,\vec y)$.

Notice that formula \eqref{eq:clone_Schur_action}
implies that
for $(\vec x,\vec y\ssp)$ depending on $c_k,d_k$ as above, we have
\begin{equation}
	\label{eq:AB_from_cd}
	s_w ( \vec x \, \big| \, \vec y \ssp )
	=
	\big( c_1 \cdots c_n \big) \ssp s_w ( \vec u \, \big| \, \vec t\ssp \ssp )
	\quad \textnormal{for all $w\in \mathbb{YF}$},\quad
	|w|=n,
\end{equation}
where $t_k\coloneqq \frac{d_k}{c_{k+1}}$,
$k\ge1$, with the agreement that $t_0=0$, and we denote, for short,
\begin{equation*}
	u_k \coloneqq 1 + t_{k-1}, \qquad k\geq 1.
\end{equation*}
Clearly, the positivity of the left- and right-hand sides of
\eqref{eq:AB_from_cd} for all $w\in \mathbb{YF}$ are equivalent to each other, and so
the problem of characterizing Fibonacci positive
specializations $(\vec x,\vec y\ssp)$ can be reduced to the
problem of identifying necessary and sufficient conditions
under which the sequence
$\vec t$
makes the tridiagonal matrices
$\mathcal{A} ( \vec{u} \, \big| \, \vec{t} \ssp\ssp)$
and $\mathcal{B}_r ( \vec{u} \, \big| \, \vec{t} \ssp\ssp)$
totally positive (for all $r \geq 1$).
In the next \Cref{sub:characterization},
we will classify such sequences $\vec t$, which leads to a complete characterization of
Fibonacci positivity.

\begin{remark}
	The $\vec{t}$-sequences (where $t_k > 0$ for $k \geq 1$ and
	$t_0 = 0$) parametrize a fundamental domain
	\begin{equation*}
	\mathcal{D} = \left\{ (\vec{x}, \vec{y}) \colon x_k = 1 + t_{k-1}, \, y_k = t_k \right\}
	\end{equation*}
	within the overall set of totally positive
	(not necessarily Fibonacci positive)
	tridiagonal
	matrices. Here, the fundamental domain is
	understood
	with respect to the action of the
	multiplicative group $\mathbb{R}_{\scriptscriptstyle >0}^\infty$
	which rescales by the $\vec{c}$-parameters as in
	\eqref{eq:xk_yk_from_cd}--\eqref{eq:AB_from_cd}.
	Our goal in characterizing Fibonacci
	positive specializations is to identify
	the subset $\mathcal{D}^{\mathrm{Fib}}\subset\mathcal{D}$ which
	is also a fundamental domain for
	the set of all Fibonacci positive
	specializations under the
	action of $\mathbb{R}_{\scriptscriptstyle > 0}^\infty$.
\end{remark}

Let us make the passage from $\vec x$ and $\vec y$ to
$\vec t$ and $\vec c$ more explicit:
\begin{proposition}
	\label{prop:c_t_from_x_y}
	We have
	\begin{equation*}
		c_k=\frac{A_{k}(\vec x\mid \vec{y}\ssp)}{A_{k-1}(\vec x\mid \vec{y}\ssp)},\qquad
		t_k=y_k\frac{A_{k-1}(\vec x\mid \vec{y}\ssp)}{A_{k+1}(\vec x\mid \vec{y}\ssp)}
		=x_{k+1}\frac{A_k(\vec x\mid \vec{y}\ssp)}{A_{k+1}(\vec x\mid \vec{y}\ssp)}-1
		,\qquad k\ge1,
	\end{equation*}
	with the agreement that $A_0(\vec x\mid \vec{y}\ssp)=1$ and $t_0=0$.
\end{proposition}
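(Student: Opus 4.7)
The plan is to recover the parameters $c_k$ and $t_k$ directly from the values of the principal tridiagonal determinants $A_k(\vec x\mid \vec y\ssp)$ by combining the standard three-term recursion for $A_k$ with the factorization $x_k = c_k + d_{k-1}$, $y_k = c_k d_k$ (with $d_0=0$) used to parametrize totally positive tridiagonal matrices.

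First, I would recall that expanding $A_k(\vec x\mid \vec y\ssp)$ along its last row yields the classical three-term recursion
\begin{equation*}
A_k(\vec x\mid \vec y\ssp) = x_k\, A_{k-1}(\vec x\mid \vec y\ssp) - y_{k-1}\, A_{k-2}(\vec x\mid \vec y\ssp), \qquad k \geq 2,
\end{equation*}
with initial values $A_0 = 1$ and $A_1 = x_1$. Substituting $x_k = c_k + d_{k-1}$ and $y_{k-1} = c_{k-1} d_{k-1}$ into this recursion, I would show by induction on $k$ that
\begin{equation*}
A_k(\vec x\mid \vec y\ssp) = c_1 \ssp c_2 \cdots c_k.
\end{equation*}
The base cases $A_0 = 1$ and $A_1 = c_1$ are immediate (using $d_0=0$). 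For the inductive step, plugging $A_{k-1} = c_1\cdots c_{k-1}$ and $A_{k-2} = c_1\cdots c_{k-2}$ into the recursion gives
\begin{equation*}
A_k = (c_k + d_{k-1}) c_1\cdots c_{k-1} - c_{k-1} d_{k-1}\, c_1\cdots c_{k-2} = c_1\cdots c_k,
\end{equation*}
since the two $d_{k-1}$-contributions cancel. This immediately yields $c_k = A_k(\vec x\mid \vec y\ssp)/A_{k-1}(\vec x\mid \vec y\ssp)$.

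Next, for $t_k = d_k/c_{k+1}$, I would express $d_k$ in two ways from the factorization. Using $y_k = c_k d_k$, we get $d_k = y_k/c_k = y_k\, A_{k-1}/A_k$, and dividing by $c_{k+1}=A_{k+1}/A_k$ produces the first formula $t_k = y_k\, A_{k-1}(\vec x\mid \vec y\ssp)/A_{k+1}(\vec x\mid \vec y\ssp)$. Alternatively, using $x_{k+1} = c_{k+1} + d_k$ gives $d_k = x_{k+1} - c_{k+1}$, so $t_k = x_{k+1}/c_{k+1} - 1 = x_{k+1} A_k/A_{k+1} - 1$, which is the second stated expression.

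There is no serious obstacle here: the whole argument reduces to a short induction built on the determinantal recursion and the algebraic identity $x_k x_{k-1} - y_{k-1}$ collapsing under the Fomin--Zelevinsky factorization. The only point worth flagging is handling the edge cases $k=1$ (where $d_0=0$ makes the formula $t_0=0$ consistent) and ensuring the $A_{k-1}$ in denominators are nonzero, which is guaranteed since all $c_j > 0$ in the totally positive regime where the proposition is applied.
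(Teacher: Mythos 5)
Your proof is correct and follows essentially the same route as the paper: everything reduces to the identity $A_k(\vec x\mid\vec y\ssp)=c_1\cdots c_k$, after which both formulas for $t_k$ drop out of $y_k=c_kc_{k+1}t_k$ and the three-term recurrence. The only (cosmetic) difference is that you establish $A_k=c_1\cdots c_k$ by direct induction on the tridiagonal recursion with the factorization $x_k=c_k+d_{k-1}$, $y_k=c_kd_k$, whereas the paper reads it off from the scaling identity $s_{1^n}(\vec x\mid\vec y\ssp)=c_1\cdots c_n\,A_n(\vec u\mid\vec t\ssp)$ together with $A_n(\vec u\mid\vec t\ssp)=1$.
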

\begin{proof}
	Consider identity \eqref{eq:AB_from_cd}
	for $w=1^n$. We have by \Cref{def:clone_Schur}:
	\begin{equation*}
		A_n(\vec x\mid \vec{y}\ssp) = c_1 \cdots c_n \cdot A_n(\ssp \vec u\mid \vec t \, \, )= c_1 \cdots c_n,
	\end{equation*}
	since the tridiagonal determinant $A_n(\ssp \vec u\mid \vec t \ssp \ssp )$ is equal to $1$. This implies the statement about
	the~$c_k$'s.
	From the fact that $y_k=c_kc_{k+1}t_k$ we get the first formula for $t_k$.
	The second formula follows from the first one by means of the
	three-term recurrence for the tridiagonal determinants~$A_k(\vec x\mid \vec{y}\ssp)$.
	This completes the proof.
\end{proof}

\subsection{Characterization}
\label{sub:characterization}

To address the question of Fibonacci positivity (\Cref{def:fibonacci_nonnegative_biserial}),
it now suffices to consider only positive sequences $\vec t=(t_0,t_1,t_2,\ldots)$ with $t_0=0$.
We will denote $u_k\coloneqq 1+t_{k-1}$, for short.
Let us define for all $m\ge0$:
\begin{equation}
	\label{eq:A_infty_B_infty_series_definitions}
	A_\infty(m)\coloneqq 1\, + \, \sum_{r=1}^{\infty} \, t_m t_{m+1}\cdots t_{m+r-1},\qquad
	B_{\infty}(m)\coloneqq
	t_{m+1}+(t_{m+1}-t_{m}-1)t_{m+2}\ssp A_{\infty}(m+3).
\end{equation}
Note that
$A_\infty(m)$
and $B_\infty(m)$ are the
respective expansions of
$\det \mathcal{A} \big(\ssp \vec{u} + m \, \big| \, \vec{t} +m \ssp \big)$
and $\det \mathcal{B}_m \big(\ssp \vec{u} \, \big| \, \vec{t} \, \, \big)$
in the parameters $t_k$ for $k \geq 1$
when treated as formal variables.
Note that we have $A_\infty(0) = 1$.
\begin{lemma}
	\label{lemma:mathcal_S}
	The sum
	$A_\infty(m)$ \eqref{eq:A_infty_B_infty_series_definitions} is
	convergent (resp., divergent) for some
    $m \ge 1$
	if and only if
	it is convergent (resp., divergent)
	for all $m \geq 1$.
\end{lemma}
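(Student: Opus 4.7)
The plan is to observe that $A_\infty(m)$ satisfies a simple one-step recursion in $m$, and then use the positivity of each $t_m$ to propagate convergence in both directions.

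First, I would isolate the first term and factor $t_m$ out of the remaining tail to obtain the identity
\begin{equation*}
A_\infty(m) \, = \, 1 \, + \, t_m \bigl(1 + t_{m+1} + t_{m+1} t_{m+2} + \cdots \bigr) \, = \, 1 + t_m \ssp A_\infty(m+1),
\end{equation*}
valid at the level of extended real numbers in $[1,+\infty]$ since all terms are positive. This is the only genuine computation involved and is purely formal.

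Next, I would use the standing assumption that $t_m > 0$ for all $m \geq 1$. From the recursion, $A_\infty(m+1) < \infty$ immediately forces $A_\infty(m) < \infty$. Conversely, if $A_\infty(m) < \infty$, then $A_\infty(m+1) = (A_\infty(m)-1)/t_m$ is likewise finite, since $t_m$ is strictly positive. Thus convergence at index $m$ and convergence at index $m+1$ are equivalent.

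Finally, a trivial induction on the distance $|m' - m|$ propagates this equivalence from any single $m \geq 1$ to all $m' \geq 1$. The divergence statement follows by contrapositive. There is no real obstacle here; the only subtle point is to make sure the manipulation of the series is performed at the level of $[1,+\infty]$ rather than assuming finiteness at the outset, so that the recursion genuinely characterizes when the sum is finite rather than presupposing it.
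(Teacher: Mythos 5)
Your proof is correct and rests on the same idea as the paper's: relating $A_\infty(m)$ to $A_\infty(m+1)$ by factoring $t_m$ out of the tail. The only difference is cosmetic — you work with the identity $A_\infty(m)=1+t_m\ssp A_\infty(m+1)$ directly in the extended reals, whereas the paper manipulates partial sums and handles the boundary product $t_{m+1}\cdots t_{m+K}$ by a short case analysis; both are valid.
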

\begin{proof}
	We have
	\begin{equation*}
		t_m^{-1} \, \sum_{r=1}^K \, t_m t_{m+1}\cdots t_{m+r-1}
		\ = \ 1 \, - \, t_{m+1}\cdots t_{m+K} \, + \, \sum_{r=1}^K \, t_{m+1}t_{m+2}\cdots t_{m+r}.
	\end{equation*}
	If the product $t_{m+1}\cdots t_{m+K}$ does not go to zero, then
	$A_\infty(m)$ diverges for all
    $m \geq 1$.
	Otherwise, we see that the partial sums of $A_\infty(m)$ and
	$A_\infty(m+1)$ diverge or converge simultaneously.
\end{proof}

\begin{definition}
	\label{def:finite_infinite_type_specializations}
	We introduce two types of positive real
	sequences $\vec t$ based on the convergence of the $A_\infty(m)$'s:
	\begin{enumerate}[\bf1.\/]
	\item
			A sequence $\vec t$
			has \emph{convergent type} if the series
			$A_\infty(m)$ is convergent and
			$B_\infty(m)\ge0$
			for all $m \geq 0$ (with the agreement that $t_0=0$).
		\item
			A sequence $\vec t$ has \emph{divergent type} if
			$t_{m+1}\ge 1+t_m$ for all $m\ge0$.
			\par\noindent
			Note that for a sequence of divergent type,
			we have $t_m\ge m$, and so the series $A_\infty(m)$ diverge for all
			$m$.
	\end{enumerate}
	We will also refer to the corresponding specialization
	$(\vec u,\vec t\ssp)$
	as
	having convergent or divergent type.
\end{definition}

We now present two general criteria
for Fibonacci positivity
based on the convergence or divergence of the series
$A_\infty(m)$ \eqref{eq:A_infty_B_infty_series_definitions}.

\begin{proposition}
	\label{prop:finite_type_positive}
	Assume that the $A_\infty(m)$'s are convergent
	for some (all) $m\ge 1$.
	The specialization determined by $\vec t$
	is Fibonacci positive if and only if $\vec t$ is a sequence
	of convergent type.
\end{proposition}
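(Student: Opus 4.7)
The plan is to reduce the verification of Fibonacci positivity to a family of pointwise inequalities on the truncated $B$-determinants, and then to exploit monotonicity in the truncation parameter to identify those inequalities with the conditions defining convergent type.

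First, since $\vec t$ is a positive real sequence with $t_0=0$, I would observe that the tridiagonal determinants $A_\ell(m)$ admit a simple closed form as telescoping sums,
\begin{equation*}
A_\ell(m) \, = \, 1 + t_m + t_m t_{m+1} + \ldots + t_m t_{m+1}\cdots t_{m+\ell-1},
\end{equation*}
obtained by unwinding the three-term recurrence $A_\ell(m)=(1+t_m)A_{\ell-1}(m+1)-t_{m+1}A_{\ell-2}(m+2)$. In particular $A_\ell(m)>0$ for every $\ell,m\ge 0$, and $A_\ell(m)\uparrow A_\infty(m)$ monotonically as $\ell\to\infty$. Consequently Fibonacci positivity is reduced to showing $B_\ell(m)>0$ for all $\ell,m\ge 0$.

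Next, I would expand $B_{k+1}(m)$ along its first row and simplify using $A_{k+1}(m+2)=1+t_{m+2}A_k(m+3)$, arriving at the key affine identity
\begin{equation*}
B_{k+1}(m) \, = \, t_{m+1} + \bigl(t_{m+1} - 1 - t_m\bigr)\ssp t_{m+2}\ssp A_k(m+3),
\end{equation*}
valid for $k\ge 0$, with the base case $B_0(m)=t_{m+1}>0$ handled separately. Since $t_j>0$ for $j\ge 1$, the map $k\mapsto A_k(m+3)$ is strictly increasing, so $k\mapsto B_{k+1}(m)$ is monotone in $k$ with direction determined by the sign of $t_{m+1}-1-t_m$. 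The dichotomy goes as follows: if $t_{m+1}\ge 1+t_m$, the coefficient of $A_k(m+3)$ is non-negative and $B_{k+1}(m)\ge t_{m+1}>0$ automatically for every $k$, with $B_\infty(m)\ge t_{m+1}>0$ as well; if $t_{m+1}<1+t_m$, the coefficient is negative, so $B_{k+1}(m)$ strictly decreases in $k$ and converges (by the standing assumption that $A_\infty$ converges) to $B_\infty(m)=t_{m+1}+(t_{m+1}-1-t_m)\ssp t_{m+2}\ssp A_\infty(m+3)$. In this second regime, $B_{k+1}(m)>0$ for all $k$ if and only if $B_\infty(m)\ge 0$. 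Taking the conjunction across all $m\ge 0$ recovers exactly the convergent-type criterion of \Cref{def:finite_infinite_type_specializations}, yielding both implications.

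The conceptually central step is the derivation of the affine identity for $B_{k+1}(m)$; once that is in hand, the rest is pure monotonicity. The subtlest point of bookkeeping --- and the place I would be most careful --- is to note that although the definition of convergent type only requires $B_\infty(m)\ge 0$, the \emph{strict} inequality $B_\ell(m)>0$ persists at every finite truncation precisely because $A_k(m+3)$ is strictly (rather than merely weakly) increasing in $k$, so that a non-strict lower bound on the limit is compatible with Fibonacci positivity.
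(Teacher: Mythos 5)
Your proposal is correct and follows essentially the same route as the paper: the same closed form for $A_\ell(m)$, the same affine identity $B_{\ell-1}(m)=t_{m+1}-(1+t_m-t_{m+1})\ssp t_{m+2}\ssp A_{\ell-2}(m+3)$, and the same monotonicity-in-$\ell$ argument (with the sign dichotomy on $t_{m+1}-1-t_m$) to pass between strict positivity of all finite truncations and the non-strict inequality $B_\infty(m)\ge 0$. The paper merely writes the two implications out separately rather than as a single biconditional.
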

\begin{proof}
	Throughout
	the proof, we will use the notation of
	\Cref{rmk:notation_Al_Blm}.
	First, let $\vec t$ be a sequence of convergent type.
	One readily sees that
	$A_{\ell}(m)=1+t_{m}A_{\ell-1}(m+1)$ for all $\ell\ge2$, so
	\begin{equation}
		\label{eq:A_ell_m_as_sum}
		A_\ell(m)= 1 \, + \, \sum_{r=1}^\ell
		\, t_m t_{m+1}\cdots t_{m+r-1}
		>0,\qquad m\ge1.
	\end{equation}
	Note that the right-hand side of \eqref{eq:A_ell_m_as_sum} is a partial sum of
	$A_\infty(m)$.

	Next, let us consider the determinants
	$B_\ell(m)$.
	We have
	\begin{equation*}
		\begin{split}
			B_0(m)&=t_{m+1}>0,\\
			B_1(m)&=
			t_{m+1}-
			(1+t_m-t_{m+1})\ssp t_{m+2}
		\end{split}
	\end{equation*}
	for all $m\ge 0$.
	If $1+t_m-t_{m+1}\le 0$, then this is already positive.
	Otherwise,
	we have
	\begin{equation*}
		B_1(m) > B_\infty(m)=
		t_{m+1}-(1+t_m-t_{m+1})\ssp t_{m+2}\ssp A_\infty(m+3)\ge0,
	\end{equation*}
	where the last inequality holds thanks to the convergent type assumption.
	The first strict inequality holds because the partial sums of
	$A_\infty(m+3)$ monotically increase to the infinite sum.
	Thus,
	$B_1(m)>0$.

	For larger determinants with $\ell\ge3$, we have
	\begin{equation}
		\label{eq:B_ell_m_finite_type_proof}
		\begin{split}
			B_{\ell-1}(m)
			&=
			v_m A_{\ell-1}(m+2)-u_m v_{m+1}A_{\ell-2}(m+3)
			\\
			&=
			v_m\left( 1+t_{m+2}A_{\ell-2}(m+3) \right)-
			u_m v_{m+1}A_{\ell-2}(m+3)
			\\
			&=
			t_{m+1}-(1+t_m-t_{m+1})t_{m+2}\ssp A_{\ell-2}(m+3).
		\end{split}
	\end{equation}
	Similarly, if $1+t_m-t_{m+1}\le 0$, then this is already positive.
	Otherwise, we have
	$B_{\ell-1}(m)
	>
	B_\infty(m)\ge 0$,
	since $A_{\ell-2}(m+2) < A_\infty(m+3)$.
	The last nonnegativity again follows from the convergent type assumption.
	This implies that for a sequence $\vec t$ of convergent type,
	all clone Schur functions
	$s_w(\vec u\mid \vec{t}\ssp)$
	are positive.

	\smallskip

	Let us now consider the converse statement and assume that the
	specialization
	determined by $\vec t$
	is Fibonacci positive.
	The positivity of the $t_k$'s implies
	that $A_\ell(m)$ is positive for all $\ell\ge1$, $m \geq 0$,
	see \eqref{eq:A_ell_m_as_sum}.
	Assume that $\vec{t}$ is
	not of convergent type,
	that is,
	$t_{m_0+1}< (1+t_{m_0}-t_{m_0+1})\ssp t_{m_0+2}\ssp A_\infty(m_0+3)$
	for some $m_0 \geq 0$
	(this automatically implies that $1+t_{m_0}-t_{m_0+1}>0$).
	Since
	\begin{equation*}
		A_\infty(m_0+3) = \lim_{\ell \rightarrow \infty} A_{\ell-2}(m_0+2),
	\end{equation*}
	there exists $\ell_0 \gg 1$ (depending on $m_0$) such that
	$t_{m_0+1}< (1+t_{m_0}-t_{m_0+1})\ssp t_{m_0+2}\ssp A_{\ell_0-2}(m_0+2)$.
	By \eqref{eq:B_ell_m_finite_type_proof}, this shows that $B_{\ell_0-1}(m_0)<0$,
	which
	violates the Fibonacci positivity.
\end{proof}

\begin{proposition}
	\label{prop:infinite_type_positive}
	Assume that $A_\infty(m)$ is divergent for
	some (all) $m\ge 1$.
	The specialization determined by $\vec t$
	is Fibonacci positive if and only if $\vec t$ is
	a sequence of divergent type.
\end{proposition}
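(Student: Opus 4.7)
The plan is to mirror the proof of \Cref{prop:finite_type_positive}, since the determinant identities for $A_\ell(m)$ and $B_{\ell-1}(m)$ derived there hold regardless of whether the series $A_\infty(m)$ converges or diverges. What changes is how the divergence of $A_\infty(m)$ is exploited, and I will handle sufficiency and necessity separately.

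For the sufficiency direction, I will start with a sequence $\vec t$ of divergent type, i.e., satisfying $t_{m+1} \ge 1 + t_m$ for all $m \ge 0$ (with $t_0 = 0$). Positivity of $A_\ell(m)$ is immediate from the expansion $A_\ell(m) = 1 + \sum_{r=1}^{\ell} t_m t_{m+1} \cdots t_{m+r-1}$ for $m \ge 1$ (and $A_\ell(0) = 1$ because $t_0 = 0$). For the $B$-determinants, I will reuse the identity
\begin{equation*}
	B_{\ell-1}(m) \, = \, t_{m+1} \, - \, (1 + t_m - t_{m+1}) \ssp t_{m+2} \ssp A_{\ell-2}(m+3)
\end{equation*}
established in the proof of \Cref{prop:finite_type_positive}, valid for $\ell \ge 3$; the cases $\ell = 1, 2$ are handled directly in the same way. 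Under divergent type the coefficient $1 + t_m - t_{m+1}$ is nonpositive, so $B_{\ell-1}(m) \ge t_{m+1} > 0$, giving Fibonacci positivity.

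For the necessity direction, I will assume Fibonacci positivity and suppose, for contradiction, that the divergent type inequality fails at some index $m_0 \ge 0$, so that $1 + t_{m_0} - t_{m_0 + 1} > 0$. By the divergence hypothesis of the proposition together with \Cref{lemma:mathcal_S}, I have $A_\infty(m_0 + 3) = \infty$. Since the partial sums $A_{\ell-2}(m_0+3)$ increase monotonically to $A_\infty(m_0+3)$, I can pick $\ell$ large enough so that $(1 + t_{m_0} - t_{m_0+1}) \ssp t_{m_0+2} \ssp A_{\ell-2}(m_0+3) > t_{m_0+1}$, forcing $B_{\ell-1}(m_0) < 0$ and contradicting Fibonacci positivity.

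The main obstacle is minor and is purely bookkeeping: I must invoke divergence of $A_\infty$ at the shifted index $m_0 + 3$ rather than at the base index, which is precisely what \Cref{lemma:mathcal_S} guarantees (convergence and divergence of $A_\infty(m)$ are equivalent across all $m \ge 1$). Everything else is a direct reuse of the algebraic identities for $A_\ell(m)$ and $B_{\ell-1}(m)$ already derived in the convergent case.
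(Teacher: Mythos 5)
Your proposal is correct and follows essentially the same route as the paper's proof: positivity of the $A_\ell(m)$ via the partial-sum expansion, the telescoped identity $B_{\ell-1}(m)=t_{m+1}-(1+t_m-t_{m+1})\ssp t_{m+2}\ssp A_{\ell-2}(m+3)$ for sufficiency, and unboundedness of $A_{\ell-2}(m_0+3)$ (justified by \Cref{lemma:mathcal_S}) to force $B_{\ell_0-1}(m_0)<0$ for necessity. No gaps.
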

\begin{proof}
	Here we use the notation of \Cref{rmk:notation_Al_Blm}.
	Assume that $\vec t$ is a sequence of divergent type.
	Similarly to the proof of \Cref{prop:finite_type_positive},
	we see that $A_\ell(m)>0$ for all $\ell\ge1$, $m \geq 0$.
	We have
	\begin{equation*}
		B_0(m)=t_{m+1}>0,\qquad
		B_1(m)=t_{m+1}+(t_{m+1}-t_m-1)t_{m+2},
	\end{equation*}
	and $t_{m+1}-t_m-1\ge 0$ for all $m\ge0$ by the assumption.
	Thus, $B_1(m)>0$ for all $m\ge0$.
	Next, using \eqref{eq:B_ell_m_finite_type_proof}, we similarly see that
	$B_{\ell-1}(m)>0$ for all $\ell\ge3$ and $m\ge0$.

	\smallskip

	Let us now consider the converse statement and assume that the
	specialization
	determined by $\vec t$
	is Fibonacci positive.
	We still have $A_\ell(m)>0$ for all $\ell\ge0$, $m \geq 0$.
	Assume that $\vec{t}$ is
	not of divergent type,
	that is, there exists $m_0 \geq 0$ such that
	$t_{m_0+1}< 1+t_{m_0}$.
	We have
	\begin{equation*}
		B_{\ell-1}(m_0)=
		t_{m_0+1}+(\underbrace{t_{m_0+1}-t_{m_0}-1}_{<0})t_{m_0+2}A_{\ell-2}(m_0+3).
	\end{equation*}
	Since $A_{\ell-2}(m_0+3)$ is positive and unbounded as $\ell \to \infty$,
	we see that $B_{\ell_0-1}(m_0)<0$ for some $\ell_0 \gg 1$ (depending on $m_0$).
	This violates the Fibonacci positivity,
	and completes the proof.
\end{proof}

Sequences of divergent type can be treated formally.
Introduce variables $\epsilon_k$ for $k \geq 1$, and let
$\epsilon^{\ssp \mathbf{i}} \coloneqq \epsilon_1^{i_1} \cdots
\epsilon_k^{i_k}$ be the monomial corresponding to an integer
composition $\mathbf{i} = (i_1, \dots, i_k) \in
\mathbb{Z}_{\geq 0}^k$. Define
\begin{equation}
    \label{eq:Fib_via_epsilons}
		t_k \coloneqq k + \epsilon_1 + \cdots + \epsilon_k.
\end{equation}

\begin{corollary}
\label{corollary:coefficientwise-total-positivity}
Let $\vec{t}$ be given by \eqref{eq:Fib_via_epsilons}. Then,
the semi-infinite, tridiagonal matrices
$\mathcal{A} \big(\, \vec{u} \, \big| \, \vec{t} \ssp\ssp \big)$ and
$\mathcal{B}_r \big( \, \vec{u} \, \big| \, \vec{t} \ssp\ssp \big)$
\eqref{eq:tridiagonal_matrices_A_Br}
for $r \geq 0$
are \emph{coefficientwise} totally positive: Each minor (which does not
identically vanish on the space of all semi-infinite, tridiagonal matrices)
is a polynomial in $\mathbb{Z}[\epsilon_1, \epsilon_2, \dots]$
with nonnegative coefficients, at least one of which is positive.

Consequently,
the clone Schur function $s_w(\vec{u} \, | \, \vec{t}\ssp \ssp)$ expands
as a polynomial in
$\epsilon_1, \epsilon_2, \ldots$ with nonnegative integer coefficients.
\end{corollary}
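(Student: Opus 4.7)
The plan is to exploit the fact that, under the parametrization $t_k = k + \epsilon_1 + \cdots + \epsilon_k$ (with $t_0 = 0$), one has $t_{k+1} - t_k - 1 = \epsilon_{k+1}$ as an identity in $\mathbb{Z}[\epsilon_1, \epsilon_2, \ldots]$. Combined with the explicit formulas for the leading principal minors derived in the proofs of \Cref{prop:finite_type_positive,prop:infinite_type_positive}, this will give the coefficientwise nonnegativity of those minors directly; for arbitrary (non-identically-vanishing) minors I would pass to a bidiagonal factorization and invoke the Lindström--Gessel--Viennot lemma.

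First, I would import the identities
\begin{equation*}
A_\ell(\vec u+m \mid \vec t+m) \, = \, 1 + \sum_{r=1}^{\ell} t_m t_{m+1} \cdots t_{m+r-1},
\qquad
B_{\ell-1}(\vec u+m \mid \vec t+m) \, = \, t_{m+1} + \epsilon_{m+1} \ssp t_{m+2} \ssp A_{\ell-2}(\vec u+m+3 \mid \vec t+m+3),
\end{equation*}
the second holding for $\ell \geq 2$ (with the convention $A_0 = 1$ and the initial value $B_0(m) = t_{m+1}$). Since each $t_k$ is a polynomial in $\epsilon_1, \ldots, \epsilon_k$ with nonnegative integer coefficients, both expressions manifestly lie in $\mathbb{Z}_{\ge 0}[\epsilon_1, \epsilon_2, \ldots]$. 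Setting all $\epsilon_i = 0$ returns the Plancherel specialization of \Cref{ex:Plancherel_clone}, in which every leading principal minor takes a strictly positive integer value; in particular the constant term of each such polynomial is positive.

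Second, to upgrade from leading principal minors to arbitrary minors of $\mathcal{A}$ and $\mathcal{B}_r$, I would invoke the Fomin--Zelevinsky bidiagonal factorization. Concretely, writing
\begin{equation*}
\mathcal{A}(\vec u \mid \vec t\ssp) \, = \, \Big(\prod_{k \ge 1}(I + e_{k+1,k})\Big) \ssp \Big(\prod_{k \ge 1}(I + t_k \ssp e_{k,k+1})\Big),
\end{equation*}
the Lindström--Gessel--Viennot lemma, applied to the associated weighted planar network, exhibits every minor of $\mathcal{A}$ as a cancellation-free sum of monomials in the $t_k$'s with nonnegative integer coefficients. Substituting $t_k = k + \epsilon_1 + \cdots + \epsilon_k$ then preserves coefficientwise nonnegativity, and the constant term (at $\epsilon \equiv 0$) being the corresponding Plancherel minor shows that at least one coefficient is positive. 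A parallel argument handles $\mathcal{B}_r$, after absorbing its non-standard first row entry $x_{r+1}y_{r+2}$ into a suitable diagonal factor so that the remainder again admits a bidiagonal factorization of the same shape.

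Finally, the consequence for $s_w(\vec u \mid \vec t\ssp)$ is immediate from \Cref{def:clone_Schur}, which expresses $s_w$ as a product of the leading principal minors $A_k$ and $B_{k-1}(m)$ treated in the first step; a product of polynomials with nonnegative integer $\epsilon$-coefficients retains that property. The main obstacle is the clean bidiagonal factorization of $\mathcal{B}_r$, whose first row breaks the standard tridiagonal shape; one workable route is to verify the LGV argument on a slightly modified planar network incorporating a single extra vertex that accounts for the $x_{r+1}y_{r+2}$ entry, while the same formulas from Step 1 already suffice for the clone Schur function consequence.
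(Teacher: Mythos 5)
Your first step and the final deduction for $s_w(\vec u\mid\vec t\,)$ coincide with the paper's proof: the paper's argument is exactly the pair of expansions \eqref{eq:A_ell_m_as_sum}--\eqref{eq:B_ell_m_finite_type_proof} together with the observation $1+t_m-t_{m+1}=-\epsilon_{m+1}$, followed by the product recursion \eqref{eq:clone_Schur_recurrence_def}. Where you diverge is the upgrade from leading principal minors to \emph{all} non-vanishing minors. The paper's (terse) route is the standard structural fact that any non-identically-vanishing minor of a tridiagonal matrix factors as a product of off-diagonal entries and \emph{contiguous} principal minors of shifted submatrices; for $\mathcal{A}$ these contiguous blocks are exactly the $A_\ell(m)$'s, and for $\mathcal{B}_r$ they are either $B_\ell(r)$'s (blocks containing row $1$) or $A_\ell$'s of a shifted sequence, so everything reduces to the expansions you already have. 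This handles $\mathcal{A}$ and $\mathcal{B}_r$ uniformly and avoids planar networks altogether.

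Your alternative via bidiagonal factorization and Lindstr\"om--Gessel--Viennot does work for $\mathcal{A}$ (the factors have entries $1$ and $t_k$, so LGV gives every minor as an $\mathbb{Z}_{\ge0}$-combination of monomials in the $t_k$'s), but the $\mathcal{B}_r$ branch has a genuine gap that your ``absorb the first-row entry'' remark does not close. The natural $LU$ factorization of $\mathcal{B}_r$ has pivots $d_k=B_{k-1}(r)/B_{k-2}(r)$, which are not polynomials in the $t$'s, so the network weights are only subtraction-free \emph{rational} expressions; and a subtraction-free rational expression that happens to equal a polynomial need not have nonnegative coefficients (e.g.\ $(x^3+y^3)/(x+y)=x^2-xy+y^2$). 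Hence positivity at every positive specialization would follow, but the coefficientwise claim in $\mathbb{Z}[\epsilon_1,\epsilon_2,\dots]$ would not. I recommend replacing that branch with the contiguous-block factorization of tridiagonal minors, which is what the paper intends and which your Step 1 already supports.
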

\begin{proof}
The statements readily follow from the expansions
\eqref{eq:A_ell_m_as_sum}--\eqref{eq:B_ell_m_finite_type_proof}
and the recursion for the clone
Schur functions \eqref{eq:clone_Schur_recurrence_def}.
\end{proof}

\begin{problem}
	\label{problem:combinatorial_interpretation_coefficients_Fibonacci_positive_via_epsilons}
	How to combinatorially interpret the coefficients of the
	monomials in the expansion of a clone Schur function
	$s_w(\vec{u} \, | \, \vec{t} \ssp)$
	in terms of the $\epsilon$-variables?
\end{problem}

The problem of identifying matrices (with polynomial entries)
that are coefficientwise totally positive has been the subject
of recent activity. We refer the reader to
\cite{sokal2014coefficientwise},
\cite{petreolle2023lattice}, \cite{chen2021coefficientwise},
and \cite{deb2023coefficientwise}. The formal specialization
given in \eqref{eq:Fib_via_epsilons} is universal
for sequences of divergent type
in the following sense:

\begin{corollary}
\label{corollary:universal-divergent-type}
Any Fibonacci positive sequence $\vec{t}$ of divergent type
can be obtained by specializing the $\epsilon$-variables in
\eqref{eq:Fib_via_epsilons} to arbitrary positive real numbers.
Moreover, the values of the $\epsilon_j$'s are uniquely determined
by $\vec{t}$.
\end{corollary}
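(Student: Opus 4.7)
The plan is to recognize \eqref{eq:Fib_via_epsilons} as an invertible change of coordinates between sequences $\vec t$ of divergent type and sequences of nonnegative $\vec\epsilon$-parameters, and then to invoke \Cref{prop:infinite_type_positive} to conclude Fibonacci positivity.

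First, I would solve \eqref{eq:Fib_via_epsilons} for the $\epsilon_j$'s by telescoping. With the convention $t_0 = 0$, this yields $\epsilon_1 = t_1 - 1$ and $\epsilon_k = t_k - t_{k-1} - 1$ for $k \geq 2$. This immediately gives both the existence and the uniqueness of the $\vec\epsilon$ corresponding to a given $\vec t$, settling the second assertion of the corollary.

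Next, I would observe that, under these telescoping relations, the divergent type condition $t_{m+1} \ge 1 + t_m$ for all $m \geq 0$ (see \Cref{def:finite_infinite_type_specializations}) translates exactly into the condition $\epsilon_j \ge 0$ for all $j \ge 1$. Conversely, given any nonnegative sequence $\vec\epsilon$, setting $t_k \coloneqq k + \epsilon_1 + \cdots + \epsilon_k$ produces a positive sequence satisfying $t_{k+1} - t_k = 1 + \epsilon_{k+1} \ge 1$; in particular $t_k \ge k$, so $A_\infty(m)$ is divergent for every $m \geq 1$, as already noted after \Cref{def:finite_infinite_type_specializations}.

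Finally, I would invoke \Cref{prop:infinite_type_positive}: when $A_\infty$ diverges, Fibonacci positivity is equivalent to divergent type. Combined with the bijection established above, this proves both directions of the corollary. There is essentially no obstacle, since the result is purely a reparametrization statement: all substantive content has already been established in \Cref{prop:infinite_type_positive} and \Cref{corollary:coefficientwise-total-positivity}. The only minor point worth mentioning is a terminological one, namely whether ``positive'' is to be read as ``nonnegative'' so as to match the non-strict inequality in the divergent type condition; the bijection is correct in either convention, with strictness of the $\epsilon_j$'s corresponding to strictness in $t_{m+1} > 1 + t_m$.
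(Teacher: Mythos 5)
Your proof is correct and is exactly the intended justification: the paper states this corollary without proof, treating it as immediate from the telescoping inversion $\epsilon_k = t_k - t_{k-1} - 1$ and the equivalence in \Cref{prop:infinite_type_positive}. Your remark about ``positive'' versus ``nonnegative'' is also well taken, since the divergent-type condition $t_{m+1}\ge 1+t_m$ only forces $\epsilon_j\ge 0$.
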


Summarizing \Cref{sub:reduction_to_one_sequence}
and the results of \Cref{prop:finite_type_positive,prop:infinite_type_positive},
we have:

\begin{theorem}[Characterization of Fibonacci positive specializations]
	\label{thm:Fibonacci_positivity}
	All Fibonacci positive specializations
	$(\vec x,\vec{y}\ssp)$
	have the form
	\begin{equation}
		\label{eq:Fibonacci_positive_specialization_in_theorem}
		x_k=c_k\ssp (1+t_{k-1}),\qquad y_k=c_k\ssp c_{k+1}\ssp t_k, \qquad k\ge1
	\end{equation}
	(with $t_0=0$ by agreement),
	where $\vec t$ is a sequence of convergent or divergent type
	as in \Cref{def:finite_infinite_type_specializations},
	and $\vec c$ is an arbitrary positive real sequence.
	The sequences $\vec c$ and $\vec t$ are determined by $(\vec x,\vec{y}\ssp)$ uniquely
	via \Cref{prop:c_t_from_x_y}.
\end{theorem}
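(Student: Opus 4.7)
The theorem is a synthesis of the results built up throughout \Cref{sec:Fibonacci_positivity}, so the plan is to assemble four already-available ingredients rather than introduce new machinery.

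First, I would establish the factorization. If $(\vec{x},\vec{y}\ssp)$ is Fibonacci positive, then in particular $s_{1^\ell}(\vec{x}\mid\vec{y}\ssp)=A_\ell(\vec{x}\mid\vec{y}\ssp)>0$ for every $\ell\ge0$, so all leading principal minors of the tridiagonal matrix $\mathcal{A}(\vec{x}\mid\vec{y}\ssp)$ are positive, which in the tridiagonal setting forces total positivity. By the Fomin--Zelevinsky factorization recalled in \Cref{sub:reduction_to_one_sequence}, there then exist unique positive sequences $\vec{c}=(c_1,c_2,\ldots)$ and $\vec{d}=(d_1,d_2,\ldots)$ (with $d_0\coloneqq0$) such that $x_k=c_k+d_{k-1}$ and $y_k=c_k d_k$.

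Second, I would pass to the normalized parameters $t_k\coloneqq d_k/c_{k+1}$ (with $t_0=0$) and $u_k\coloneqq1+t_{k-1}$. The scaling identity \eqref{eq:AB_from_cd} yields
\begin{equation*}
   s_w(\vec{x}\mid\vec{y}\ssp) \, = \, (c_1\cdots c_{|w|})\ssp s_w(\vec{u}\mid\vec{t}\ssp),
\end{equation*}
and since $c_1\cdots c_{|w|}>0$, Fibonacci positivity of $(\vec{x},\vec{y}\ssp)$ is equivalent to Fibonacci positivity of $(\vec{u},\vec{t}\ssp)$. Rewriting the factorization back yields $x_k=c_k(1+t_{k-1})$ and $y_k=c_kc_{k+1}t_k$, which is the form claimed in \eqref{eq:Fibonacci_positive_specialization_in_theorem}.

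Third, I would apply the convergent/divergent dichotomy. \Cref{lemma:mathcal_S} shows that $A_\infty(m)$ either converges for all $m\ge1$ or diverges for all $m\ge1$, so exactly one of \Cref{prop:finite_type_positive} and \Cref{prop:infinite_type_positive} applies to $(\vec{u},\vec{t}\ssp)$. The first identifies Fibonacci positivity with $\vec{t}$ being of convergent type, and the second with $\vec{t}$ being of divergent type, matching \Cref{def:finite_infinite_type_specializations}. For sufficiency, I would reverse the chain: given any positive $\vec{c}$ and any $\vec{t}$ of either type, \Cref{prop:finite_type_positive,prop:infinite_type_positive} produce Fibonacci positivity of $(\vec{u},\vec{t}\ssp)$, which via \eqref{eq:AB_from_cd} and the positivity of $c_1\cdots c_{|w|}$ transfers to $(\vec{x},\vec{y}\ssp)$. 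Uniqueness of $\vec{c}$ and $\vec{t}$ is read off from the reconstruction formulas in \Cref{prop:c_t_from_x_y}, which express $c_k$ and $t_k$ as explicit rational functions of the $A_k(\vec{x}\mid\vec{y}\ssp)$.

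I do not anticipate a real obstacle: the content of the theorem has already been proved piecewise, and what remains is the organizational step of checking that the two convergence alternatives supplied by \Cref{lemma:mathcal_S} are in bijective correspondence with the two positivity regimes of \Cref{def:finite_infinite_type_specializations}. The one place to be careful is to verify that the ``if'' direction in both steps composes correctly through the scaling \eqref{eq:AB_from_cd} --- that is, that an arbitrary choice of the auxiliary rescaling sequence $\vec{c}$ does not silently impose additional constraints beyond those carried by $\vec{t}$ --- but this is immediate from the strict positivity of each $c_k$.
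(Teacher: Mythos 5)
Your proposal is correct and follows essentially the same route as the paper, which itself states \Cref{thm:Fibonacci_positivity} as a direct summary of \Cref{sub:reduction_to_one_sequence} (total positivity of $\mathcal{A}$ via positive leading principal minors, the Fomin--Zelevinsky factorization, and the reduction to $(\vec u,\vec t\ssp)$ through \eqref{eq:AB_from_cd}) combined with \Cref{lemma:mathcal_S}, \Cref{prop:finite_type_positive,prop:infinite_type_positive}, and the uniqueness formulas of \Cref{prop:c_t_from_x_y}. No gaps.
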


\begin{remark}
	\label{rmk:connection_to_harmonic_functions_and_non_uniqueness_of_c_parameters}
	The characterization result (\Cref{thm:Fibonacci_positivity})
	includes two sequences, $\vec t$ and $\vec c$.
	On the other hand, from \eqref{eq:clone_Schur_action}
	we readily see that the corresponding clone harmonic function
	$\varphi_{\vec x,\vec{y}}$ depends only on the sequence $\vec t$.
	Thus, while multiple Fibonacci positive specializations give rise to the same
	clone harmonic function, the type of the specialization (convergent or divergent)
	is uniquely determined by the harmonic function via the sequence $\vec t$.
\end{remark}

\section{Properties of Fibonacci Positive Specializations}
\label{sec:Fibonacci_positive_properties_part1}

\subsection{Necessary conditions}

Here, we list several necessary conditions on the sequence $\vec t$ to be Fibonacci positive, following from \Cref{def:finite_infinite_type_specializations}.

\begin{proposition}
	\label{proposition:valleys}
	For any $m \geq 1$, none of the
    inequalities
    $t_m \ge t_{m+1} \le t_{m+2}$ hold whenever $\vec t$ is
    a Fibonacci positive specialization.
\end{proposition}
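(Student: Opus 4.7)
The plan is to dispose of the two types of Fibonacci positive specializations separately, using the classification in \Cref{thm:Fibonacci_positivity}. Throughout, a valley at position $m+1$ means that both $t_m \geq t_{m+1}$ and $t_{m+1} \leq t_{m+2}$ hold.

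First I would treat the divergent case: if $\vec t$ is of divergent type, then by definition $t_{k+1} \ge 1 + t_k > t_k$ for every $k \ge 0$, so the sequence $(t_k)_{k\ge 0}$ is strictly increasing. In particular $t_m < t_{m+1}$, so the first half of the valley condition is violated and there is nothing to prove.

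The substantive case is when $\vec t$ is of convergent type. Here, for the sake of contradiction, I would assume that a valley occurs at position $m+1$, and exploit the inequality $B_\infty(m) \ge 0$ from \Cref{def:finite_infinite_type_specializations}, which after rearrangement reads
\begin{equation*}
  t_{m+1} \ \ge \ (1 + t_m - t_{m+1})\, t_{m+2}\, A_\infty(m+3).
\end{equation*}
Since $t_m \ge t_{m+1}$, the factor $1 + t_m - t_{m+1}$ is at least $1$. Moreover, because every $t_k > 0$ for $k \ge 1$, the series $A_\infty(m+3) = 1 + t_{m+3} + t_{m+3}t_{m+4} + \cdots$ is strictly greater than $1$. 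Combining these two observations yields $t_{m+1} > t_{m+2}$, which contradicts the second half of the valley condition $t_{m+1} \le t_{m+2}$.

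There is no real obstacle here beyond correctly tracking strict versus non-strict inequalities: the key point is the strict bound $A_\infty(m+3) > 1$, which is forced by the positivity of the $t_k$'s and is what converts the non-strict hypothesis $B_\infty(m) \ge 0$ into a strict separation between $t_{m+1}$ and $t_{m+2}$.
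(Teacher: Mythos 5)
Your proof is correct. It is worth noting, though, that it takes a slightly longer route than the paper's, which is a one-line argument: Fibonacci positivity directly forces the strict positivity of the $2\times 2$ determinant $B_1(m)=t_{m+1}-(1+t_m-t_{m+1})\,t_{m+2}$ (a leading principal minor of $\mathcal{B}_m(\vec u\mid\vec t\,)$), and under a valley one has $1+t_m-t_{m+1}\ge 1$ and $t_{m+2}\ge t_{m+1}$, whence $B_1(m)\le 0$ immediately --- no case split and no appeal to the classification theorem. You instead invoke \Cref{thm:Fibonacci_positivity} to separate the divergent case (where the claim is vacuous) from the convergent case, and there use the limiting inequality $B_\infty(m)\ge 0$, recovering the needed strictness from $A_\infty(m+3)>1$; this is valid, and your attention to strict versus non-strict inequalities is exactly right, since $B_\infty(m)\ge 0$ alone would only give $t_{m+1}\ge t_{m+2}$. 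The two arguments rest on the same algebraic observation; the paper's version is more economical and logically self-contained (it uses only the definition of Fibonacci positivity), while yours makes the divergent case visibly trivial at the cost of depending on the classification result.
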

\begin{proof}
It is sufficient to examine the determinants
\begin{equation*}
    B_1(m) = t_{m+1} - \left(1 + t_m - t_{m+1}\right)t_{m+2},\qquad m \geq 1,
\end{equation*}
and verify for each $m \geq 0$ that
$t_m \ge t_{m+1} \le t_{m+2}$
is
inconsistent with
the positivity of $B_1(m)$.
\end{proof}

\Cref{proposition:valleys}
implies that a Fibonacci positive
sequence $\vec{t}$ can exhibit one of three behaviors
(bearing in mind our convention $t_0 = 0$):
\begin{enumerate}[$\bullet$]
    \item The sequence $\vec{t}$ strictly increases, i.e., $t_k > t_{k-1}$ for all $k \geq 1$.
    \item There exists an $\ell \geq 1$ such that $t_k > t_{k-1}$ for $1 \leq k \leq \ell$,
    and thereafter $t_k < t_{k-1}$ for $k \geq \ell + 1$.
    \item There exists an $\ell \geq 1$ such that $t_k > t_{k-1}$ for $0 \leq k \leq \ell$,
    the sequence forms a plateau with $t_\ell = t_{\ell+1}$, and subsequently $t_k < t_{k-1}$ for all $k \geq \ell + 2$.
\end{enumerate}
In particular, a Fibonacci positive
sequence $\vec{t}$ must eventually either strictly
increase or strictly decrease.

\begin{lemma}
\label{lemma:monotonic-decreasing-Astuff}
If $\vec{t}$ is a sequence of convergent type, then
\begin{equation*}
\label{eq:monotonic-decreasing-Astuff}
	A_\infty(1)\ge A_\infty(2)> A_\infty(3)> \ldots .
\end{equation*}
Furthermore,
$A_\infty(1) = A_\infty(2)$
if and only if
$A_\infty(2) = (1-t_1)^{-1}$ and
$t_1 \in (0,1)$.
\end{lemma}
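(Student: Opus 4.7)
The plan is to derive and exploit the following one-line identity, valid for all $m \geq 1$:
\begin{equation*}
A_\infty(m) - A_\infty(m+1) \,=\, B_\infty(m-1) \,+\, t_{m-1}\ssp t_{m+1}\ssp A_\infty(m+2).
\end{equation*}
Once this identity is in hand, the entire lemma is immediate. Both summands on the right are nonnegative: the term $B_\infty(m-1) \geq 0$ is exactly the convergent-type hypothesis (\Cref{def:finite_infinite_type_specializations}), and the second term is a product of nonnegative quantities. Hence $A_\infty(m) \geq A_\infty(m+1)$ in all cases. For $m \geq 2$ we have $t_{m-1} > 0$ and $t_{m+1}\ssp A_\infty(m+2) > 0$, so the second term is strictly positive and the inequality is strict. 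For $m = 1$ the convention $t_0 = 0$ wipes out the second term, leaving $A_\infty(1) - A_\infty(2) = B_\infty(0)$, which can vanish --- this is precisely where equality in the chain can occur.

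First I would verify the identity by a short algebraic manipulation. Starting from the telescoping recursion $A_\infty(m) = 1 + t_m A_\infty(m+1)$ (immediate from the series definition in \eqref{eq:A_infty_B_infty_series_definitions}), expanding once more via $A_\infty(m+1) = 1 + t_{m+1} A_\infty(m+2)$ yields
\begin{equation*}
A_\infty(m) - A_\infty(m+1) \,=\, 1 - (1-t_m)A_\infty(m+1) \,=\, t_m + (t_m - 1)\ssp t_{m+1}\ssp A_\infty(m+2).
\end{equation*}
Comparing this with the defining expression $B_\infty(m-1) = t_m + (t_m - t_{m-1} - 1)\ssp t_{m+1}\ssp A_\infty(m+2)$, the difference is exactly $t_{m-1}\ssp t_{m+1}\ssp A_\infty(m+2)$, giving the identity.

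Next, I would extract the equality characterization, which lives entirely in the $m = 1$ branch. The identity specializes to $A_\infty(1) - A_\infty(2) = B_\infty(0) = t_1 - (1-t_1)\ssp t_2\ssp A_\infty(3)$. Equality $A_\infty(1) = A_\infty(2)$ is equivalent to $(1-t_1)\ssp t_2\ssp A_\infty(3) = t_1$; since $t_1, t_2, A_\infty(3) > 0$, this forces $1 - t_1 > 0$, hence $t_1 \in (0,1)$ and $t_2\ssp A_\infty(3) = t_1/(1-t_1)$. Feeding this back through the recursion gives $A_\infty(2) = 1 + t_2\ssp A_\infty(3) = (1-t_1)^{-1}$. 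The converse is a one-line check: assuming $t_1 \in (0,1)$ and $A_\infty(2) = (1-t_1)^{-1}$ produces $A_\infty(1) = 1 + t_1 (1-t_1)^{-1} = (1-t_1)^{-1} = A_\infty(2)$.

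There is no substantive obstacle; the only conceptual step is recognizing the decomposition $A_\infty(m) - A_\infty(m+1) = B_\infty(m-1) + (\text{manifestly nonnegative remainder})$. After that, strict monotonicity for $m \geq 2$ follows from the positivity of $t_{m-1}$, and the equality clause for $m = 1$ is a direct rearrangement of $B_\infty(0) = 0$.
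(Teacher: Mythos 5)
Your proof is correct and is essentially the paper's argument: your identity $A_\infty(m)-A_\infty(m+1)=B_\infty(m-1)+t_{m-1}\ssp t_{m+1}\ssp A_\infty(m+2)$ is exactly the paper's rearrangement $B_\infty(m)=A_\infty(m+1)-A_\infty(m+2)-t_m\ssp t_{m+2}\ssp A_\infty(m+3)$ after an index shift, and both proofs then read off monotonicity from $B_\infty\ge 0$ plus positivity of the remainder, with the equality clause coming from the recursion $A_\infty(1)=1+t_1A_\infty(2)$.
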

\begin{proof}
	For $m \geq 0$, observe that
\[
B_\infty(m)
= A_\infty(m+1)
- A_\infty(m+2)
- t_m \, t_{m+2} \, A_\infty(m+3).
\]
Consequently,
$
B_\infty(m) \geq 0
$
if and only if
\[
A_\infty(m+1) \geq
A_\infty(m+2) +
t_m \, t_{m+2} \, A_\infty(m+3) \geq
A_\infty(m+2),
\]
the latter inequality being strict whenever $m \geq 1$.
Note that $A_\infty(1) = 1 + t_1 A_\infty(2)$,
so $A_\infty(1) = A_\infty(2)$
holds if and only if $t_1 \in (0, 1)$
and $A_\infty(2) = (1-t_1)^{-1}$.
This completes the proof.
\end{proof}

\begin{proposition}
	\label{proposition:cannot_nondecrease}
If $\vec{t}$ is a sequence of convergent type, then it cannot eventually weakly increase.
In other words, there is no $m_0 \geq 1$ such that $t_m \leq t_{m+1}$ for all
$m \geq m_0$.
\end{proposition}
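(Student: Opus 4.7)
The plan is to argue by contradiction. Suppose $\vec t$ is of convergent type and that there exists $m_0\ge 1$ with $t_m\le t_{m+1}$ for every $m\ge m_0$. The positive, nondecreasing sequence $(t_m)_{m\ge m_0}$ then has a limit $L\in(0,\infty]$. I first rule out $L>1$ (including $L=\infty$): in that case $t_m>1$ eventually, so the partial products $t_m t_{m+1}\cdots t_{m+r-1}$ do not tend to $0$, forcing $A_\infty(m)$ to diverge and contradicting the convergent type assumption. Hence $L\in(0,1]$.

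The decisive step is to exploit the condition $B_\infty(m)\ge 0$ built into \Cref{def:finite_infinite_type_specializations}. Since $t_{m+1}\le 1<1+t_m$, the factor $1+t_m-t_{m+1}$ is strictly positive for every $m\ge m_0$, and rearranging
\begin{equation*}
	B_\infty(m)\,=\,t_{m+1}-(1+t_m-t_{m+1})\,t_{m+2}\,A_\infty(m+3)\,\ge\,0
\end{equation*}
yields
\begin{equation*}
	A_\infty(m+3)\,\le\,\frac{t_{m+1}}{(1+t_m-t_{m+1})\,t_{m+2}}.
\end{equation*}
Letting $m\to\infty$, the right-hand side converges to $L/(1\cdot L)=1$, because $t_{m+1},t_{m+2}\to L$ and $1+t_m-t_{m+1}\to 1$.

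To close the argument, I invoke \Cref{lemma:monotonic-decreasing-Astuff}: the sequence $A_\infty(m)$ is nonincreasing for $m\ge 1$ and bounded below by $1$, so it admits a finite limit $A^*\ge 1$. Passing to the limit in the recursion $A_\infty(m)=1+t_m\,A_\infty(m+1)$ gives $A^*=1+LA^*$. For $L<1$ this forces $A^*=1/(1-L)>1$, contradicting the bound $A^*\le 1$ just obtained; for $L=1$ the identity $A^*=1+A^*$ is itself absurd. Either way we reach a contradiction. The only real subtlety will be the boundary case $L=1$: the coefficient $1+t_m-t_{m+1}$ might at first appear to degenerate, but in fact it tends to $1$, so the rearranged inequality remains informative, and what ultimately eliminates $L=1$ is the rigid combination of the universal lower bound $A_\infty\ge 1$ with the defining recursion, both of which survive the passage to the limit.
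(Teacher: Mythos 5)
Your proof is correct, but it takes a noticeably longer road than the paper's. The paper's entire argument is the observation that if $t_m \le t_{m+1}$ for all $m \ge m_0$, then comparing the two series term by term gives $t_m t_{m+1}\cdots t_{m+r-1} \le t_{m+1} t_{m+2}\cdots t_{m+r}$ for every $r$, hence $A_\infty(m) \le A_\infty(m+1)$ for all $m \ge m_0$, which immediately contradicts the strict decrease $A_\infty(m) > A_\infty(m+1)$ for $m\ge 2$ from \Cref{lemma:monotonic-decreasing-Astuff}. You instead pass to the limit $L=\lim_m t_m$, rule out $L>1$ via divergence of $A_\infty$, and then play the bound $A_\infty(m+3)\le t_{m+1}/\bigl((1+t_m-t_{m+1})\ssp t_{m+2}\bigr)$ extracted from $B_\infty(m)\ge 0$ against the recursion $A_\infty(m)=1+t_m A_\infty(m+1)$: the former forces $\lim_m A_\infty(m)\le 1$, the latter forces it to exceed $1$. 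Both arguments ultimately rest on the same lemma (you use it only to guarantee that $A_\infty(m)$ has a finite limit $A^*$), and your treatment of the borderline case $L=1$ is necessary and handled correctly, since convergence of $A_\infty$ alone does not exclude $t_m\nearrow 1$. What your version buys is a more quantitative picture of where the contradiction sits (the limit of $A_\infty(m)$ would have to be simultaneously at most $1$ and strictly greater than $1$), at the cost of a case analysis that the paper's termwise comparison renders unnecessary. One small streamlining: once you have $A^*\le 1$, the trivial bound $A_\infty(m)\ge 1+t_m$ already gives $A^*\ge 1+L>1$, so your final dichotomy between $L<1$ and $L=1$ can be collapsed into a single line.
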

\begin{proof}
	If such an $m_0$ exists, then for all $m \geq m_0$, we have
	$
	A_\infty(m) \leq A_\infty(m+1),
	$
	which contradicts the conclusion of
	\Cref{lemma:monotonic-decreasing-Astuff}. This completes the proof.
\end{proof}

\Cref{proposition:cannot_nondecrease} shows that the sequence $\vec t$ of convergent type
must have a limit. In fact, this limit is always zero:

\begin{proposition}
	\label{proposition:limit_zero_of_convergent_type}
	Let $\vec{t}$ be a sequence
	of convergent type.
	Then $\lim_{m \rightarrow \infty} t_m = 0$.
\end{proposition}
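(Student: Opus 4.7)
The plan is to pass to the limit $m \to \infty$ in both the defining recursion for $A_\infty(m)$ and the convergent-type inequality $B_\infty(m) \geq 0$, extracting an algebraic constraint that forces $\tau := \lim_m t_m$ to vanish. The main work is in setting up the limits cleanly; once they exist, the conclusion is just a short computation.

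First, I will establish that $\tau$ exists. Since a sequence of convergent type is Fibonacci positive by \Cref{prop:finite_type_positive}, \Cref{proposition:valleys} applies and gives the three-case dichotomy discussed after it: $\vec t$ is either strictly increasing, eventually strictly decreasing after a unique peak, or eventually strictly decreasing after a single plateau. \Cref{proposition:cannot_nondecrease} rules out the eventually weakly-increasing case. Hence $\vec t$ is eventually strictly decreasing, so $\tau := \lim_{m\to\infty} t_m \in [0,\infty)$ exists.

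Next, I will compute $L := \lim_{m \to \infty} A_\infty(m)$ and relate it to $\tau$. By \Cref{lemma:monotonic-decreasing-Astuff}, the tail $\{A_\infty(m)\}_{m \geq 2}$ is strictly decreasing and bounded below by $1$, hence converges to some $L \geq 1$. Passing to the limit in the identity $A_\infty(m) = 1 + t_m A_\infty(m+1)$ gives the fixed-point equation $L = 1 + \tau L$. Finiteness of $L$ immediately rules out $\tau = 1$; and $\tau > 1$ is incompatible with the convergent-type assumption, since eventual monotone convergence of $t_m$ down to a value $>1$ would force the tail products $t_m t_{m+1} \cdots t_{m+r-1}$ to grow at least like $\tau^r \to \infty$, contradicting convergence of the series $A_\infty(m)$. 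Therefore $\tau \in [0, 1)$ and $L = (1-\tau)^{-1}$.

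Finally, passing to the limit in
\begin{equation*}
0 \,\leq\, B_\infty(m) \,=\, t_{m+1} + (t_{m+1} - t_m - 1)\, t_{m+2}\, A_\infty(m+3)
\end{equation*}
yields $\tau + (\tau - \tau - 1)\,\tau\, L = \tau(1 - L) \geq 0$. Substituting $L = (1-\tau)^{-1}$ gives
\begin{equation*}
\tau\!\left(1 - \frac{1}{1-\tau}\right) \,=\, \frac{-\tau^2}{1-\tau} \,\geq\, 0.
\end{equation*}
Since $1 - \tau > 0$ and $\tau \geq 0$, this forces $\tau^2 \leq 0$, hence $\tau = 0$.

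The only delicate step is the second one: ensuring both that $L$ is finite and that $\tau < 1$, so that the algebraic manipulation in the last step is legal. Both follow directly — finiteness from \Cref{lemma:monotonic-decreasing-Astuff}, and $\tau < 1$ from the fixed-point equation together with the convergence of the series defining $A_\infty$. The rest is immediate.
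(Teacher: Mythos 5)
Your proof is correct and follows essentially the same route as the paper: both pass to the limit $m\to\infty$ in the convergent-type inequality $B_\infty(m)\ge 0$ and arrive at $-\tau^2/(1-\tau)\ge 0$, which forces $\tau=0$. The only cosmetic difference is that you compute $L=\lim_{m\to\infty} A_\infty(m)=(1-\tau)^{-1}$ from the fixed-point equation $L=1+\tau L$, whereas the paper bounds $A_\infty(m+3)$ below by $(1-\tau)^{-1}$ directly using $t_k\ge\tau$; the resulting final inequality is identical.
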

\begin{proof}
	Denote $\gamma \coloneqq \lim_{m \rightarrow \infty} t_m$,
	which
	exists since the sequence eventually weakly decreases.
	Using the fact that $A_\infty(m+3)\le (1-t_{m+3})^{-1}$ for $m\ge m_0$,
	we see that $\gamma$  must be between $0$ and $1$.
	By \Cref{def:finite_infinite_type_specializations}, we can write
	for all $m\ge m_0$:
	\begin{equation*}
		t_{m+1}\ge (1+t_m-t_{m+1})\ssp t_{m+2}\ssp
         A_\infty(m+3)
		\ge (1+t_m-t_{m+1})\ssp t_{m+2}\ssp (1-\gamma)^{-1}.
	\end{equation*}
	Taking the limit as $m\to\infty$, we get the inequality
	$\gamma\ge \gamma(1-\gamma)^{-1}$, which implies that $\gamma=0$.
\end{proof}

\begin{proposition}
\label{proposition:bounded_limit__of_convergent_type}
	Let $\vec{t}$ be a sequence
	of convergent type.
	Then
	$\limsup_{m \rightarrow \infty} m \ssp t_m \in [0,1]$,
	and similarly $\liminf_{m \rightarrow \infty} m \ssp t_m \in [0,1]$.
\end{proposition}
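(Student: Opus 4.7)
The strategy is to reduce to proving $\limsup_{m\to\infty} m\, t_m \leq 1$. The lower bound $\liminf_{m\to\infty} m\, t_m \geq 0$ is trivial since $t_m > 0$, and once the upper bound $\limsup \leq 1$ is established, $\liminf \leq \limsup$ places both quantities in $[0,1]$.

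As a preliminary step I would show that $a_m \coloneqq A_\infty(m) \to 1$ as $m \to \infty$. By \Cref{lemma:monotonic-decreasing-Astuff} the sequence $(a_m)_{m \geq 2}$ is (eventually) strictly decreasing and bounded below by $1$, hence has a limit $a_\infty \geq 1$. The recursion $a_m = 1 + t_m\, a_{m+1}$ combined with $t_m \to 0$ (\Cref{proposition:limit_zero_of_convergent_type}) forces $a_\infty = 1 + 0 \cdot a_\infty$, so $a_\infty = 1$.

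The algebraic core is the rewriting of $B_\infty(m-1) \geq 0$ (with $m \geq 1$ large enough that $1 + t_{m-1} - t_m > 0$, by \Cref{proposition:valleys}) as
\[
    a_{m+1}\,(1 + t_{m-1} - t_m) \,\leq\, 1 + t_{m-1},
    \qquad \text{equivalently}\qquad
    \frac{t_m\, a_{m+1}}{t_{m+1}\, a_{m+2}} \,\geq\, 1 + t_{m-1}.
\]
Taking the telescoping product over $m = m_0, m_0+1, \ldots, M-1$ collapses to
\[
    \frac{t_{m_0}\, a_{m_0 + 1}}{t_M\, a_{M+1}} \,\geq\, \prod_{k \,=\, m_0 - 1}^{M - 2}(1 + t_k),
\]
and using $a_{M+1} \to 1$ yields the key bound
\[
    M\, t_M \,\leq\, (1 + o(1)) \cdot t_{m_0}\, a_{m_0 + 1} \cdot \frac{M}{\prod_{k = m_0 - 1}^{M - 2}(1 + t_k)}
    \qquad \text{as } M \to \infty.
\]

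The conclusion $\limsup_m m\, t_m \leq 1$ then follows by contradiction: assuming $\limsup_m m\, t_m = L > 1$, one extracts a subsequence $M_i$ with $M_i\, t_{M_i} \to L$, and the combined use of (i) the eventual monotonic decrease of $t_k$ from \Cref{proposition:valleys}, and (ii) the displayed product estimate, should force $\prod_{k = m_0 - 1}^{M_i - 2}(1 + t_k)$ to grow super-linearly in $M_i$, contradicting $M_i\, t_{M_i} \to L > 0$. The main obstacle I anticipate is in making this last comparison sharp enough to extract the tight constant $1$: the crude inequality $t_k \geq t_{M_i} > L/M_i$ for $k \leq M_i$ only produces $\prod_k (1 + t_k) \geq e^{L(1 + o(1))}$, which is far too weak. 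A self-consistent refinement is required, presumably an iterative scheme in which an initial weak bound on $\limsup_m m\, t_m$ is used to control the $t_k$'s individually, yielding a sharper bound on the product, and so on --- or an integral-comparison argument exploiting the full recursion $a_{m+1} \leq (1 + t_{m-1})/(1 + t_{m-1} - t_m)$ beyond its first-order approximation. The tightness at $t_k = 1/k$ (for which $M\, t_M = 1$ identically and $\prod_{k=1}^{M-2}(1 + 1/k) \sim M$) shows that any successful argument must recover the constant $1$ exactly.
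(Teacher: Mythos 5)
Your telescoping of the inequality $B_\infty(m-1)\ge 0$ is algebraically correct (the identity $t_m A_\infty(m+1)=A_\infty(m)-1$ does turn the constraint into $\frac{t_m A_\infty(m+1)}{t_{m+1}A_\infty(m+2)}\ge 1+t_{m-1}$, and the product collapses as you claim), but the proof is not complete, and you say so yourself: everything hinges on showing that $\prod_{k=m_0-1}^{M-2}(1+t_k)$ grows at least like $(1-o(1))\,t_{m_0}A_\infty(m_0+1)\,M$, and no argument for this is supplied. The gap is not cosmetic. In the regime $\sum_k t_k<\infty$ (e.g.\ the power specialization $t_k=\varkappa/k^2$ of \Cref{sub:power_spec_defn}), the product converges to a finite limit and your telescoped bound degenerates to $t_M=O(1)$, which gives no control on $M t_M$ at all; one would have to switch to an entirely separate argument there (eventual monotonicity plus Cauchy condensation, as the paper uses in the proof of \Cref{prop:full_type_I_support}). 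In the regime $\sum_k t_k=\infty$ the bound is self-referential in exactly the way you describe, and the "iterative refinement" needed to extract the sharp constant $1$ is the actual content of the proposition --- it is left as an anticipated obstacle rather than carried out.

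The paper's proof avoids all of this and is essentially one computation: it uses harmonicity of $\varphi_{\vec u,\vec t}$ rather than the positivity of the $B$-determinants. Writing $1=\sum_{|w|=m+1}\dim(w)\,\varphi_{\vec u,\vec t}(w)$ and splitting the sum according to whether $w$ begins with $1$ or with $2$, the second piece evaluates exactly (via $s_{2v}=t_m\,s_v$ and $\dim(2v)=m\dim(v)$) to $\frac{m\,t_m}{(1+t_{m-1})(1+t_m)}$, and since both pieces are strictly positive this quantity lies in $(0,1)$ for every $m$. Combined with $t_m\to 0$ from \Cref{proposition:limit_zero_of_convergent_type}, this immediately gives $\limsup_m m\,t_m\le 1$ and likewise for the $\liminf$. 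If you want to salvage your analytic route, you would at minimum need to split into the two regimes above and supply the missing self-consistent estimate in the divergent-sum case; the probabilistic normalization identity is the mechanism that produces the exact constant $1$ for free.
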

\begin{proof}
Harmonicity
 (\Cref{def:harmonic_on_YF})
 implies that
	\begin{align*}
	1 &=
	\sum_{|w| = m+1} \dim(w) \ssp\varphi_{\vec u, \vec t\ssp}(w) \\
	&=
	\sum_{|w| = m}
	\dim(1w)\ssp \varphi_{\vec u, \vec t\ssp}(1w) +
	\sum_{|w| = m-1}
	\dim(2w)\ssp \varphi_{\vec u, \vec t\ssp}(2w) \\
	&=
	\sum_{|w| = m}
	\dim(w) \varphi_{\vec u, \vec t\ssp}(1w) +
	\sum_{|w| = m-1}
	\frac{m\ssp t_m}{(1+ t_{m-1})(1 + t_m)}\ssp
	\dim(w)\ssp \varphi_{\vec u, \vec t\ssp}(w) \\
	&=
	\sum_{|w| = m}
	\dim(w)\ssp \varphi_{\vec u, \vec t\ssp}(1w) +
	\frac{m\ssp t_m}{(1+ t_{m-1})(1 + t_m)}.
	\end{align*}
	Both
$\sum_{|w|=m} \dim(w) \varphi_{\vec u, \vec t\ssp}(1w)$
and
$m\ssp t_m (1 + t_{m-1})^{-1}(1 + t_m)^{-1}$
are nonzero. Thus, we may conclude that
$m\ssp t_m (1 + t_{m-1})^{-1}(1 + t_m)^{-1} \in (0,1)$
for all $m \geq 1$.
By Proposition~\ref{proposition:limit_zero_of_convergent_type},
$t_m \to 0$ as $m \to \infty$, and consequently,
\begin{equation*}
1 \geq
\limsup_{m \to \infty}
\frac{m\ssp t_m}{(1+ t_{m-1})(1 + t_m)}
=
\limsup_{m \to \infty} m\ssp t_m
\geq 0.
\end{equation*}
Similarly, $\liminf_{m \rightarrow \infty} m\ssp t_m \in [0,1]$.
This completes the proof.
\end{proof}

\begin{remark}[Non-example of convergent type specializations]
	\label{rmk:nonexample_convergent_type}
	Let $0<\upalpha<1$.
	By \Cref{proposition:bounded_limit__of_convergent_type},
	the sequence
	$
	t_k = \varkappa\ssp k^{-\upalpha}
	$,
	$k\ge 1$,
	is never of convergent type for any value $\varkappa > 0$,
	despite the fact that $t_m \to 0$ as $m \to \infty$.
\end{remark}

\begin{remark}
	\label{rmk:might_not_be_convergent}
	The sequence $\{m\ssp t_m\}$ itself might not have a limit
	under the assumptions that
	\begin{enumerate}[$\bullet$]
		\item
			$t_m$ is eventually decreasing
			to zero, and
		\item
			the terms $m\ssp t_m$ are bounded by, say, $1$.
	\end{enumerate}
	Indeed, denote $f_n=n\ssp t_n$, then $t_n\ge t_{n+1}$ implies
	that $f_n-f_{n+1}\ge -1/n$.
	Thus, $f_n$ may make steps in the interval $[0,1]$ of size at most
	$1/n$ in any direction. Since the series $\sum 1/n$ diverges,
	we can organize the steps in such a way that $f_n$
	has at least two subsequential limits.

This observation indicates that we cannot, in general, strengthen
\Cref{proposition:bounded_limit__of_convergent_type}
to the full convergence of the sequence $m\ssp t_m$.
Nevertheless, more subtle consequences of the assumption that~$\vec t$
is a Fibonacci positive specialization of convergent type
might still enforce the convergence of $m\ssp t_m$.
We do not address this question here.
\end{remark}

\subsection{Operations preserving Fibonacci positivity}
\label{sub:operations_preserving_Fibonacci_positivity}

We describe a number of operations
which preserve Fibonacci positivity.
The first is straightforward:

\begin{proposition}[Left shift]
\label{propositon:shifting-fibonacci-positive-specializations}
For any integer $r \geq 0$,
the shifted pair of sequences
$(\vec{x} + r, \, \vec{y} + r)=
(x_{1+r}, x_{2+r}, \ldots, \, y_{1+r}, y_{2+r}, \ldots)$
is a Fibonacci positive specialization whenever $(\vec{x}, \vec{y} \ssp)$
is Fibonacci positive. Moreover, for any $r\ge1$, the shifted pair
of sequences define a specialization of convergent type.
\end{proposition}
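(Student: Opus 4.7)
I would argue the two claims separately. For preservation, I recall the equivalence (noted just after \Cref{def:fibonacci_nonnegative_biserial}) that $(\vec{x},\vec{y})$ is Fibonacci positive exactly when the determinants $A_\ell(\vec{x}\mid\vec{y})$, $\ell\ge 0$, and the shifted determinants $B_\ell(\vec{x}+s\mid\vec{y}+s)$, $\ell,s\ge 0$, are all strictly positive. Applied to $(\vec{x}+r,\vec{y}+r)$, the required shifted-$B$ conditions are $B_\ell(\vec{x}+(r+m)\mid\vec{y}+(r+m))$ with $\ell,m\ge 0$, a sub-collection of the original's. For the shifted $A$-determinants I write $x_k = c_k(1+t_{k-1})$, $y_k = c_k c_{k+1} t_k$ in the canonical form furnished by \Cref{thm:Fibonacci_positivity} and apply the scaling identity \eqref{eq:clone_Schur_action} to reduce
\begin{equation*}
    A_\ell(\vec{x}+r\mid\vec{y}+r) \, = \, c_{r+1}\cdots c_{r+\ell}\bigg(1+\sum_{k=1}^{\ell} t_r t_{r+1}\cdots t_{r+k-1}\bigg)
\end{equation*}
via \eqref{eq:A_ell_m_as_sum}, which is manifestly positive.

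For the convergent-type assertion when $r\ge 1$, the previous step yields a Fibonacci positive specialization whose canonical form $(\vec{\tilde c},\vec{\tilde t})$ is guaranteed by \Cref{thm:Fibonacci_positivity} to be of either convergent or divergent type. Since divergent type would force $\tilde t_m\ge m$ and hence $A^{\mathrm{new}}_\infty(m)=\infty$ for all $m\ge 1$, it suffices to exhibit a finite bound on $A^{\mathrm{new}}_\infty(1)=\lim_{n\to\infty}A_n(\vec{\tilde u}+1\mid\vec{\tilde t}+1)$. The key step is the identity
\begin{equation*}
    A_n(\vec{\tilde u}+1\mid\vec{\tilde t}+1) \, = \, \frac{(1+t_r)\,A_n(\vec{u}+r+1\mid\vec{t}+r+1)}{1+t_r\,A_n(\vec{u}+r+1\mid\vec{t}+r+1)},
\end{equation*}
whose right-hand side is monotone in $A_n(\vec{u}+r+1\mid\vec{t}+r+1)\in(0,\infty]$ and bounded above by $(1+t_r)/t_r<\infty$ (finite because $r\ge 1$ and positivity of $\vec{t}$ give $t_r>0$). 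Passing to the limit, \Cref{lemma:mathcal_S} upgrades convergence at $m=1$ to convergence for all $m\ge 1$, and \Cref{prop:finite_type_positive} then identifies $\vec{\tilde t}$ as of convergent type.

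\textbf{Main obstacle.} The principal technical point is the derivation of the displayed identity, which I plan to obtain by equating two expressions for the determinant $A_n(\vec{x}+r+1\mid\vec{y}+r+1)$: via the new canonical parametrization it factors as $\tilde c_2\cdots\tilde c_{n+1}\cdot A_n(\vec{\tilde u}+1\mid\vec{\tilde t}+1)$, while the original parametrization together with \eqref{eq:clone_Schur_action} gives $c_{r+2}\cdots c_{r+n+1}\cdot A_n(\vec{u}+r+1\mid\vec{t}+r+1)$. The ratio of the two prefactors is computed using $\tilde c_1\cdots\tilde c_{n+1} = A_{n+1}(\vec{x}+r\mid\vec{y}+r) = c_{r+1}\cdots c_{r+n+1}\,A_{n+1}(\vec{u}+r\mid\vec{t}+r)$ together with $\tilde c_1 = x_{r+1} = c_{r+1}(1+t_r)$, and the closed form then drops out after invoking the one-step recursion $A_{n+1}(\vec{u}+r\mid\vec{t}+r) = 1+t_r\,A_n(\vec{u}+r+1\mid\vec{t}+r+1)$ immediate from \eqref{eq:A_ell_m_as_sum}. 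No individual step is difficult, but coordinating the two parametrizations requires careful bookkeeping.
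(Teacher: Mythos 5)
Your proof is correct and follows essentially the same route as the paper's: both identify the canonical $\vec t$-sequence of the shifted specialization (equivalently, its partial sums $A_n$) in terms of the original's and show the associated series converges, and your fractional-linear identity
$A_n(\vec{\tilde u}+1\mid\vec{\tilde t}+1)=\frac{(1+t_r)A_n(r+1)}{1+t_rA_n(r+1)}$
is exactly the closed form of the paper's telescoped product $T_1\cdots T_k = t_2\cdots t_{k+1}\,\tilde A_1/(\tilde A_k\tilde A_{k+1})$. The only cosmetic differences are that you handle general $r\ge 1$ directly with a uniform bound $(1+t_r)/t_r$, which sidesteps the paper's case split between convergent- and divergent-type originals, and that you spell out the (easy) preservation-of-positivity step that the paper leaves implicit.
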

\begin{proof}
	It suffices to consider the case $r=1$.
	The new $\vec t$-sequence of the shifted specialization
	(denote it by $\vec T$)
	is expressed in terms of the original $\vec t$-sequence as
	\begin{equation*}
		T_k=\frac{t_{k+1}\tilde A_{k-1}}{\tilde A_{k+1}},\qquad \tilde A_k\coloneqq 1+t_1+t_1t_2+ \ssp \cdots \ssp + t_1\cdots t_{k}.
	\end{equation*}
	Therefore (since $\tilde A_0=1$), we have
	\begin{equation}
		\label{eq:shifted_fibonacci_positive_specialization_proof}
		T_1 \cdots T_k \, = \, t_2\cdots t_{k+1} \, \frac{\tilde A_1}{\tilde A_{k}\tilde A_{k+1}}.
	\end{equation}
	If the original $\vec t$-sequence is of convergent type, then $\tilde A_k \tilde A_{k+1}$ stays bounded as $k\to \infty$, and the series
	with the summands \eqref{eq:shifted_fibonacci_positive_specialization_proof} converges.
	If the original $\vec t$-sequence is of divergent type, then
	$t_k\ge k$, we may use the inequalities
	\begin{equation*}
		\tilde A_{k+1}\ge t_1\cdots t_{k+1},\qquad  \tilde A_k\ge t_1 \cdots t_k\ge k!
	\end{equation*}
	to conclude that the series
	with the summands \eqref{eq:shifted_fibonacci_positive_specialization_proof}
	also converges.
\end{proof}

There are also two right shifts, which
can be applied to divergent and
convergent Fibonacci positive sequences,
both of which introduce a new parameter:

\begin{proposition}[Infiltration right shift for convergent type]
\label{proposition:infiltration_shift_fibonacci_positive_specializations}
Let $\vec t$ be a Fibonacci positive sequence of
convergent type and suppose that
\begin{equation}
\label{eq:infiltration-inequality}
L:= \,
{t_1 A_\infty(2) \over {t_1 A_\infty(2) + 1}}
\ \,  \leq  \ \,
{t_1 A_\infty(2) \over {A_\infty(2) -1}}
\ssp - \ssp 1
\ =:R
\end{equation}
Set
$T_k\coloneqq t_{k-1}$ for $k\ge2$
and let $T_1 \in [L,R]$ be a new parameter,
then $\vec T$ is Fibonacci positive of
convergent type.
\end{proposition}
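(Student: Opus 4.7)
The plan is to invoke the characterization in \Cref{prop:finite_type_positive}: I must verify that for $T_1 \in [L, R]$ the shifted sequence $\vec T$ is of convergent type, meaning both that all series $\tilde A_\infty(m)$ (defined by \eqref{eq:A_infty_B_infty_series_definitions} with $\vec T$ in place of $\vec t\ssp$) converge, and that all the quantities $\tilde B_\infty(m)$ are nonnegative for $m \geq 0$. The main observation is that because the shift amounts to an almost-trivial translation of indices, every $\tilde A_\infty(m)$ and $\tilde B_\infty(m)$ can be expressed directly in terms of the original $A_\infty$'s (and $B_\infty$'s), after which only two boundary constraints will remain.

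First, a direct unfolding of \eqref{eq:A_infty_B_infty_series_definitions} under $T_k = t_{k-1}$ for $k \geq 2$ gives $\tilde A_\infty(m) = A_\infty(m-1)$ for all $m \geq 2$, while $\tilde A_\infty(1) = 1 + T_1\ssp A_\infty(1)$; since each $A_\infty(m)$ converges by hypothesis and $T_1$ is finite, every $\tilde A_\infty(m)$ is finite as well.

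Second, for the nonnegativity, I would use the reformulation $B_\infty(m) = A_\infty(m+1) - A_\infty(m+2) - t_m\ssp t_{m+2}\ssp A_\infty(m+3)$ appearing inside the proof of \Cref{lemma:monotonic-decreasing-Astuff}. Applied to $\vec T$ for $m \geq 2$, this identity immediately yields $\tilde B_\infty(m) = B_\infty(m-1)$, which is nonnegative by the original Fibonacci positivity assumption. Only the two boundary cases $m = 0$ and $m = 1$ need separate attention. For $m = 0$, the convention $T_0 = 0$ reduces the expression to $\tilde B_\infty(0) = \tilde A_\infty(1) - \tilde A_\infty(2) = 1 - (1 - T_1)\ssp A_\infty(1)$, whose nonnegativity simplifies to $T_1 \geq (A_\infty(1) - 1)/A_\infty(1)$; combined with the recursion $A_\infty(1) = 1 + t_1\ssp A_\infty(2)$, this lower bound equals exactly $L$. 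For $m = 1$, one obtains $\tilde B_\infty(1) = A_\infty(1) - A_\infty(2) - T_1\ssp t_2\ssp A_\infty(3)$, and using both $A_\infty(1) = 1 + t_1\ssp A_\infty(2)$ and $A_\infty(2) = 1 + t_2\ssp A_\infty(3)$ to rewrite the numerator, the inequality $\tilde B_\infty(1) \geq 0$ becomes $T_1 \leq R$.

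Assembling the three ranges of $m$, the requirement $T_1 \in [L, R]$ is precisely what ensures $\tilde B_\infty(m) \geq 0$ for every $m \geq 0$; together with the convergence of the $\tilde A_\infty(m)$ established above, this is exactly the convergent-type condition, and the Fibonacci positivity of $\vec T$ then follows from \Cref{prop:finite_type_positive}. The main obstacle is not conceptual but algebraic: one must carefully recognize that the boundary inequalities from $m = 0$ and $m = 1$ are the \emph{only} nontrivial constraints, and that after applying the three-term recursion for $A_\infty$, they reproduce exactly the two expressions defining $L$ and $R$.
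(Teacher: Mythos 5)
Your proposal is correct and follows essentially the same route as the paper: both observe that $\tilde A_\infty(m)=A_\infty(m-1)$ for $m\ge2$ and $\tilde B_\infty(m)=B_\infty(m-1)\ge0$ for $m\ge2$, reducing everything to the two boundary inequalities $\tilde B_\infty(0)\ge0$ and $\tilde B_\infty(1)\ge0$, which are exactly $T_1\ge L$ and $T_1\le R$. The only (cosmetic) difference is that you compute the boundary cases via the identity $B_\infty(m)=A_\infty(m+1)-A_\infty(m+2)-t_m t_{m+2}A_\infty(m+3)$ from \Cref{lemma:monotonic-decreasing-Astuff}, whereas the paper expands $B'_\infty(0)$ and $B'_\infty(1)$ directly from the definition; the resulting algebra is identical.
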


\begin{proof}
For $m \geq 0$ let
\[
\begin{split}
A'_\infty(m) &:= \, 1 \ssp + \ssp T_m
\ssp + \ssp T_m T_{m+1} \ssp + \ssp T_m T_{m+1} T_{m+2} \ssp + \ssp \cdots, \\
B'_\infty(m) &:= \, T_{m+1} \ssp + \ssp
(T_{m+1} - T_m - 1) T_{m+2} A'_\infty(m+3).
\end{split}
\]
with the understanding that $T_0 = 0$.
Notice that $A'_\infty(m) = A_\infty(m-1)$
for $m \geq 2$ and  $A'_\infty(1) =
1 + T_1 A_\infty(1)$ which are
convergent. Furthermore
$B'_\infty(m) = B_\infty(m-1) \geq 0$ for
all $m \geq 2$. So we only need to check
that
\begin{equation}
\label{eq:pair-B-inequalities}
\begin{split}
B'_\infty(0)
&= T_1 \ssp + \ssp (T_1 - 1)t_1 A_\infty(2)
\, \geq 0, \\
B'_\infty(1)
&= t_1 \ssp + \ssp (t_1 - T_1 - 1)t_2 A_\infty(3) \\[2pt]
&= t_1 \ssp + \ssp (t_1 - T_1 - 1)
(A_\infty(2) - 1) \, \geq 0.
\end{split}
\end{equation}
The pair of
inequalities in \eqref{eq:pair-B-inequalities} is
clearly equivalent to $T_1 \in [L,R]$.
\end{proof}

\begin{remark}
It is always the case that $L  > 0$ and so
\eqref{eq:infiltration-inequality}
forces $ R > 0$ as well.
Since

\[
\begin{split}
B_\infty(0)
&= \, t_1 \ssp + \ssp (t_1 - 1)t_2 A_\infty(3)\\
&= \, 1 + (t_1 -1) A_\infty(2)\\
&= \, (A_\infty(2) -1) R
\end{split}
\]
it follows that $B_\infty(0) > 0$
whenever \eqref{eq:infiltration-inequality}
holds.
\end{remark}

\begin{proposition}[Infiltration right shift for divergent type]
\label{proposition:right_shifting_fibonacci_positive_specializations}
Let $\vec t$ be a Fibonacci positive sequence of divergent type, and $\sigma\ge 0$.
Set
$T_k\coloneqq \sigma+t_{k-1}$, $k\ge2$,
and let $T_1$ be a new parameter.
If $0<T_1\le t_1+\sigma-1$,
	then $\vec T$ is Fibonacci positive of divergent type.
\end{proposition}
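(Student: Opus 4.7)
The plan is to reduce the claim to the divergent-type characterization of Fibonacci positivity proved in \Cref{prop:infinite_type_positive}. By that proposition, once I have shown that the series $A'_\infty(m)$ associated with $\vec T$ are divergent, Fibonacci positivity of the corresponding specialization becomes equivalent to the three-term inequalities $T_{m+1}\ge 1+T_m$ for every $m\ge 0$ (with the convention $T_0=0$). So my strategy has two steps: first establish divergence of the $A'_\infty(m)$, and then verify the three-term inequalities case by case.

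First I would observe that divergence of $A'_\infty(m)$ is immediate: since $\vec t$ is of divergent type we have $t_k\ge k$, hence $T_k=\sigma+t_{k-1}\to\infty$, and the partial products $T_m T_{m+1}\cdots T_{m+r-1}$ grow without bound as $r\to\infty$. Next I would split the verification of the three-term inequalities into three regimes. For $m\ge 2$, the inequality $T_{m+1}\ge 1+T_m$ rewrites as $\sigma+t_m\ge 1+\sigma+t_{m-1}$, i.e.\ $t_m\ge 1+t_{m-1}$, which is exactly the divergent-type hypothesis on $\vec t$. For $m=1$, the inequality $T_2\ge 1+T_1$ rewrites as $\sigma+t_1\ge 1+T_1$, which is precisely the upper bound $T_1\le t_1+\sigma-1$ in the hypothesis. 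The remaining case $m=0$ reduces to a lower bound $T_1\ge 1$ on the new free parameter.

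With all three inequalities in hand, \Cref{prop:infinite_type_positive} immediately yields that the specialization determined by $\vec T$ is Fibonacci positive and that $\vec T$ is itself of divergent type. The main obstacle, paralleling the convergent-type analogue \Cref{proposition:infiltration_shift_fibonacci_positive_specializations}, is the bookkeeping around the single new parameter $T_1$: the upper bound $T_1\le t_1+\sigma-1$ from the hypothesis must be compatible with the lower bound coming from the $m=0$ case, while the divergent-type property of the original sequence $\vec t$ handles all larger indices $m\ge 2$ automatically under the shift by $\sigma$. Nothing more than elementary bookkeeping is needed, so the proof will be short.
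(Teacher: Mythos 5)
Your route is the same as the paper's: reduce to the divergent-type criterion and check the inequalities $T_{m+1}\ge 1+T_m$ term by term, with $m\ge 2$ inherited from $\vec t$, and $m=1$ being exactly the stated upper bound $T_1\le t_1+\sigma-1$. You are in fact slightly more careful than the printed proof, which neither records the (easy) divergence of the new series $A_\infty(m)$ nor appeals explicitly to \Cref{prop:infinite_type_positive}; it simply verifies $T_k\ge 1+T_{k-1}$ for $k\ge 2$ and stops.

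The one point you raise but do not close is the $m=0$ inequality $T_1\ge 1+T_0=1$. This cannot be waved away as bookkeeping: the hypothesis only gives $T_1>0$, and when $t_1+\sigma-1>1$ one may choose $T_1\in(0,1)$, in which case $\vec T$ fails to be of divergent type in the sense of \Cref{def:finite_infinite_type_specializations}, and by the converse direction of \Cref{prop:infinite_type_positive} the resulting specialization is \emph{not} Fibonacci positive (the determinants $B_{\ell}(0)$ eventually go negative). So the compatibility you flag genuinely fails for part of the stated parameter range. To be fair, the paper's own proof silently skips the $k=1$ case as well, so this is best read as a defect in the statement --- the admissible range should be $1\le T_1\le t_1+\sigma-1$ --- rather than a flaw specific to your argument; but as written, your proof leaves the $m=0$ inequality unestablished, and under the literal hypothesis it cannot be established.
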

\begin{proof}s
    Since $t_k\ge 1+t_{k-1}$, we have $T_k\ge 1+T_{k-1}$ for
	$k\ge 3$. For $k=2$, we need $T_2=\sigma+t_1\ge 1+T_1$, which
	explains the condition $T_1\le t_1+\sigma-1$.
	This completes the proof.
\end{proof}

Fibonacci positivity can also be seen as a ``snake'' that eats
its own tail because $\mathcal{B}_r(\ssp \vec{x} \mid
\vec{y} \ssp)$ provides a new Fibonacci positive
specialization for each $r \geq 0$, whenever the pair
$(\vec{x}, \vec{y} \ssp)$ satisfies Fibonacci positivity.
The following result introduces a form of plethystic
substitution for clone Schur functions
that preserves Fibonacci positivity:

\begin{proposition}[Ouroboric shift]
Let $(\vec{x}, \vec{y} \ssp)$
be a Fibonacci positive specialization. Then, for any $r \geq 0$, the specialization
$(\vec{X}, \vec{Y})$ is also Fibonacci positive, where
\[
X_k \ssp :=
\begin{cases}
y_{r+1}, & \text{if $k = 1$,} \\
x_{k+r+1}, & \text{if $k \geq 2$,}
\end{cases}
\qquad
Y_k \ssp :=
\begin{cases}
x_{r+1} \ssp y_{r+2}, & \text{if $k = 1$,} \\
y_{k+r+1}, & \text{if $k \geq 2$.}
\end{cases}
\]
\end{proposition}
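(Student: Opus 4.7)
The plan is to verify the two equivalent conditions characterizing Fibonacci positivity of $(\vec X,\vec Y\ssp)$ from \Cref{sub:reduction_to_one_sequence}: positivity of every leading principal minor of $\mathcal A(\vec X\mid \vec Y\ssp)$, and of every leading principal minor of $\mathcal B_m(\vec X\mid \vec Y\ssp)$ for each $m\ge 0$. I would split the verification into three cases according to the block structure: the $\mathcal A$-minors, the $\mathcal B_m$-minors for $m\ge 1$, and the remaining family of $\mathcal B_0$-minors.

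First I would observe that the specialization $(\vec X,\vec Y\ssp)$ is engineered so that $\mathcal A(\vec X\mid \vec Y\ssp)=\mathcal B_r(\vec x\mid \vec y\ssp)$ as semi-infinite tridiagonal matrices: substituting $X_1=y_{r+1}$, $Y_1=x_{r+1}y_{r+2}$, and $X_k=x_{k+r+1}$, $Y_k=y_{k+r+1}$ for $k\ge 2$ reproduces exactly the matrix $\mathcal B_r(\vec x\mid \vec y\ssp)$ in \eqref{eq:tridiagonal_matrices_A_Br}, so every size-$\ell$ leading principal minor $A_\ell(\vec X\mid \vec Y\ssp)$ equals $B_{\ell-1}(\vec x+r\mid \vec y+r)$, which is positive by Fibonacci positivity of $(\vec x,\vec y\ssp)$. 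Next, for $m\ge 1$ only the entries $X_k,Y_k$ with $k\ge 2$ appear in $\mathcal B_m(\vec X\mid \vec Y\ssp)$, and a direct index shift gives $\mathcal B_m(\vec X\mid \vec Y\ssp)=\mathcal B_{m+r+1}(\vec x\mid \vec y\ssp)$, whose leading principal minors are positive by Fibonacci positivity of $(\vec x,\vec y\ssp)$.

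The genuinely new case is $m=0$, and this is where I would focus the work. Using the shorthand $A_\ell(n)=A_\ell(\vec x+n\mid \vec y+n)$ and $B_\ell(n)=B_\ell(\vec x+n\mid \vec y+n)$ from \Cref{rmk:notation_Al_Blm}, cofactor expansion of $B_{\ell-1}(\vec X\mid \vec Y\ssp)$ along its first row $(x_{r+1}y_{r+2},\,y_{r+1}y_{r+3},\,0,\,\ldots)$ gives
\[
B_{\ell-1}(\vec X\mid \vec Y\ssp)=x_{r+1}y_{r+2}\,A_{\ell-1}(r+3)-y_{r+1}y_{r+3}\,A_{\ell-2}(r+4),
\]
while the same type of expansion applied to $B_{\ell-1}(r+1)$ yields
\[
B_{\ell-1}(r+1)=y_{r+2}\,A_{\ell-1}(r+3)-x_{r+2}y_{r+3}\,A_{\ell-2}(r+4).
\]
Eliminating the common term $y_{r+2}A_{\ell-1}(r+3)$ between these two displays produces the key decomposition
\[
B_{\ell-1}(\vec X\mid \vec Y\ssp)=x_{r+1}\,B_{\ell-1}(r+1)+y_{r+3}\,A_2(r)\,A_{\ell-2}(r+4),
\]
after recognizing $x_{r+1}x_{r+2}-y_{r+1}=A_2(r)$.

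Each of the three factors on the right-hand side is strictly positive: $x_{r+1}$ and $y_{r+3}$ are positive entries of a Fibonacci positive specialization; $B_{\ell-1}(r+1)$ is positive by Fibonacci positivity of $(\vec x,\vec y\ssp)$; and $A_2(r)$ together with $A_{\ell-2}(r+4)$ are positive as contiguous diagonal minors of the totally positive tridiagonal matrix $\mathcal A(\vec x\mid \vec y\ssp)$ (with the convention $A_0=1$; the boundary case $\ell=1$ reduces trivially to $B_0(\vec X\mid \vec Y\ssp)=Y_1>0$). The main obstacle is uncovering the algebraic identity in the previous display, which reroutes the only genuinely new $B$-minor into a positive combination of minors already known to be positive for the original specialization; once that identity is in hand, the rest of the argument is routine.
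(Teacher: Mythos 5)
Your proof is correct and follows essentially the same route as the paper: the reduction to the single genuinely new family $\mathcal{B}_0(\vec{X}\mid\vec{Y})$ and your final decomposition $B_{\ell-1}(\vec{X}\mid\vec{Y})=x_{r+1}\,B_{\ell-1}(r+1)+y_{r+3}\,A_2(r)\,A_{\ell-2}(r+4)$ is exactly the paper's identity \eqref{clone-identity-ouroboric} rewritten in determinant notation (there $s_{1^\ell 21}(\vec{x}+r\mid\vec{y}+r)=x_{r+1}B_\ell(r+1)$ and $s_{211}(\vec{x}+r\mid\vec{y}+r)=y_{r+3}A_2(r)$). The only divergence is in how the identity is obtained: you eliminate the common term between two first-row cofactor expansions, whereas the paper deduces it from the clone Littlewood--Richardson/Pieri rule; your derivation is more elementary and self-contained, and the positivity of each factor is justified correctly.
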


\begin{proof}
We need only to prove that the
semi-infinite matrix
\begin{equation}
\mathcal{B}_0 \big( \, \vec{X} \, \big| \, \vec{Y} \, \big) \, = \,
\begin{pmatrix}
		Y_{1} & X_{1} Y_{2} & 0 & \cdots\\
		1 & X_{3} & Y_{3} &\\
		0 & 1 & X_{4}  & \\
		\vdots & & & \ddots
	\end{pmatrix}
    \, = \,
\begin{pmatrix}
		x_{r+1} \ssp y_{r+2}
        & y_{r+1} \ssp y_{r+3} & 0 & \cdots\\
		1 & x_{r+4} & y_{r+4} &\\
		0 & 1 & x_{r+5}  & \\
		\vdots & & & \ddots
	\end{pmatrix}
\end{equation}
is totally positive for all $r \geq 0$.
Let us show that the corresponding
determinants satisfy
$B_\ell\big( \, \vec{X} \, \big| \, \vec{Y} \, \big) > 0$ for all $\ell \geq 0$.
There is no issue when $\ell = 0$,
since
$B_0\big( \, \vec{X} \, \big| \, \vec{Y} \, \big) = x_{r+1} \ssp y_{r+1} > 0$.
We can assume that $\ell \geq 1$. We have
\begin{equation}
\label{eq:ouroboric-expansion}
\begin{split}
B_\ell (  \vec{X} \, \mid \, \vec{Y} \, )
&=
x_{r+1} \ssp y_{r+2} \ssp
s_{1^{\ell}}
 (  \vec{x} + r + 3\, \mid \, \vec{y} + r + 3  )
-
y_{r+1} \ssp y_{r+3} \ssp
s_{1^{\ell-1}}
 (  \vec{x} + r + 4\, \mid \, \vec{y} + r + 4  ) \\
&=
s_{1^{\ell}}
 (  \vec{x} + r + 3\, \mid \, \vec{y} + r + 3  )
\ssp s_{21}
(  \vec{x} + r \, \mid \, \vec{y} + r \, ) \\
&\hspace{120pt} -
s_{1^{\ell-1}}
(  \vec{x} + r + 4 \, \mid \, \vec{y} + r + 4  ) \ssp
s_{22}
(\vec{x} + r \, \mid \, \vec{y} + r  ).
\end{split}
\end{equation}
By \Cref{propositon:shifting-fibonacci-positive-specializations},
it will be sufficient to restrict our
analysis of formula \eqref{eq:ouroboric-expansion} to the case where
$r = 0$, since
the four clone Schur functions
that occur are each
shifted by $r \geq 0$.
We have
\begin{equation}
\label{clone-identity-ouroboric}
\begin{split}
&s_{1^{\ell}}
(\vec{x} + 3 \mid \vec{y} + 3)
\ssp s_{21}
(\vec{x} \mid \vec{y} \ssp)
-
s_{1^{\ell-1}}
(\vec{x} + 4 \mid \vec{y} + 4) \ssp
s_{22}
(\vec{x} \mid \vec{y} \ssp) \\
&\hspace{140pt}=
s_{1^\ell 21}
(\vec{x} \mid \vec{y} \ssp)
+
s_{1^{\ell-1}}
(\vec{x} + 4 \mid \vec{y} + 4)
\ssp s_{211}
(\vec{x} \mid \vec{y} \ssp).
\end{split}
\end{equation}
for all $\ell \geq 1$.
Formula \eqref{clone-identity-ouroboric} follows as a direct
consequence of the clone Littlewood-Richardson identity
\cite{okada1994algebras}. Alternatively, it can be verified
directly by induction. The base case, $\ell = 1$, reduces to
a restatement of the clone Pieri rule for $s_{21}(\vec{x}
\mid \vec{y} \ssp)$:
\[
s_1(\vec{x} + 3 \mid \vec{y} + 3 )
\ssp s_{21}(\vec{x} \mid \vec{y} \ssp)
-
s_{22}(\vec{x} \mid \vec{y} \ssp)
=
s_{211}(\vec{x} \mid \vec{y} \ssp)
+
\ssp s_{121}(\vec{x} \mid \vec{y} \ssp).
\]
As a result, we conclude that
\eqref{clone-identity-ouroboric} is positive, as its
right-hand side
involves only sums and products of clone Schur functions. These
functions, by definition, are positive since $(\vec{x},
\vec{y} \ssp)$ is a Fibonacci positive specialization.
This completes the proof.
\end{proof}

\begin{proposition}
	\label{proposition:infinite_type_scaling}
	Let $\vec{t}$ be a divergent type sequence.
	Let $\vec \alpha$ be a positive real sequence such that
	\begin{equation}
		\label{eq:sequence_alpha_condition}
		\alpha_k t_{k} - \alpha_{k+1} t_{k-1} \geq \alpha_k \alpha_{k+1}, \qquad k \geq 1.
	\end{equation}
	Then the specialization $(\vec x,\vec{y}\ssp)$ defined
	by
	$x_k = \alpha_k + t_{k-1}$ and $y_k = \alpha_k t_{k}$
	is Fibonacci positive.
\end{proposition}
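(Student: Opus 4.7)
The plan is to realize the given $(\vec x,\vec y)$ in the canonical form of \Cref{thm:Fibonacci_positivity} and then verify that the associated sequence $\vec T$ is of divergent type. First I would compute the leading tridiagonal determinants from \eqref{eq:A_B_dets}. Using the three-term recursion $A_{\ell+1} = x_{\ell+1} A_\ell - y_\ell A_{\ell-1}$ with $x_k = \alpha_k + t_{k-1}$ and $y_k = \alpha_k t_k$, the cross-terms in $t$ cancel at each step, and a one-line induction yields
\[
A_\ell(\vec x \mid \vec y) \,=\, \alpha_1 \alpha_2 \cdots \alpha_\ell, \qquad \ell \geq 1.
\]
Substituting into the formulas of \Cref{prop:c_t_from_x_y} gives $c_k = \alpha_k$ and $T_k = t_k/\alpha_{k+1}$, both of which are sequences of positive reals, since $\alpha_k > 0$ and (as $\vec t$ is of divergent type) $t_k \geq k > 0$ for $k \geq 1$. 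In particular $T_0 = 0$ by the convention $t_0 = 0$.

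It then remains to verify that $\vec T$ itself satisfies the divergent type condition of \Cref{def:finite_infinite_type_specializations}, i.e.\ $T_{m+1} \geq 1 + T_m$ for every $m \geq 0$. Clearing denominators by $\alpha_{m+1}\alpha_{m+2} > 0$, this inequality is precisely the hypothesis \eqref{eq:sequence_alpha_condition} at index $k = m+1 \geq 1$ (the case $m=0$ reducing to $\alpha_1 t_1 \geq \alpha_1 \alpha_2$ via $t_0 = 0$). \Cref{prop:infinite_type_positive} then delivers the Fibonacci positivity of $(\vec x, \vec y)$, automatically of divergent type. The only genuinely nontrivial step is spotting the correct ansatz $c_k = \alpha_k$; once this is in place, all remaining work is routine, and the hypothesis on $\vec\alpha$ is engineered so that it translates exactly into the divergent type inequality for the transformed sequence $\vec T = \vec t/(\vec\alpha+1)$. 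The divergent type assumption on $\vec t$ itself plays only an auxiliary role, ensuring positivity of the $T_k$.
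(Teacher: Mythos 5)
Your proof is correct and follows essentially the same route as the paper: both identify the rescaled sequence $T_k = t_k/\alpha_{k+1}$ (with $c_k=\alpha_k$) and observe that \eqref{eq:sequence_alpha_condition} is precisely the divergent-type inequality $T_{k}\ge 1+T_{k-1}$, after which \Cref{prop:infinite_type_positive} applies. The only cosmetic difference is that the paper extracts this normalization directly from the scaling identity \eqref{eq:clone_Schur_action} rather than by computing $A_\ell=\alpha_1\cdots\alpha_\ell$ and invoking \Cref{prop:c_t_from_x_y}, and your closing remark that only positivity (not divergence) of $\vec t$ is actually used is accurate.
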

A particular case is when
$\alpha_k=\rho\in(0,1]$ for all $k$. Then
\eqref{eq:sequence_alpha_condition} clearly holds, and
for a sequence~$\vec t$ of divergent type, the specialization
$x_k=\rho+t_{k-1}$ and $y_k=\rho\ssp t_{k}$ is Fibonacci positive.
\begin{proof}[Proof of \Cref{proposition:infinite_type_scaling}]
	Denote $r_k\coloneqq t_k/\alpha_{k+1}$, $k\ge1$ (with $r_0=0$).
	For all Fibonacci words~$w$,
	we have
	$s_w(\vec x\mid \vec{y}\ssp) = (\alpha_1 \cdots \alpha_{|w|})
	\ssp s_w(\vec{x} \, ' \mid \vec{y} \, ')$,
	where $x_k' = 1 + r_{k-1}$ and $y_k' = r_{k}$.
	Note that \eqref{eq:sequence_alpha_condition}
	implies that $r_{k} \geq 1 + r_{k-1}$ for all $k \geq 1$.
	Thus, the specialization $(\vec x',\vec{y}\ssp')$ is Fibonacci positive
	of divergent type, and so is $(\vec x,\vec{y}\ssp)$.
\end{proof}

\begin{proposition}
Let $\vec{t} = (t_1, t_2, t_3, \dots )$
be a strictly decreasing sequence of
convergent type. Then the sequence
$\gamma \ssp \vec{t} \coloneqq ( \gamma t_1,
\gamma t_2, \gamma t_3, \dots )$
is of convergent type whenever
$0 < \gamma \leq 1$.
\end{proposition}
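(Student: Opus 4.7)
The plan is to verify the two conditions defining convergent type (\Cref{def:finite_infinite_type_specializations}) for the rescaled sequence $\gamma \vec t$: namely, convergence of $A_\infty^{(\gamma)}(m)$ and nonnegativity of $B_\infty^{(\gamma)}(m)$ for each $m \ge 0$, where the superscript $(\gamma)$ indicates that every occurrence of $t_j$ in the definitions \eqref{eq:A_infty_B_infty_series_definitions} is replaced by $\gamma t_j$. Convergence is immediate: the $r$-th summand of $A_\infty(m)$ is multiplied by $\gamma^r$, so since $0 < \gamma \le 1$ and all summands are nonnegative we obtain the termwise bound $A_\infty^{(\gamma)}(m) \le A_\infty(m) < \infty$.

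For the nonnegativity of $B_\infty^{(\gamma)}(m)$, I would first rewrite $B_\infty(m)$ in the more convenient form
\[
B_\infty(m) \, = \, A_\infty(m+2) \ssp (t_{m+1} - t_m - 1) + t_m + 1,
\]
obtained from the identity $t_{m+2}\, A_\infty(m+3) = A_\infty(m+2) - 1$. For $m \ge 1$, strict monotonicity gives $\Delta_m \coloneqq t_m - t_{m+1} > 0$, and nonnegativity of $B_\infty^{(\gamma)}(m)$ reduces to
\[
t_{m+1} \, \ge \, (1 + \gamma \Delta_m) \ssp t_{m+2} \ssp A_\infty^{(\gamma)}(m+3).
\]
Both factors on the right compare favorably with their unscaled versions: $1 + \gamma \Delta_m \le 1 + \Delta_m$ because $\gamma \le 1$, and $A_\infty^{(\gamma)}(m+3) \le A_\infty(m+3)$ by the first paragraph. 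Multiplying these inequalities and invoking the original hypothesis $B_\infty(m) \ge 0$ closes this case.

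The main obstacle is the boundary case $m = 0$: because $t_0 = 0$, one no longer has $\Delta_m > 0$, and the two factors $1 - \gamma t_1$ and $A_\infty^{(\gamma)}(2)$ respond oppositely to a decrease in $\gamma$, so the direct comparison above breaks down. The subcase $\gamma t_1 \ge 1$ is trivial (both summands of $B_\infty^{(\gamma)}(0)$ are nonnegative), so assume $\gamma t_1 < 1$. My approach is to expand the product directly as a telescoping series
\[
A_\infty^{(\gamma)}(2) \ssp (1 - \gamma t_1) \, = \, 1 \, + \sum_{k \ge 1} \gamma^k \ssp \Bigl( \prod_{j=2}^{k} t_j \Bigr) \ssp (t_{k+1} - t_1),
\]
where the empty product at $k = 1$ equals $1$. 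Strict decrease of $\vec t$ guarantees $t_{k+1} - t_1 < 0$ for every $k \ge 1$, so each summand is negative and the full series is at most $1$. Hence $B_\infty^{(\gamma)}(0) = 1 - A_\infty^{(\gamma)}(2)(1 - \gamma t_1) \ge 0$, completing the verification that $\gamma \vec t$ is of convergent type.
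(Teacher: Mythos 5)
Your proof is correct. The convergence claim and the case $m \ge 1$ follow essentially the same route as the paper: after dividing by $\gamma$, one compares $1 + \gamma(t_m - t_{m+1}) \le 1 + (t_m - t_{m+1})$ and $A_\infty^{(\gamma)}(m+3) \le A_\infty(m+3)$ factor by factor against the hypothesis $B_\infty(m) \ge 0$. The genuine difference is in the boundary case $m=0$, which is indeed where the work lies. There the paper splits into the subcases $t_1 \le 1$ and $t_1 > 1$, bounds $A_\infty(3,\gamma)$ by the geometric estimate $(1-\gamma t_3)^{-1}$ to obtain the rational minorant
\[
R(\gamma) \;=\; t_1 - \frac{(1-\gamma t_1)\, t_2}{1-\gamma t_3},
\]
and checks that $R$ vanishes only at a negative value of $\gamma$, so that $R>0$ on all of $(0,1]$. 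Your telescoping identity
\[
(1-\gamma t_1)\, A_\infty^{(\gamma)}(2) \;=\; 1 \, + \, \sum_{k \ge 1} \gamma^k \Bigl( \prod_{j=2}^{k} t_j \Bigr) (t_{k+1} - t_1)
\]
replaces all of this by a single expansion whose correction terms are manifestly negative under strict decrease (the regrouping is legitimate because $A_\infty^{(\gamma)}(2)$ has nonnegative terms and converges). This is cleaner: it avoids the case split, it does not need the geometric bound together with its implicit requirement $\gamma t_3 < 1$, and it makes transparent that $B_\infty^{(\gamma)}(0) \ge 0$ is already forced by strict decrease plus convergence of the $A$-series, with no appeal to the hypothesis $B_\infty(0) \ge 0$ --- a fact the paper's argument also establishes, but less visibly. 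The preliminary rewriting $B_\infty(m) = A_\infty(m+2)(t_{m+1}-t_m-1) + t_m + 1$ is only really used for $m=0$; for $m\ge1$ your argument runs directly on the original form, exactly as in the paper.
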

\begin{proof}
For $m \geq 0$, let
\begin{align*}
A_\infty(m,\gamma) &\coloneqq
1 + \gamma t_m + \gamma^2 t_m t_{m+1}
+ \gamma^3 t_m t_{m+1} t_{m+2} + \ldots, \\
B_\infty(m,\gamma) &\coloneqq
\gamma t_{m+1} -
\left( 1 + \gamma t_m -
\gamma t_{m+1} \right) \gamma t_{m+2}
A_\infty(m+3, \gamma), \\
\phi_0(\gamma) &\coloneqq
t_1 -
\left( 1 -
\gamma t_1 \right) t_2
A_\infty(3, \gamma).
\end{align*}
Clearly, $A_\infty(m, \gamma)$ is convergent
for any $\gamma \geq 0$, so we only have
to address the nonnegativity
of $B_\infty(m, \gamma)$ whenever
$0 < \gamma \leq 1$ and
$m \geq 0$.

Consider two cases, $t_1 \leq 1$ and $t_1 > 1$.
If $t_1 \leq 1$, we have
\begin{equation*}
    \phi_0(\gamma) \geq R(\gamma) \quad \text{for all} \quad \gamma \geq 0,
		\qquad \textnormal{where}\qquad
    R(\gamma) \coloneqq t_1 -
    \frac{(1 - \gamma t_1) t_2}{1 - \gamma t_3}.
\end{equation*}
Furthermore, $R(0) = t_1 - t_2 > 0$, and $R(\gamma)$ only vanishes at
\begin{equation*}
    \gamma_0 =
    \frac{t_1 - t_2}{t_1 ( t_3 - t_1 )} < 0.
\end{equation*}
It follows that $R(\gamma) > 0$ for all $\gamma > 0$, which forces
\begin{equation*}
    \phi_0(\gamma) > 0 \quad \text{and} \quad B_\infty(0,\gamma) > 0
    \quad \text{for all} \quad \gamma > 0.
\end{equation*}

If $t_1 > 1$, then
$\phi_0(\gamma) \geq 0$ whenever $\frac{1}{t_1} \leq \gamma \leq 1$.
For $\gamma$ within the range $0 \leq \gamma < \frac{1}{t_1}$, the inequality
$
    \phi_0(\gamma) \geq R(\gamma)
$
is valid, and we may again conclude that $\phi_0(\gamma) > 0$ whenever $0 \leq \gamma < \frac{1}{t_1}$.

The nonnegativity of $B_\infty(m,\gamma)$ for $m \geq 1$ follows from
\begin{align*}
    t_{m+1} &\geq \left( 1 + t_m - t_{m+1} \right)
    t_{m+2} A_\infty(m+3),
\end{align*}
together with
$
    A_\infty(m) > A_\infty(m, \gamma)
		$
and
$t_m - t_{m+1} \geq \gamma t_m - \gamma t_{m+1}$ whenever $0 \leq \gamma \leq 1$.
This completes the proof.
\end{proof}

\section{Examples of Fibonacci Positivity}
\label{sec:examples_of_Fibonacci_positivity}

Here
we present several Fibonacci positive
specializations, starting with divergent type.

\subsection{Divergent type examples}
\label{sub:divergent_type_examples_new}

We use the standard $q$-integer notation
$[k]_q=(1-q^k)/(1-q)$.

\begin{definition}[Examples of divergent type]
	\label{def:positive_specializations_class_II}
	We introduce a list of Fibonacci positive specializations
	$(\vec x,\vec{y}\ssp)$
	related to sequences of divergent type.
	The naming of some of the specializations is motivated by
	connections with Stieltjes moment sequences
	and the Askey scheme developed in \Cref{part:2}.
	We consider the following specializations:
	\begin{enumerate}[$\bullet$]
		\item \textbf{Charlier (deformed Plancherel).}
			$x_k = k + \rho -1$, $y_k = k \rho$ for $\rho \in (0 , 1]$.
			It reduces to the Plancherel specialization for $\rho=1$.
		\item \textbf{Type-I Al-Salam--Carlitz.}
			$x_k = \rho q^{k-1}+[k-1]_q$, $y_k = \rho q^{k-1}\ssp [k]_q$ for $\rho \in (0,1]$
			and $q \in (0 , 1)$.
		\item \textbf{Al-Salam--Chihara.}
			$x_k = \rho + [k-1]_q$, $y_k = \rho [k]_q$ for $\rho \in (0,1]$
			and $q \in [1 , \infty)$.
		\item \textbf{$q$-Charlier.}
			$x_k = \rho q^{2k-2} + [k-1]_q\big(1 + \rho (q-1) q^{k-2}\big)$,
			$y_k = \rho q^{2k-2} [k]_q\big(1 + \rho (q-1) q^{k-1}\big)$
			for $\rho,q \in (0,1]$.
		\item \textbf{Cigler--Zeng.}
			$x_k = q^{k-1}$, $y_k = q^k -1$ for
			$q\in [q_0 , \infty)$ where $q_0 \approx 1.4656$ is the unique real root of $z^3 = z^2+1$.
			This specialization can also be deformed to
			$x_k = q^{k-1}+\rho-1$, $y_k = \rho(q^k -1)$ with $\rho\in(0,1]$.
			The name comes from a family of orthogonal
			polynomials introduced in
			\cite{cigler2011curious}.
	\end{enumerate}
\end{definition}

\begin{proposition}
	\label{prop:class_II_positive_specializations}
	The specializations in \Cref{def:positive_specializations_class_II}
	are Fibonacci positive and are of divergent type.
\end{proposition}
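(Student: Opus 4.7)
The plan is to invoke the characterization theorem (\Cref{thm:Fibonacci_positivity}) directly. For each of the five specializations in \Cref{def:positive_specializations_class_II}, I would first produce the unique factorization $x_k = c_k(1 + t_{k-1})$ and $y_k = c_k c_{k+1} t_k$ with $c_1 = x_1$ and $t_0 = 0$, and then verify the divergent-type inequality $t_{m+1} \geq 1 + t_m$. Both tasks can be read off from the defining relations by induction on $k$, or equivalently by first computing the tridiagonal determinants $A_k(\vec x \mid \vec y\ssp)$ and invoking the closed forms in \Cref{prop:c_t_from_x_y}.

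Carrying out the first step yields explicit decompositions: for Charlier, $c_k = \rho$ and $t_k = k/\rho$; for Type-I Al-Salam--Carlitz, $c_k = \rho q^{k-1}$ and $t_k = [k]_q/(\rho q^k)$; for Al-Salam--Chihara, $c_k = \rho$ and $t_k = [k]_q/\rho$; for $q$-Charlier, $c_k = \rho q^{2k-2}$ and $t_k = [k]_q\bigl(1 + \rho(q-1)q^{k-1}\bigr)/(\rho q^{2k})$; and for (the deformed) Cigler--Zeng, $c_k = \rho$ and $t_k = (q^k - 1)/\rho$. In each case, the identities are verified by a direct substitution into $x_k = c_k(1+t_{k-1})$ and $y_k = c_k c_{k+1} t_k$.

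The second step reduces in each case to a short $q$-calculation for $t_{m+1} - t_m$, based on the standard telescoping identities $[m+1]_q - q[m]_q = 1$, $[m+1]_q - [m]_q = q^m$, and (for $q$-Charlier) $[m+1]_q - q^2[m]_q = 1 + q - q^{m+1}$. For the Charlier, Al-Salam--Carlitz, and Al-Salam--Chihara cases, one obtains $t_{m+1} - t_m$ equal to $1/\rho$, $1/(\rho q^{m+1})$, and $q^m/\rho$ respectively, and the bound $\geq 1$ is immediate from the ranges $\rho \in (0,1]$ and $q \in (0,1)$ or $q \in [1,\infty)$ as appropriate. For Cigler--Zeng, $t_{m+1} - t_m = q^m(q-1)/\rho$, and the bound $\geq 1$ is binding at small $m$, pinning down the threshold on $q$ quoted in \Cref{def:positive_specializations_class_II}. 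Once the divergent-type condition is verified, \Cref{thm:Fibonacci_positivity} delivers both Fibonacci positivity and the divergent type simultaneously.

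The main obstacle is the $q$-Charlier case, where $t_{m+1} - t_m$ is a rational function of $\rho$ and $q$ rather than a single monomial, and the inequality must be verified across the two-parameter region $(\rho, q) \in (0, 1]^2$. A clean approach is to rewrite $\rho q^{2m+2}(t_{m+1} - t_m - 1)$ as a polynomial in $\rho$ and $q$ and then display it as a sum of manifestly nonnegative terms, using the identity $[m+1]_q - q^2 [m]_q = 1 + q - q^{m+1}$ to eliminate mixed $q$-binomial expressions. I expect a similar, slightly more delicate rearrangement to be required at the boundary of the Cigler--Zeng range, where the cubic threshold enters.
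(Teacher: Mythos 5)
Your proposal follows essentially the same route as the paper's proof: for each specialization you read off the sequences $\vec c$ and $\vec t$ from the factorization $x_k=c_k(1+t_{k-1})$, $y_k=c_k c_{k+1}t_k$, verify the divergent-type inequality $t_{m+1}\ge 1+t_m$ via the same telescoping $q$-identities, and conclude by invoking \Cref{thm:Fibonacci_positivity}. Your explicit values of $c_k$ and $t_k$ agree with the paper's in every case (the paper's ``$c_k=\rho^{-1}$'' in the Al-Salam--Chihara case is a slip for your correct $c_k=\rho$), and, like the paper, you defer the two-parameter $q$-Charlier inequality and the precise Cigler--Zeng threshold to a routine computation.
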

\begin{proof}
    Let us examine the Fibonacci positivity in each case using the classification (\Cref{thm:Fibonacci_positivity}) and the operations preserving Fibonacci positivity (\Cref{thm:Fibonacci_positivity} or \Cref{proposition:infinite_type_scaling}).

	The Charlier case follows from
	\Cref{thm:Fibonacci_positivity}
	 with $c_k=\rho$ and $t_k=k \rho^{-1}$ for all $k$.

	For the Type-I Al-Salam--Carlitz case,
	take $t_k=(\rho q^k)^{-1}[k]_q$, so the $A_\infty(m)$'s clearly diverge.
	We have $t_{k+1}-t_k=(\rho q^{k+1})^{-1}\ge 1$, so $\vec t$ is of divergent type.
	To get the desired specialization, we use \Cref{thm:Fibonacci_positivity} with
	$c_k=\rho q^{k-1}$.

	For the Al-Salam--Chihara case,
	take $t_k=\rho^{-1}[k]_q$, so the $A_\infty(m)$'s clearly diverge.
	We have $t_{k+1}-t_k=\rho^{-1}q^k\ge 1$, so $\vec t$ is of divergent type.
	To get the specialization, apply
  \Cref{thm:Fibonacci_positivity} with
  $c_k = \rho^{-1}$ for all $k$.

	For the $q$-Charlier case, take
	$t_k=[k]_q \ssp(1+\rho(q-1)q^{k-1})/(\rho q^{2k})$
	and $c_k=\rho q^{2k-2}$ in \Cref{thm:Fibonacci_positivity}.
	The series $A_\infty(m)$ diverges for all $m$.
	Moreover,
	\begin{equation*}
		t_{k+1}-t_k=
		\rho^{-1}q^{-2 k-2} \bigl((\rho - 1) q^{k+1} - \rho q^k + q + 1\bigl).
	\end{equation*}
	One can check that this expression is $\ge1$ for $0<\rho\le 1$, $0<q\le 1$.
	Thus, $\vec t$ is of divergent type.

  Finally, for the Cigler--Zeng case, we have $c_k=1$ and
	$t_k=q^k-1$, so the
	$A_\infty(m)$'s clearly diverge.
	The difference $t_{k+1}-t_k=q^k(q-1) \geq 1$ for all
	$q\ge q_0$ where $q_0 \approx 1.4656$ is the unique real root of
	the cubic equation $z^3=z^2+1$.

	This completes the proof.
\end{proof}

Inspired by another member of the Askey scheme, we conjecture that
the corresponding specialization is Fibonacci positive:
\begin{conjecture}
	\label{conj:alternative_q_Charlier}
	There exist values of $\rho$ and $q$ for which the following specialization
	(which we call the \textbf{alternative $q$-Charlier})
	is Fibonacci positive:
	\begin{equation*}
				x_k = \frac{q^{\,k-1}\bigl(1+\rho q^{k-2}+\rho q^{k-1}-\rho q^{2k-2}\bigr)}
										 {\bigl(1+\rho q^{2k-3}\bigr)\bigl(1+\rho q^{2k-1}\bigr)},\qquad
				y_k = \frac{\rho\,q^{3k-2}\bigl(1-q^{k}\bigr)\bigl(1+\rho q^{k-1}\bigr)}
										 {\bigl(1+\rho q^{2k}\bigr)\bigl(1+\rho q^{2k-1}\bigr)^{2}\bigl(1+\rho q^{2k-2}\bigr)},
	\end{equation*}
	where $k\ge1$.
	The sequences $\vec c$ and $\vec t$ are computed for this specialization
	by \Cref{prop:c_t_from_x_y}:
	\begin{equation*}
		c_k=
		\frac{q^{k-1} \left(1+\rho  q^{k-1}\right)}{\left(1+\rho
		q^{2 k-1}\right) \left(1+\rho  q^{2 k-2}\right)},
		\qquad
		t_k=
		\frac{\rho\,q^{k-1}\left(1-q^{k}\right)\left(1+\rho\,q^{2k+1}\right)}
		{\left(1+\rho\,q^{k}\right)\left(1+\rho\,q^{2k-1}\right)}
		,
	\end{equation*}
	where also $k\ge1$.
\end{conjecture}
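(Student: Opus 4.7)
The plan is to apply \Cref{thm:Fibonacci_positivity} in the convergent-type regime. First, I would analyze the asymptotic behavior of $t_k$ from the explicit rational formula: for $q \in (0,1)$ and $\rho > 0$ fixed, one computes $t_k \sim \rho\ssp q^{k-1}$ as $k \to \infty$, so $t_k \to 0$ and the series $A_\infty(m)$ from \eqref{eq:A_infty_B_infty_series_definitions} converges for every $m \geq 1$. This places the candidate specialization squarely in the convergent-type regime, so by \Cref{prop:finite_type_positive} Fibonacci positivity reduces to the infinite family of inequalities
\[
t_{m+1} \;\geq\; (1 + t_m - t_{m+1})\,t_{m+2}\,A_\infty(m+3),\qquad m \geq 0.
\]
Note that $q \geq 1$ forces $1-q^k \leq 0$ and hence $t_k \leq 0$, so the Fibonacci-positive locus must lie inside $(0,1)^2$.

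Next, I would exploit the small-$\rho$ limit. Expanding with $q \in (0,1)$ fixed, all $t_k$ are uniformly small, $A_\infty(m+3) = 1 + O(\rho)$, and $t_k \sim \rho\ssp q^{k-1}$ decays geometrically. For $m \geq 1$ the inequality then reduces at leading order to the comparison $t_{m+1} \gtrsim t_{m+2}$, which is automatic from the geometric decay. The sharpest constraint comes from $m = 0$ (where $t_0 = 0$), which at leading order reads $t_1 \geq t_2$; substituting $t_1 \approx \rho(1-q)$ and $t_2 \approx \rho\ssp q(1-q^2)$ yields the explicit threshold $q(1+q) \leq 1$, i.e.\ $q \leq (\sqrt{5}-1)/2$. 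Thus, for any such $q$, one expects Fibonacci positivity for all $\rho \in (0, \rho_0(q))$ with some threshold $\rho_0(q) > 0$, which already suffices to prove the existence statement.

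The main obstacle is upgrading this asymptotic analysis into a rigorous statement uniform in $m$. A concrete route is to majorize $A_\infty(m+3)$ by an explicit geometric series, using a closed-form upper bound on $t_k$ extracted directly from the rational formula, thereby reducing the infinite family of $B_\infty(m) \geq 0$ conditions to a finite system of rational inequalities in $(\rho, q)$ amenable to symbolic verification; the higher-index conditions should then follow by a monotonicity argument based on the shift structure of \Cref{propositon:shifting-fibonacci-positive-specializations}. A cleaner alternative---and the route I would ultimately prefer---is to recognize the name \emph{alternative $q$-Charlier} as referring to a known family in the $q$-Askey scheme; identifying the associated three-term recurrence and its discrete orthogonality measure would let one invoke positivity of the Stieltjes moment sequence together with the dictionary developed in \Cref{part:2}. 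In either approach the delicate point is the low-index inequalities, where the geometric decay of $t_k$ has not yet set in and the $\rho$-expansion is not sharp enough to be conclusive; this is where the true Fibonacci-positive locus in the $(\rho,q)$-plane must be carved out.
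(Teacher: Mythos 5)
The statement you are trying to prove is stated in the paper as \emph{Conjecture}~\ref{conj:alternative_q_Charlier}: the authors provide no proof, so there is nothing to compare your argument against on their side. The relevant question is therefore only whether your proposal actually settles the conjecture, and as written it does not --- it is a plan with the decisive steps explicitly deferred.

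Your reduction is the correct one: for $q\in(0,1)$ and $\rho>0$ one indeed has $t_k\sim\rho\ssp q^{k-1}$, so $A_\infty(m)$ converges and, by \Cref{prop:finite_type_positive}, Fibonacci positivity is equivalent to $B_\infty(m)\ge 0$ for all $m\ge 0$. Your leading-order computation in $\rho$ is also sound, and correctly identifies $m=0$ as the binding case with threshold $q(1+q)\le 1$ (for $m\ge1$ the leading-order condition is $q^{m+1}(1+q)\le 1$, which is implied by the $m=0$ case). But the existence claim does not follow from a leading-order expansion alone: the inequalities $B_\infty(m)\ge 0$ must hold \emph{simultaneously for all $m\ge 0$}, and ``$t_{m+1}\gtrsim t_{m+2}$ up to $O(\rho)$ corrections'' is not a proof unless the $O(\rho)$ error is controlled uniformly in $m$ --- the margin $1-q^{m+1}(1+q)$ does not degenerate as $m\to\infty$, so this should be doable, but you have not done it, and you say so yourself. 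The two escape routes you sketch (majorizing $A_\infty(m+3)$ by an explicit geometric series and reducing to finitely many rational inequalities; or identifying the family inside the $q$-Askey scheme and importing a known discrete orthogonality measure) are both plausible, and either would be a genuine contribution since the paper leaves this open; but note that the second route by itself only yields total positivity of $\mathcal{A}(\vec x\mid\vec y\ssp)$ via the Stieltjes correspondence, whereas Fibonacci positivity additionally requires positivity of all the $\mathcal{B}_r$-minors, which is strictly stronger (cf.\ the Meixner and Laguerre non-examples in \Cref{sec:Fibonacci_and_Stieltjes_examples}). Until one of these routes is carried through, the conjecture remains unproved.
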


For future use in \Cref{sub:shifted_Charlier_defn} below, we
introduce a family of divergent specializations that deform
the Plancherel specialization:
\begin{definition}
	\label{def:fake_shifted_Charlier}
	Let $\rho\in(0,1]$ and $\sigma\in [1,\infty)$
	be two parameters.
	The \textbf{fake shifted Charlier}
	specialization
	(see
	\Cref{rmk:why_not_fake_shifted_Charlier}
	below
	for a discussion of the name)
	is defined as
	\begin{equation}
		\label{eq:fake_shifted_Charlier}
		x_k=\begin{cases}
			\rho, & k=1,\\
			k+\rho+\sigma-2, & k\ge2,
		\end{cases}
		\qquad  y_k=\rho(k+\sigma-1), \quad k\ge1.
	\end{equation}
	It corresponds to setting $t_k=\rho^{-1}(\sigma+k-1)$ and
	$c_k=\rho$, where $k\ge1$. The convention $t_0=0$ implies that $x_k=1$.
	When $\sigma=\rho=1$, the specialization becomes the Plancherel specialization
	$x_k=y_k=k$. One can check directly by \Cref{def:finite_infinite_type_specializations}
	that the $\vec t$-sequence $t_k=\rho^{-1}(\sigma+k-1)$ is of divergent type.
\end{definition}

\begin{remark}
	When $\rho=1$, the specialization
	\eqref{eq:fake_shifted_Charlier} can be obtained from the
	Plancherel specialization by applying a right shift
	(\Cref{proposition:right_shifting_fibonacci_positive_specializations})
	with $T_1=\sigma$. The extra parameter $\rho$ can
	subsequently be incorporated via
	\Cref{proposition:infinite_type_scaling} with
	$\alpha_k=\rho$ for all $k$.
	This implies that the fake shifted Charlier specialization
	is Fibonacci positive.
\end{remark}

\subsection{Convergent type example: Shifted Charlier specialization}
\label{sub:shifted_Charlier_defn}

The next example is a
deformation of the Charlier specialization by means of a new parameter $\sigma$:
\begin{definition}
	\label{def:shifted_Plancherel_and_Charlier}
	Let $\rho\in(0,1]$ and $\sigma\in(1,\infty)$ be two parameters.
	The \textbf{shifted Charlier} specialization
	is defined as
	\begin{equation}
		\label{eq:shifted_Charlier_definition}
		x_k=k+\rho+\sigma-2,\qquad
		y_k=(k+\sigma-1)\rho.
	\end{equation}
	It reduces to the Charlier specialization in the limit as $\sigma\to 1$.
	When $\rho=1$, we call \eqref{eq:shifted_Charlier_definition} the \textbf{shifted Plancherel} specialization.
	The latter reduces to the usual Plancherel specialization in the limit as $\sigma\to 1$.
\end{definition}

\begin{proposition}
	\label{prop:shifted_Charlier_and_Plancherel}
	For all $\sigma>1$ and $0<\rho\le1$, the shifted Charlier specialization is Fibonacci positive
	and is of convergent type.
\end{proposition}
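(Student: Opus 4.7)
The plan is to apply the characterization in \Cref{thm:Fibonacci_positivity} by extracting the underlying $\vec t$-sequence associated to the shifted Charlier parameters and verifying that it belongs to the convergent-type class of \Cref{def:finite_infinite_type_specializations}. This reduces to two checks: (a) convergence of $A_\infty(m)$ for some $m\ge 1$, and (b) nonnegativity $B_\infty(m)\ge 0$ for every $m\ge 0$.

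For step (a), I would apply \Cref{prop:c_t_from_x_y} to set $c_k = A_k/A_{k-1}$ and $t_k = y_k\,A_{k-1}/A_{k+1}$, where the tridiagonal determinants $A_k = A_k(\vec x\mid\vec y\ssp)$ obey
\[
A_k \;=\; (k+\rho+\sigma-2)\,A_{k-1} \;-\; (k+\sigma-2)\,\rho\,A_{k-2},\qquad A_0 = 1,\quad A_1 = \rho+\sigma-1.
\]
This is the three-term recurrence for the monic associated Charlier polynomials evaluated at the origin. Writing $A_k = k!\,R_k$ and performing a Birkhoff--Poincar\'e asymptotic analysis on the resulting recurrence yields $R_k \sim C_{\rho,\sigma}\,k^{\sigma-2}$ for some positive constant $C_{\rho,\sigma}$, and hence $t_k \sim \rho/k$ as $k\to\infty$. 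The comparison $t_m t_{m+1}\cdots t_{m+r-1} = O\bigl(\rho^r(m-1)!/(m+r-1)!\bigr)$ then gives $A_\infty(m)<\infty$ for every $m\ge 1$, while the decay $t_k\to 0$ rules out the divergent-type monotonicity $t_{k+1}\ge 1+t_k$, so $\vec t$ can only fall into the convergent-type class.

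The remaining (and principal) task is the inequality $B_\infty(m) = t_{m+1} + (t_{m+1}-t_m-1)\,t_{m+2}\,A_\infty(m+3)\ge 0$ for every $m\ge 0$. For large $m$, substituting $t_k\sim \rho/k$ and $A_\infty(m+3) = 1 + O(1/m)$ gives $B_\infty(m) \sim \rho/((m+1)(m+2)) > 0$, so the inequality holds with a positive margin beyond some threshold $m_0$. For the finitely many $0\le m < m_0$, I would verify positivity of each shifted determinant $B_\ell(\vec x+m\mid\vec y+m)$ for all $\ell\ge 0$ by induction on $\ell$, using the three-term recurrence $B_\ell(m) = x_{m+\ell+1}\,B_{\ell-1}(m) - y_{m+\ell}\,B_{\ell-2}(m)$ together with the growth rates of $A_\ell(m)$; the limit $\ell\to\infty$ then delivers $B_\infty(m)\ge 0$. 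The small-$\ell$ base cases are encouraging: a direct expansion yields $B_0(\vec x+m\mid \vec y+m) = (m+\sigma)\rho$ and $B_1(\vec x+m\mid \vec y+m) = \rho(m+\sigma-\rho+1)$, both manifestly positive for $\rho\in(0,1]$ and $\sigma>1$.

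The main obstacle is the uniform nonnegativity of $B_\infty(m)$: the two terms in its definition decay at the \emph{same} rate $\sim \rho/m$, so positivity turns on the subleading corrections, and the inductive step must carry tight enough quantitative control of $A_\ell(m)$ to ensure the margin survives for every $m\ge 0$ rather than merely asymptotically. A cleaner alternative route is to invoke the orthogonality measure of the associated Charlier polynomials, whose moment generating function is computed independently in \Cref{sec:shifted_Charlier_Stieltjes}: since this measure is positive on $[0,\infty)$, each shifted Jacobi matrix $\mathcal{B}_r(\vec x\mid\vec y\ssp)$ inherits total positivity, which certifies $B_\ell(\vec x+m\mid\vec y+m)>0$ for all $\ell,m\ge 0$ at a stroke.
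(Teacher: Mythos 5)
Your overall framework matches the paper's: reduce to the characterization of \Cref{thm:Fibonacci_positivity} and verify that the $\vec t$-sequence is of convergent type. The convergence of $A_\infty(m)$ is handled adequately (the paper gets $t_k\sim 1/k$ from the explicit formula $A_k=\sum_{j=0}^k(\sigma-1)_j$ in the case $\rho=1$ and a rescaling argument for general $\rho$, whereas you use a Poincar\'e-type asymptotic of the three-term recurrence; either works). The genuine gap is in the part you yourself flag as the main obstacle: the uniform nonnegativity of $B_\infty(m)$, equivalently the positivity of every $B_\ell(\vec x+m\mid\vec y+m)$. You do not close this. The ``induction on $\ell$ with tight quantitative control'' is precisely where the paper does all of its work: for $\rho=1$ it proves the exact identity $B_k(m)=m+\sigma$ for all $k,m\ge0$; for $\rho<1$ and $m\ge1$ it compares with the fake shifted Charlier specialization, whose $B$-determinants coincide for $m\ge1$ and which is already known to be Fibonacci positive of divergent type; and for the remaining determinants $B_k(0)$ it writes $f(\rho)=B_k^{(\rho)}(0)$ as an explicit polynomial in $\rho$ with $f(0)=0$, $f(1)>0$, and shows its coefficient sequence has exactly one sign change, so $f$ has at most one root in $(0,1)$ and hence stays positive there. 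None of these ingredients appears in your sketch. Moreover your large-$m$ asymptotic is too coarse: $B_\infty(m)=t_{m+1}+(t_{m+1}-t_m-1)\ssp t_{m+2}\ssp A_\infty(m+3)$ contains the same-order negative correction $-t_{m+2}\bigl(A_\infty(m+3)-1\bigr)\sim-\rho^2/m^2$, so the true leading behavior is $\rho(1-\rho)/m^2$ plus lower-order terms, which degenerates entirely at $\rho=1$; ``eventual positivity with a margin'' is not automatic.

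The proposed ``cleaner alternative route'' is incorrect. By \Cref{thm:stieltjes-moments-theorem}, the existence of a positive orthogonality measure on $[0,\infty)$ with infinite support is equivalent to the total positivity of the single matrix $\mathcal{A}(\vec x\mid\vec y\ssp)$ only; it says nothing about the shifted matrices $\mathcal{B}_r(\vec x\mid\vec y\ssp)$, whose total positivity is exactly the extra content of Fibonacci positivity beyond total positivity. The paper's Laguerre non-example makes this failure explicit: the Gamma distribution is a positive measure on $[0,\infty)$ with infinite support, yet $B_2(1)<0$ for the Laguerre recurrence coefficients. So positivity of the associated Charlier orthogonality measure cannot certify $B_\ell(\vec x+m\mid\vec y+m)>0$ ``at a stroke,'' and the circularity concern aside, the implication you invoke is simply false.
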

In the limit as $\sigma\to1$,
the convergent type of the specialization
becomes divergent
(see \Cref{prop:class_II_positive_specializations}).
Note that for $\rho=1$ and integer
$\sigma \geq 2$, the statement of \Cref{prop:shifted_Charlier_and_Plancherel}
follows in a simpler way from
\Cref{propositon:shifting-fibonacci-positive-specializations} by applying left shifts to the
Plancherel specialization.
\begin{proof}[Proof of \Cref{prop:shifted_Charlier_and_Plancherel}]
	We thus first deal with the case $\rho=1$.
	Denote $\gamma\coloneqq \sigma-1>0$.
	One can check (for example,
	using the tridiagonal recurrence)
	that
	for $\rho=1$ and $x_k=y_k=k+\gamma$,
	the $A$-determinants \eqref{eq:A_B_dets} are
	sums of the Pochhammer symbols:
	\begin{equation}
		\label{eq:A_for_shifted_Plancherel_explicit}
		A_k=\sum_{j=0}^{k} \, (\gamma)_j, \qquad (\gamma)_j=\gamma(\gamma+1)\cdots(\gamma+j-1).
	\end{equation}
	This immediately implies
	that these determinants are positive.
	Moreover, on can check that
	asymptotically, we have
	$t_k=y_k A_{k-1}/A_{k+1}\sim 1/k$ as $k\to\infty$.
	See \Cref{fig:t_k_for_shifted_Plancherel} for a plot showing unimodality
	and eventual decay of the sequence $t_k$.
	Therefore, the series $A_\infty(m)$
	\eqref{eq:A_infty_B_infty_series_definitions}
	converges for all $m$.\footnote{Note that since the shifted Plancherel specialization
	does not have the form $x_k=1+y_{k-1}$ (in particular, $x_1=1+\gamma$ instead of $1$),
	the series $A_\infty(0)$ is \emph{not} the limit of the expressions $A_k$ \eqref{eq:A_for_shifted_Plancherel_explicit}.
	In particular, the series $A_\infty(0)$ converges, while clearly $\lim_{k\to\infty}A_k=+\infty$.}
	\begin{figure}[htpb]
		\centering
		\includegraphics[width=0.5\textwidth]{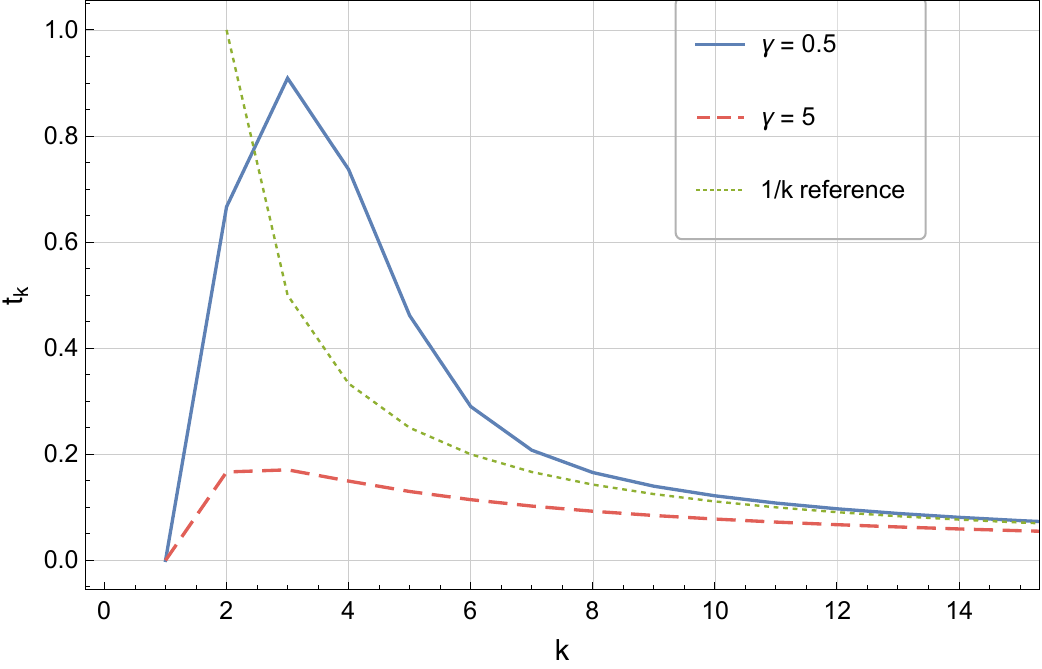}
		\caption{
			Plot of the sequences $t_k$ for the shifted Plancherel specialization
			with $\gamma=0.5$ and $\gamma=5$, together with the reference asymptotic behavior $1/k$.
			We see that the $t_k$'s are unimodal.
			For some $\gamma$, two of the neighboring $t_k$'s may become equal
			(cf.~\Cref{proposition:valleys} which prohibits consecutive
			triplets of equal $t$'s). For example, $t_2=t_3$ for $\gamma\approx
			0.147$.}
		\label{fig:t_k_for_shifted_Plancherel}
	\end{figure}

	Therefore, if the shifted Plancherel specialization is Fibonacci positive,
	it must be of convergent type.
	To conclude the positivity in the case $\rho=1$, observe that the
	B-determinants are explicit,
	\begin{equation}
		\label{eq:B_for_shifted_Plancherel_explicit}
		B_k(m)=m+1+\gamma, \qquad  k,m\ge0.
	\end{equation}
	Indeed, one can deduce this by expanding the tridiagonal
	determinants and observing a three-term recurrence relation.
	The expressions \eqref{eq:B_for_shifted_Plancherel_explicit} are evidently positive.

	\medskip

	Let us now consider the remaining case $0< \rho< 1$.
	Observe that if
	a sequence
	$\vec t$ defines a Fibonacci positive specialization
	via \eqref{eq:Fibonacci_positive_specialization_in_theorem},
	and we set
	\begin{equation}
		\label{eq:rho_deformed_x_y_parameters}
		x_k'=c_k(\rho+t_{k-1}),\qquad  y_k'=\rho c_kc_{k+1}t_k,
	\end{equation}
	then
	the new $\vec t$-sequence is given by $t_k'=t_k/\rho$.
	This implies that for general $\rho\in(0,1)$, the series $A_m(\infty)$
	also converges.
	It remains to check the positivity of the
	determinants $A_k$ and $B_k(m)$~\eqref{eq:A_B_dets}.

	Consider first the determinants $A_k$ and $B_k(m)$ with $m\ge 1$.
	Recall the fake shifted Charlier specialization
	(\Cref{def:fake_shifted_Charlier}),
	and let $x_k^\bullet$ and $y_k^\bullet$ denote its parameters.
	\eqref{eq:fake_shifted_Charlier}.
	It is Fibonacci positive and of divergent type.
	Our shifted Charlier specialization
	\eqref{eq:shifted_Charlier_definition}
	differs from its fake counterpart
	only insofar as $x_1^\bullet=\rho$ is replaced by $x_1=\rho+\gamma$.
	In particular, the determinants $B_k(m)$ for all $m\ge1$
	are the same in both specializations, and hence are positive.
	By linearity in the first row, the determinants $A_k$ and $B_k(0)$
	for two the specializations
	are related as follows:
	\begin{equation}
		\label{eq:shifted_Charlier_determinants_fake_relation}
		A_k=A_k^{\bullet}+\gamma A_{k-1}^{\bullet}(1),
		\qquad
		B_k(0)=B_k^{\bullet}(0)-\gamma y_2 A_{k-1}^{\bullet}(3),
	\end{equation}
	where $A_k^\bullet(m),B_k^\bullet(m)$ are the determinants for the fake
	shifted Charlier specialization,
	with indices shifted by $m$.
	From
	\eqref{eq:shifted_Charlier_determinants_fake_relation},
	we immediately see that $A_k>0$ for all $k$.

	The determinants $B_k(0)$ require a different treatment.
	Fix $k$ and denote by $f(\rho)\coloneqq B_k^{(\rho)}(0)$ the determinants constructed from the
	general $\rho$-dependent specialization
	\eqref{eq:rho_deformed_x_y_parameters}.
	We already showed that $f(1)>0$. Clearly, $f(0)=0$.
	Therefore, $f(\rho)$ has an even number of roots on $(0,1)$,
	counted with multiplicity.
	One can check that this polynomial is explicitly given by
	\begin{equation}
		\label{eq:shifted_Charlier_determinants_explicit_f_rho}
		f(\rho)=
		\rho\ssp c_1\cdots c_{k+2} \,
		\sum_{j=0}^k \rho^{k - j} \left(t_1 -
		\mathbf{1}_{j < k} \cdot t_{j + 2}\right)
		t_2 t_3\cdots t_{j+1},
	\end{equation}
	where $t_i=(i+\gamma)A_{i-1}/A_{i+1}$ (see \Cref{prop:c_t_from_x_y}),
	and where $A_i$ is the sum \eqref{eq:A_for_shifted_Plancherel_explicit}.
	This implies that
	$f'(0)=t_1\cdots t_{k+1} >0$.
	We claim that the
	coefficient sequence of \eqref{eq:shifted_Charlier_determinants_explicit_f_rho}
	has exactly one sign change. This would imply that $f(\rho)$ has at most one root in $(0,1)$,
	and hence is positive for all $\rho\in(0,1)$. This would finish the proof.

	Let us now check the claim about the sign changes.
	The first coefficient is
	\begin{equation*}
		t_1-t_2=-1 / (\gamma ^3+4 \gamma ^2+4 \gamma +1)<0,
	\end{equation*}
	and the last coefficient
	is $t_1t_2\cdots t_{k+1} >0$.
	We will show that if
	$\gamma$ is such that
	$t_1-t_m\ge0$ for some $m\ge 3$, then
	$t_1-t_{m+1}\ge0$.
	The quantity $t_1-t_m$ is a rational function with a positive denominator,
	and its numerator is equat to
	\begin{equation*}
		(\gamma+2)\sum_{j=0}^{m+1}(\gamma)_j-(\gamma+1)(\gamma+m+1)\sum_{j=0}^m(\gamma)_j=
		\gamma \ssp  Q_m(\gamma)-(m-1),
	\end{equation*}
	where the $Q_m$'s are certain polynomials. One can check that both polynomials
	$Q_m(\gamma)$ and $Q_m(\gamma)-m\ssp Q_{m-1}(\gamma)$ have nonnegative integer coefficients for all $m\ge 3$.
	The condition $t_1-t_m\ge0$ is equivalent to to $Q_m(\gamma)\ge (m-1)/\gamma$.
	We have
	\begin{equation*}
		Q_{m+1}-m/\gamma =
		\left( Q_{m+1}-Q_m-1/\gamma \right)+
		\underbrace{\left( Q_m-(m-1)/\gamma \right)}_{\ge0}.
	\end{equation*}
	We also see that
	\begin{equation*}
		\gamma(Q_{m+1}-Q_m-1) - \left( \gamma Q_m-(m-1) \right)=
		\gamma\left( Q_{m+1}-2Q_m \right)+m-2,
	\end{equation*}
	and for $m\ge 3$ this polynomial has nonnegative integer coefficients.
	This implies that $Q_{m+1}-Q_m-1/\gamma\ge0$, and
	hence we get the claim about the sign changes. This completes the proof
	of the proposition.
\end{proof}

\begin{remark}
	\label{rmk:why_not_fake_shifted_Charlier}
	Throughout the paper, we focus mainly on the shifted Charlier specialization rather than
	the fake one from
	\Cref{def:fake_shifted_Charlier}.
	In the case $\rho=1$ (shifted Plancherel specializations),
	we compare the asymptotic behavior of the fake and the true
	shifted Plancherel specializations in
	\Cref{sec:asymptotics_shifted_Plancherel_specialization,sub:two_cycles}
	(in two different regimes). Surprisingly, we observe that they are the same
	for both regimes.
\end{remark}

\subsection{Convergent type example: Power specializations}
\label{sub:power_spec_defn}

Let us consider two more examples
of convergent type of the form
\begin{equation}
	\label{eq:convergent_spec_definition}
	t_k=\frac{\varkappa}{k^\upalpha},\qquad
	\upalpha=1,2,\qquad k\ge1,
\end{equation}
where $\varkappa$ is a positive real parameter.
We call these the \emph{power specializations}.
Note that we must have $\upalpha\ge 1$,
see \Cref{rmk:nonexample_convergent_type}.

\begin{proposition}
	\label{prop:convergent_type_two_examples}
	There exist
	upper bounds
	$\varkappa_1^{(\upalpha)}$, $\upalpha=1,2$,
	with
	$\varkappa_1^{(1)} \approx 0.844637$ and $\varkappa_1^{(2)} \approx 1.41056$,
	such that for all
	$0<\varkappa<\varkappa_1^{(\upalpha)}$,
	the specialization \eqref{eq:convergent_spec_definition}
	is Fibonacci positive and of convergent type.
\end{proposition}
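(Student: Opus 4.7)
The plan is to apply \Cref{prop:finite_type_positive}. Since $t_k=\varkappa/k^\upalpha\to 0$ for $\upalpha\in\{1,2\}$ and any $\varkappa>0$, the series $A_\infty(m)=1+\sum_{r\ge 1}t_m t_{m+1}\cdots t_{m+r-1}$ converges for every $m\ge 1$, so Fibonacci positivity is equivalent to the simultaneous satisfaction of the inequalities $B_\infty(m;\varkappa)\ge 0$ for all $m\ge 0$. Because $t_k$ is strictly decreasing for $\upalpha\in\{1,2\}$, the factor $1+t_m-t_{m+1}$ is positive, and each $B_\infty(m;\varkappa)$ is a real-analytic function of $\varkappa>0$ with $B_\infty(m;0)=0$ and $\partial_\varkappa B_\infty(m;0)=(m+1)^{-\upalpha}>0$. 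Hence for every $m$ there is a positive threshold
\begin{equation*}
\varkappa_m^{*}\coloneqq \sup\bigl\{\varkappa>0:B_\infty(m;\varkappa')>0\text{ for all }\varkappa'\in(0,\varkappa)\bigr\},
\end{equation*}
and the claim reduces to showing $\varkappa_1^{(\upalpha)}\coloneqq \inf_{m\ge 0}\varkappa_m^{*}>0$ and identifying its value.

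The next step is to make $A_\infty$ explicit. For $\upalpha=1$, a direct computation of $t_m t_{m+1}\cdots t_{m+r-1}=\varkappa^r(m-1)!/(m+r-1)!$ yields
\begin{equation*}
A_\infty(m;\varkappa)=(m-1)!\,\varkappa^{\,1-m}\Bigl(e^{\varkappa}-\sum_{k=0}^{m-2}\tfrac{\varkappa^{k}}{k!}\Bigr),
\end{equation*}
and for $\upalpha=2$ the same manipulation yields an analogous formula with $e^{\varkappa}$ replaced by $I_0(2\sqrt{\varkappa})=\sum_{k\ge 0}\varkappa^{k}/(k!)^{2}$. Substituting into the definition of $B_\infty(m;\varkappa)$ produces, for each $m\ge 0$, an explicit transcendental function of $\varkappa$ whose smallest positive root is exactly $\varkappa_m^{*}$.

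To confine the infimum to a finite range of $m$, I would use the term-by-term estimate $(m+2)!/(m+2+r)!\le (m+3)^{-r}$, squared for $\upalpha=2$, to obtain the crude bound
\begin{equation*}
A_\infty(m+3;\varkappa)\le \frac{(m+3)^{\upalpha}}{(m+3)^{\upalpha}-\varkappa},\qquad \varkappa<(m+3)^{\upalpha}.
\end{equation*}
Combined with $t_k=\varkappa/k^{\upalpha}$, this forces $B_\infty(m;\varkappa)>0$ once $m$ is large enough relative to $\varkappa$; choosing a fixed cutoff $\varkappa_{0}$ slightly above the expected answer, one extracts an explicit $M_{0}=M_{0}(\varkappa_{0},\upalpha)$ such that $B_\infty(m;\varkappa)>0$ for all $m\ge M_{0}$ and all $\varkappa\in(0,\varkappa_{0}]$. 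The problem is thereby reduced to numerically locating the smallest positive root among the finitely many explicit transcendental functions $B_\infty(m;\cdot)$ for $0\le m<M_{0}$; the minimum of these roots yields the reported numerical values $\varkappa_1^{(1)}\approx 0.844637$ and $\varkappa_1^{(2)}\approx 1.41056$.

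The main obstacle is calibrating the large-$m$ tail estimate against the small-$m$ numerics. In particular, for $\upalpha=1$ the leading asymptotic contribution $t_{m+1}-t_{m+2}$ is of order $\varkappa/m^{2}$, which is exactly the same order as the first correction $t_{m+1}t_{m+2}$, so the crude bound above only gives control when $\varkappa$ is strictly less than an asymptotic threshold close to $1$; the tail analysis must be delicate enough to produce a tractable $M_{0}$ just above the eventual answer $\approx 0.8446$. A secondary subtlety is that $B_\infty(m;\varkappa)$ is not monotone in $\varkappa$, so at each $m\in[0,M_{0})$ one must genuinely track the first zero crossing rather than comparing values at a single candidate $\varkappa_1^{(\upalpha)}$.
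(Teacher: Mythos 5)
Your proposal follows essentially the same route as the paper: both reduce the claim to the system of inequalities $B_\infty(m;\varkappa)\ge 0$ for all $m\ge 0$ via \Cref{prop:finite_type_positive} (convergence of $A_\infty(m)$ being automatic since $t_k\to0$), both compute $A_\infty(m;\varkappa)$ in closed form (your truncated-exponential/Bessel expressions are exactly the paper's ${}_1F_\upalpha(1;m,\dots,m;\varkappa)$), and both ultimately determine the thresholds numerically. The one genuine difference is how the infimum over $m$ is controlled. The paper asserts --- without supplying an argument --- that for each $m$ the function $\varkappa\mapsto B_\infty(m;\varkappa)$ vanishes only at $0$ and at a single positive value $\varkappa_m^{(\upalpha)}$, and that these zeros increase in $m$, so that only the smallest index is binding. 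You instead propose a quantitative tail bound, $A_\infty(m+3;\varkappa)\le (m+3)^\upalpha/((m+3)^\upalpha-\varkappa)$, which forces $B_\infty(m;\varkappa)>0$ for all $m\ge M_0$ and $\varkappa\le\varkappa_0$, reducing the problem to a finite, explicitly checkable list. Your route is arguably more defensible on that point: you correctly identify that for $\upalpha=1$ the two competing terms are both of order $1/m^2$, so the crude bound works precisely because the reported threshold $0.8446$ lies below the asymptotic barrier at $\varkappa=1$, while for $\upalpha=2$ the tail is unconditionally fine. Two small remarks: $\partial_\varkappa B_\infty(m;0)=(m+1)^{-\upalpha}-(m+2)^{-\upalpha}$, not $(m+1)^{-\upalpha}$ (still positive, so nothing breaks); and, like the paper, your argument leaves the small-$m$ root-finding to a numerical computation, which you at least flag honestly (noting the non-monotonicity of $B_\infty(m;\cdot)$ in $\varkappa$) rather than asserting uniqueness of the positive root without proof.
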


In the proof and throughout the rest of the paper, we
use the standard notation
for the hypergeometric functions
and Pochhammer symbols:
\begin{equation}
	\label{eq:hypergeometric_function}
	{}_r F_s
	\left(
	\begin{array}{c}
		a_1, \dots, a_r \\
		b_1, \dots, b_s
	\end{array}
	\ssp\middle| \ssp z
	\right)
	=
	\sum_{k=0}^{\infty} \frac{(a_1)_k \cdots (a_r)_k}{(b_1)_k
	\cdots (b_s)_k} \frac{z^k}{k!},
	\qquad (a)_k = a(a+1)\cdots(a+k-1).
\end{equation}

\begin{proof}[Proof of \Cref{prop:convergent_type_two_examples}]

We have for integer $\upalpha$:
\begin{align*}
A_\infty(m)
&= \ 1  +  \frac{\varkappa}{m^\upalpha}
 +  \frac{\varkappa^2}{m^\upalpha (m+1)^\upalpha}
 +  \frac{\varkappa^3}{m^\upalpha (m+1)^\upalpha (m+2)^\upalpha}
 +  \cdots \\ \\
 &= \ \sum_{r = 0}^\infty \left(\frac{ \Gamma(m)}{\Gamma(m+r)} \right)^\upalpha
\varkappa^r
= {}_1F_\upalpha
\bigl( 1 \, ; \, \underbrace{m, \dots, m}_{\upalpha \text{ times}}
\, ; \, \varkappa \bigr).
\end{align*}
The desired inequality
\[
B_\infty(m) = \, t_{m+1}  -
\big(1 + t_m - t_{m+1} \big)
 t_{m+2} \, A_\infty(m+3)
 \geq  0
\]
can be rewritten as
\[
\frac{\varkappa}{(m+1)^\upalpha}
- \left( 1 + \frac{\varkappa}{m^\upalpha}\ssp\mathbf{1}_{m>0} - \frac{\varkappa}{(m+1)^\upalpha} \right)
\, \frac{\varkappa}{(m+2)^\upalpha}
\, _1F_\upalpha
\bigl( 1 \, ; \, \underbrace{m+3, \dots, m+3}_{\upalpha \text{ times}}
\, ; \, \varkappa \bigr)
\geq  0.
\]
As a function of $\varkappa$, one can check that $B_\infty(m)$ vanishes only at $\varkappa = 0$ and at a value
$\varkappa_m^{(\upalpha)} \in (0, \infty)$ for each $m \geq 0$. Furthermore, the sequence
$\{ \varkappa_m^{(\upalpha)} \, : \, m \geq 0\}$ is strictly increasing, and consequently, the $\vec{t}$-sequence will be of convergent type if and only if
$0 < \varkappa \leq \varkappa_1^{(\upalpha)}$. The
bounds are numerically found to be
$\varkappa_1^{(1)} \approx 0.844637$ and $\varkappa_1^{(2)} \approx 1.41056$.
\end{proof}

\part{Fibonacci Positivity and Stieltjes Moment Sequences}
\label{part:2}

In this part we examine Fibonacci positivity
in light of the well-known correspondence
(due to
\cite{flajolet1980combinatorial},
\cite{viennot1983theorie},
\cite{CorteelKimStanton2016},
\cite{sokal2020euler},
\cite{petreolle2023lattice})
between semi-infinite, totally positive, tridiagonal
matrices and Stieltjes moment sequences.
We
recall the general setup related to
Stieltjes moment sequences,
continued fractions,
tridiagonal matrices,
orthogonal polynomials,
Motzkin polynomials, and Toda flow in \Cref{sec:Stieltjes_moment_sequences}.
In \Cref{sec:Stieltjes_moments_general}
we derive general formulas for
Stieltjes moment sequences in terms of the
$\vec c$- and $\vec t$-sequences associated with
a totally positive tridiagonal matrix. These formulas involve sums over non-crossing set partitions, and we also provide an expression involving a sum over compositions.
In \Cref{sec:Fibonacci_positive_moments}
we obtain a different formula for
Stieltjes moment sequences
arising from Fibonacci positive specializations; this expression, again written in terms of the corresponding $\vec c$- and $\vec t$-sequences, is given by a sum over all set partitions.
In \Cref{sec:Fibonacci_and_Stieltjes_examples},
we connect some examples of divergent type
defined in \Cref{sub:divergent_type_examples_new}
to the Askey scheme of orthogonal polynomials.
Finally, in \Cref{sec:shifted_Charlier_Stieltjes},
we treat the shifted Charlier specialization (\Cref{sub:shifted_Charlier_defn})
from the moment sequence perspective.

\section{Stieltjes Moment Sequences and Total Positivity}
\label{sec:Stieltjes_moment_sequences}

\subsection{Stieltjes moment sequences and Jacobi continued fractions}

Recall that a sequence
$\vec{a}=(a_0, a_1, a_2, \dots)$ of real numbers is called a
\emph{strong Stieltjes moment
sequence} if there exists a nonnegative Borel measure $\upnu(dt)$ on $[0,\infty)$
with \emph{infinite support} such that
$a_n = \int_0^\infty t^n  \upnu(dt)$ for each $n \geq 0$.
The following result may be found, e.g.,
in
\cite{sokal2020euler}:

\begin{theorem}
\label{thm:stieltjes-moments-theorem}
A sequence of real numbers $\vec{a} = (a_0, a_1, a_2, \dots)$ is a strong Stieltjes moment sequence if and only if there exist two real number sequences, $\vec{x}$ and $\vec{y}$, such that the matrix $\mathcal{A} \big( \, \vec{x} \, | \, \vec{y} \, \big)$ defined in \eqref{eq:tridiagonal_matrices_A_Br} is totally positive, and the (normalized) ordinary moment generating function of $\vec a$,
\begin{equation}
	\label{eq:moment-generating-function_theorem}
	M(z) = \sum_{n \geq 0} \frac{a_n}{a_0} z^n,
\end{equation}
is expressed by the Jacobi continued fraction
depending on $( \, \vec{x} \, | \, \vec{y} \, )$ as
\begin{equation}
	\label{eq:Jacobi-continued-fraction_theorem}
	M(z)=
	J_{\, \vec{x}, \vec{y}} \,(z) \coloneqq
{1 \over {1 - x_1z - {\displaystyle y_1z^2 \over {\displaystyle 1 - x_2 z - {\displaystyle y_2z^2 \over
{\displaystyle 1 - x_3 z - {\displaystyle y_3z^2 \over {\ddots } } } } } } } }
\end{equation}

Moreover,
the equality between the
generating function $M(z)$ \eqref{eq:moment-generating-function_theorem}
and the continued fraction
$J_{\, \vec{x}, \vec{y}} \,(z)$ \eqref{eq:Jacobi-continued-fraction_theorem}
is witnessed by the recursion
\begin{equation*}
	P_{n+1}(t) = (t - x_{n+1})P_n(t) - y_n P_{n-1}(t),
	\quad n \geq 1,
	\qquad
	P_0(t) = 1, \quad P_1(t) = t - x_1.
\end{equation*}
responsible for generating the polynomials
$P_n(t)$ which are orthogonal with respect to the nonnegative
Borel measure $\upnu(dt)$
on $[0,\infty)$ whose moment sequence is
$\vec{a}$.
\end{theorem}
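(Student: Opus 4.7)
The plan is to prove the two implications separately, with the theory of orthogonal polynomials and Hankel determinants serving as the bridge between the moment data and the tridiagonal data.

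For the forward direction, assume $\vec a$ comes from a Borel measure $\upnu$ on $[0,\infty)$ with infinite support. Applying Gram--Schmidt to $1, t, t^2, \ldots$ in $L^2(\upnu)$ produces a unique sequence of monic orthogonal polynomials $P_n(t)$, since infinite support guarantees linear independence of the monomials. Standard manipulations yield the three-term recurrence with coefficients
$$
x_{n+1} = \frac{\int t\, P_n(t)^2\, \upnu(dt)}{\int P_n(t)^2\, \upnu(dt)},
\qquad
y_n = \frac{\int P_n(t)^2\, \upnu(dt)}{\int P_{n-1}(t)^2\, \upnu(dt)},
$$
which are strictly positive because $\upnu$ is supported on $[0,\infty)$ and is not a single atom. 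The identity $M(z) = J_{\vec x, \vec y}(z)$ then follows by iterating the Stieltjes functional identity $M(z) = (1 - x_1 z - y_1 z^2\, M^{(1)}(z))^{-1}$, where $M^{(1)}$ is the moment generating function attached to the left-shifted parameters; this unfolds the Jacobi continued fraction.

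For the backward direction, define $P_n(t)$ by the recurrence attached to $(\vec x, \vec y)$. Favard's theorem provides a positive Borel measure $\upnu$ on $\mathbb{R}$ with respect to which the $P_n$ are orthogonal, whose Cauchy transform equals $J_{\vec x, \vec y}(z)$, and whose moments are therefore $\vec a$. The remaining step is to upgrade the support from $\mathbb{R}$ to $[0, \infty)$, which is the content of Stieltjes' criterion: $\upnu$ is supported on $[0, \infty)$ if and only if both Hankel families
$$
H_n \coloneqq \det(a_{i+j})_{i,j=0}^{n}, \qquad \widetilde H_n \coloneqq \det(a_{i+j+1})_{i,j=0}^{n}
$$
are strictly positive for all $n \geq 0$. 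Via the LDU factorization of the Jacobi matrix, or equivalently the Lindstr\"om--Gessel--Viennot interpretation of Hankel determinants as weighted counts of non-intersecting systems of Motzkin paths, each $H_n$ and $\widetilde H_n$ factors, up to a positive constant, as a product of the $A$-minors and shifted $B$-minors of the tridiagonal matrix $\mathcal{A}(\vec x \mid \vec y)$ from \eqref{eq:A_B_dets}. Total positivity of $\mathcal{A}$ is exactly the positivity of all these minors, so $H_n, \widetilde H_n > 0$ and $\upnu$ is supported on $[0, \infty)$.

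The main obstacle is the support-localization step in the backward direction: upgrading Favard's measure on $\mathbb{R}$ to one on the half-line. The key lemma is the factorization of shifted Hankel determinants in terms of the $B$-minors, which combinatorially corresponds to Flajolet's path interpretation of shifted moments as weighted Motzkin paths with endpoints lifted above the axis. Once this identification is in place, the proof is an assembly of well-developed machinery from \cite{flajolet1980combinatorial, viennot1983theorie, sokal2020euler, petreolle2023lattice}, and the closing assertion about the three-term recurrence is a direct consequence of the orthogonal-polynomial construction used in both directions.
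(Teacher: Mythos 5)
First, some context: the paper does not actually prove this theorem — it quotes it from the literature (Flajolet, Viennot, Sokal, P\'etr\'eolle--Sokal--Zhu), so your proposal is being measured against the standard arguments rather than an in-paper proof. Your architecture (Gram--Schmidt and the three-term recurrence in the forward direction; Favard plus the Stieltjes criterion in the backward direction) is the right one and is essentially the route those references take. The genuine gap is in your forward direction: you produce $x_{n+1}$ and $y_n$ and observe they are strictly positive because $\upnu$ lives on $[0,\infty)$ with infinite support, but the theorem demands \emph{total positivity} of $\mathcal{A}(\vec x\mid\vec y)$, i.e.\ positivity of all non-identically-vanishing minors, and positivity of the entries of a tridiagonal matrix with unit subdiagonal does not imply this (take $x_1=x_2=1$, $y_1=2$: the $2\times2$ leading minor is $-1$). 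What is needed, and what you never establish, is the positivity of all leading principal minors $A_\ell(\vec x\mid\vec y)$; together with $y_k>0$ this is equivalent to total positivity via the factorization $x_k=c_k+d_{k-1}$, $y_k=c_kd_k$ with $c_k,d_k>0$ recalled in \Cref{sub:reduction_to_one_sequence}. The repair uses exactly the identity you invoke only in the backward direction: $A_\ell(\vec x\mid\vec y)=(-1)^\ell P_\ell(0)=\widetilde H_{\ell-1}/H_{\ell-1}$, and $\widetilde H_{\ell-1}$ is the Hankel determinant of the moment sequence of $t\,\upnu(dt)$, which is again a positive measure on $[0,\infty)$ with infinite support and hence has positive Hankel determinants. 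As written, your forward direction proves a strictly weaker conclusion than the theorem asserts.

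Two smaller points on the backward direction. Your factorization claim is imprecise: up to the factor $a_0^{n+1}$, $H_n$ is a monomial in the $y_k$ alone (namely $\prod_k y_k^{\ssp n+1-k}$), and $\widetilde H_n=H_n\ssp A_{n+1}(\vec x\mid\vec y)$; the shifted $B$-determinants of \eqref{eq:A_B_dets} do not enter the two Hankel families you need, so appealing to them obscures rather than completes the step. (The $B$-determinants are what you would need for the minors of the full Hankel matrix or for the S-fraction coefficients, not for $H_n$ and $\widetilde H_n$.) Finally, you should say explicitly that the Stieltjes criterion guarantees that \emph{some} representing measure is supported on $[0,\infty)$, which is all the definition of a strong Stieltjes moment sequence requires; in the indeterminate case the particular measure produced by Favard's theorem is not canonical, so the conclusion should be phrased in terms of solvability of the Stieltjes problem rather than relocation of ``the'' Favard measure.
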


A putative or "formal" moment sequence $\vec{a}$ can always be
combinatorially determined from any pair of sequences $\vec{x}$
and $\vec{y}$ by calculating the associated \textit{Motzkin polynomials}.
Specifically, the ratio ${a_n/a_0}$ can be expressed as the generating
function of all length-$n$ Motzkin paths, where each up-step $\nearrow$
at height $k$ is weighted by $y_k$, and each horizontal step $\rightarrow$
at height $k$ is weighted by $x_{k+1}$. Figure \ref{motzkin-path}
illustrates an example of a weighted Motzkin path of length seven.

\begin{figure}[h]
\centering
\begin{tikzpicture}[scale=1]
    \foreach \x in {0,1,2,3,4,5,6,7} {
        \foreach \y in {0,1,2,3} {
            \fill (\x,\y) circle (2pt);
        }
    }

    \draw[thick] (0,0) -- (1,1) node[midway,left,red]  {\small$y_1$};
    \draw[thick] (1,1) -- (2,1) node[midway,above,red] {\small$x_2$};
    \draw[thick] (2,1) -- (3,2) node[midway,left,red] {\small$y_2$};
    \draw[thick] (3,2) -- (4,1) node[midway,right,red] {\small$1$};
    \draw[thick] (4,1) -- (5,1) node[midway,above,red] {\small$x_2$};
    \draw[thick] (5,1) -- (6,0) node[midway,right,red] {\small1};
    \draw[thick] (6,0) -- (7,0) node[midway,above,red] {\small$x_1$};
\end{tikzpicture}
\caption{An example of a Motzkin path of weight $x_1x_2^2 y_1 y_2$.}
\label{motzkin-path}
\end{figure}

Below
we list the first four (normalized) formal
moments which are the Motzkin polynomials:
\begin{equation}
	\label{eq:motzkin-polynomials}
	\begin{split}
    a_1/a_0 &= x_1, \\
    a_2/a_0 &= x_1^2 + y_1, \\
    a_3/a_0 &= x_1^3 + 2x_1y_1 + x_2y_1, \\
    a_4/a_0 &= x_1^4 + 3x_1^2y_1 + y_1^2 + 2x_1x_2y_1 + x_2^2y_1 + y_1y_2.
	\end{split}
\end{equation}

\begin{remark}[Permutation statistics and Jacobi continued fractions]
	\label{rmk:14_permutation_statistics}
	Jacobi continued fractions
	and their associated moments are
	connected not only to Motzkin paths (and set partitions,
	as we explain below in \Cref{sec:Stieltjes_moments_general}),
	but also to
	permutation
	statistics. One of the most recent examples of such
	connections is \cite{blitvic2021permutations},
	which relates a 14-parameter Jacobi continued fraction
	to permutation enumeration.

	It would be very interesting to combine the random permutations
	arising from this 14-parameter enumeration with the
	Young--Fibonacci RS correspondence, which we describe
	in \Cref{sec:random_Fibonacci_words_to_random_permutations}
	below. The resulting measures on Fibonacci words
	may coincide with certain clone Schur measures.
	We do not develop this direction further in the present work.
\end{remark}

\subsection{Toda flow}

By \Cref{thm:stieltjes-moments-theorem}, the fact that the sequence
$\vec{a}$, as in \eqref{eq:motzkin-polynomials}, is realized by an
infinitely supported, nonnegative Borel measure is equivalent to the
total positivity of $\mathcal{A} \big( \, \vec{x} \, | \, \vec{y} \, \big)$
\eqref{eq:tridiagonal_matrices_A_Br}.
Conversely, sequences $\vec{x}$ and $\vec{y}$ can be constructed
from a Borel measure $\upnu(dt)$ using the \textit{Toda flow}
\cite{GekhtmanShapiro1997, NakamuraZhedanov2004},
which we now recall.

Having $\upnu(dt)$, consider its exponential reweighting
$e^{\varrho t} \upnu(dt)$. The moments of the reweighted measure
satisfy
\begin{equation*}
    a_n(\varrho) = \frac{d^n}{d\varrho^n} a_0(\varrho),
		\quad \text{where} \quad
    a_0(\varrho) = \int_{-\infty}^{\infty} e^{\varrho t} \ssp \upnu(dt)
    = \sum_{n \geq 0} \frac{a_n}{n!}\ssp \varrho^n.
\end{equation*}
The sum
on the far right is the exponential moment generating function of
$\upnu(dt)$. As functions of $\varrho$, the
associated
tridiagonal parameters $x_n(\varrho)$ and
$y_n(\varrho)$ for $n \geq 1$ must obey
the {\it Toda chain equations}, namely,
\begin{equation}
\label{TodaChain}
\begin{split}
{d \over {d\varrho}} \, x_n(\varrho)
&=  y_n(\varrho)  -  y_{n-1}(\varrho); \\
{d \over {d\varrho}} \, y_n(\varrho)
&=  y_n(\varrho) \big( x_{n+1}(\varrho)  -
x_n(\varrho)\big).
\end{split}
\end{equation}
Their solutions are given by
\begin{equation}
\label{TodaSolution}
\begin{split}
x_n(\varrho)
&= \frac{d}{d\varrho}
\log \left( \frac{\Delta_n(\varrho)}{\Delta_{n-1}(\varrho)} \right) \\
&= \mathrm{Tr} \left(
\mathrm{H}_n^{-1}(\varrho)
\, \mathrm{H}_n^{(1)}(\varrho)
\right) -
\mathrm{Tr} \left(
\mathrm{H}_{n-1}^{-1}(\varrho) \,
\mathrm{H}_{n-1}^{(1)}(\varrho)
\right), \\
y_n(\varrho)
&= \frac{\Delta_{n-1}(\varrho) \, \Delta_{n+1}(\varrho)}
{\Delta_n(\varrho)^2}.
\end{split}
\end{equation}
Here, $\Delta_n(\varrho) = \det \mathrm{H}_n(\varrho)$, and
$\mathrm{H}_n(\varrho)$ and
$\mathrm{H}_n^{(1)}(\varrho)$
are the Hankel matrices
\begin{equation*}
  \mathrm{H}_n(\varrho) \coloneqq
  \underbrace{
    \begin{pmatrix}
      a_0(\varrho) & a_1(\varrho) & a_2(\varrho) & \cdots \\
      a_1(\varrho) & a_2(\varrho) & a_3(\varrho) & \cdots \\
      a_2(\varrho) & a_3(\varrho) & a_4(\varrho) & \cdots \\
      \vdots & \vdots & \vdots & \ddots
    \end{pmatrix}
  }_{\text{$n \times n$ Hankel matrix}},
	\quad
  \mathrm{H}_n^{(1)}(\varrho) \coloneqq
  \frac{d}{d \varrho} \mathrm{H}_n(\varrho)
  =
  \underbrace{
    \begin{pmatrix}
      a_1(\varrho) & a_2(\varrho) & a_3(\varrho) & \cdots \\
      a_2(\varrho) & a_3(\varrho) & a_4(\varrho) & \cdots \\
      a_3(\varrho) & a_4(\varrho) & a_5(\varrho) & \cdots \\
      \vdots & \vdots & \vdots & \ddots
    \end{pmatrix}
	}_{\text{$n \times n$ Hankel matrix}}.
\end{equation*}
In $\mathrm{H}_n^{(1)}(\varrho)$,
we used the fact that
${d \over {d\varrho}} a_k(\varrho) = a_{k+1}(\varrho)$.
For example, the solutions
for $x_k(\varrho)$ and $y_k(\varrho)$
for $k= 1,2$ are
\begin{equation*}
\begin{split}
	x_1(\varrho) &= \frac{a_1(\varrho)}{a_0(\varrho)},\qquad
x_2(\varrho)= \frac{a_1^3(\varrho) - 2a_0(\varrho)a_1(\varrho)a_2(\varrho) + a_0^2(\varrho)a_3(\varrho)}
{a_0 \big( a_0(\varrho) a_2(\varrho) - a_1^2(\varrho) \big)}, \\
y_1(\varrho) &= \frac{a_0(\varrho)a_2(\varrho) - a_1^2(\varrho)}{a_0^2(\varrho)}, \\
y_2(\varrho) &= \frac{a_0(\varrho)\big( a_0(\varrho)a_2(\varrho)a_4(\varrho)
+ 2a_1(\varrho)a_2(\varrho)a_3(\varrho) - a_1^2(\varrho)a_4(\varrho)
- a_0(\varrho)a_3^2(\varrho) - a_2^3(\varrho) \big)}
{\big( a_0(\varrho) a_2(\varrho) - a_1^2(\varrho) \big)^2}.
\end{split}
\end{equation*}
The sequences $\vec{x}$ and $\vec{y}$ for the original
measure $\upnu(dt)$ can be obtained by setting $\varrho = 0$
in \eqref{TodaSolution}. We emphasize that the Toda flow
preserves total positivity: Given two initial sequences
$\vec{x}$ and $\vec{y}$ for which the matrix
$\mathcal{A}(\vec{x} \mid \vec{y} \ssp )$ is totally positive, the
matrix $\mathcal{A}(\vec{x}(\varrho) \mid \vec{y}(\varrho))$
remains totally positive for any $\varrho \leq 0$. Here,
$\vec{x}(\varrho) = (x_1(\varrho), x_2(\varrho), \dots)$ and
$\vec{y}(\varrho) = (y_1(\varrho), y_2(\varrho), \dots)$ are
solutions of the Toda chain equations given by
\eqref{TodaSolution}.

\begin{example}
	\label{ex:Poisson-measure}
	Consider the Poisson distribution
	\begin{equation}
			\label{eq:Poisson-measure-definition}
			\upnu_{\scriptscriptstyle \mathrm{Pois}}^{\scriptscriptstyle (\rho)} (dt)
			\coloneqq
			e^{-\rho} \sum_{k \geq 0} \,\frac{\rho^k}{k!} \, \delta_k(dt),
	\end{equation}
	where $\delta_k$ is the Dirac delta mass at $k$.
	This distribution
	is
	obtained by applying the Toda flow, with ``time''
	$\varrho = \log(\rho)$, to the Poisson distribution
	$\upnu_{\scriptscriptstyle \mathrm{Pois}}^{\scriptscriptstyle (1)} (dt)$,
	and then
	renormalizing by $e^{1-\rho}$.
	Indeed,
	the associated
	tridiagonal parameters have the form
	\[
	x_n(\varrho) = n + \rho - 1 = n + e^\varrho - 1
	\quad \text{and} \quad
	y_n(\varrho) = \rho \, n = e^\varrho \, n.
	\]
	and satisfy the Toda chain equations \eqref{TodaChain}.
	Note that for all $\rho \in (0,1]$
    these tridiagonal parameters
    are Fibonacci positive;
	equivalently, $(\vec{x}(\varrho), \vec{y}(\varrho))$ is Fibonacci positive
    when the Toda flow parameter satisfies
    $\varrho \in (-\infty, 0]$.
\end{example}

Fibonacci positivity is stronger than total positivity.
This presents two natural questions:
\begin{problem}
	\label{problem:moment-sequences-tridiagonal-matrices-Fibonacci}
	What are the properties of moment sequences and nonnegative Borel
	measures $\upnu_{\vec{x},\vec{y}} \ssp (dt)$ associated with
	Fibonacci positive specializations $(\vec{x}, \vec{y}\ssp)$
	by \Cref{thm:stieltjes-moments-theorem}? Can these moment
	sequences and measures be characterized in a meaningful way?
\end{problem}
\begin{problem}
	\label{problem:Toda-flow}
	Does the Toda flow preserve
	the space of Fibonacci positive
	specializations
	$(\vec x,\vec{y}\ssp)$
	for values of the
	deformation parameter $\varrho$
	within some interval $(-R, 0]$
	with $R > 0$?
\end{problem}

We do not address these problems in full generality here.

\begin{remark}[Toda flow]
	Along with the Poisson measure
	(\Cref{ex:Poisson-measure}), the
	shifted Charlier specialization
	$x_k=\rho+\sigma+k-2$,
	$y_k=\rho(\sigma+k-1)$, $k\ge1$, also
	satisfies the Toda chain equations \eqref{TodaChain},
	after the same change of variables
	$\rho=e^{\varrho}$.
	Thus, in the shifted Charlier case,
	the Toda flow preserves the
	Fibonacci positivity when $\varrho\in(-\infty,0]$.

	In contrast, the
	Type-I Al-Salam--Carlitz, Al-Salam--Chihara, and
	$q$-Charlier specializations
	we consider in \Cref{sec:Fibonacci_and_Stieltjes_examples} below
	\emph{do not} satisfy the Toda chain equations
	with the natural change of variables $\rho = \exp(\varrho)$.
	This is not evidence against a positive answer to
	\Cref{problem:Toda-flow},
	but indicates that the associated Toda flow
	may require a different, more intricate
	parametrization.
\end{remark}

\section{Combinatorics of Stieltjes Moment Sequences}
\label{sec:Stieltjes_moments_general}

In this section, we present general results
on Stieltjes moment sequences
associated with a totally positive
specialization
$(\vec{x}, \vec{y} \ssp)$,
without assuming that it is Fibonacci positive.


\subsection{Statistics on non-crossing partitions}

We begin describing the moments by connecting
set partitions and
Motzkin paths.
Recall that $\pi \in \Pi(n)$ denotes an arbitrary set partition of
$\{1, \dots, n\}$.
It is always presented in canonical form, i.e.,
\[
\pi = B_1 \big| B_2 \big| \cdots \big| B_r,
\]
where the blocks $B_1, \dots, B_r$ are ordered such that
$\min B_1 < \cdots < \min B_r$.
We say that $i$ {\it precedes} $j$
whenever $i = b_s$ and $j = b_{s+1}$
for some $1 \leq s < \ell$
where $B = \{ b_1 < \cdots < b_\ell \}$
is a block of $\pi$ whose
elements are listed in increasing order.

An element of a block of size one is called a
\emph{singleton}.  An element is called an \emph{opener} if
it is not a singleton and it is the minimal element in its
block.  Likewise, an element is called a \emph{closer} if it
is not a singleton and it is the maximal element in its
block.  An element which is neither a singleton, an opener,
nor a closer is called a \emph{transient}. The sets of
singletons, openers, closers, and transients of a set
partition $\pi \in \Pi(n)$ are denoted, respectively, by
$\mathbcal{S}(\pi)$, $\mathbcal{O}(\pi)$,
$\mathbcal{C}(\pi)$,~and~$\mathbcal{T}(\pi)$.  Clearly,
these four sets are disjoint, and their union is $\{1,
\dots, n\}$.

A set partition $\pi \in \Pi(n)$ is often depicted by
arranging the numbers $\{1, \dots, n\}$ in increasing order
from left to right on the horizontal $x$-axis.  An arc is
drawn in the upper half-plane between $i, j \in \{ 1, \dots,
n\}$ whenever $i$ precedes $j$. The ensemble of arcs can
always be drawn so that any pair of arcs cross at most once
and no more than two arcs cross at any point in the upper
half-plane. A set partition $\pi \in \Pi(n)$ is said to be
{\it non-crossing} if its arcs are pairwise non-crossing;
the set of non-crossing set partitions of $n$ is denoted
$\mathrm{NC}(n)$.

\begin{figure}[htb]
\begin{minipage}[b]{0.45\linewidth}
\centering
\begin{tikzpicture}[scale=0.6]
\foreach \x in {1,...,9} {
    \fill (\x,0) circle (3pt);
    \node[below] at (\x,-0.3) {\x};
}
\draw[thick] (1,0) to[out=90,in=90] (3,0);
\draw[thick] (2,0) to[out=90,in=90] (9,0);
\draw[thick] (3,0) to[out=90,in=90] (5,0);
\draw[thick] (6,0) to[out=90,in=90] (7,0);
\draw[thick] (7,0) to[out=90,in=90] (8,0);
\end{tikzpicture}
\end{minipage}
\hfill
\begin{minipage}[b]{0.45\linewidth}
\centering
\begin{tikzpicture}[scale=0.6]
\foreach \x in {1,...,9} {
    \fill (\x,0) circle (3pt);
    \node[below] at (\x,-0.3) {\x};
}
\draw[thick] (1,0) to[out=90,in=90] (9,0);
\draw[thick] (2,0) to[out=90,in=90] (3,0);
\draw[thick] (3,0) to[out=90,in=90] (5,0);
\draw[thick] (6,0) to[out=90,in=90] (7,0);
\draw[thick] (7,0) to[out=90,in=90] (8,0);
\end{tikzpicture}
\end{minipage}
\caption{Examples of arc ensembles of set partitions $\pi = 135 \mid 29 \mid 4 \mid 678$ (left)
and $\pi' = 19 \mid 235 \mid 4 \mid 678$  (right). Note that
$\pi'$ is non-crossing, while $\pi$ is not. In $\pi$,
the openers are $1,2$, and $6$, the closers are $5,8$, and $9$,
the transients are $3$ and $7$. Finally, $4$ is the only singleton of $\pi$.}
\label{ex:arc-ensembles}
\label{fig:arc-ensembles}
\end{figure}

For $i \in \{1, \dots, n \}$, let $\Gamma_i(\pi)$ denote the
set of openers or transients $a < i$ such that $i \leq b$,
where $b$ is the element succeeding $a$ in the same block of
$\pi$.  For $i \in \mathbcal{C}(\pi) \cup
\mathbcal{T}(\pi)$, let $\gamma_i(\pi)$ be the position of
the opener or transient in $\Gamma_i(\pi)$ preceding~$i$,
where we list the elements $\Gamma_i(\pi) = \{a_1 < \dots <
a_k \}$ in increasing order.  Kasraoui and Zeng
\cite{kasraoui2006distribution} showed that a set partition
$\pi \in \Pi(n)$ is uniquely determined by the tuple
$(\mathbcal{O}, \mathbcal{C}, \mathbcal{S},\mathbcal{T})$
together with the integers $\gamma_i(\pi)$ for $i \in
\mathbcal{C}(\pi) \cup \mathbcal{T}(\pi)$.

\begin{remark}
\label{rem:noncrossing+gstatistic}
Note that $\pi \in \mathrm{NC}(n)$
if and only if
$\gamma_i(\pi) = \# \ssp \Gamma_i(\pi)$
for all $i \in \mathbcal{C}(\pi)
\cup \mathbcal{T}(\pi)$.
\end{remark}

\begin{example}
\label{example:set_partition}
Consider the
set partition $\pi = 135 \big| 29 \big| 4 \big| 678$ of $n=9$. In this case, we have
\[
\begin{array}{llll}
	\mathbcal{O}(\pi) = \{ 1, 2, 6 \};
&\qquad \Gamma_3(\pi) = \{1, 2 \},
&\qquad \gamma_3(\pi) = 1;\\
\mathbcal{C}(\pi) = \{ 5, 8, 9 \};
&\qquad \Gamma_5(\pi) = \{2, 3\},
&\qquad \gamma_5(\pi) = 2; \\
\mathbcal{S}(\pi) = \{ 4 \};
&\qquad \Gamma_7(\pi) = \{2, 6 \},
&\qquad \gamma_7(\pi) = 2; \\
\mathbcal{T}(\pi) = \{ 3, 7 \};
&\qquad \Gamma_8(\pi) = \{2, 7 \},
&\qquad \gamma_8(\pi) = 2; \\
&\qquad \Gamma_9(\pi) = \{2 \},
&\qquad \gamma_9(\pi) = 1,
\end{array}
\]
and $\Gamma_1(\pi)=\varnothing$,
$\Gamma_2(\pi)=\left\{ 1 \right\}$,
$\Gamma_4(\pi)=\left\{ 2,3 \right\}$,
$\Gamma_6(\pi)=\left\{ 2 \right\}$.
\end{example}

Denote also
\begin{equation}
\label{def:g-ell-statistics}
\begin{array}{rcll}
\displaystyle \ell_k(\pi)
&\displaystyle \coloneqq& \# \, \bigl\{\, 1\le i\le n
: \# \Gamma_i(\pi)=k \bigr\}, &\qquad k\ge 0, \\[4pt]
\displaystyle g_m(\pi)
&\displaystyle \coloneqq &\# \, \bigl\{\, i \in \mathbcal{C}(\pi)
\cup \mathbcal{T}(\pi)
: \gamma_i(\pi)=m \bigr\}, &\qquad m\ge 1.
\end{array}
\end{equation}
In \Cref{example:set_partition}, we have
\begin{equation*}
	\ell_0(\pi)=1,\qquad \ell_1(\pi)=3,\qquad
	\ell_2(\pi)=5,\qquad g_1(\pi)=2,\qquad g_2(\pi)=3,
\end{equation*}
and all other $\ell_k(\pi)$ and $g_k(\pi)$ are zero.

\begin{remark}
\label{remark:inequality-setpartitions}
Notice that $\ell_0(\pi) \geq 1$
and $\ell_k(\pi) \geq g_k(\pi)$
for any set partition $\pi \in \Pi(n)$.
\end{remark}

\begin{lemma}
\label{lemma:noncrossing+compositions}
Let $\pi \in \mathrm{NC}(n)$,
and let $p = \max \ssp \{ k  :  \ell_k(\pi) > 0 \} = \max \ssp \{ k  :  g_k(\pi) > 0 \}$.
Then for $p\ge1$, we have
$\ell_k(\pi)> g_k(\pi) \geq 1$
whenever $1 \leq k < p$
while $\ell_p(\pi) \geq
g_p(\pi) \geq 1$ (for $p=0$, we simply have $\ell_0=n$ and $g_1=0$).
In particular, this implies that
$\boldsymbol{g}(\pi) = (g_1(\pi), \dots, g_p(\pi))$
is a composition of
$\# \ssp ( \mathbcal{C}(\pi) \cup \mathbcal{T}(\pi) )$,
while
$\boldsymbol{\ell}(\pi) := (\ell_0(\pi), \dots,
\ell_p(\pi))$ is a
composition of $n$.
\end{lemma}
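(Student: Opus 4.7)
The plan is to view each non-crossing set partition $\pi \in \mathrm{NC}(n)$ as a Motzkin path of length $n$ from $(0,0)$ to $(n,0)$, where the $i$-th step is an up-step if $i \in \mathbcal{O}(\pi)$, a down-step if $i \in \mathbcal{C}(\pi)$, and a horizontal step if $i \in \mathbcal{S}(\pi) \cup \mathbcal{T}(\pi)$. A direct induction on $i$ using how $\Gamma_i(\pi)$ is updated when $i$ is of each type shows that the height of the path just before step $i$ equals $\#\ssp \Gamma_i(\pi)$; in particular, $\ell_k(\pi)$ counts positions at which the path sits at height $k$. Appealing to \Cref{rem:noncrossing+gstatistic}, for $k \geq 1$ the statistic $g_k(\pi)$ counts positions at height $k$ that are closers or transients, that is, the down-steps and transient horizontal steps of the Motzkin path at height $k$.

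From this picture one sees immediately that $g_k(\pi) \leq \ell_k(\pi)$ and that the difference $\ell_k(\pi) - g_k(\pi)$ equals the number of openers at height $k$ plus the number of singletons at height $k$. The coincidence of the two maxima follows because if the path reaches maximum height $p \geq 1$, it must eventually step down from $p$, producing a closer at height $p$ and hence $g_p(\pi) \geq 1$; conversely, any contribution to $g_k(\pi)$ forces a position at height $k$, so $g_k(\pi) > 0$ implies $\ell_k(\pi) > 0$.

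For $1 \leq k < p$, since the path reaches height $p > k$ it must contain an up-step starting at height $k$, corresponding to an opener at that height, giving $\ell_k(\pi) - g_k(\pi) \geq 1$ and hence the strict inequality $\ell_k(\pi) > g_k(\pi)$. In the same way, the path descends from height $p$ back to $0$ and therefore passes through every intermediate height, producing in particular a down-step from height $k$, so $g_k(\pi) \geq 1$ for every $1 \leq k \leq p$. The borderline case $k = p$ gives $\ell_p(\pi) \geq g_p(\pi) \geq 1$ tautologically from the inequality $g_k \leq \ell_k$ noted above. The composition assertion then follows by summing: $\sum_{k=0}^{p} \ell_k(\pi) = n$ since every position has some height, while $\sum_{k=1}^{p} g_k(\pi) = \# \ssp (\mathbcal{C}(\pi) \cup \mathbcal{T}(\pi))$.

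I anticipate no real obstacle here; the substance of the argument is the Motzkin path dictionary together with \Cref{rem:noncrossing+gstatistic}, and the strict-vs-weak dichotomy at $k < p$ versus $k = p$ simply reflects that no up-step can leave the maximum height $p$. The degenerate case $p = 0$ is immediate: the condition forces every step to be a horizontal step at height $0$, i.e., every position is a singleton, yielding $\ell_0(\pi) = n$ and $g_k(\pi) = 0$ for all $k \geq 1$.
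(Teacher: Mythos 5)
Your proof is correct and follows essentially the same route as the paper's: both encode $\pi$ as a Motzkin path via the opener/closer/singleton-transient step dictionary, identify $\ell_k$ with visits to level $k$ and (using \Cref{rem:noncrossing+gstatistic}) $g_k$ with down-steps and transient horizontal steps at level $k$, and derive the strict inequality for $k<p$ from the forced up-step leaving level $k$. If anything, you are slightly more explicit than the paper in justifying $g_k(\pi)\geq 1$ for all $1\leq k\leq p$ via the descent of the path, which is a welcome touch.
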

Recall that a \emph{composition} of $m$ is a sequence
of \emph{positive} integers
summing to $m$.
\begin{proof}
First, observe the following changes in $\#\Gamma_i(\pi)$:
\begin{enumerate}[$\bullet$]
	\item
	If $i$ is an opener, then $\#\Gamma_{i+1}(\pi) = \#\Gamma_{i}(\pi) + 1$;
	\item
	If $i$ is a singleton or a transient, then $\#\Gamma_{i+1}(\pi) = \#\Gamma_{i}(\pi)$;
	\item
	If $i$ is a closer, then $\#\Gamma_{i+1}(\pi) = \#\Gamma_{i}(\pi) - 1$.
\end{enumerate}
In other words, the sequence
$(0=\#\Gamma_1(\pi), \#\Gamma_2(\pi), \dots, \#\Gamma_n(\pi),0)$
is a Motzkin path with $n$ steps.
The number of visits to level $k$ of this path is counted by
$\ell_k(\pi)$.
If we label the horizontal steps of the path
by either color $0$ (if $i$ is a singleton)
or color $k$, where $k$ is the height of the horizontal step
(if $i$ is a transient),
then $g_k(\pi)$ counts the number of down-steps
that start at level $k\ge1$, and also the number
of horizontal steps at level $k\ge1$ that have positive color.
Note that the latter count is only valid for non-crossing
set partitions, see \Cref{rem:noncrossing+gstatistic} (and also the extension
to all set partitions in \Cref{sub:Charlier_histoires} below).
The quantity $p$ is the maximal level visited by the path,
so clearly $\ell_p(\pi)\ge g_p(\pi) \ge 1$.
For any $k<p$, the path must eventually rise from level $k$ to level $k+1$,
so we get the strict inequality. This completes the proof.
\end{proof}

Denote by $\mathrm{nest}(\pi)$ the
\emph{nesting} statistic of a set partition $\pi \in \Pi(n)$,
which counts the number of pairs $(a,b)$
and $(c,d)$
where $a$ precedes $b$
and $c$ precedes $d$ in $\pi$,
while the {\it nesting} inequality
$a<c<d<b$ holds.

\begin{lemma}
\label{lemma:nesting-formula}
For any
$\pi \in \Pi(n)$, we have
\begin{equation}
\label{eq:nesting-formula}
\mathrm{nest}(\pi)
\, = \,
\sum_{k \ssp \geq \ssp 1}
\, (k-1) g_k(\pi).
\end{equation}
\end{lemma}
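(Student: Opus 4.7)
The plan is to evaluate $\mathrm{nest}(\pi)$ by summing over the inner arc of each nesting pair. Every nesting quadruple $((a,b),(c,d))$ satisfying $a<c<d<b$ is uniquely determined by the choice of its inner arc $(c,d)$ together with its outer arc $(a,b)$; the inner right endpoint $d$ is necessarily either a transient or a closer of $\pi$, and $c$ is the unique element preceding $d$ in its block. Thus the count decomposes as
\begin{equation*}
\mathrm{nest}(\pi) \, = \sum_{d \, \in \, \mathbcal{C}(\pi) \, \cup \, \mathbcal{T}(\pi)} N(d),
\end{equation*}
where $N(d)$ denotes the number of arcs $(a,b)$ of $\pi$ with $a<c<d<b$.

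The crux of the argument is the identity $N(d) = \gamma_d(\pi) - 1$. To establish it, I would identify the admissible outer-arc left endpoints $a$ with those elements of $\Gamma_d(\pi)$ lying strictly to the left of $c$. On the one hand, any $a<c$ whose successor $b$ satisfies $b>d$ is an opener or transient with $a<d$ and $b\ge d$, so $a \in \Gamma_d(\pi)$. On the other hand, if $a \in \Gamma_d(\pi)$ then the successor $b$ of $a$ equals $d$ precisely when $a$ is the predecessor of $d$, namely $a=c$; hence every other $a \in \Gamma_d(\pi)$ automatically gives rise to an arc reaching strictly beyond $d$. By the very definition of $\gamma_d(\pi)$ as the position of $c$ in the increasing ordering of $\Gamma_d(\pi)$, there are exactly $\gamma_d(\pi) - 1$ elements of $\Gamma_d(\pi)$ smaller than $c$, which yields $N(d) = \gamma_d(\pi) - 1$.

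Substituting this and grouping by the common value $k=\gamma_d(\pi)$ gives
\begin{equation*}
\mathrm{nest}(\pi) \, = \sum_{d \, \in \, \mathbcal{C}(\pi) \, \cup \, \mathbcal{T}(\pi)} (\gamma_d(\pi) - 1) \, = \, \sum_{k \ge 1} (k-1) \, g_k(\pi),
\end{equation*}
which is the asserted formula. The argument is essentially a direct unwinding of the definitions of $\Gamma_d(\pi)$ and $\gamma_d(\pi)$; no substantive obstacle is anticipated beyond careful bookkeeping of which elements of $\Gamma_d(\pi)$ correspond to genuine outer arcs and the fact that $c$ itself must be excluded from the count.
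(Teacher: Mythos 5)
Your proof is correct and takes essentially the same approach as the paper: both arguments decompose the nesting count according to the right endpoint $d$ of the inner arc, observe that $d \in \mathbcal{C}(\pi) \cup \mathbcal{T}(\pi)$, and identify the admissible outer arcs with the first $\gamma_d(\pi)-1$ elements of $\Gamma_d(\pi)$ before grouping by the value $k=\gamma_d(\pi)$. The only cosmetic remark is that your intermediate sentence about ``every other $a \in \Gamma_d(\pi)$'' could be read as counting all of $\Gamma_d(\pi)\setminus\{c\}$, but your final count correctly restricts to the elements strictly smaller than $c$, so the argument stands.
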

\begin{proof}
For each quadruple $a<c<d<b$ contributing to the nesting statistic,
note that $d$ is not an opener or a singleton (since $c$ precedes $d$),
hence $d\in \mathbcal{C}(\pi) \cup \mathbcal{T}(\pi)$.
Moreover, $a\in \Gamma_d(\pi)$, since $b>d$.
Because $c$ precedes $d$ in $\pi$, the element $a\in \Gamma_d(\pi)$
can be any of the first $\gamma_d(\pi)-1$ elements of $\Gamma_d(\pi)$.
Clearly, for fixed $d$, the $\gamma_d(\pi)-1$ choices of $a$ yield
all the nestings for which $(c,d)$ is the inner arc.

For each $k$, and for every $g_k(\pi)$ elements $d \in \mathbcal{C}(\pi) \cup \mathbcal{T}(\pi)$
with $\gamma_d(\pi) = k$, there are exactly $k-1$ choices for $a \in \Gamma_d(\pi)$.
This completes the proof.
\end{proof}

\subsection{Charlier histoires}
\label{sub:Charlier_histoires}

Let us extend the correspondence between
non-crossing set partitions and
Motzkin paths (given in the proof of \Cref{lemma:noncrossing+compositions})
to all set partitions.
A length-$n$ \emph{Charlier histoire} is a
colored Motzkin path of length $n$,
where each~$\rightarrow$ step at height $k$ is assigned a
nonnegative integer color $\in \{0, \dots, k \}$, while
each~$\searrow$ step from height $k$ to $k-1$ is assigned a positive integer color $
\in \{1, \dots, k\}$. Let $\mathfrak{H}_{n}$ denote the
set of length-$n$ Charlier histoires.

Set partitions of $n$
and Charlier histoires of length $n$
are well-known to be in bijective correspondence.\footnote{We are
grateful to Dennis Stanton for explaining
this relationship to us.}
We present a bijection $\Pi(n) \rightarrow
\frak{H}_{n}$
which is adapted from
\cite{josuat2011crossings} and
\cite{kasraoui2006distribution},
and extends the easier correspondence for non-crossing partitions
from the proof of \Cref{lemma:noncrossing+compositions}.
Specifically, a Charlier histoire
	$\frak{h}_\pi$ is constructed from left to right by
	converting, in order, each element $i \in \{1, \dots, n\}$
	of a set partition $\pi \in \Pi(n)$ into a (colored) step
	of type $\{ \nearrow,  \rightarrow, \searrow \}$ according
	to the following rules:
	\begin{enumerate}[$\bullet$]
		\item Each $i \in \mathbcal{O}(\pi)$
			is converted into an $\nearrow$ step at
            height $\# \ssp \Gamma_i(\pi) + 1$;
		\item Each $i \in \mathbcal{S}(\pi)$
            \ssp is converted
			into an $\rightarrow$ step at
            height $\# \ssp \Gamma_i(\pi)$
            with color $\chi = 0$;
		\item Each $i \in \mathbcal{T}(\pi)$
        is converted into an $\rightarrow$ step
        at height $\# \ssp \Gamma_i(\pi)$
        with color $\chi = \gamma_i(\pi)$;
		\item Each $i \in \mathbcal{C}(\pi)$
        \ssp is converted into an $\searrow$ step at
            height $\# \ssp \Gamma_i(\pi)$
		    with color $\chi = \gamma_i(\pi)$.
	\end{enumerate}

\begin{figure}[h]
\centering
\begin{tikzpicture}[scale=0.7]
  \draw[step=1cm,help lines,gray!30] (0,0) grid (9,3);

  \foreach \x/\y in {0/0,1/1,2/2,3/2,4/2,5/1,6/2,7/2,8/1,9/0}
    {\filldraw[black] (\x,\y) circle (1.6pt);}

  \draw[very thick] (0,0) -- (1,1);
  \draw[very thick] (1,1) -- (2,2);
  \draw[very thick] (5,1) -- (6,2);

  \draw[very thick] (2,2) -- (3,2);
  \draw[very thick] (3,2) -- (4,2);
  \draw[very thick] (6,2) -- (7,2);

  \draw[very thick] (4,2) -- (5,1);
  \draw[very thick] (7,2) -- (8,1);
  \draw[very thick] (8,1) -- (9,0);

  \node[above] at (2.5,2) {\textcolor{red}{$1$}};
  \node[above] at (3.5,2) {\textcolor{red}{$0$}};
  \node[above] at (4.5,1.5) {\textcolor{red}{$2$}};
  \node[above] at (6.5,2) {\textcolor{red}{$2$}};
  \node[above] at (7.5,1.5) {\textcolor{red}{$2$}};
  \node[above] at (8.5,0.5) {\textcolor{red}{$1$}};

  \node[left] at (0,0) {$0$};
  \node[left] at (0,1) {$1$};
  \node[left] at (0,2) {$2$};
\end{tikzpicture}
\caption{The Charlier histoire
$\frak{h}_\pi$ corresponding
to $\pi = 135 \big| 29 \big| 4 \big| 678$.}
\label{charlier-histoire}
\end{figure}

\begin{remark}
\label{rem:noncrossing+histoires}
	A set partition $\pi \in \Pi(n)$
	is non-crossing
	if and only if
	each $\rightarrow$ step
	of the corresponding Charlier histoire $\frak{h}_\pi$
	at height $k$
	has either color $0$ or $k$,
	and each $\searrow$ step
	of $\frak{h}_\pi$
	at height $k$
	has color $k$.
	Indeed, this is equivalent to \Cref{rem:noncrossing+gstatistic}.
\end{remark}

Given a Charlier histoire $\frak{h}_\pi$, let $\frak{m}$ be
the Motzkin path obtained by forgetting the
colours. The area statistic
$\mathrm{area}(\frak{m})$ is simply the Euclidean area lying below
$\frak{m}$ and above the horizontal axis. Note that
$\mathrm{area}(\frak{m})$ is always an integer.
When convenient
we abuse notation and write
$\mathrm{area}(\pi):=\mathrm{area}(\frak{m})$.
We will use the following statement in the case-by-case analysis of moments presented in \Cref{sec:Fibonacci_and_Stieltjes_examples} below.

\begin{lemma}
	\label{lemma:area-statistic+formula}
	For any set partition $\pi \in \Pi(n)$,
	we have
	\begin{equation}
	\label{eq:area-statistic+formula}
	\mathrm{area}(\frak{m}) \, = \,
	\sum_{k \ssp \geq \ssp 0} \, k \ssp \ell_k(\pi).
	\end{equation}
\end{lemma}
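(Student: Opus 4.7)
The plan is to compute $\mathrm{area}(\frak{m})$ via the standard column (trapezoid) decomposition of the area under a lattice path, and then to rewrite the resulting sum by grouping positions $i$ according to the value of $\#\Gamma_i(\pi)$. The only non-obvious input is the identification of the heights of $\frak{m}$ with the statistics $\#\Gamma_i(\pi)$, which was already established in the proof of \Cref{lemma:noncrossing+compositions}.

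First I would recall from the bijection $\pi \mapsto \mathfrak{h}_\pi$, and the height analysis in \Cref{lemma:noncrossing+compositions}, that the $i$-th step of $\frak{m}$ starts at height $\#\Gamma_i(\pi)$ and ends at height $\#\Gamma_{i+1}(\pi)$, under the conventions $\#\Gamma_1(\pi)=0$ and $\#\Gamma_{n+1}(\pi)=0$. Openers increase the height by $1$, closers decrease it by $1$, and singletons/transients leave it unchanged, so the step type at position $i$ is determined by the class of $i$ in $(\mathbcal{O},\mathbcal{S},\mathbcal{T},\mathbcal{C})$.

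Next, by the trapezoid rule, the contribution of the $i$-th step to $\mathrm{area}(\frak{m})$ equals
\[
\frac{\#\Gamma_i(\pi)+\#\Gamma_{i+1}(\pi)}{2}
\;=\;
\begin{cases}
\#\Gamma_i(\pi)+\tfrac{1}{2}, & i\in\mathbcal{O}(\pi),\\[2pt]
\#\Gamma_i(\pi), & i\in\mathbcal{S}(\pi)\cup\mathbcal{T}(\pi),\\[2pt]
\#\Gamma_i(\pi)-\tfrac{1}{2}, & i\in\mathbcal{C}(\pi).
\end{cases}
\]
Summing over $i=1,\dots,n$, the half-integer corrections from $\nearrow$- and $\searrow$-steps cancel, because $\frak{m}$ starts and ends at height $0$ and hence contains equal numbers of up- and down-steps. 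Thus $\mathrm{area}(\frak{m})=\sum_{i=1}^n \#\Gamma_i(\pi)$.

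Finally, I would reorganize the right-hand side by collecting indices with equal values of $\#\Gamma_i(\pi)$:
\[
\sum_{i=1}^n \#\Gamma_i(\pi)
\;=\;\sum_{k\ge 0} k\cdot\#\{\,1\le i\le n:\#\Gamma_i(\pi)=k\,\}
\;=\;\sum_{k\ge 0} k\,\ell_k(\pi),
\]
by the definition \eqref{def:g-ell-statistics} of $\ell_k(\pi)$. The argument is essentially bookkeeping; the only point requiring care is lining up the height indexing of $\frak{m}$ with $\#\Gamma_i(\pi)$ and verifying the $\pm\tfrac{1}{2}$ cancellation, both of which are immediate from the bijection already set up.
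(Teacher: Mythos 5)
Your proof is correct and follows essentially the same route as the paper's: both reduce the claim to the identity $\mathrm{area}(\frak{m})=\sum_{i=1}^n \#\Gamma_i(\pi)$ and then group indices by the value of $\#\Gamma_i(\pi)$. The only cosmetic difference is that the paper simply asserts that the area of a Motzkin path equals the sum of the starting heights of its steps, whereas you derive this via the trapezoid decomposition and the cancellation of the $\pm\tfrac{1}{2}$ corrections; both are fine.
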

\begin{proof}
	The area under a Motzkin path is equal to the sum of the
	starting heights of all its steps. By the construction
	of $\mathfrak{h}_\pi$, these heights are equal to $\#\Gamma_i(\pi)$, and so
	we have
	\begin{equation*}
		\mathrm{area}(\pi)=\sum_{i=1}^{n} \# \Gamma_i(\pi).
	\end{equation*}
	Grouping by the values of $\Gamma_i(\pi)=k$, $0\le k\le n$,
	we get the desired identity
	\eqref{eq:area-statistic+formula}.
	This completes the proof.
\end{proof}

\subsection{Totally positive moment formula}
\label{sub:totally_positive_moment_formula}

Let $\frak{M}_n$ denote the set of
all Motzkin paths of length $n$,
and let $\mathrm{pr}_n: \frak{H}_n \rightarrow \frak{M}_n$ be the projection map from
Charlier histoires to Motzkin paths which
simply ``forgets'' the colors.
By \Cref{rem:noncrossing+histoires}, there are
exactly $2^{\ssp \# \ssp
\mathrm{hor}(\frak{m})}$
Charlier histoires
$\frak{h}_\pi$ with
$\pi \in \mathrm{NC}(n)$
and $\mathrm{pr}_n (\frak{h}_\pi) = \frak{m}$
for each Motzkin
path $\frak{m} \in \frak{M}_n$,
where $\# \ssp \mathrm{hor}(\frak{m})$
is the number of
$\rightarrow$ steps
in~$\frak{m}$ which are strictly above the $x$-axis.
Thus,
\begin{equation}
\label{catalan-motzkin-count}
 \# \ssp \mathrm{NC}(n) \, = \,
\sum_{\frak{m} \ssp \in \ssp \frak{M}_n}
2^{\# \ssp \mathrm{hor}(\frak{m})}.
\end{equation}

The next result is a multivariate
enhancement of \eqref{catalan-motzkin-count}
which incorporates the parameters~$\vec{c}$ and~$\vec{t}$ coming from
total positivity:

\begin{proposition}
\label{prop:tp-moment-formula}
Let $(\vec{x}, \vec{y} \ssp )$
be a totally positive specialization
expressed in terms of the sequences
$\vec t$ and $\vec c$
\eqref{eq:Fibonacci_positive_specialization_in_theorem},
see
\Cref{prop:c_t_from_x_y}. Then
the associated $n$-th Stieltjes moment
is given by
\begin{equation}
\label{eq:tp-moment-formula}
a_n = \sum_{\pi \ssp \in \ssp
\mathrm{NC}(n)} \,
\prod_{k \ssp \geq \ssp 1} \,
c_k^{\ssp \ell_{k-1}(\pi)} \ssp t_k^{\ssp g_k(\pi)},
\end{equation}
where the statistics $\ell_k(\pi)$ and $g_k(\pi)$
are defined in \eqref{def:g-ell-statistics}.
Note that we do not assume that the specialization
$(\vec{x}, \vec{y} \ssp )$
is Fibonacci positive.
\end{proposition}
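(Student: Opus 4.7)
The plan is to start from the classical Motzkin-path expansion of Stieltjes moments, rewrite the step weights using the $(\vec c,\vec t\,)$ parametrization, and interpret the resulting refined sum as a sum over non-crossing partitions via the Charlier histoire bijection.

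First I would recall that, under the normalization $a_0=1$, the moment $a_n$ equals the generating polynomial of weighted Motzkin paths of length $n$, where each $\nearrow$-step ending at height $k$ is weighted by $y_k$, each horizontal step at height $k$ by $x_{k+1}$, and each $\searrow$-step by $1$ (this is the combinatorial interpretation of the Jacobi continued fraction \eqref{eq:Jacobi-continued-fraction_theorem}, cf.\ \eqref{eq:motzkin-polynomials}). Using \Cref{prop:c_t_from_x_y} and \eqref{eq:Fibonacci_positive_specialization_in_theorem}, substitute $x_{h+1}=c_{h+1}(1+t_h)$ and $y_{h+1}=c_{h+1}c_{h+2}\ssp t_{h+1}$. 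Since $t_0=0$, horizontal steps on the $x$-axis carry weight $c_1$ only, while horizontal steps at height $h\geq 1$ carry weight $c_{h+1}(1+t_h)$. Expanding $(1+t_h)$ at each such horizontal step refines the Motzkin sum into a sum over \emph{decorated} Motzkin paths in which every horizontal step at positive height is tagged with either a ``$0$'' (contributing $c_{h+1}$) or an ``$h$'' (contributing $c_{h+1}\ssp t_h$).

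Next I would invoke the bijection used in the proof of \Cref{lemma:noncrossing+compositions} (and described explicitly in \Cref{rem:noncrossing+histoires}): non-crossing partitions $\pi\in \mathrm{NC}(n)$ correspond precisely to Charlier histoires whose horizontal steps at height $h$ bear color $0$ or $h$ and whose down-steps at height $h$ bear color $h$. Under this bijection, openers of $\pi$ become up-steps, closers become down-steps, singletons become height-$h$ horizontals tagged~$0$, and transients become height-$h$ horizontals tagged~$h$; the starting height of step $i$ equals $\#\ssp\Gamma_i(\pi)$.

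Finally, I would verify that the weight of the decorated path corresponding to $\pi$ is exactly $\prod_{k\geq 1} c_k^{\ell_{k-1}(\pi)}\ssp t_k^{g_k(\pi)}$, which is the routine but central bookkeeping step. Each opener at height $k-1$ contributes $c_k c_{k+1} t_k$, each singleton or transient at height $k-1$ contributes a single factor $c_k$ (times $t_{k-1}$ in the transient case), and each closer contributes~$1$. The total exponent of $c_k$ therefore equals
\[
\#\{\text{openers at height }k-1\}+\#\{\text{openers at height }k-2\}+\#\{\text{sing.\ or trans.\ at height }k-1\}.
\]
Using the standard Motzkin matching $\#\{\text{openers at height }k-2\}=\#\{\text{closers at height }k-1\}$, this collapses to the total number of steps starting at height $k-1$, which is $\ell_{k-1}(\pi)$. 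Similarly, the exponent of $t_k$ equals $\#\{\text{openers at height }k-1\}+\#\{\text{transients at height }k\}$, and because $\pi$ is non-crossing we have $\gamma_i(\pi)=\#\ssp\Gamma_i(\pi)$ (\Cref{rem:noncrossing+gstatistic}), whence this sum equals $\#\{\text{closers at height }k\}+\#\{\text{transients at height }k\}=g_k(\pi)$. Summing over $\pi\in\mathrm{NC}(n)$ yields \eqref{eq:tp-moment-formula}. The main care needed is in the index-shift bookkeeping between step heights, the indices on the $c_k$'s and $t_k$'s, and the statistics $\ell_{k-1}$ and $g_k$; once the opener-closer matching at height $k-1$ is used to rewrite the $c$-exponent, everything falls into place.
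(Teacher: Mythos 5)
Your proof is correct and follows essentially the same route as the paper's: the Motzkin-path expansion of the moments, substitution of the $(\vec c,\vec t\,)$ parametrization with expansion of $x_{k+1}=c_{k+1}(1+t_k)$ at horizontal steps, and the Charlier-histoire bijection with $\mathrm{NC}(n)$ to read off the statistics $\ell_{k-1}$ and $g_k$. The only (cosmetic) difference is that the paper pre-splits $y_k=c_k\cdot(c_{k+1}t_k)$ between the up-step and its matching down-step so each step's weight depends only on its starting height, whereas you keep $y_k$ on the up-step and perform the opener--closer matching at the end; your bookkeeping of the exponents is correct.
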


\begin{remark}
\label{rem:odd-even-trick-after-proposition}
Our \Cref{prop:tp-moment-formula} can be viewed as a
bi\-variate refinement of the classical ``odd-even trick''
\cite[p.~40]{Chihara1978}, see also
\cite[Section~4]{CorteelKimStanton2016}.\footnote{We thank Dennis
Stanton for bringing the odd-even trick to our attention.}
Specifically, the refinement is obtained by the
specialization $\lambda_{2k-1}=c_k$ and
$\lambda_{2k}=c_{k+1}t_k$ for all $k\ge 1$.  In the odd-even
construction, an east step at height $k$ is weighted by
$\lambda_{2k}+\lambda_{2k+1}=x_{k+1}$, while a southeast
step at height $k$ is weighted by
$\lambda_{2k}\lambda_{2k-1}=y_k$.  Hence one can deduce 
\Cref{prop:tp-moment-formula} from the results of
\cite{CorteelKimStanton2016}.
We utilize an odd-even-like
construction in \Cref{def:splitting+composition,def:even-odd-composition} below.
For the reader's convenience,
we include a self-contained proof of \Cref{prop:tp-moment-formula} below.
\end{remark}

\begin{proof}[Proof of \Cref{prop:tp-moment-formula}]
The recipe in \Cref{sec:Stieltjes_moment_sequences}
for computing Stieltjes moments
from the sequences $\vec{x}$ and $\vec{y}$
assigns $x_{k+1}$ to each~$\rightarrow$ step
at height $k$ and $y_k$ to each~$\nearrow$ step
at height $k$ in a Motzkin path
$\frak{m} \in \frak{M}_n$.
Since
$y_k = c_k c_{k+1} t_k$
can be factored into $c_k$ and
$c_{k+1} t_k$,
we can modify this rule
and instead assign weights to steps as follows:
\begin{enumerate}[$\bullet$]
\item each $\nearrow$ step at height $k$ is
weighted by $c_k$;
\item each $\searrow$ step at height $k$
is weighted by $c_{k+1} t_k$;
\item each $\rightarrow$ step at height $k$ is
	weighted by $x_{k+1} = c_{k+1} + c_{k+1} t_k$.
\end{enumerate}
We define the weight $\mathrm{wt}(\frak{m})$
of a Motzkin path $\frak{m} \in \frak{M}_n$ to be the product of the weights of its steps. This weight
agrees with \Cref{sec:Stieltjes_moment_sequences},
and we recover the Stieltjes moments $a_n =
\sum_{\frak{m} \in \frak{M}_n} \mathrm{wt}(\frak{m})$,
as before.

Let us now introduce a weight $\omega(\frak{h}_\pi)$
for each
Charlier histoire $\frak{h}_\pi \in \frak{H}_n$
with $\pi \in \mathrm{NC}(n)$. Note that
we consider only non-crossing set partitions $\pi$,
in accordance with the desired formula
\eqref{eq:tp-moment-formula}.
By \Cref{rem:noncrossing+histoires},
we only have to consider the cases where the colors are
either $\chi = 0$ (for $\rightarrow$ steps)
or $\chi = k$ (for $\rightarrow$ and
$\searrow$ steps at height $k$).
Define the weight $\omega(\mathfrak{h}_\pi)$ of a Charlier histoire
$\mathfrak{h}_\pi \in \mathfrak{H}_{n}$
with $\pi \in \mathrm{NC}(n)$
as the product of the weights of
its (colored) steps, where the weights are given as follows:
\begin{enumerate}[$\bullet$]
				\item each $\nearrow$ step at height $k$ is weighted by $c_k$;
				\item each $\rightarrow$ step at height $k$ is weighted by $c_{k+1}$ if $\chi = 0$, or else by $c_{k+1}t_k$ if $\chi=k$;
				\item each $\searrow$ step at height $k$ is weighted by $c_{k+1}t_k$.
\end{enumerate}

This system of weights for
Charlier histoires $\frak{h}_\pi \in \frak{H}_n$
with $\pi \in \mathrm{NC}(n)$ is
consistent
with the weights
$\mathrm{wt}(\frak{m})$
for Motzkin paths $\frak{m} \in \frak{M}_n$
and the projection map
$\mathrm{pr}_n: \frak{H}_n
\rightarrow \frak{M}_n$ in the
sense that
\[\mathrm{wt}(\frak{m})
\, = \, \sum_{\stackrel{\scriptstyle \pi \ssp \in \ssp \mathrm{NC}(n)}{\mathrm{pr}_n(\frak{h}_\pi) \ssp = \ssp \frak{m}}} \omega(\frak{h}_\pi). \]
Each $\rightarrow$ step
and each $\searrow$ step
occurring at height $k$
and colored $\chi = k$
contributes a
factor of $t_k$ to
$\omega(\frak{h}_\pi)$.
Since
$\pi$ is non-crossing,
these steps
correspond precisely to
the
elements in
$\mathbcal{C}(\pi) \cup \mathbcal{T}(\pi)$.
Clearly, $t_k$ must occur
$g_k(\pi)$ many times. Finally,
each $\nearrow$ step at height $k$ carries a
weight of $c_k$ while
all other steps at height $k$
carry a weight of $c_{k+1}$.
Said differently, the step
corresponding to $i \in \{1, \dots, n \}$
is weighted $c_{k+1}$
if and only if $\# \ssp \Gamma_i(\pi) = k$.
Consequently, the total number of
steps with weight $c_k$ is $\ell_{k-1}(\pi)$.
Hence,
\[ \omega(\frak{h}_\pi)
= \prod_{k \ssp \geq \ssp 1} \,
c_k^{\ssp \ell_{k-1}(\pi)} \ssp t_k^{\ssp g_k(\pi)}, \]
for any $\pi \in \mathrm{NC}(n)$, and we are
done.
\end{proof}

\subsection{Compositions and set partitions}

We examine connections between set partitions, integer
compositions, and Fibonacci words. Every set partition
determines an integer composition, and we introduce a
procedure to {\it split} a composition into a unique pair of
{\it Fibonacci compositions}, each naturally constructed
from a Fibonacci word. We present a more concise
reformulation of \Cref{eq:tp-moment-formula} for the $n$-th
Stieltjes moment in terms of compositions, which involves
this Fibonacci splitting.


\begin{definition}
\label{def:fibonacci-ribbon+composition}
Given a Fibonacci word $w = a_1 \cdots a_k$ with $a_i\in \{1,2\}$
and with rank $|w|=n$, let $\mathrm{\bf R}(w)$ denote the
connected {\it ribbon} consisting of $n$ boxes
$\Box$
arranged from left to right
where the $i$-th column contains
$a_i$ boxes.
See \Cref{fig:fibonacci-ribbon-example} for an illustration.
\begin{figure}[htpb]
\centering
\[ \begin{young}
, & , & , & , & , &,  &,  &,  &
\\ , & , & , & , & , & & & &
\\ , & , & , & , & & & , 1 & , 1 & , 2
\\ , & & & & & , 2
\\  &  & , 1 & , 1 & , 2
\\ , 1 & , 2
\end{young}\]
\caption{Fibonacci ribbon for $w= 121122112$
with $\boldsymbol{\varsigma}(w)=(2,4,2,4,1)\models 13$.}
\label{fig:fibonacci-ribbon-example}
\end{figure}

A composition of $n$ denoted by
$\boldsymbol{\varsigma}(w) = (\varsigma_1,\dots,\varsigma_p)$
is obtained from $w \in \Bbb{YF}_n$
by letting $\varsigma_i$ record the number of boxes in the
$i$-th row of $\mathrm{\bf R}(w)$,
counted from the bottom.
Alternatively, we have
$\varsigma_1 = 1 + r_1$, $\varsigma_p = 1 + r_p$,
and $\varsigma_k = 2 + r_k$ for
$1 < k < p$, where
$w = 1^{r_1} 2 1^{r_2} 2 \cdots
2 1^{r_p}$ is the decomposition
of $w$ into its {\it runs} (see \Cref{eq:runs}).
A composition
$\boldsymbol{\varsigma} = (\varsigma_1,\dots,\varsigma_p)$
is of the form $\boldsymbol{\varsigma}(w)$
for some $w \in \Bbb{YF}_n$ if and only if
$\boldsymbol{\varsigma} \models n$ and
$\varsigma_i > 1$ whenever $1 < i < p$.
Compositions of this kind are called
{\it Fibonacci compositions}.
The Fibonacci word in $\Bbb{YF}_n$
corresponding to a Fibonacci composition
$\boldsymbol{\varsigma} = (\varsigma_1,\dots,\varsigma_p)$ of $n$
is denoted $\mathrm{Fib}(\boldsymbol{\varsigma})$.
\end{definition}


The relationship between Fibonacci words and
Motzkin
constructions developed in \Cref{sub:Charlier_histoires,sub:totally_positive_moment_formula}
starts from the following observation.
\begin{lemma}
\label{lemma:fibonacci-composition+example-one}
For any $\pi \in \Pi(n)$, the sequence
\[
\boldsymbol{\ell}(\pi):=(\ell_0(\pi),\dots,\ell_p(\pi)),
\qquad
p \;=\; \max\{k : \ell_k(\pi) > 0\},
\]
is a Fibonacci composition of $n$.
\end{lemma}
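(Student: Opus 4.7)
The plan is to leverage the Motzkin-path encoding already developed in the proof of \Cref{lemma:noncrossing+compositions}. For any $\pi \in \Pi(n)$ --- not just for non-crossing $\pi$ --- the sequence
$(\#\Gamma_1(\pi), \#\Gamma_2(\pi), \ldots, \#\Gamma_n(\pi), 0)$,
with $\#\Gamma_1(\pi) = 0$, encodes a Motzkin path $\mathfrak{m}_\pi$ of length $n$: openers contribute $\nearrow$ steps, closers contribute $\searrow$ steps, and singletons and transients together contribute $\rightarrow$ steps. Under this encoding, $\ell_k(\pi)$ counts exactly those steps of $\mathfrak{m}_\pi$ whose initial height is $k$, and the identification needs no non-crossing hypothesis. (Equivalently, $\mathfrak{m}_\pi$ is the uncolored projection $\mathrm{pr}_n(\mathfrak{h}_\pi)$ of the Charlier histoire from \Cref{sub:Charlier_histoires}.)

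With this in hand, I would note that $p = \max\{k : \ell_k(\pi) > 0\}$ is precisely the maximum height attained by $\mathfrak{m}_\pi$, and that $\sum_{k=0}^{p} \ell_k(\pi) = n$ since every step of $\mathfrak{m}_\pi$ starts at some height between $0$ and $p$. What remains is to check that each $\ell_k(\pi)$ is a strictly positive integer for $0 \leq k \leq p$, and that $\ell_k(\pi) \geq 2$ for $0 < k < p$; these are exactly the requirements for $\boldsymbol{\ell}(\pi)$ to be a Fibonacci composition of $n$ in the sense of \Cref{def:fibonacci-ribbon+composition}.

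To establish the positivity constraints, I would introduce $u_k$, $d_k$, and $h_k$ for the number of $\nearrow$ steps from height $k-1$ to $k$, of $\searrow$ steps from $k$ to $k-1$, and of $\rightarrow$ steps at height $k$ in $\mathfrak{m}_\pi$, respectively. Since $\mathfrak{m}_\pi$ starts and ends at $0$, one has $u_k = d_k$ for every $k \geq 1$; since it attains height $p$, each of $u_1, \ldots, u_p$ (and therefore each of $d_1, \ldots, d_p$) is strictly positive. Combined with the identities
\begin{equation*}
\ell_0(\pi) = u_1 + h_0, \qquad \ell_k(\pi) = u_{k+1} + d_k + h_k \ \ (k \geq 1),
\end{equation*}
this yields $\ell_0(\pi) \geq u_1 \geq 1$, $\ell_p(\pi) \geq d_p \geq 1$, and $\ell_k(\pi) \geq u_{k+1} + d_k \geq 2$ for every $0 < k < p$. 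The degenerate case $p = 0$ corresponds to $\pi$ being a partition into singletons, in which case $\mathfrak{m}_\pi$ is a horizontal path and $\ell_0(\pi) = n$, giving a trivial Fibonacci composition $(n)$. Since the Motzkin encoding is already in the proof of \Cref{lemma:noncrossing+compositions}, the argument reduces to elementary bookkeeping on up-, down-, and horizontal-step counts, and I do not foresee a substantive obstacle.
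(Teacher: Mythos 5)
Your proof is correct and follows essentially the same route as the paper's: both identify $\boldsymbol{\ell}(\pi)$ with the level-visit counts of the Motzkin path $\mathfrak{m}_\pi$ (valid for arbitrary, not just non-crossing, $\pi$) and then use the fact that a Motzkin path visits each non-maximal level at least twice. Your explicit bookkeeping with $u_k=d_k$ and the identities $\ell_0=u_1+h_0$, $\ell_k=u_{k+1}+d_k+h_k$ simply makes that last observation precise.
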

\begin{proof}
	First, we observe that $\boldsymbol{\ell}(\pi)$ is a composition of $n$
	for an arbitrary (not necessarily non-crossing) set partition $\pi \in \Pi(n)$,
	due to the very construction. Next, we have
	$\ell_k(\pi)>1$ for all $1 \leq k < p$, since $\ell_k(\pi)$ counts the
	number of visits to level $k$ in the Motzkin path $\frak{m}$ corresponding to $\pi$
	as defined in \Cref{sub:Charlier_histoires}.
	A Motzkin path visits each of its non-maximal levels at least twice, and so we are done.
\end{proof}

\begin{definition}[Splitting of a composition]
\label{def:splitting+composition}
Let $\pmb{\varkappa}
= (\varkappa_1, \dots, \varkappa_m)$ be a composition of an integer $n \ge 1$.
We associate with $\pmb{\varkappa}$ two auxiliary Fibonacci
compositions of $n$, denoted
$\mathrm{A}(\pmb{\varkappa})
= (\alpha_1, \dots, \alpha_{p+1})$ and
$\mathrm{B}(\pmb{\varkappa})
= (\beta_1, \dots, \beta_{p+1})$,
where $p = \lfloor m/2 \rfloor $, defined as follows:
\begin{enumerate}[$\bullet$]
\item
If $m \le 2$, set
$\mathrm{A}(\pmb{\varkappa})
= \mathrm{B}(\pmb{\varkappa})
= \pmb{\varkappa}$.

\item If $m = 3$, set
$\mathrm{A}(\pmb{\varkappa})
= (\varkappa_1, \varkappa_2 + \varkappa_3)$
and
$\mathrm{B}(\pmb{\varkappa})
= (\varkappa_1 + \varkappa_3, \varkappa_2)$.

\item For $m \ge 4$, define
\[
\alpha_k =
\begin{cases}
\varkappa_1, & k = 1, \\
\varkappa_{2k-2} + \varkappa_{2k-1}, & 2 \le k \le p, \\
\varkappa_{2p}, & k = p+1,\ m = 2p, \\
\varkappa_{2p} + \varkappa_{2p+1}, & k = p+1,\ m = 2p+1,
\end{cases}
\qquad
\beta_k =
\begin{cases}
1 - p + \displaystyle\sum_{j=1}^p \varkappa_{2j-1}, & k = 1, \\
1 + \varkappa_{2k-2}, & 2 \le k \le p, \\
\varkappa_{2p}, & k = p+1 .
\end{cases}
\]
\end{enumerate}
\end{definition}

\begin{definition}
\label{def:fibwords+splitting+composition}
For a composition $\pmb{\varkappa}
=(\varkappa_1,\dots,\varkappa_p)$ of $n$,
we denote
\[
u(\pmb{\varkappa})
:=\mathrm{Fib}\bigl(\mathrm{A}(\pmb{\varkappa})\bigr),
\qquad
v(\pmb{\varkappa})
:=\mathrm{Fib}\bigl(\mathrm{B}(\pmb{\varkappa})\bigr).
\]
We call $u(\pmb{\varkappa}),v(\pmb{\varkappa})\in \mathbb{YF}_n$
the Fibonacci words associated with a composition
$\pmb{\varkappa}$.
\end{definition}

\begin{lemma}
\label{lemma:injection+compositions+splitting}
The map
\[
\pmb{\varkappa}\;\longmapsto\;
\bigl(u(\pmb{\varkappa}), v(\pmb{\varkappa})\bigr)
\]
is injective from $\mathrm{Comp}(n)$ to
$\Bbb{YF}_n \times \Bbb{YF}_n$.
Furthermore, the maps
$\pmb{\varkappa} \mapsto u(\pmb{\varkappa})$ and
$\pmb{\varkappa} \mapsto v(\pmb{\varkappa})$
define surjections from
$\mathrm{Comp}(n)$ to
$\Bbb{YF}_n$.
\end{lemma}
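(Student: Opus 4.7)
The plan is to unwind \Cref{def:splitting+composition} directly in both directions, using that $\boldsymbol{\varsigma}$ is a bijection between Fibonacci compositions of $n$ and elements of $\mathbb{YF}_n$ (\Cref{def:fibonacci-ribbon+composition}); this reduces the statement to an assertion about the map $\pmb{\varkappa}\mapsto(\mathrm{A}(\pmb{\varkappa}),\mathrm{B}(\pmb{\varkappa}))$ between compositions.

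For injectivity, I would construct an explicit left inverse. From $(u,v)$ one reads off $\mathrm{A}=\boldsymbol{\varsigma}(u)=(\alpha_1,\dots,\alpha_{p+1})$ and $\mathrm{B}=\boldsymbol{\varsigma}(v)=(\beta_1,\dots,\beta_{p+1})$; the common length $p+1$ already determines $p=\lfloor m/2\rfloor$ for any preimage $\pmb{\varkappa}$. For $m\ge 4$ the defining relations invert immediately to give $\varkappa_1=\alpha_1$, $\varkappa_{2k-2}=\beta_k-1$ and $\varkappa_{2k-1}=\alpha_k-\beta_k+1$ for $2\le k\le p$, with $\varkappa_{2p}$ (and $\varkappa_{2p+1}$ when $m=2p+1$) recovered from $\alpha_{p+1}$ and $\beta_{p+1}$. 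The parity of $m$ is detected by whether $\alpha_{p+1}=\beta_{p+1}$, since the discrepancy is exactly $\varkappa_{2p+1}\ge 1$. The small cases $m\in\{1,2,3\}$ are disposed of by direct inspection of the first two clauses of \Cref{def:splitting+composition}.

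For surjectivity of $\pmb{\varkappa}\mapsto u(\pmb{\varkappa})$, given $w\in\mathbb{YF}_n$ with $\boldsymbol{\varsigma}(w)=(\alpha_1,\dots,\alpha_q)$, I would exhibit an explicit preimage. If $q\le 2$, take $\pmb{\varkappa}=\boldsymbol{\varsigma}(w)$, so that $\mathrm{A}(\pmb{\varkappa})=\pmb{\varkappa}$ by the first clause of \Cref{def:splitting+composition}. If $q\ge 3$, take
\[
\pmb{\varkappa}=(\alpha_1,\,1,\,\alpha_2-1,\,1,\,\alpha_3-1,\,\ldots,\,1,\,\alpha_{q-1}-1,\,\alpha_q),
\]
of length $2(q-1)$; this is a legitimate integer composition because $\alpha_k\ge 2$ for interior $k$, which is precisely the Fibonacci composition condition on $\boldsymbol{\varsigma}(w)$. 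Substituting into the formulas for $\alpha_k(\pmb{\varkappa})$ in \Cref{def:splitting+composition} gives $\mathrm{A}(\pmb{\varkappa})=\boldsymbol{\varsigma}(w)$. For the analogous $v$-surjection, given $\boldsymbol{\varsigma}(w)=(\beta_1,\dots,\beta_q)$ with $q\ge 3$, I would take
\[
\pmb{\varkappa}=(\beta_1,\,\beta_2-1,\,1,\,\beta_3-1,\,1,\,\ldots,\,\beta_{q-1}-1,\,1,\,\beta_q),
\]
again of length $2(q-1)$; the value $\beta_1(\pmb{\varkappa})=1-p+\sum_{j=1}^{p}\varkappa_{2j-1}$ with $p=q-1$ telescopes to $\beta_1$, while the remaining entries of $\mathrm{B}(\pmb{\varkappa})$ reduce to $\beta_k$ term by term.

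The only real obstacle is bookkeeping across the piecewise cases of \Cref{def:splitting+composition} ($m\le 2$, $m=3$, and $m=2p$ vs.\ $m=2p+1$ for $p\ge 2$), and verifying in each that the reconstructed $\varkappa_i$'s are positive integers. In every case positivity is furnished by the Fibonacci composition condition on $\mathrm{A}$ and $\mathrm{B}$ (interior parts $\ge 2$), which is exactly the slack needed to absorb the ``$-1$'' adjustments in the inverse formulas.
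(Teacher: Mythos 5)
Your proof is correct and follows essentially the same route as the paper: injectivity via inverting the $\mathrm{A}/\mathrm{B}$ formulas (the paper simply asserts that $\pmb{\varkappa}\mapsto(\mathrm{A}(\pmb{\varkappa}),\mathrm{B}(\pmb{\varkappa}))$ is injective, noting that positivity of the recovered parts is irrelevant for this direction), and surjectivity via an explicit interleaved preimage. The only difference is that the paper exhibits a single composition $\pmb{\varkappa}$ --- coinciding with your $v$-preimage --- satisfying $\mathrm{A}(\pmb{\varkappa})=\mathrm{B}(\pmb{\varkappa})=\boldsymbol{\varsigma}(w)$, which settles both surjections at once, whereas you construct two separate preimages.
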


\begin{proof}
The injectivity assertion follows from the fact that the map
\[
\pmb{\varkappa}\;\longmapsto\;
\bigl(\mathrm{A}(\pmb{\varkappa}),\mathrm{B}(\pmb{\varkappa})\bigr)
\]
is injective from $\mathrm{Comp}(n)$ to
$\mathrm{Comp}(n)\times \mathrm{Comp}(n)$.
The fact that the image consists of pairs of Fibonacci compositions is irrelevant for injectivity.

For surjectivity, given $w \in \Bbb{YF}_n$
let $\boldsymbol{\varsigma}(w) = (\varsigma_1, \dots,
\varsigma_{p+1})$ where $p = \mathcal{h}(w) > 0$ is the number of $2$'s in $w$,
and define $\pmb{\varkappa} = (\varkappa_1,
\dots, \varkappa_{2p})$ by
setting $\varkappa_1 = \varsigma_1$
and $\varkappa_{2p}=\varsigma_{p+1}$
with
\[ \begin{array}{ll}
\varkappa_{2k} = \varsigma_{k+1} -1
&\text{for $1 \leq k < p $}, \\
\varkappa_{2k-1} = 1
& \text{for $1 < k \leq p$}.
\end{array} \]
When $\mathcal{h}(w) = 0$, i.e., $w = 1^n$, then
set $\pmb{\varkappa} = (n)$. By construction
$\mathrm{A}(\pmb{\varkappa}) = \mathrm{B}(\pmb{\varkappa})= \boldsymbol{\varsigma}(w)$. This settles the surjectivity claim.
\end{proof}

\begin{definition}
\label{def:composition+depletion}
Let $\boldsymbol{\varsigma}=(\varsigma_1,\dots,\varsigma_p)$ be a Fibonacci composition of $n$.
Its \emph{depletion} is the composition
\[
\mathrm{dep}(\boldsymbol{\varsigma})=(\delta_1,\dots,\delta_{p-1})
\]
of $n-\varsigma_1-p+1$ defined by
\[
\delta_k=\varsigma_{k+1}-1 \quad (1\le k\le p-2), \qquad
\delta_{p-1}=\varsigma_p.
\]
By convention, we set $\mathrm{dep}(1^n)= \pmb{\varnothing}$
where $\pmb{\varnothing}$ is the empty composition. Clearly,
$\mathrm{dep}(\boldsymbol\varsigma)=\mathrm{dep}(\boldsymbol\tau)$
if and only if $\boldsymbol{\alpha}=\boldsymbol{\tau}$ for any Fibonacci compositions $\boldsymbol{\varsigma},\boldsymbol{\tau}$ of $n$.
\end{definition}


For a composition $\pmb{\varkappa}=(\varkappa_1,\dots,\varkappa_p)$ we adopt the standard shorthand
\[
\boldsymbol{c}^{\pmb{\varkappa}}
:=c_1^{\varkappa_1}\cdots c_p^{\varkappa_p},
\qquad
\boldsymbol{t}^{\pmb{\varkappa}}
:=t_1^{\varkappa_1}\cdots t_p^{\varkappa_p},
\]
which will be used throughout.

\begin{definition}
\label{def:even-odd-composition}
Let $\pi \in \mathrm{NC}(n)$ be a non-crossing set partition.  Define
\[
\mathcal{z}_1(\pi)=\ell_0(\pi),
\qquad
\mathcal{z}_{2k}(\pi)=g_k(\pi),
\qquad
\mathcal{z}_{2k+1}(\pi)=\ell_k(\pi)-g_k(\pi),
\]
where $1\le k\le p$ and where
$p = \max \{ k : \ell_k(\pi) > 0 \}$.
Set
\[
\boldsymbol{\mathcal{z}}(\pi):=
\begin{cases}
\bigl(\mathcal{z}_1(\pi),\dots,\mathcal{z}_{2p+1}(\pi)\bigr), & \text{if } \ell_p(\pi) > g_p(\pi),\\
\bigl(\mathcal{z}_1(\pi),\dots,\mathcal{z}_{2p}(\pi)\bigr), & \text{if } \ell_p(\pi) = g_p(\pi).
\end{cases}
\]
Clearly, $\boldsymbol{\mathcal{z}}(\pi)$ is a composition of $n$.

Introduce a sequence of independent parameters
$\vec{\lambda}=(\lambda_1,\lambda_2,\lambda_3,\dots)$ and
apply the odd-even trick
\cite{Chihara1978}, \cite{CorteelKimStanton2016} (cf. \Cref{rem:odd-even-trick-after-proposition})
by specializing $c_k=\lambda_{2k-1}$ and $t_k=\lambda_{2k}/\lambda_{2k+1}$ for $k\ge 1$.
With this specialization, we have
$\boldsymbol{c}^{\ssp \boldsymbol{\ell}(\pi)}
\ssp \boldsymbol{t}^{\ssp \boldsymbol{g}(\pi)}
=\boldsymbol{\lambda}^{\boldsymbol{\mathcal{z}}(\pi)}$.
\end{definition}

\begin{lemma}
\label{lemma:splitting+set partitons}
For any non-crossing
set partition $\pi \in \Pi(n)$
we have
\[ \boldsymbol{\ell}(\pi) = \mathrm{A}(\boldsymbol{\mathcal{z}}(\pi))
\qquad \boldsymbol{g}(\pi) =
\mathrm{dep} \big(\mathrm{B}(\boldsymbol{\mathcal{z}}(\pi)) \big).\]
\end{lemma}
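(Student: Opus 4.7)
The plan is to verify both identities by a direct computation from the definitions, using the alternating structure of $\boldsymbol{\mathcal{z}}(\pi)$. First I would dispose of the degenerate cases $p = 0$ and $p = 1$ separately, since \Cref{def:splitting+composition} treats $m \leq 2$ and $m = 3$ with special formulas. When $p = 0$, the partition $\pi$ consists only of singletons, so $\boldsymbol{\mathcal{z}}(\pi) = (n)$ and both $\mathrm{A}$ and $\mathrm{B}$ act as the identity; the identities $\boldsymbol{\ell}(\pi) = (n)$ and $\boldsymbol{g}(\pi) = \pmb{\varnothing} = \mathrm{dep}((n))$ are immediate. The case $p = 1$ splits further according to whether $\ell_1(\pi) = g_1(\pi)$ (so $\boldsymbol{\mathcal{z}}(\pi)$ has two entries) or $\ell_1(\pi) > g_1(\pi)$ (three entries), both handled directly by the $m \leq 3$ clauses of the splitting definition.

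For $p \geq 2$, the composition $\boldsymbol{\mathcal{z}}(\pi)$ interleaves $\ell_0, g_1, \ell_1 - g_1, g_2, \ell_2 - g_2, \ldots$, and has either $m = 2p+1$ entries (when $\ell_p > g_p$) or $m = 2p$ entries (when $\ell_p = g_p$). In both cases $\lfloor m/2 \rfloor = p$, so the splitting definition produces compositions of length $p + 1$. The identity $\mathrm{A}(\boldsymbol{\mathcal{z}}(\pi)) = \boldsymbol{\ell}(\pi)$ follows from the telescoping
\[
\alpha_k \;=\; \mathcal{z}_{2k-2} + \mathcal{z}_{2k-1} \;=\; g_{k-1} + (\ell_{k-1} - g_{k-1}) \;=\; \ell_{k-1}
\qquad (2 \leq k \leq p),
\]
together with $\alpha_1 = \mathcal{z}_1 = \ell_0$ and the boundary identity $\alpha_{p+1} = \ell_p$, which holds either as $\mathcal{z}_{2p} = g_p = \ell_p$ (case $m = 2p$) or as $\mathcal{z}_{2p} + \mathcal{z}_{2p+1} = g_p + (\ell_p - g_p) = \ell_p$ (case $m = 2p+1$).

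For the second identity I would compute the entries of $\mathrm{B}(\boldsymbol{\mathcal{z}}(\pi)) = (\beta_1, \ldots, \beta_{p+1})$ directly from \Cref{def:splitting+composition} and observe that $\beta_k = 1 + \mathcal{z}_{2k-2} = 1 + g_{k-1}$ for $2 \leq k \leq p$, while $\beta_{p+1} = \mathcal{z}_{2p} = g_p$. Before invoking depletion I must check that $\mathrm{B}(\boldsymbol{\mathcal{z}}(\pi))$ is actually a Fibonacci composition: the interior entries satisfy $\beta_k = 1 + g_{k-1} > 1$ since \Cref{lemma:noncrossing+compositions} guarantees $g_{k-1} \geq 1$, and the leading entry $\beta_1 = 1 - p + \sum_{j=1}^{p} \mathcal{z}_{2j-1}$ is $\geq 1$ by the strict inequality $\ell_k > g_k$ for $1 \leq k < p$ from the same lemma. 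Applying \Cref{def:composition+depletion} then strips $\beta_1$ and subtracts $1$ from each middle entry, leaving $(g_1, g_2, \ldots, g_{p-1}, g_p) = \boldsymbol{g}(\pi)$.

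The computation is essentially routine; the only subtlety worth flagging is the bookkeeping to confirm $\mathrm{B}(\boldsymbol{\mathcal{z}}(\pi))$ is Fibonacci, which is precisely where the non-crossing hypothesis enters through \Cref{lemma:noncrossing+compositions}. Without that hypothesis the inequalities $\ell_k > g_k$ could fail for some $k < p$, and the depletion step would produce a spurious zero entry. Apart from this, no substantive obstacle is expected.
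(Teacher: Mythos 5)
Your proposal is correct and follows essentially the same route as the paper's proof: a direct entrywise computation showing $\alpha_k=\mathcal{z}_{2k-2}+\mathcal{z}_{2k-1}=\ell_{k-1}$ with the two boundary cases $m=2p$ versus $m=2p+1$, and $\beta_{k+1}-1=g_k$ after depletion. The extra care you take with the degenerate cases $p\le 1$ and with verifying that $\mathrm{B}(\boldsymbol{\mathcal{z}}(\pi))$ is a Fibonacci composition is sound but not a different argument; note only that the non-crossing hypothesis is really used to guarantee that $\boldsymbol{\mathcal{z}}(\pi)$ has positive entries (via $\ell_k>g_k$ from \Cref{lemma:noncrossing+compositions}), i.e., is a composition at all, rather than to rescue the depletion step.
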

\begin{proof}
Set $\varkappa=\boldsymbol{\mathcal z}(\pi)$, and
apply
\Cref{def:splitting+composition} to compute
$\alpha=\mathrm{A}(\pmb{\varkappa})=(\alpha_{1},\dots ,\alpha_{p+1})$.
We have $\alpha_1=\varkappa_1=\ell_0$.
For $2\le k\le p$, we have
\begin{equation*}
\alpha_k=\varkappa_{2k-2}+\varkappa_{2k-1}=g_{k-1}+(\ell_{k-1}-g_{k-1})=\ell_{k-1}.
\end{equation*}
One can also check that $\alpha_{p+1}=\ell_p$ in both cases $m=2p$ and $m=2p+1$,
which completes the proof of the first equality.

Similarly for $\beta=\mathrm{B}(\pmb{\varkappa})=(\beta_{1},\dots ,\beta_{p+1})$, we get
its depletion $\delta=\operatorname{dep}(\beta)$:
\begin{equation*}
	\delta_k=\beta_{k+1}-1=(1+g_k)-1=g_k,\qquad 1\le k\le p-1,
\end{equation*}
and
$\delta_p=\beta_{p+1}=\varkappa_{2p}=g_p$.
This completes the proof of the second equality.
\end{proof}

\begin{proposition}
\label{prop:noncrossing+composition+surjection}
The map
$
\pi \mapsto \boldsymbol{\mathcal{z}}(\pi)$
is surjective from $\mathrm{NC}(n)$ onto the set $\mathrm{Comp}(n)$ of all compositions of $n$.
\end{proposition}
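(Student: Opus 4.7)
The plan is to prove surjectivity by exhibiting an explicit pre-image for each $\pmb{\varkappa}\in\mathrm{Comp}(n)$. First I would note that $\boldsymbol{\mathcal{z}}(\pi)$ always lies in $\mathrm{Comp}(n)$: by \Cref{lemma:noncrossing+compositions} and \Cref{remark:inequality-setpartitions}, every entry $\ell_0(\pi)$, $g_k(\pi)$, and $\ell_k(\pi)-g_k(\pi)$ appearing in $\boldsymbol{\mathcal{z}}(\pi)$ is a strictly positive integer, and the total equals $\sum_{k\ge 0}\ell_k(\pi)=n$, since the $\ell_k(\pi)$'s partition $\{1,\dots,n\}$ according to the value of $\#\Gamma_i(\pi)$.

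For surjectivity, given $\pmb{\varkappa}=(\varkappa_1,\dots,\varkappa_m)\in\mathrm{Comp}(n)$, set $p=\lfloor m/2\rfloor$ and build $\pi\in\mathrm{NC}(n)$ as follows. If $m=1$, let $\pi=\{\{1\},\dots,\{n\}\}$. Otherwise, introduce $p$ nested blocks $B_1\supset B_2\supset\cdots\supset B_p$ with $|B_k|=\varkappa_{2k}+1$, and assign the elements of $\{1,\dots,n\}$ from left to right in the following order: begin with $\varkappa_1-1$ singletons at depth $0$; then for each $k=1,\dots,p-1$ in turn, place the opener of $B_k$, next $\varkappa_{2k}-1$ transients of $B_k$, then $\varkappa_{2k+1}-1$ singletons at depth $k$; next place the opener of $B_p$, then $\varkappa_{2p}-1$ transients of $B_p$, and, only when $m=2p+1$, $\varkappa_{2p+1}$ further singletons at depth $p$; finally place the closers of $B_p,B_{p-1},\dots,B_1$ in that order. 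Because the $B_k$ are nested and no singleton crosses any arc, the partition $\pi$ is non-crossing.

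To verify $\boldsymbol{\mathcal{z}}(\pi)=\pmb{\varkappa}$, I would trace the associated Motzkin path, where openers are $\nearrow$, closers are $\searrow$, and singletons and transients are horizontal. By construction the path has $\ell_0(\pi)=\varkappa_1$ steps starting at height $0$ and $\ell_k(\pi)=\varkappa_{2k}+\varkappa_{2k+1}$ steps starting at height $k$ for $1\le k\le p$, with the convention $\varkappa_{2p+1}=0$ when $m=2p$. Since $\pi$ is non-crossing, \Cref{rem:noncrossing+gstatistic} identifies $g_k(\pi)$ with the number of transients of $B_k$ plus its single closer, giving $g_k(\pi)=(\varkappa_{2k}-1)+1=\varkappa_{2k}$. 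Substituting these counts into \Cref{def:even-odd-composition} then produces $\boldsymbol{\mathcal{z}}(\pi)=\pmb{\varkappa}$.

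The one subtlety requiring genuine attention is the behavior at the deepest level $k=p$, which depends on the parity of $m$. When $m=2p$, the definition of $\boldsymbol{\mathcal{z}}(\pi)$ terminates after $\mathcal{z}_{2p}$, which must be matched by the choice of no singletons at depth $p$, giving $\ell_p(\pi)=g_p(\pi)=\varkappa_{2p}$. When $m=2p+1$, the extra entry $\mathcal{z}_{2p+1}=\ell_p(\pi)-g_p(\pi)$ must equal $\varkappa_{2p+1}\ge 1$, which is arranged precisely by inserting $\varkappa_{2p+1}$ singletons at depth $p$ before the final block of closers. Once this parity case-split is kept straight, all remaining bookkeeping is routine.
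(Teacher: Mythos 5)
Your construction is correct, and it reaches the conclusion by a route that differs from the paper's main argument. The paper first converts $\pmb{\varkappa}$ into the pair $\boldsymbol{\ell}=\mathrm{A}(\pmb{\varkappa})$, $\boldsymbol{g}=\mathrm{dep}\bigl(\mathrm{B}(\pmb{\varkappa})\bigr)$ of \Cref{def:splitting+composition} and then assembles a labeled Motzkin path realizing these statistics from a greedy step inventory (with $U_k=D_{k-1}=\min(g_k,\ell_{k-1}-g_{k-1})$), which in general yields a path with several ascents and descents at each level. You instead work directly in the $\boldsymbol{\mathcal{z}}$-coordinates and exhibit a single-peak preimage: one chain of mutually nested blocks of sizes $\varkappa_{2k}+1$, padded by singletons at the appropriate depths. (Strictly speaking the blocks are disjoint, so $B_1\supset\cdots\supset B_p$ should be read as nesting of arc supports rather than set containment, but the intent is clear.) Your verification is sound: the count $\ell_k=\varkappa_{2k}+\varkappa_{2k+1}$, the identification $g_k=(\varkappa_{2k}-1)+1=\varkappa_{2k}$ via \Cref{rem:noncrossing+gstatistic}, and the parity case-split at depth $p$ all match the truncation convention of \Cref{def:even-odd-composition}. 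In effect your proof is the iterative unrolling of the recursive ``canonical lift'' that the paper records only as an alternative in a remark (its Case 2 creates exactly your outermost block $\{1,\dots,\varkappa_2\}\cup\{n\}$ and recurses inside). What your version buys is self-containment --- no appeal to the splitting maps $\mathrm{A},\mathrm{B}$ or to the lemma identifying $\boldsymbol{\ell}(\pi)$ and $\boldsymbol{g}(\pi)$ with them --- at the cost of producing a more special preimage than the paper's; since only surjectivity is claimed, nothing is lost.
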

Observe that
the sizes of the sets agree with the statement:
\begin{equation*}
\#\mathrm{NC}(n)=\mathrm{Catalan}(n)\ge 2^{n-1}=\#\mathrm{Comp}(n)
,\qquad n\ge1,
\end{equation*}
where $\mathrm{Catalan}(n)$ is the $n$-th Catalan number.

\begin{proof}[Proof of \Cref{prop:noncrossing+composition+surjection}]
Fix a composition $\pmb{\varkappa}=(\varkappa_1,\dots,\varkappa_m)\models n$ and set
\[
\boldsymbol{\ell}:=\mathrm{A}(\pmb{\varkappa}), \qquad
\boldsymbol{g}:=\mathrm{dep}\bigl(\mathrm{B}(\pmb{\varkappa})\bigr).
\]

Suppose that there exists a Motzkin path (with labelled
horizontal steps) whose statistics are precisely
$\boldsymbol{\ell}$ and $\boldsymbol{g}$, interpreted as in
the proof of \Cref{lemma:noncrossing+compositions}. In this
case the desired statement follows. Indeed, starting from
such a Motzkin path one constructs a non-crossing set
partition in the standard way: up-steps and down-steps are
regarded as openers and closers, respectively, and are
matched in the usual nested order as in the classical
bijection between Dyck paths and non-crossing pairings. Each
horizontal step is treated as either a singleton or a
transient, according to whether its color is $0$ or the
height of the step, respectively; in the transient case the
element is placed into the block determined by the nearest
adjacent up- and down-steps.

Let $p=\max\{k : \ell_k > 0\}$ and assume $p>0$ (the case $p=0$ is trivial:
the horizontal path at height zero corresponds to the set partition consisting entirely of singletons).
By construction, the compositions
$\boldsymbol{\ell}=(\ell_0,\ell_1,\dots,\ell_p)$ and
$\boldsymbol{g}=(g_1,\dots,g_p)$ satisfy the properties
$\ell_k > g_k$ for $1 \le k < p$ and $\ell_p \ge g_p$, as in \Cref{lemma:noncrossing+compositions}.

We construct an inventory of steps for a labeled Motzkin
path $\frak{m}$ of length $n$. This inventory will consist
of the number of up-steps ($U_k$ from level $k-1$ to $k$), down-steps ($D_k$ from level $k+1$ to $k$),
horizontal steps corresponding to singletons ($H_{0,k}$ on level $k$),
and horizontal steps corresponding to transients ($H_{k,k}$ on level $k$)
at each height $k\ge0$.
This inventory satisfies
\begin{equation} \label{eq:lg_motzkin_relations}
\ell_k = U_k + D_k + H_{0,k} + H_{k,k} \qquad \text{and} \qquad g_k = D_{k-1} + H_{k,k},\qquad k\ge1,
\end{equation}
and $\ell_0=D_0 + H_{0,0}$.
Furthermore, we have the relations
$U_k=D_{k-1}$, and $U_0=D_p=0$.

We determine the number of steps of each type recursively in a certain deterministic way,
starting from the maximum height $p$ and working downwards. Note that
the choices we make are not unique, but they ensure that a given composition $\pmb{\varkappa}$ is represented by a unique
labeled Motzkin path $\frak{m}$ with the prescribed statistics $\boldsymbol{\ell}$ and $\boldsymbol{g}$.

First, set $D_p=0$, and consider $k$ from $p$ down to $1$.
We set
\begin{equation}
	\label{eq:U_k_choice}
	U_k=D_{k-1}=\min(g_k, \ell_{k-1} - g_{k-1})\ge1,
\end{equation}
then set
\begin{equation}
\label{eq:H_k_definitions}
	H_{k,k} = g_k - D_{k-1},\qquad H_{0,k} = \ell_k - g_k - D_k,
\end{equation}
where $D_k$ is determined in the previous step.
The choice of $D_{k-1}$ \eqref{eq:U_k_choice} is valid since
then both $H_{k,k}$ and $H_{0,k}$ defined by
\eqref{eq:H_k_definitions} are nonnegative,
and identities \eqref{eq:lg_motzkin_relations} hold.
Finally, for $k=0$, we set
\begin{equation*}
U_0=0,\qquad H_{0,0}=\ell_0-D_0,
\end{equation*}
where $D_0$ is determined in the previous step, and there is no $H_{k,k}$ for $k=0$.

Having an inventory of steps, we can arrange them in some prescribed
order, for example, first use the maximal number of
available up-steps, then all horizontal steps corresponding
to singletons, then horizontal steps corresponding to
transients (for the current height), then one down-step,
then repeat the process (use one up-step if available, and
so on). This produces a Motzkin path, and correspondingly, a non-crossing set partition $\pi$,
for which we have $\boldsymbol{\ell}(\pi) = \boldsymbol{\ell}$ and $\boldsymbol{g}(\pi) = \boldsymbol{g}$,
and $\boldsymbol{\mathcal{z}}(\pi) = \pmb{\varkappa}$.
An example of a Motzkin path is given in \Cref{fig:fibonacci-inverse-example}.

This partial inverse of the map $\pi \mapsto \boldsymbol{\mathcal{z}}(\pi)$
ensures its surjectivity, and so we are done.
\end{proof}

\begin{figure}[htpb]
\centering
\begin{tikzpicture}[scale=0.7]
  \draw[step=1cm,help lines,gray!30] (0,0) grid (9,3);

  \foreach \x/\y in {0/0,1/1,2/2,3/2,4/2,5/2,6/1,7/2,8/1,9/0}
    {\filldraw[black] (\x,\y) circle (1.6pt);}

  \draw[very thick] (0,0) -- (1,1);
  \draw[very thick] (1,1) -- (2,2);

  \draw[very thick] (2,2) -- (3,2);
  \draw[very thick] (3,2) -- (4,2);
  \draw[very thick] (4,2) -- (5,2);

  \draw[very thick] (5,2) -- (6,1);
  \draw[very thick] (6,1) -- (7,2);
  \draw[very thick] (7,2) -- (8,1);
  \draw[very thick] (8,1) -- (9,0);

  \node[above left, text=red] at (0.7,0.35) {$\scriptstyle U_{1}$};
  \node[above left, text=red] at (1.7,1.35) {$\scriptstyle U_{2}$};
  \node[above left, text=red] at (6.7,1.35) {$\scriptstyle U_{2}$};

  \node[below left, text=red] at (5.75,1.65) {$\scriptstyle D_{1}$};
  \node[below left, text=red] at (7.75,1.65) {$\scriptstyle D_{1}$};
  \node[below left, text=red] at (8.75,0.65) {$\scriptstyle D_{0}$};

  \node[above, text=red] at (2.5,2) {$\scriptstyle H_{0,2}$};
  \node[above, text=red] at (3.5,2) {$\scriptstyle H_{2,2}$};
  \node[above, text=red] at (4.5,2) {$\scriptstyle H_{2,2}$};
  \node[left] at (0,0) {$0$};
  \node[left] at (0,1) {$1$};
  \node[left] at (0,2) {$2$};
\end{tikzpicture}
\caption{A labeled Motzkin path for the composition $\boldsymbol{\ell}=(1,1,2,4,1)\models 9$ as in the 
proof of \Cref{prop:noncrossing+composition+surjection}.
The path has statistics
$\boldsymbol\ell=(1,3,5)$ and $\boldsymbol{g}=(1,4)$,
and yields the non-crossing set partition
$\pi=19\mid 2456\mid 3\mid 78$.
Note that the statistics are the same as for the non-crossing example in
\Cref{fig:arc-ensembles}, but the resulting set partition is different.}
\label{fig:fibonacci-inverse-example}
\end{figure}

\begin{remark}[Alternative proof via a canonical lift of the $\mathbcal{z}$-map]
Fix a composition $\pmb{\varkappa}=(\varkappa_1,\dots,\varkappa_m)\models n$.
We outline a recursive construction of a non-crossing set partition $\pi=\pi(\pmb{\varkappa})\in\mathrm{NC}(n)$ such that
$\boldsymbol{\ell}(\pi)=\mathrm{A}(\pmb{\varkappa})$ and
$\boldsymbol{g}(\pi)=\mathrm{dep}\big(\mathrm{B}(\pmb{\varkappa})\big)$.
Read $\pmb{\varkappa}$ from left to right and build $\pi$ from a previously constructed non-crossing set partition
$\widetilde{\pi}=\pi(\widetilde{\pmb{\varkappa}})$ corresponding to a subordinate composition
$\widetilde{\pmb{\varkappa}}\models\widetilde{n}<n$.
Blocks of $\widetilde{\pi}$ are denoted $B_1(\widetilde{\pi}),\dots,B_r(\widetilde{\pi})$.
Set $\pi(\pmb{\varnothing})=\varnothing$ for the empty composition $\pmb{\varnothing}$, and use the notation
$S+j=\{s+j:s\in S\}$.

Case 1: If $\varkappa_1>1$, let $\widetilde{\pmb{\varkappa}}=(\varkappa_1-1,\varkappa_2,\dots,\varkappa_m)$ (so $\widetilde{n}=n-1$), and define $\pi$ with blocks
$B_1(\pi)=\{1\}$ and $B_k(\pi)=B_{k-1}(\widetilde{\pi})+1$ for $k\ge2$.
Then $\ell_0(\pi)=\ell_0(\widetilde{\pi})+1$, $\ell_k(\pi)=\ell_k(\widetilde{\pi})$ for $k\ge1$, and $g_k(\pi)=g_k(\widetilde{\pi})$ for all $k\ge1$.

Case 2: If $\varkappa_1=1$, let $\widetilde{\pmb{\varkappa}}=(\varkappa_3,\dots,\varkappa_m)$
(so $\widetilde{n}=n-\varkappa_2-1$), and define $\pi$ with blocks
$B_1(\pi)=\{1,\dots,\varkappa_2\}\cup\{n\}$ and $B_k(\pi)=B_{k-1}(\widetilde{\pi})+\varkappa_2$ for $k\ge2$.
Then $\ell_0(\pi)=1$, $\ell_1(\pi)=\ell_0(\widetilde{\pi})+\varkappa_2$, $\ell_k(\pi)=\ell_{k-1}(\widetilde{\pi})$ for $k\ge2$, and
$g_1(\pi)=\varkappa_2$, $g_k(\pi)=g_{k-1}(\widetilde{\pi})$ for $k\ge2$.

In both cases, no crossings are introduced. Assuming inductively that
$\boldsymbol{\ell}(\widetilde{\pi})=\mathrm{A}(\widetilde{\pmb{\varkappa}})$ and
$\boldsymbol{g}(\widetilde{\pi})=\mathrm{dep}\big(\mathrm{B}(\widetilde{\pmb{\varkappa}})\big)$, one can check that
$\boldsymbol{\ell}(\pi)=\mathrm{A}(\pmb{\varkappa})$ and
$\boldsymbol{g}(\pi)=\mathrm{dep}\big(\mathrm{B}(\pmb{\varkappa})\big)$.
\end{remark}

We restate \Cref{prop:tp-moment-formula}
as a sum over compositions
using
\Cref{prop:noncrossing+composition+surjection}:

\begin{corollary}
\label{prop:tp-moment-formula-restatement}
Let $(\vec{x}, \vec{y} \ssp )$ be a totally positive
specialization expressed in terms of the sequences $\vec t$
and $\vec c$ as in
\eqref{eq:Fibonacci_positive_specialization_in_theorem}; see
\Cref{prop:c_t_from_x_y}. Then the associated $n$-th
Stieltjes moment is given by
\begin{equation}
\label{eq:tp-moment-formula-restatement}
a_n = \sum_{\pmb{\varkappa} \ssp \in \ssp \mathrm{Comp}(n)} \,
N(\pmb{\varkappa}) \ssp
\boldsymbol{c}^{\ssp \mathrm{A}(\pmb{\varkappa} ) } \ssp \boldsymbol{t}^{\ssp
\mathrm{dep} (\mathrm{B}(\pmb{\varkappa}))},
\end{equation}
where
$N(\pmb{\varkappa}) =
\# \ssp \{ \pi \in \mathrm{NC}(n) :
\boldsymbol{\mathcal{z}}(\pi) =
\pmb{\varkappa} \}$,
and where $\mathrm{A}(\pmb{\varkappa})$ and $\mathrm{dep}(\mathrm{B}(\pmb{\varkappa}))$
are gvien in
\Cref{def:splitting+composition,def:composition+depletion}.
\end{corollary}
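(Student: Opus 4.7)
The plan is to derive the restatement directly from \Cref{prop:tp-moment-formula} by regrouping its sum over non-crossing set partitions according to the value of the composition statistic $\boldsymbol{\mathcal{z}}(\pi)$ introduced in \Cref{def:even-odd-composition}. Concretely, my starting point is the identity
\begin{equation*}
a_n \;=\; \sum_{\pi \ssp \in \ssp \mathrm{NC}(n)} \, \boldsymbol{c}^{\ssp \boldsymbol{\ell}(\pi)} \, \boldsymbol{t}^{\ssp \boldsymbol{g}(\pi)},
\end{equation*}
which follows from \Cref{prop:tp-moment-formula} after contracting the product notation. The idea is that although the index set is $\mathrm{NC}(n)$, the summand depends on $\pi$ only through the pair of compositions $(\boldsymbol{\ell}(\pi),\boldsymbol{g}(\pi))$, and by \Cref{lemma:splitting+set partitons} these two compositions are both determined by the single composition $\boldsymbol{\mathcal{z}}(\pi) \in \mathrm{Comp}(n)$.

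The steps I would carry out, in order, are as follows. First, I would invoke \Cref{lemma:splitting+set partitons} to rewrite each monomial in the sum as
\begin{equation*}
\boldsymbol{c}^{\ssp \boldsymbol{\ell}(\pi)} \, \boldsymbol{t}^{\ssp \boldsymbol{g}(\pi)} \;=\; \boldsymbol{c}^{\ssp \mathrm{A}(\boldsymbol{\mathcal{z}}(\pi))} \, \boldsymbol{t}^{\ssp \mathrm{dep}(\mathrm{B}(\boldsymbol{\mathcal{z}}(\pi)))}.
\end{equation*}
Next, I would partition $\mathrm{NC}(n)$ into fibers of the map $\pi \mapsto \boldsymbol{\mathcal{z}}(\pi)$. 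By \Cref{prop:noncrossing+composition+surjection}, this map surjects onto $\mathrm{Comp}(n)$, so the fibers are indexed precisely by $\pmb{\varkappa} \in \mathrm{Comp}(n)$, with $N(\pmb{\varkappa})$ counting the elements of the fiber over $\pmb{\varkappa}$. Interchanging the order of summation then yields
\begin{equation*}
a_n \;=\; \sum_{\pmb{\varkappa} \ssp \in \ssp \mathrm{Comp}(n)} \; \sum_{\substack{\pi \ssp \in \ssp \mathrm{NC}(n) \\ \boldsymbol{\mathcal{z}}(\pi) \ssp = \ssp \pmb{\varkappa}}} \boldsymbol{c}^{\ssp \mathrm{A}(\pmb{\varkappa})} \, \boldsymbol{t}^{\ssp \mathrm{dep}(\mathrm{B}(\pmb{\varkappa}))} \;=\; \sum_{\pmb{\varkappa} \ssp \in \ssp \mathrm{Comp}(n)} N(\pmb{\varkappa}) \, \boldsymbol{c}^{\ssp \mathrm{A}(\pmb{\varkappa})} \, \boldsymbol{t}^{\ssp \mathrm{dep}(\mathrm{B}(\pmb{\varkappa}))},
\end{equation*}
since the summand in the inner sum no longer depends on $\pi$.

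There is no real obstacle once \Cref{lemma:splitting+set partitons,prop:noncrossing+composition+surjection} are in hand — the proof is purely a bookkeeping/reindexing step. The only small point worth double-checking is that the two boundary cases in \Cref{def:even-odd-composition} (namely $\ell_p(\pi) > g_p(\pi)$ versus $\ell_p(\pi) = g_p(\pi)$) are consistent with the definitions of $\mathrm{A}$ and $\mathrm{B}$ from \Cref{def:splitting+composition} when $\boldsymbol{\mathcal{z}}(\pi)$ has even versus odd length; this is already encoded in \Cref{lemma:splitting+set partitons}, but I would verify the length matches explicitly to make sure $\mathrm{A}(\pmb{\varkappa})$ really produces $\boldsymbol{\ell}(\pi)$ with no off-by-one mismatch. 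Beyond this, the corollary is immediate.
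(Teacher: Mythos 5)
Your proposal is correct and is exactly the argument the paper intends: the corollary is presented as an immediate restatement of \Cref{prop:tp-moment-formula} obtained by noting (via \Cref{lemma:splitting+set partitons}) that the summand depends on $\pi$ only through $\boldsymbol{\mathcal{z}}(\pi)$ and then grouping the non-crossing partitions into fibers of $\pi\mapsto\boldsymbol{\mathcal{z}}(\pi)$, with \Cref{prop:noncrossing+composition+surjection} guaranteeing every composition occurs. The paper gives no separate proof, so your bookkeeping write-up fills in precisely the intended reindexing step.
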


\subsection{Multiplicity matrix}

Let us discuss the multiplicities $N(\pmb{\varkappa})$ appearing in \Cref{eq:tp-moment-formula-restatement}.
First,
by \Cref{prop:noncrossing+composition+surjection},
this multiplicity is nonzero for all $\pmb{\varkappa} \in \mathrm{Comp}(n)$.
Thus, there are $2^{n-1}$ summands in \eqref{eq:tp-moment-formula-restatement} all of which are positive.
Since
\begin{equation*}
\mathrm{Catalan}(n) = \# \ssp \mathrm{NC}(n) = \sum_{\pmb{\varkappa} \in \mathrm{Comp}(n)} N(\pmb{\varkappa}),
\end{equation*}
the Stieltjes moments $a_n$ may be thought of as defining a multi-variate
extension of the Catalan numbers
in the parameters $\vec{c}$ and
$\vec{t}$.
If we specialize $c_k=1$ (or $t_k=1$) for all $k \geq 1$, then the polynomial expression for $a_n$ in \eqref{eq:tp-moment-formula-restatement} collapses to a sum of $F_n$ distinct monomials, each with a positive integer coefficient, where $F_n$ is the $n$-th Fibonacci number. This observation leads us to the
following construction.

\begin{definition}[Multiplicity matrix]
\label{def:matrix+multiplicities}
For a pair of Fibonacci words
$u, v \in \Bbb{YF}_n$, set
$N_{u,v} = N(\pmb{\varkappa})$
if $u = u(\pmb{\varkappa})$
and $v = v(\pmb{\varkappa})$
for some
composition $\pmb{\varkappa} \in \mathrm{Comp}(n)$. By \Cref{lemma:injection+compositions+splitting},
this composition is unique if it exists.
Otherwise, set $N_{u,v} = 0$.
Order the Fibonacci words
in $\Bbb{YF}_n$ lexicographically,
and arrange the multiplicities
$N_{u,v}$ into an
$\Bbb{YF}_n \times \Bbb{YF}_n$
matrix denoted by $\mathrm{\bf N}_n$.
By
\Cref{lemma:injection+compositions+splitting},
the diagonal
entry $N_{u,u}$ is nonzero
for each $u \in \Bbb{YF}_n$.
See \Cref{ex:compressed-moment-formula}
for an illustration.
\end{definition}

By \Cref{prop:noncrossing+composition+surjection},
the matrix $\mathrm{\bf N}_n$ has $2^{n-1}$
nonzero entries whose sum is
$\mathrm{Catalan}(n)=\# \ssp \mathrm{NC}(n)$.
The following
conjecture makes use of the {\it dominance order}
$\succeq$
on Fibonacci words $u,v \in \Bbb{YF}_n$.
Recall from \cite{okada1994algebras} that $u \succeq v$ if $u=  u_1 \cdots u_p$
and $v = v_1 \cdots v_q$ in $\Bbb{YF}_n$,
and $u_1 + \cdots + u_k \geq v_1 + \cdots + v_k$
for all $1 \leq k \leq \min(p,q)$.

\begin{conjecture}
\label{conj:splitting+dominance+upper-triangular}
The matrix $\mathrm{\bf N}_n$ is upper triangular. Furthermore, if
$N_{u,v}$ is nonzero then $u \succeq v$ and
$\mathcal{h}(u)=\mathcal{h}(v)$, where
$\mathcal{h}(w)$
denotes the {\bf total hike},
i.e., the number of $2$'s appearing in
a Fibonacci word $w$.
\end{conjecture}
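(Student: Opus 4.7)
The plan is to prove both assertions of the conjecture by direct computation with the Fibonacci compositions $\mathrm{A}(\pmb{\varkappa})$ and $\mathrm{B}(\pmb{\varkappa})$ from \Cref{def:splitting+composition}. The hike equality is essentially bookkeeping: in every case ($m\le 2$, $m=3$, $m\ge 4$), inspection of the definition shows that $\mathrm{A}(\pmb{\varkappa})$ and $\mathrm{B}(\pmb{\varkappa})$ have the same number of parts, namely $\lfloor m/2\rfloor+1$. Since the hike of a Fibonacci word $w$ equals the number of parts of $\boldsymbol{\varsigma}(w)$ minus one, this yields $\mathcal{h}(u(\pmb{\varkappa}))=\mathcal{h}(v(\pmb{\varkappa}))$ in all cases.

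For the dominance claim, I would first translate $u\succeq v$ into an inequality on partial sums of the associated Fibonacci compositions. Writing $w=1^{r_1}2\cdots 2\, 1^{r_{p+1}}$ with $r_1=\varsigma_1-1$, $r_{p+1}=\varsigma_{p+1}-1$, and $r_k=\varsigma_k-2$ in between, a direct count shows that the $k$-th letter $2$ in $w$ occupies position $\sum_{i=1}^k \varsigma_i - k+1$. Because $\mathcal{h}(u)=\mathcal{h}(v)=p$ forces $u$ and $v$ to share the common length $n-p$, dominance is equivalent to
\[
\sum_{i=1}^k \alpha_i \;\le\; \sum_{i=1}^k \beta_i \qquad \text{for all } k=1,\ldots,p,
\]
with $\mathrm{A}(\pmb{\varkappa})=(\alpha_i)$ and $\mathrm{B}(\pmb{\varkappa})=(\beta_i)$. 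Substituting the explicit formulas from \Cref{def:splitting+composition} and simplifying, the difference $\sum_{i=1}^k \beta_i - \sum_{i=1}^k \alpha_i$ reduces to $(k-p) + \sum \varkappa_{2j-1}$, where the sum runs over all odd indices strictly greater than $2k-1$ and at most $m$. Since each part $\varkappa_j\ge 1$ and this sum has at least $p-k$ terms, the inequality follows uniformly in the parity of $m$.

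Upper triangularity then comes from a standard partial-sum argument: if $u\succeq v$ and $u\ne v$, then at the first position where they disagree, $u$ must carry a $2$ and $v$ a $1$, so $u$ precedes $v$ in the paper's lexicographic order (which treats $2<1$), placing $N_{u,v}$ strictly above the diagonal. Nonvanishing on the diagonal is already recorded in \Cref{def:matrix+multiplicities} via \Cref{lemma:injection+compositions+splitting}. The principal subtlety, and perhaps the reason the result is stated as a conjecture, concerns the case of odd $m$: the formula for $\beta_1$ must be interpreted so that the sum extends over all odd indices up to $m$ (rather than strictly up to $p$), ensuring that $\mathrm{B}(\pmb{\varkappa})$ remains a composition of $n$; once this convention is fixed, the preceding calculation closes uniformly, and no deeper structural obstacle appears.
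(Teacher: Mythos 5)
The statement you are addressing is stated in the paper as an open \emph{conjecture} (immediately followed by \Cref{prob:counting-N}); the paper offers no proof, so there is nothing to compare your argument against. Judged on its own merits, your outline is correct and, once the computations are written out, would in fact settle the conjecture.

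The individual steps check out. (i) Hike equality: in every case of \Cref{def:splitting+composition}, both $\mathrm{A}(\pmb{\varkappa})$ and $\mathrm{B}(\pmb{\varkappa})$ have $\lfloor m/2\rfloor+1$ parts, and $\mathcal{h}(\mathrm{Fib}(\boldsymbol{\varsigma}))$ is the number of parts of $\boldsymbol{\varsigma}$ minus one, so $\mathcal{h}(u(\pmb{\varkappa}))=\mathcal{h}(v(\pmb{\varkappa}))=\lfloor m/2\rfloor$. (ii) Dominance: for words of equal length and equal hike (equal length follows from equal hike and equal weight, as you note), $u\succeq v$ is indeed equivalent to the $k$-th $2$ of $u$ sitting weakly to the left of the $k$-th $2$ of $v$ for every $k$, i.e.\ to $\sum_{i\le k}\alpha_i\le\sum_{i\le k}\beta_i$. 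For $m\ge 4$ one gets $\sum_{i\le k}\alpha_i=\sum_{j\le 2k-1}\varkappa_j$ and
$\sum_{i\le k}\beta_i-\sum_{i\le k}\alpha_i=(k-p)+\sum_{j}\varkappa_j$, the last sum running over odd $j$ with $2k-1<j\le m$; that sum has $p-k$ terms when $m=2p$ and $p-k+1$ terms when $m=2p+1$, each at least $1$, so the difference is nonnegative. The cases $m\le 3$ are immediate. (iii) Upper triangularity then follows because the paper's ordering of $\mathbb{YF}_n$ (see the $n=5$ table) is lexicographic with $2$ preceding $1$, and for two distinct words of equal length with $u\succeq v$ the first disagreement must have $u_i=2$, $v_i=1$. (iv) Your reading of $\beta_1$ for odd $m$ --- summing over \emph{all} odd indices $\le m$ --- is the right one: with the literal formula $\mathrm{B}(\pmb{\varkappa})$ would sum to $n-\varkappa_m$, contradicting \Cref{lemma:injection+compositions+splitting}, and the corrected reading is confirmed by the entry $N_{221,122}=1$ in \Cref{ex:compressed-moment-formula}, which comes from $\pmb{\varkappa}=(1,1,1,1,1)$. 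The only work remaining is to display the partial-sum computation in (ii) explicitly rather than asserting the simplification.
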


\begin{problem}
\label{prob:counting-N}
Calculate $N(\pmb{\varkappa})$
for any
$\pmb{\varkappa}
\in \mathrm{Comp}(n)$ directly
in terms of the composition itself.
Alternatively, calculate $N_{u,v}$
in terms of the pair
of Fibonacci words $u, v \in \Bbb{YF}_n$.
In particular find necessary and
sufficient conditions for when
$N_{u,v}$ vanishes.
\end{problem}

\begin{figure}[htpb]
\centering
	\[
	\begin{array}{|c|cccccccc|}
	\hline
	N_{u,v} & \mathrm{221} & \mathrm{212} & \mathrm{2111} & \mathrm{122} & \mathrm{1211} & \mathrm{1121}
	& \mathrm{1112} & \mathrm{11111} \\
	\hline
	\mathrm{221} & \boldsymbol{\textcolor{blue}{1}} & \mathrm{0} & \mathrm{0}
	&\boldsymbol{\textcolor{blue}{1}}& \mathrm{0}
	& \mathrm{0} & \mathrm{0} & \mathrm{0} \\
	\mathrm{212} & \mathrm{0} & \boldsymbol{\textcolor{blue}{2}} & \mathrm{0}
	& \boldsymbol{\textcolor{blue}{2}}
	& \mathrm{0}
	& \mathrm{0} & \mathrm{0} & \mathrm{0} \\
	\mathrm{2111} & \mathrm{0} & \mathrm{0} & \boldsymbol{\textcolor{blue}{1}} & \mathrm{0}
	& \boldsymbol{\textcolor{blue}{3}}
	& \boldsymbol{\textcolor{blue}{3}}
	& \boldsymbol{\textcolor{blue}{1}}
	& \mathrm{0} \\
	\mathrm{122} & \mathrm{0} & \mathrm{0}
	& \mathrm{0}
	& \boldsymbol{\textcolor{blue}{2}} & \mathrm{0}
	& \mathrm{0} & \mathrm{0} & \mathrm{0} \\
	\mathrm{1211} & \mathrm{0} & \mathrm{0} & \mathrm{0} & \mathrm{0} & \boldsymbol{\textcolor{blue}{4}}
	& \boldsymbol{\textcolor{blue}{6}}
	& \boldsymbol{\textcolor{blue}{2}}
	& \mathrm{0} \\
	\mathrm{1121} & \mathrm{0} & \mathrm{0}
	& \mathrm{0} & \mathrm{0} & \mathrm{0}
	& \boldsymbol{\textcolor{blue}{6}}
	& \boldsymbol{\textcolor{blue}{3}}
	& \mathrm{0} \\
	\mathrm{1112} & \mathrm{0} & \mathrm{0}
	& \mathrm{0}  & \mathrm{0} & \mathrm{0}
	& \mathrm{0} & \boldsymbol{\textcolor{blue}{4}} & \mathrm{0} \\
	\mathrm{11111} & \mathrm{0} & \mathrm{0} & \mathrm{0} & \mathrm{0} & \mathrm{0}
	& \mathrm{0} & \mathrm{0} & \boldsymbol{\textcolor{blue}{1}} \\
	\hline
	\end{array}
	\]
	\caption{The coefficients $N_{u,v}$ from
    \Cref{def:matrix+multiplicities}
    when $n=5$. We have $\# \ssp \mathrm{NC}(5) = 42 = 5 \cdot
	\boldsymbol{\textcolor{blue}1} + 4 \cdot
	\boldsymbol{\textcolor{blue}2} + 3 \cdot
	\boldsymbol{\textcolor{blue} 3} + 2 \cdot
	\boldsymbol{\textcolor{blue} 4} + 2 \cdot
	\boldsymbol{\textcolor{blue} 6}$ and $2^4 = 16 = 5 + 4 + 3
	+ 2 + 2$. Upper triangularity is a manifestation of \Cref{conj:splitting+dominance+upper-triangular}.}
\label{ex:compressed-moment-formula}
\end{figure}

\section{Moments for Fibonacci Positive Specializations}
\label{sec:Fibonacci_positive_moments}

Let us now consider the Stieltjes moments in the case when
the specialization $(\vec{x}, \vec{y} \ssp )$ is Fibonacci
positive and of divergent type (see \Cref{def:finite_infinite_type_specializations}).
We continue to use the definitions and notation from the previous
\Cref{sec:Stieltjes_moments_general}.

In this case, we can further
express the $n$-th Stieltjes moment $a_n$ as a sum over all
set partitions of $n$. The next result is a multivariate
version of the enumerative formula
\begin{equation*}
 \# \ssp \Pi(n) \, = \,
\sum_{\frak{m} \ssp \in \ssp \frak{M}_n}
\ssp \prod_{k \ssp \geq \ssp 1} \,
k^{\ssp
\# \ssp \mathrm{down}_k(\frak{m})}
\ssp \prod_{k \ssp \geq \ssp 0} \,
(1+k)^{\ssp \# \ssp \mathrm{hor}_k
(\frak{m})},
\end{equation*}
where $\mathrm{hor}_k(\frak{m})$
and $\mathrm{down}_k(\frak{m})$,
respectively, count the number of
$\rightarrow$ and $\searrow$
steps at height $k$
taken by a length-$n$ Motzkin path
$\frak{m} \in \frak{M}_n$.

\begin{proposition}
\label{prop:fp-divergent-moment-formula}
Let $(\vec{x}, \vec{y} \ssp )$
be a Fibonacci positive specialization
of divergent type
\begin{equation}
		x_k=c_k\ssp (k +\epsilon_1 + \cdots + \epsilon_{k-1}),\qquad y_k=c_k\ssp c_{k+1}\ssp (k + \epsilon_1 +
        \cdots + \epsilon_k), \qquad k\ge1,
\end{equation}
where $\vec{c}$ and $\vec{\epsilon}$
are sequences of positive real
numbers uniquely determined by \eqref{eq:Fib_via_epsilons}
and \Cref{corollary:coefficientwise-total-positivity}.
Then
the associated $n$-th Stieltjes moment
is given by
\begin{equation*}
a_n = \sum_{\pi \ssp \in \ssp
\Pi(n)} \,
\prod_{k \ssp \geq \ssp 1} \,
c_k^{\ssp \ell_{k-1}(\pi)} \ssp (1 + \epsilon_k)^{\ssp g_k(\pi)}.
\end{equation*}
\end{proposition}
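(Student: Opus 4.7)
The plan is to prove the formula by a Flajolet--Viennot style weighted Motzkin/Charlier histoire expansion, exploiting the specific algebraic form of the divergent-type parametrization.

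First I would recall the Motzkin-path expansion of moments from \Cref{sec:Stieltjes_moment_sequences}, which gives $a_n=\sum_{\mathfrak{m}\in\mathfrak{M}_n}\mathrm{wt}(\mathfrak{m})$, where $\mathrm{wt}(\mathfrak{m})$ assigns $x_{h+1}$ to each $\rightarrow$ step at height $h$ and $y_h$ to each $\nearrow$ step rising to height $h$. The key algebraic observation, using only the divergent-type parametrization $1+t_{k-1}=k+\epsilon_1+\cdots+\epsilon_{k-1}$ and $t_k=k+\epsilon_1+\cdots+\epsilon_k$, is the pair of decompositions
\begin{equation*}
x_{h+1}=c_{h+1}+\sum_{j=1}^{h}c_{h+1}(1+\epsilon_j),
\qquad
y_h=c_h\cdot\sum_{j=1}^{h}c_{h+1}(1+\epsilon_j),
\end{equation*}
which exactly match the number of color choices ($h+1$ for $\rightarrow$ and $h$ for $\searrow$) in a Charlier histoire at the corresponding heights.

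Next I would use this decomposition to pass from Motzkin paths to Charlier histoires $\mathfrak{h}\in\mathfrak{H}_n$ by assigning the following step weights: $\nearrow$ from height $h$ to $h+1$ receives weight $c_{h+1}$; $\rightarrow$ at height $h$ with color $\chi=0$ receives $c_{h+1}$; $\rightarrow$ at height $h$ with color $\chi\in\{1,\ldots,h\}$ receives $c_{h+1}(1+\epsilon_\chi)$; and $\searrow$ from $h$ to $h-1$ with color $\chi\in\{1,\ldots,h\}$ receives $c_{h+1}(1+\epsilon_\chi)$. Summing these colored weights collapses back to the Motzkin weights $x_{h+1}$ and $y_h$, so $a_n=\sum_{\mathfrak{h}\in\mathfrak{H}_n}\mathrm{wt}(\mathfrak{h})$. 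A clean feature of this assignment is that every step carries the single $c$-factor $c_{h+1}$, where $h$ is its starting height, and carries a $(1+\epsilon_\chi)$-factor if and only if it has positive color $\chi$.

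Finally I would invoke the bijection $\pi\leftrightarrow\mathfrak{h}_\pi$ between $\Pi(n)$ and $\mathfrak{H}_n$ recalled in \Cref{sub:Charlier_histoires}, together with the statistic interpretations: by the construction of $\mathfrak{h}_\pi$, the number of steps in $\mathfrak{h}_\pi$ starting at height $k-1$ equals $\ell_{k-1}(\pi)$, while the number of (horizontal or down) steps of positive color $k$ equals $g_k(\pi)$ (horizontal $\chi=k$ steps encode transients with $\gamma_i=k$, down $\chi=k$ steps encode closers with $\gamma_i=k$). Substituting these counts into the product of step weights gives
\begin{equation*}
\mathrm{wt}(\mathfrak{h}_\pi)=\prod_{k\geq1}c_k^{\,\ell_{k-1}(\pi)}(1+\epsilon_k)^{\,g_k(\pi)},
\end{equation*}
and summing over $\pi\in\Pi(n)$ yields the claimed identity.

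The only delicate point, and where I would take care, is the bookkeeping of the height/color conventions in the Flajolet--Viennot-to-Charlier-histoire reweighting: one must verify that the factor $c_{h+1}$ attached to a step starting at height $h$ is compatible with the three-term identity $x_{h+1}y_h=\big(c_{h+1}+\sum c_{h+1}(1+\epsilon_j)\big)\cdot c_h\sum c_{h+1}(1+\epsilon_j)$ on each elementary up--down--horizontal combination, and that the bijection of \Cref{sub:Charlier_histoires} sends openers/singletons/transients/closers to the step types whose starting heights and colors realize $\ell_{k-1}(\pi)$ and $g_k(\pi)$ as claimed. Everything else is a direct application of the Motzkin expansion and the bijection.
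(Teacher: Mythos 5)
Your proposal is correct and follows essentially the same route as the paper's proof: expand $a_n$ over Motzkin paths, split $x_{h+1}$ and $y_h$ according to the divergent-type parametrization so that the summands match the color choices of a Charlier histoire, assign exactly the colored step weights the paper uses, and then read off $\ell_{k-1}(\pi)$ from starting heights and $g_k(\pi)$ from positive colors via the bijection $\pi\leftrightarrow\mathfrak{h}_\pi$. The "delicate point" you flag at the end is already settled by your second paragraph (summing the colored weights over colors recovers the Motzkin weight step by step), so no further verification is needed.
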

\Cref{prop:fp-divergent-moment-formula} is not
a corollary of \Cref{prop:tp-moment-formula} or \Cref{prop:tp-moment-formula-restatement},
although its proof is similar in spirit.
Note that here the sum ranges over all set partitions,
whereas in \Cref{prop:tp-moment-formula}
the sum ranges only over non-crossing set partitions.
\begin{proof}[Proof of \Cref{prop:fp-divergent-moment-formula}]
Since
$y_k = c_k c_{k+1} (k + \epsilon_1
+ \cdots + \epsilon_k)$
can be factored as $c_k$ and
$c_{k+1} (k + \epsilon_1 + \cdots
+ \epsilon_k)$,
we modify the rule
described in
\Cref{sec:Stieltjes_moments_general}
and assign weights to steps
of a Motzkin path $\frak{m} \in \frak{M}_n$
as follows:
\begin{enumerate}[$\bullet$]
\item each $\nearrow$ step at height $k$ is
assigned weight $c_k$;
\item each $\searrow$ step at height $k$
is assigned weight $c_{k+1} (k + \epsilon_1
+ \cdots + \epsilon_k)$;
\item each $\rightarrow$ step at height $k$ is
assigned weight $x_{k+1} = c_{k+1} + c_{k+1} (k +
    \epsilon_1 + \cdots + \epsilon_k)$.
\end{enumerate}
Likewise, we introduce weights $\omega(\frak{h}_\pi)$
for each Charlier histoire $\frak{h}_\pi \in \frak{H}_n$,
where $\pi \in \Pi(n)$ is an
arbitrary set partition.
Specifically, define $\omega(\mathfrak{h}_\pi)$
as the product of the weights of
its (colored) steps, where the weights are given as follows:
\begin{enumerate}[$\bullet$]
    \item each $\nearrow$ step at height $k$ is assigned weight $c_k$;
    \item each $\rightarrow$ step at height $k$ is
        assigned weight $c_{k+1}$ if $\chi = 0$, and $c_{k+1}(1 + \epsilon_\chi)$ if
        $\chi \in \{1, \dots, k \}$;
    \item each $\searrow$ step at height $k$
    with color $\chi \in \{1, \dots, k \}$ is
			assigned weight $c_{k+1}(1 + \epsilon_\chi)$.
\end{enumerate}
As in the proof of \Cref{prop:tp-moment-formula},
this system of weights for
Charlier histoires is designed to
be consistent with the Motzkin weights
and the projection map
$\mathrm{pr}_n: \frak{H}_n
\rightarrow \frak{M}_n$, i.e.,
\[
	\mathrm{wt}(\frak{m})
	\, = \, \sum_{\stackrel{\scriptstyle \pi \ssp \in \ssp
	\Pi(n)}{\mathrm{pr}_n(\frak{h}_\pi) \ssp = \ssp \frak{m}}}
	\omega(\frak{h}_\pi).
\]
The same reasoning
as in \Cref{prop:tp-moment-formula}
shows that
\[ \omega(\frak{h}_\pi)
= \prod_{k \ssp \geq \ssp 1} \,
c_k^{\ssp \ell_{k-1}(\pi)} \ssp (1+ \epsilon_k)^{\ssp g_k(\pi)} \]
for any $\pi \in \Pi(n)$.
\end{proof}

\begin{remark}
It would be interesting to find an analogue
of \Cref{prop:fp-divergent-moment-formula} for the convergent case of Fibonacci positive specializations.
\end{remark}

The moment polynomials in
\Cref{prop:tp-moment-formula,prop:fp-divergent-moment-formula}
(depending on the variables $c_k,t_k$ or $c_k,\epsilon_k$)
should not be confused with the well-known multivariate Bell
polynomials \cite{bell1934exponential}, even though the
univariate Bell polynomials arise as Stieltjes moments for the Charlier Fibonacci positive specialization. We discuss the latter
fact, along with its variants for other families of
orthogonal polynomials, in
\Cref{sec:Fibonacci_and_Stieltjes_examples} below.

\section{Fibonacci Positive Examples from the Askey Scheme}
\label{sec:Fibonacci_and_Stieltjes_examples}

Here we consider a number of examples of Fibonacci positive
specializations introduced in \Cref{sec:examples_of_Fibonacci_positivity},
and connect them to
orthogonal polynomials from the ($q$-)Askey scheme.

\subsection{Charlier specialization}

\subsubsection{Orthogonal polynomials}
For $\rho \in (0,1]$, set $x_k = \rho + k - 1$ and $y_k = \rho k$ for all $k \geq 1$. In this case, the orthogonal polynomials satisfy the three-term recurrence
\begin{equation*}
	P_{n+1}(t) = \big(t - \rho - n \big) P_n(t) - \rho\ssp n\ssp P_{n-1}(t).
\end{equation*}
These are readily recognized as
the classical Charlier polynomials. The associated
orthogonality measure is the Poisson distribution
$\upnu_{\scriptscriptstyle \mathrm{Pois}}^{\scriptscriptstyle (\rho)}$
\eqref{eq:Poisson-measure-definition} with the
parameter $\rho$. This measure is
supported on $\mathbb{Z}_{\geq 0}$.
For more details on Charlier polynomials we refer to
\cite[Chapter~1.12]{Koekoek1996}.

\subsubsection{Stieltjes moments}
The moments
$a_n$
of $\upnu_{\scriptscriptstyle \mathrm{Pois}}^{\scriptscriptstyle (\rho)}$
are the Bell polynomials (sometimes called Touchard
polynomials), which have the combinatorial interpretation
\begin{equation}
	\label{eq:Charlier-moments}
	a_n=B_n(\rho) \coloneqq \sum\nolimits_{\pi \in \Pi(n)} \rho^{\# \mathrm{blocks}(\pi)}.
\end{equation}
where $\# \mathrm{blocks}(\pi)$ counts the number of blocks in a set partition $\pi \in \Pi(n)$.

Observe that the
moment generating function $M(z)$
\eqref{eq:moment-generating-function_theorem}
is the confluent hypergeometric function
$_1F_1\left(1; 1 - \frac{1}{z}; -\rho \right)$
(see \eqref{eq:hypergeometric_function} for the notation).

\begin{remark}
The moments \eqref{eq:Charlier-moments} can be recovered
from \Cref{prop:fp-divergent-moment-formula}. To do so,
observe that $c_k = \rho$ and $1+\epsilon_k = 1/\rho$ for all $k \geq 1$ and use the identities
$\sum_{k \ssp \geq \ssp 1} \ell_{k-1}(\pi) = n$
and $\sum_{k \ssp \geq \ssp 1}
g_k(\pi) = \# \big( \mathcal{C}(\pi)
\cup \mathcal{T}(\pi) \big)$
along with the fact that $n - \# \big( \mathcal{C}(\pi) \cup \mathcal{T}(\pi) \big) = \# \mathrm{blocks}(\pi)$
for any set partition $\pi \in \Pi(n)$.
\end{remark}

\subsection{Type-I Al-Salam--Carlitz specialization}

For $\rho, q \in (0,1]$, define $x_k = \rho \, q^{k-1} + [k-1]_q$ and
$y_k = \rho \, q^{k-1} [k]_q$ for $k \geq 1$.

\subsubsection{Orthogonal polynomials}
The corresponding orthogonal polynomials satisfy
\begin{equation*}
P_{n+1}(t) = \big(t - \rho \ssp q^n - [n]_q \big) P_n(t)
- \rho\ssp q^{n-1}[n]_q P_{n-1}(t),
\end{equation*}
and appear in \cite{deMedicisStantonWhite1995}, where they are denoted
$P_n(t) = C^{(\rho)}_n(t ; q)$. These polynomials can be identified with
the Type-I Al-Salam--Carlitz polynomials $U^{(a)}_n(x;q)=U_n(x, a ; q)$
via a change of variables and parameters, namely,
\begin{equation}
\label{eq:Al-Salam-Carlitz-specialization}
P_n(t) = C^{(\rho)}_n(t ; q) = \rho^n \, U_n
\bigg( \frac{t}{\rho} - \frac{1}{\rho(1-q)}
, \frac{-1}{\rho(1-q)} ; q \bigg).
\end{equation}
See \cite[Chapter~3.24]{Koekoek1996} for more details on the
Al-Salam--Carlitz polynomials $U_n(x, a ; q)$.
In particular, they satisfy the orthogonality relation
\[ \int_a^1 w_{\scriptscriptstyle U}(x, a; q)
\ssp U_m(x, a ; q)
\ssp U_n(x, a ; q) \, d_qx
\, = \, (1-q)(-a)^n q^{n(n-1)/2} (q;q)_n \ssp \mathbf{1}_{m=n} \]
where
\[\displaystyle  w_{\scriptscriptstyle U}(x, a; q)
\, := \,
{(qx;q)_\infty \ssp (qx/a;q)_\infty
\over{(q;q)_\infty \ssp (a;q)_\infty \ssp (q/a;q)_\infty}}
\]
and where
\[ \int_a^1 f(x) \, d_qx \, := \,
(1-q) \Bigg( \sum_{n \ssp \geq \ssp 0} f\big(q^n \big)q^n
\, - \, a \sum_{n \ssp \geq \ssp 0}
f\big(aq^n\big)q^n \Bigg)\]
is the Jackson integral. As usual,
\[
(z;q)_k \coloneqq (1-z)(1-zq)\cdots(1-zq^{k-1}),
\quad \textnormal{and} \quad
(z;q)_\infty \coloneqq \prod\nolimits_{k \ge 0} (1-zq^k)
\]
are $q$-Pochhammer symbols.

In \eqref{eq:Al-Salam-Carlitz-specialization}, we have $q \in (0,1)$, and the parameter $a$ must be negative.
This is consistent with
the change of variables used in
\cite{deMedicisStantonWhite1995}, and with the range of
values $\rho, q \in (0,1]$ we consider.
In terms of the variable $t$, the nonnegative Borel measure
corresponding to the Type-I Al-Salam--Carlitz
Fibonacci positive specialization
is
supported by the discrete subset
\begin{equation*}
\bigl\{ [k]_q \ssp, \rho \ssp q^k + \frac{1}{1-q} \bigr\}_{k\ge0} \subset \mathbb{R}_{\scriptscriptstyle \geq 0}.
\end{equation*}

\subsubsection{Stieltjes moments}

The $n$-th moment $a_n$ of the orthogonality measure for
$P_n(t)$
is given by a $q$-variant of the Bell polynomial, and
can also be expressed as a generating function for set partitions:
\begin{equation*}
a_n=
B_{q,n}(\rho) \coloneqq \sum\nolimits_{\pi \in \Pi(n)} \rho^{\# \text{blocks}(\pi)}
q^{\text{inv}(\pi)},
\end{equation*}
which incorporates an additional $q$-statistic $\text{inv}(\pi)$
counting inversions in the set partition $\pi \in \Pi(n)$.
We refer to \cite{wachs1991pq}
and \cite{Zeng1993} for details.

\begin{remark}
Alternatively, we may apply
\Cref{prop:fp-divergent-moment-formula} using
$c_k = \rho \, q^{k-1}$ and $1 + \epsilon_k = \rho^{-1} q^{-k}$
for all $k \geq 1$. \Cref{prop:fp-divergent-moment-formula}, together with
\eqref{eq:nesting-formula}--\eqref{eq:area-statistic+formula}, implies that
\[ a_n = \sum\nolimits_{\pi \ssp \in \ssp \Pi(n)}
\rho^{\# \mathrm{blocks}(\pi)} \ssp
q^{\ssp \mathrm{area}(\pi)} \ssp
(1/q)^{\# ( \mathcal{C}(\pi) \ssp \cup \ssp
\mathcal{T}(\pi))} \ssp
(1/q)^{\mathrm{nest}(\pi)}. \]
This suggests that
\[ \mathrm{inv}(\pi) \, = \,
\mathrm{area}(\pi) \ssp - \ssp
\# \big( \mathcal{C}(\pi) \ssp \cup \ssp
\mathcal{T}(\pi) \big) \ssp - \ssp
\mathrm{nest}(\pi)
\]
for any set partition $\pi \in \Pi(n)$, but we do not prove this here.
\end{remark}

\subsection{Al-Salam--Chihara specialization}

Recall that the Al-Salam--Chihara specialization depends on two parameters
$\rho \in (0,1]$ and $q \in [1,\infty)$, and has
$x_k = \rho \, + [k-1]_q$ and $y_k = \rho \, [k]_q$ for $k \geq 1$.

\subsubsection{Orthogonal polynomials}
The orthogonal polynomials satisfy the three-term recurrence
\[
	P_{n+1}(t) = \big(t - \rho - [n]_q \big) P_n(t) - \rho [n]_q P_{n-1}(t).
\]
They appeared in
\cite{anshelevich2005linearization} and
\cite{KimStantonZeng2006}
under the notation
\(C_n(t, \rho ; q)\).
In the latter reference, $P_n(t)=C_n(t,\rho;q)$ were
identified with the
the Al-Salam--Chihara polynomials \( Q_n(x ; a ,b \mid q) \), after rescaling and incorporating a change of variables as follows:
\[
	P_n(t) = C_n(t, \rho ; q) = \left(\frac{\rho}{1-q}\right)^{n/2} Q_n \left( \frac{1}{2} \sqrt{\frac{1-q}{\rho}} \left(t - \rho - \frac{1}{1-q} \right); \, \frac{-1}{\sqrt{\rho(1-q)}}, 0 \,\Bigg|\, q \right).
\]
See \cite[Chapter~3.8]{Koekoek1996} for more details on the
Al-Salam--Chihara orthogonal polynomials $Q_n$.
Note that our our parameter $q$ is greater than one,
while in \cite{Koekoek1996} it is classically assumed
that $|q|<1$. Because of this, we cannot identify
the nonnegative Borel measure
$\upnu(dt)$
(which exists by \Cref{thm:stieltjes-moments-theorem}
and serves as the orthogonality measure for the $P_n(t)$'s)
with
the one coming from the Al-Salam--Chihara
polynomials in \cite{Koekoek1996}.

The existence of
different orthogonality measures for different ranges of parameters
is a known phenomenon, see, e.g.,
\cite{Askey1989} or
\cite{christiansen2004indeterminate}.
In particular, for $q>1$, the Al-Salam--Chihara
polynomials admit a different orthogonality measure
\cite{koornwinder2004onsalamchihara}. Let us
recall the necessary notation.
Denote
\begin{equation*}
	\widetilde{Q}_n(x;a,b\mid q)\coloneqq i^{-n}\ssp Q_n(ix;ia,ib\mid q),\qquad
	i=\sqrt{-1},\qquad n\ge0.
\end{equation*}
For $q>1$ we have, taking $b\to 0$ in \cite[(16)]{koornwinder2004onsalamchihara}:
\begin{equation}
	\label{eq:Al-Salam-Chihara-orthogonality-measure}
	\begin{split}
		&\sum_{k=0}^{\infty}
		\frac{1 + q^{-2k} a^{-2}}{1 + a^{-2}}
		\frac{(-a^{2}; q)_k}{(q; q)_k}\ssp a^{-4k} q^{\frac{3}{2}k(1-k)}
		\\&\hspace{120pt}\times
		\widetilde{Q}_n
		\bigl( \tfrac{1}{2} (a q^{k} - a^{-1} q^{-k}); a, 0 \mid q \bigr)
		\ssp\widetilde{Q}_m
		\bigl( \tfrac{1}{2} (a q^{k} - a^{-1} q^{-k}); a, 0 \mid q \bigr)
		\\&\hspace{30pt}=
		(-q^{-1}a^{-2};q^{-1})_\infty \ssp (-1)^n(q;q)_n
		\ssp
		\mathbf{1}_{m=n}.
	\end{split}
\end{equation}
One can readily verify that the weights
in \eqref{eq:Al-Salam-Chihara-orthogonality-measure} are nonnegative
when $a=-1/\sqrt{\rho(1-q)}$ and $q>1$. Moreover, matching the
variables in the polynomials, we see that the measure $\upnu(dt)$ is supported
on the following discrete set:
\begin{equation*}
	\bigl\{
		[k]_q+(1-q^{-k})\ssp \rho
	\bigr\}_{k\ge0}\subset \mathbb{R}_{\scriptscriptstyle\ge0}.
\end{equation*}

\subsubsection{Stieltjes moments}

The moment sequence $a_n$ of $\upnu(dt)$
can be derived from the continued fraction
$M(z)=J_{\, \vec{x}, \vec{y}} \,(z)$
\eqref{eq:moment-generating-function_theorem}--\eqref{eq:Jacobi-continued-fraction_theorem}. Moreover, in
\cite{KimStantonZeng2006}, the combinatorial interpretation of the $a_n$'s
was shown to be
\begin{equation*}
	a_n=B_{q,n}^{\mathrm{rc}}(\rho) \coloneqq \sum\nolimits_{\pi \, \in \, \Pi(n)} \rho^{\# \mathrm{blocks}(\pi)} q^{\mathrm{rc}(\pi)},
\end{equation*}
which includes a certain $q$-statistic counting the number
of restricted crossings in the set partition~$\pi$. We refer
to \cite{KimStantonZeng2006} for
the definition of $\mathrm{rc}(\pi)$.

\begin{remark}
	Again, we may appeal to
	\Cref{prop:fp-divergent-moment-formula}
	using the parameters $c_k = \rho$ and
	$1 + \epsilon_k = \rho^{-1} q^{k-1}$
	for all $k \geq 1$. Using \eqref{eq:nesting-formula}--\eqref{eq:area-statistic+formula}, we obtain
	\[ a_n = \sum\nolimits_{\pi \ssp \in \ssp \Pi(n)}
	\rho^{\# \mathrm{blocks}(\pi)} \ssp
	q^{\ssp \mathrm{nest}(\pi)}. \]
	This suggests that $\mathrm{nest}(\pi)$ coincides with the statistic
	$\mathrm{rc}(\pi)$ \cite{KimStantonZeng2006} for all $\pi \in \Pi(n)$, but we do not
	pursue a proof of this here.
\end{remark}

\subsection{$q$-Charlier specialization}

For $\rho, q \in (0,1]$, let
\[
x_k = \rho \ssp q^{2k-2} + [k-1]_q \big(1 + \rho (q-1) q^{k-2} \big)
\quad \text{and} \quad
y_k = \rho \ssp q^{2k-2} [k]_q \big(1 + \rho (q-1) q^{k-1} \big),
\qquad
k \geq 1.
\]

\subsubsection{Orthogonal polynomials}
The
orthogonal polynomials satisfy the three-term recurrence
\[
P_{n+1}(t) = \left( t - \rho \ssp q^{2n} - [n]_q \left(1 + \rho (q-1) q^{n-1} \right) \right) P_n(t)
- \rho \ssp q^{2n-2} [n]_q \left(1 + \rho (q-1) q^{n-1}
\right) P_{n-1}(t).
\]
They appear in
\cite{Zeng1993} under the notation \( V_n^{(\rho)}(t\, ; q) \).
Moreover, it follows from
\cite{Zeng1993} that these polynomials are related to the

After rescaling and a change of variables, these are exactly the $q$-Charlier polynomials from the Askey scheme \cite[Chapter~3.23]{Koekoek1996}. Namely, we have
(using the notation $C_n(x;a,q)$ instead of $C_n(q^{-x};a,q)$
as in \cite{Koekoek1996}):
\begin{equation}
	\label{eq:q-Charlier-identification}
	P_n(t) = V_n^{(\rho)}(t ; q) =
	(-\rho)^n q^{n(n-1)} C_n\left(
	(q - 1) \ssp t + 1; \rho(1-q^{-1}),q^{-1}\right).
\end{equation}
Like in the previous Al-Salam--Chihara case, here the $q$-Charlier
polynomials contain the parameter $q^{-1}>1$. Therefore,
the classical orthogonality measure \cite[Chapter~3.23]{Koekoek1996}
does not correspond to our nonnegative Borel measure $\upnu(dt)$.
Instead, we have the following orthogonality
for $q^{-1}$-Charlier polynomials:
\begin{equation*}
	\sum_{k=0}^{\infty}\frac{(-q)^{k}a^k}{(q;q)_k}\ssp
	C_n(q^k;a,q^{-1})\ssp C_m(q^k;a,q^{-1})=
	\frac{q^n \ssp (q^{-1};q^{-1})_n \ssp (-a^{-1}q^{-1};q^{-1})_n }
	{(-a\ssp q; q)_{\infty}}
	\ssp
	\mathbf{1}_{m=n}.
\end{equation*}
The $q$-Pochhammer symbol $(-a\ssp q;q)_\infty$ in the denominator
(as opposed to $(-a;q)_\infty$ in the numerator for
$0<q<1$, see \cite[(3.23.2)]{Koekoek1996})
comes from normalizing the orthogonality measure
to be a probability distribution.

In terms of our parameters, we have
$a=\rho(1-q^{-1})<0$, which ensures that the
orthogonality measure $\upnu(dt)$
for the polynomials $P_n(t)$
is nonnegative.
The support of $\upnu(dt)$
in $\mathbb{R}_{\scriptscriptstyle \geq 0}$
consists
of all $q$-integers $[k]_q$,
where
$k \in \mathbb{Z}_{\scriptscriptstyle \geq 0}$.

\subsubsection{Stieltjes moments}

The moments $a_n$ of $\upnu(dt)$ can be derived from the continued fraction
\eqref{eq:moment-generating-function_theorem}--\eqref{eq:Jacobi-continued-fraction_theorem}, and their combinatorial interpretation is
yet another $q$-variant of the Bell polynomials:
\[
	a_n=\widetilde{B}_{q,n}(\rho) \coloneqq \sum\nolimits_{\pi \in \Pi(n)} \rho^{\# \mathrm{blocks}(\pi)} \, q^{\widetilde{\mathrm{inv}}(\pi)}.
\]
Here
the statistic \( \widetilde{\mathrm{inv}}(\pi) \) is the number of so-called dual inversions of a set partition \( \pi \in \Pi(n) \). We refer to \cite{wachs1991pq} and \cite{Zeng1993} for details.

\subsection{Cigler-Zeng specialization}
\label{subsubsection:Cigler-Zeng-specialization+Al-Salam-Chihara}

For $q \geq q_0$ where $q_0 \approx 1.4656$ is
the unique root
of the equation $z^3 - z^2 = 1$, let
$x_k = q^{k-1}$ and
$y_k = q^k -1$ for all
$k \geq 1$.

\subsubsection{Orthogonal polynomials}
The orthogonal polynomials satisfy the three-term recurrence
\begin{equation}
\label{special-Cigler-Zeng-polynomials}
P_{n+1}(t) = \left( t - q^n \right)
P_n(t) - \left(q^n -1 \right) P_{n-1}(t).
\end{equation}
This is a special case of the more general recurrence
\begin{equation}
\label{general-Cigler-Zeng-polynomials}
P_{n+1}(t) = \left( t - xq^n \right)
P_n(t) + s[n]_q P_{n-1}(t),
\end{equation}
studied in \cite{cigler2011curious}. When $x=1$
and $s=1-q$, we recover \eqref{special-Cigler-Zeng-polynomials}.
It is shown in \cite{cigler2011curious} that solutions of
\eqref{general-Cigler-Zeng-polynomials}
can be expressed in terms of
Al-Salam-Chihara polynomials, namely as:
\[
	P_n(t) = \left(\frac{s}{q-1}\right)^{n/2} Q_n \left( \frac{t}{2} \sqrt{\frac{1-q}{s}}  \ssp ;
    x \sqrt{\frac{q-1}{s}} \ssp , 0 \,\Bigg|\, q \right).
\]
For $x=1$ and $s=1-q$,
we have $P_n(t) = i^n Q_n ( it/2 \ssp ; i , 0 \ssp | \ssp q) = \widetilde{Q}_n(t/2 \ssp ; 1 , 0
\ssp | \ssp q)$.
We require $q > q_0 >1$
which is outside the range of $q$-values
prescribed by the Askey scheme,
and so we must again use the
orthogonality
measure adapted for the
$\widetilde{Q}_n$-polynomials
given in
\eqref{eq:Al-Salam-Chihara-orthogonality-measure}.
When $x=1$ and
$s=1-q$, the support of $\upnu(dt)$
in $\mathbb{R}_{\scriptscriptstyle \geq 0}$
is $\{ q^k - q^{-k} : k \geq 0 \}$.

\subsubsection{Stieltjes moments}

The moments $a_n$ of $\upnu(dt)$ for general values of $x$ and $s$ are the chief concern of \cite{cigler2011curious}, where they are referred to as {\it new $q$-Hermite polynomials}. In general, $a_n = H_n (x, s \ssp | \ssp q)$ is expressed as a generating function for {\it incomplete matchings} of $n$, i.e., set partitions of $n$ whose blocks have size at most two. Specifically, they show
\[a_n \, = \,  H_n (x, s \ssp | \ssp q)
\, = \, \sum_{k \ssp = \ssp 0 }^n
\, \sum_{m \in M(n,k)} \,
q^{c(m)+ \mathrm{cr}(m)}
x^k
(-s)^{(n-k)/2},
\]
where the inner sum ranges over $M(n,k)$, the set of incomplete matchings $m$ of $n$ with $k$ unmatched vertices, and where $\mathrm{cr}(m)$ is the number of crossings of the matching $m$; see \cite[page~5]{cigler2011curious} for a definition of the $c(m)$ statistic.

\begin{remark}
Alternatively, when $x=1$ and $s=1-q$, we may apply \Cref{prop:fp-divergent-moment-formula}, using $c_k = 1$ and $1 + \epsilon_k = (q-1)q^{k-1}$, together with the observations
\Cref{lemma:nesting-formula,lemma:area-statistic+formula}
to obtain the following moment formula:
\begin{equation*}
a_n \, = \,
\sum\nolimits_{\pi \ssp \in \ssp \Pi(n)}
(q-1)^{\ssp \# \big(\mathcal{C}(\pi)
\ssp \cup \ssp \mathcal{T}(\pi) \big)}
\ssp q^{\ssp \mathrm{nest}(\pi)}.
\end{equation*}
\end{remark}

\subsection{Non-examples}

We showed that
Charlier, Al-Salam--Carlitz, Al-Salam--Chihara,
$q$-Char\-lier,
and the Cigler-Zeng Fibonacci positive specializations are related to
discrete orthogonality measures for orthogonal
polynomials of the respective types.
Notably, all the orthogonality measures we obtain
have \emph{discrete support}.
However, not all orthogonal polynomials correspond to Fibonacci positive specializations.

\subsubsection{Meixner polynomials}
The Meixner polynomials $P_n(t)$
are orthogonal with respect to the negative binomial distribution
$t\mapsto \frac{(\beta)_t c^t}{t!}$, $t\in \mathbb{Z}_{\ge0}$,
where $\beta>0$ and $0<c<1$ are parameters.
This distribution has discrete support.

The recurrence coefficients take the form
\begin{equation*}
	x_k=\frac{\beta c + (k - 1)(c + 1)}{1 - c},
\qquad
y_k=\frac{k(k + \beta - 1)c}{(1 - c)^2}.
\end{equation*}
Thus, the sequences $\vec t$ and $\vec c$ are
(see \Cref{prop:c_t_from_x_y}):
\begin{equation*}
	c_k=\frac{c\ssp (k-1+\beta)}{1-c},\qquad t_k=\frac{k}{c\ssp (k+\beta)}.
\end{equation*}
The coefficients $c_k$ are always positive.
Thanks to $c\in(0,1)$ in the denominator of $t_k$, the series
$A_\infty(m)$ \eqref{eq:A_infty_B_infty_series_definitions}
diverges. However,
then the inequalities $t_{k+1}-1-t_k\ge0$ take the form
\begin{equation*}
	c\le \frac{\beta}{(k+\beta)(k+1+\beta)}, \qquad k\ge1,
\end{equation*}
which implies that $c$ must be zero. This contradicts the assumption that $c>0$.
Thus, the Meixner polynomials
do not correspond to a Fibonacci positive specialization for any values of the parameters $\beta$ and $c$.

Meixner polynomials are
closely connected to a distinguished family of harmonic functions
(corresponding to the $z$-measures)
on the usual Young lattice $\mathbb{Y}$, which arise in harmonic analysis on
the infinite symmetric group \cite{borodin2006meixner}.

\subsubsection{Laguerre polynomials}

The Laguerre polynomials $P_n(t)$ are orthogonal with respect to the
Gamma distribution $t^\alpha e^{-t}\ssp dt$ on $\mathbb{R}_{\ge0}$, which is non-discrete.
Here $\alpha>-1$ is a parameter.
The recurrence coefficients are
\begin{equation*}
	x_k=2k+\alpha-1,\qquad y_k=k(k+\alpha).
\end{equation*}
One of the determinants in \eqref{eq:A_B_dets} equals $B_2(1) = -55 - 47 \alpha - 17 \alpha^{2} - \alpha^{3}$,
which is strictly negative for $\alpha>-1$.
Thus,
the Laguerre specialization is not Fibonacci positive for any parameter $\alpha>-1$.

\subsubsection{Discrete support of the orthogonality measure}

The examples and non-examples considered in
\Cref{sec:Fibonacci_and_Stieltjes_examples}
lead to the following question:
\begin{problem}
	\label{problem:discrete_support}
		Does there exist a Fibonacci positive specialization
		$(\vec{x}, \vec{y} \ssp)$ whose associated orthogonality
		measure $\upnu$ is continuous (rather than discrete)?
\end{problem}

A partial answer to this question for Fibonacci positive specializations of convergent type
can be deduced from \cite[pages 120-121]{Chihara1978}: If
$\lim_{k \rightarrow \infty} c_k < \infty$, then the
orthogonality measure associated to a Fibonacci positive
specialization of convergent type is discrete.
This observation provides a sufficient, not necessary,
condition for discreteness. We do not know an if-and-only-if
characterization of when the orthogonality measure is
discrete in the convergent case, nor do we have an analogous
result for specializations of divergent type.

\section{Shifted Charlier Specialization}
\label{sec:shifted_Charlier_Stieltjes}

Here we
consider the shifted Charlier specialization
$x_k = \rho + \sigma + k - 2$ and $y_k = (\sigma + k - 1)\rho$,
where \( k \geq 1 \) and \( \rho \in (0,1] \), \( \sigma \in [1, \infty) \),
see \Cref{def:shifted_Plancherel_and_Charlier}.
It is of divergent type when $\sigma=1$, and otherwise it is of
convergent type.
In this section, we discuss its Stieltjes moments and their generating function.

\subsection{Orthogonal polynomials for the shifted Charlier specialization}

The three-term recurrence for the orthogonal polynomials has the form
\begin{equation}
	\label{eq:shifted-Charlier-recurrence}
	P_{n+1}(t)
	=
	(t- \rho-\sigma-n+1)P_n(t)
	-
	\rho(\sigma+n-1)P_{n-1}(t).
\end{equation}
A similar recurrence is satisfied by the
so-called \emph{associated Charlier polynomials}
\cite{ismail1988linear},
\cite{ahbli2023new}:
\begin{equation*}
	a\ssp \mathscr{C}_{n+1}(x;a,\gamma)=
	(n+\gamma+a-x)\mathscr{C}_n(x;a,\gamma)
	-(n+\gamma)\mathscr{C}_{n-1}(x;a,\gamma).
\end{equation*}
Namely, we have the following identification:
\begin{equation}
	\label{eq:shifted-Charlier-identification}
	P_n(t)=(-\rho)^{n}\ssp\mathscr{C}_n(t; \rho,\sigma-1).
\end{equation}
The polynomials $P_n$
\eqref{eq:shifted-Charlier-recurrence}
can be expressed through
the hypergeometric function ${}_3F_2$
(see \eqref{eq:hypergeometric_function} for the notation).
This follows
from
\eqref{eq:shifted-Charlier-identification} and
\cite[(3.6)]{ahbli2023new}:
\begin{equation*}
	P_n(t)=
	\sum_{k=0}^{n} (-\rho)^{n-k} \ssp \frac{(-n)_k (\sigma-1 - t)_k}{k!}
	\ssp {}_3F_2 \left( \begin{array}{c} -k, \sigma-1, k-n \\ -n,
	\sigma-1-t\end{array} \ssp\middle|\ssp 1 \right).
\end{equation*}

\begin{remark}
	\label{rmk:Q_fake_shifted-Charlier}
	Denote by $Q_n(t)$ the orthogonal polynomials
	corresponding to the fake shifted Charlier
	specialization (\Cref{def:fake_shifted_Charlier}). One can check that they are related to the
	polynomials $P_n(t)$ via
	\begin{equation*}
	Q_n(t)=P_n(t;\sigma)+(\sigma-1) P_{n-1}(t;\sigma+1).
    \end{equation*}
		Since the properties of the polynomials $Q_n(t)$ are
		closely related to those of $P_n(t)$, we will not
		explore this other family of polynomials further.
\end{remark}

\subsection{Moment generating function}

Let us now discuss the moment generating function
$M(z)=J_{\, \vec{x}, \vec{y}} \,(z)$
\eqref{eq:moment-generating-function_theorem}--\eqref{eq:Jacobi-continued-fraction_theorem}
for the shifted Charlier specialization.
Define the fractional linear action of $2\times 2$ matrices on power series
$f(z)$~by
\begin{equation*}
\begin{pmatrix} a & b \\ c & d \end{pmatrix}
\boldsymbol{\cdot} f(z) \coloneqq \, {a \ssp f(z) + b \over {c\ssp f(z) + d}} .
\end{equation*}
\begin{lemma}
	\label{lemma:recurrence_for_power_series_shifted_Charlier}
	The moment generating function $M(z)=M(z;\rho,\sigma)$
	satisfies the following functional equation:
	\begin{equation}
		\label{eq:shifted-Charlier-continued-fraction-functional-equation}
		M(z;\rho,\sigma+1) =
		\begin{pmatrix} 1-(\sigma+\rho-1) \ssp z & -1 \\ \sigma\ssp \rho\ssp z^2 & 0 \end{pmatrix}
		\boldsymbol{\cdot}
		M(z;\rho,\sigma).
	\end{equation}
	In terms of the series coefficients $a_n=a_n(\rho,\sigma)$, this yields
	the quadratic recurrence
	\begin{equation}
	\label{eq:shifted-charlier-moment-recurrence}
		a_{n+1}(\rho,\sigma)  =  (\sigma+\rho-1)\ssp a_n(\rho,\sigma)  +
		\rho \ssp \sigma \sum_{k  =  0}^{n-1}   a_k(\rho,\sigma) \ssp
		a_{n-k-1}(\rho,\sigma +1 ),
	\end{equation}
	with the initial condition $a_0(\rho,\sigma)\equiv 1$.
\end{lemma}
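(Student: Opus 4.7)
The plan is to extract the functional equation directly from the structure of the Jacobi continued fraction, and then read off the coefficient recurrence by multiplying out.

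The key observation is that, under the shifted Charlier specialization, replacing $\sigma$ by $\sigma+1$ has the effect of shifting the indices of the recurrence parameters by one: $x_k(\rho,\sigma+1) = x_{k+1}(\rho,\sigma)$ and $y_k(\rho,\sigma+1) = y_{k+1}(\rho,\sigma)$. Since the Jacobi continued fraction $J_{\vec x,\vec y}(z)$ in \eqref{eq:Jacobi-continued-fraction_theorem} is built layer by layer from these parameters, the $\sigma+1$-continued fraction equals the tail of the $\sigma$-continued fraction obtained by peeling off the top level. Concretely, I would use this to assert
\begin{equation*}
M(z;\rho,\sigma) \;=\; \frac{1}{1-x_1 z - y_1 z^2\, M(z;\rho,\sigma+1)} \;=\; \frac{1}{1-(\rho+\sigma-1)z - \rho\sigma z^2\, M(z;\rho,\sigma+1)},
\end{equation*}
interpreted as an identity of formal power series in $z$ (this is justified by truncating the continued fractions at any finite depth, where both sides are rational functions with the same expansion to any prescribed order).

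Solving the displayed identity for $M(z;\rho,\sigma+1)$ immediately gives the fractional linear relation
\begin{equation*}
M(z;\rho,\sigma+1) \;=\; \frac{(1-(\rho+\sigma-1)z)\,M(z;\rho,\sigma) - 1}{\rho\sigma z^2\, M(z;\rho,\sigma)},
\end{equation*}
which is exactly \eqref{eq:shifted-Charlier-continued-fraction-functional-equation} written out via the prescribed action of $2\times 2$ matrices on power series. Note that $M(z;\rho,\sigma)$ has constant term $1$, so the inversion is valid in the ring of formal power series.

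For the recurrence \eqref{eq:shifted-charlier-moment-recurrence}, I would rearrange the above to the polynomial identity
\begin{equation*}
M(z;\rho,\sigma) \;=\; 1 + (\rho+\sigma-1)z\, M(z;\rho,\sigma) + \rho\sigma z^2\, M(z;\rho,\sigma)\, M(z;\rho,\sigma+1),
\end{equation*}
substitute the expansions $M(z;\rho,\sigma)=\sum_{n\ge 0} a_n(\rho,\sigma)\,z^n$ and $M(z;\rho,\sigma+1)=\sum_{n\ge 0} a_n(\rho,\sigma+1)\,z^n$, and equate the coefficients of $z^{n+1}$ for $n\ge 0$. The linear term contributes $(\rho+\sigma-1)\,a_n(\rho,\sigma)$, while the quadratic term contributes the Cauchy convolution sum $\rho\sigma\sum_{k=0}^{n-1} a_k(\rho,\sigma)\,a_{n-1-k}(\rho,\sigma+1)$, yielding exactly \eqref{eq:shifted-charlier-moment-recurrence}. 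The initial condition $a_0(\rho,\sigma)\equiv 1$ is read off from $M(0;\rho,\sigma)=1$.

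There is no real obstacle here; the only subtle point is justifying the continued-fraction identity as a formal-power-series identity, which I would handle via the standard truncation argument (the depth-$N$ truncations of both continued fractions agree through order $z^{2N}$, so the limiting formal series coincide). Everything else is algebraic manipulation and coefficient extraction.
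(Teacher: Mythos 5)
Your proposal is correct and follows essentially the same route as the paper: peel off the top layer of the Jacobi continued fraction using the shift identity $x_k(\rho,\sigma+1)=x_{k+1}(\rho,\sigma)$, $y_k(\rho,\sigma+1)=y_{k+1}(\rho,\sigma)$, solve for $M(z;\rho,\sigma+1)$, and extract the coefficient recurrence from the resulting quadratic identity. The added remark on justifying the continued-fraction identity as a formal power series statement via truncation is a reasonable (and standard) elaboration of a point the paper leaves implicit.
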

In particular, when $\sigma=1$, we know that
$M(z;\rho,1)$ is the moment generating function for
the Poisson distribution \eqref{eq:Poisson-measure-definition}, and thus
$M(z;\rho,1)={}_1F_1(1;1-1/z;-\rho)$. We thus have
\begin{equation}
	\label{eq:shifted-Charlier-moment-generating-function-sequence-of-linear-fractional-transformations}
	M(z;\rho,k)=
	\begin{pmatrix}
	1 - (k-1) z & -1 \\
	(k-1) z^2 & 0
	\end{pmatrix} \, \cdots \,
	\begin{pmatrix}
	1 - z & -1 \\
	z^2 & 0
	\end{pmatrix}
	\, \boldsymbol{\cdot} \,
	{}_1F_1(1;1-1/z;-\rho).
\end{equation}
We are grateful to Michael Somos and Qiaochu Yuan
for helpful observations \cite{somos2022mathse} leading to
\Cref{lemma:recurrence_for_power_series_shifted_Charlier}.
\begin{proof}[Proof of \Cref{lemma:recurrence_for_power_series_shifted_Charlier}]
	The continued fraction for the shifted
	Charlier parameters has the form
	\begin{equation}
		\label{eq:shifted-Charlier-continued-fraction_in_proof}
		M(z; \rho,\sigma) =
		\frac{1}{1-(\sigma+\rho-1)\ssp z- \sigma\ssp \rho\ssp z^2 M(z;\rho,\sigma+1)},
	\end{equation}
	since the shifted sequences
	$(\vec{x}+1, \vec{y}+1)$
	correspond to the specialization under the shift $\sigma\mapsto \sigma+1$.
	Identity \eqref{eq:shifted-Charlier-continued-fraction_in_proof}
	is equivalent to the desired functional equation
	\eqref{eq:shifted-Charlier-continued-fraction-functional-equation}.

	The recurrence \eqref{eq:shifted-charlier-moment-recurrence}
	follows by writing the equation
	\eqref{eq:shifted-Charlier-continued-fraction_in_proof}
	as
	\begin{equation*}
		M(z;\rho,\sigma)\left( 1-(\sigma+\rho-1)\ssp z \right)=
		1+ \sigma\ssp \rho\ssp z^2 M(z;\rho,\sigma+1)\ssp M(z;\rho,\sigma),
	\end{equation*}
	and comparing the coefficients by $z^{n+1}$.
\end{proof}

\begin{remark}
	\label{rmk:meromorphic_Stieltjes}
	For integer values $\sigma=k\in \mathbb{Z}_{\ge1}$, the generating function
	$M(z; \rho, k)$ is derived by applying a sequence of
	fractional linear transformations
	\eqref{eq:shifted-Charlier-moment-generating-function-sequence-of-linear-fractional-transformations}
	to the meromorphic function
	$M(z; \rho, 1) = {}_1F_1(1; 1 - 1/z; -\rho)$. Thus,
	$M(z; \rho, k)$ is a meromorphic function of $z$.
	Consequently,
	the support of the measure $\upnu(dt)$ is
	discrete, which is likely the case also for all non-integer $\sigma>1$.
\end{remark}

We can solve the functional equation
\eqref{eq:shifted-Charlier-continued-fraction-functional-equation}
for $M(z;\rho,\sigma)$
in terms of the confluent hypergeometric function
$_1F_1$
(see \eqref{eq:hypergeometric_function} for the notation):

\begin{proposition}
	\label{prop:shifted_Charlier_moments}
	The moment generating function
	\( M(z) = M(z; \rho, \sigma) \)
	of the shifted Charlier specialization is given by
	\begin{equation}
		\label{eq:MGF_shifted_charlier_solution}
		M\big(z;\rho, \sigma\big)
		=
		\big(1 - z(\sigma-1) \big)^{-1} \,
        \frac{
		{_1F_1}\left(\sigma  ; \, \sigma - \frac{1}{z}  ; \, -\rho \right)
		}{
		{_1F_1}\left(\sigma - 1 ; \, \sigma -1 - \frac{1}{z}  ; \, -\rho \right)}
	\end{equation}
\end{proposition}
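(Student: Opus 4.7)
The plan is to verify that the right-hand side of \eqref{eq:MGF_shifted_charlier_solution} satisfies the functional equation from \Cref{lemma:recurrence_for_power_series_shifted_Charlier} along with the normalization at $z=0$, and then invoke the uniqueness of the recursion \eqref{eq:shifted-charlier-moment-recurrence}. First I would check that the right-hand side is a well-defined element of the formal power series ring with constant term $1$. Pulling a factor of $-1/z$ out of each $(\sigma-1/z)_k$, one rewrites
$$G(\sigma) \,:=\, {_1F_1}\Big(\sigma; \sigma - \tfrac{1}{z}; -\rho\Big) \,=\, \sum_{k \geq 0} \frac{(\sigma)_k\, (\rho z)^k}{k!\, \prod_{j=0}^{k-1}\bigl(1 - (\sigma+j)z\bigr)},$$
which belongs to $\mathbb{C}(\rho,\sigma)[[z]]$ with constant term $1$; the same applies to $G(\sigma-1)$, so the proposed $\widetilde M(z;\rho,\sigma) := \bigl(1-(\sigma-1)z\bigr)^{-1} G(\sigma)/G(\sigma-1)$ is a well-defined formal power series satisfying $\widetilde M(0;\rho,\sigma) = 1$.

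The main step, and the principal obstacle, is proving the three-term identity
\begin{equation*}
(1-\sigma z)\bigl(1-(\sigma+\rho-1)z\bigr)\, G(\sigma) \,=\, (1-\sigma z)\bigl(1-(\sigma-1)z\bigr)\, G(\sigma-1) + \sigma\rho z^2\, G(\sigma+1).
\end{equation*}
The difficulty is that in $G(\sigma)$ both parameters of ${_1F_1}$ shift simultaneously as $\sigma$ varies, while classical contiguous relations shift only one parameter at a time. To bypass this, I would apply Kummer's transformation ${_1F_1}(a;b;w) = e^w\, {_1F_1}(b-a;b;-w)$ to obtain $G(\sigma) = e^{-\rho}\, {_1F_1}\bigl(-1/z;\, \sigma - 1/z;\, \rho\bigr)$; in this form, a shift of $\sigma$ by $1$ shifts only the lower parameter, while the upper parameter $-1/z$ remains fixed. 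Substituting $c = \sigma - 1/z$, $a = -1/z$, $w = \rho$ into the classical Gauss-type contiguous identity
$$c(c-1)\, {_1F_1}(a;c-1;w) - c(c-1+w)\, {_1F_1}(a;c;w) + (c-a)w\, {_1F_1}(a;c+1;w) = 0,$$
cancelling the common factor $e^\rho$, clearing denominators by multiplying by $z^2$, and factoring out signs from $(\sigma z - 1) = -(1-\sigma z)$ and the analogous expressions, produces exactly the three-term identity above.

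The final step is a short manipulation. Dividing the three-term identity by $(1-(\sigma-1)z)(1-\sigma z)\, G(\sigma-1)$ and rearranging yields
$$\bigl(1-(\sigma+\rho-1)z\bigr)\, \widetilde M(z;\rho,\sigma) \,=\, 1 + \sigma\rho z^2\, \widetilde M(z;\rho,\sigma)\, \widetilde M(z;\rho,\sigma+1),$$
which is equivalent to the fractional-linear functional equation \eqref{eq:shifted-Charlier-continued-fraction-functional-equation}. Since the quadratic recursion \eqref{eq:shifted-charlier-moment-recurrence} together with $a_0(\rho,\sigma) \equiv 1$ determines all coefficients $a_n(\rho,\sigma)$ recursively (as polynomials in $\rho$ and $\sigma$) for every $n \geq 0$, and the RHS $\widetilde M(z;\rho,\sigma)$ satisfies the same recursion with the same initial value, the identity $M(z;\rho,\sigma) = \widetilde M(z;\rho,\sigma)$ follows term by term in~$z$.
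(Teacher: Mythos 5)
Your proof is correct and follows essentially the same strategy as the paper: show the right-hand side is a power series in $z$ with constant term $1$, verify that it satisfies the functional equation \eqref{eq:shifted-Charlier-continued-fraction-functional-equation} via a contiguous relation for ${}_1F_1$, and conclude by uniqueness of the solution of the coefficient recursion \eqref{eq:shifted-charlier-moment-recurrence}. The only difference is cosmetic: the paper cites the simultaneous-shift contiguous relation \cite[(13.3.13)]{NIST:DLMF} directly (and uses \cite[(13.3.3)]{NIST:DLMF} for the regularity check), whereas you derive the same three-term identity from Kummer's transformation combined with the standard lower-parameter contiguous relation, which is an equally valid route.
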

\begin{proof}
Equation \eqref{eq:shifted-Charlier-continued-fraction-functional-equation},
rewritten as the recurrence
\eqref{eq:shifted-charlier-moment-recurrence}
for the coefficients of a generating function in $z$
has a unique solution.
Moreover, identity
\cite[(13.3.13)]{NIST:DLMF}
implies that the right-hand side of
\eqref{eq:MGF_shifted_charlier_solution} satisfies the recurrence relation
\eqref{eq:shifted-Charlier-continued-fraction-functional-equation}.
Therefore, it remains to verify that the right-hand side
of \eqref{eq:MGF_shifted_charlier_solution}
is regular at $z=0$ (and hence is expanded as a power series in $z$).
See also \Cref{rmk:many_solutions_to_functional_equation} below
for examples of other solutions to \eqref{eq:shifted-Charlier-continued-fraction-functional-equation} which are not regular at $z=0$.

We have
\begin{equation*}
{}_1F_1\bigl(\sigma  ; \, \sigma - \tfrac{1}{z}  ; \, -\rho \bigr)
=
1+\sum_{r=1}^\infty
{\sigma (\sigma +1) \cdots( \sigma+r-1) \over
{(1 - \sigma z) \cdots (1 - (\sigma+r-1)z)}}
\, {\rho^r \over {r!}}  \, z^r,
\end{equation*}
and similarly,
\begin{equation}
	\label{eq:MGF_shifted_charlier_solution_2}
	\left( 1-z(\sigma-1) \right){_1F_1}
	\left(\sigma - 1 ; \, \sigma -1 - \tfrac{1}{z}  ; \, -\rho \right)
	=
	{_1F_1}\left(\sigma - 1 ;  \sigma - \tfrac{1}{z}  ;  -\rho \right)
- z (\sigma - 1)  {_1F_1}\left(\sigma  ;  \sigma - \tfrac{1}{z}  ;  -\rho \right)
\end{equation}
is a power series in $z$ with constant coefficient $1$.
Identity \eqref{eq:MGF_shifted_charlier_solution_2}
follows from \cite[(13.3.3)]{NIST:DLMF}.
Therefore, the right-hand side of \eqref{eq:MGF_shifted_charlier_solution}
is a power series in $z$, as desired.
\end{proof}

\begin{remark}
	\label{rmk:many_solutions_to_functional_equation}
	Curiosly, the functional equation
	\eqref{eq:shifted-Charlier-continued-fraction-functional-equation}
	has at least two solutions expressible as power series in \( z^{-1} \)
	with vanishing constant coefficient. If
	\(\mathcal{M}(z; \rho, \sigma) = \sum_{n \geq 1} m_n(\rho,\sigma) z^{-n}\)
	is a solution, then
	the recurrence relations for the coefficients take a different form:
	\begin{equation}
	\label{eq:variant-functional-equation}
		\begin{split}
			(1 - \rho - \sigma) \ssp m_1(\rho,\sigma) \, - \, 1
			&= \, \rho \ssp \sigma \ssp m_1(\rho, \sigma) \ssp m_1(\rho, \sigma +1),
			\\
			m_{n-1}(\rho,\sigma)
			\, - \, (\rho + \sigma -1)m_n(\rho, \sigma)
			&\displaystyle
			= \, \rho \ssp \sigma \sum_{k = 0}^{n-1}  \, m_{k+1}(\rho,\sigma) \,
			m_{n-k}(\rho,\sigma + 1),
			\qquad n \geq 2.
		\end{split}
	\end{equation}
	Two choices of valid initial conditions for \eqref{eq:variant-functional-equation} are
	\begin{equation*}
		m_1(\rho,\sigma)=(1-\sigma)^{-1}
	\end{equation*}
	and
	\[
	m_1(\rho,\sigma)
	 =
	\frac{\mathrm{U} \big(\sigma, \, \sigma, \, -\rho \big)}
	{(1 - \rho) \, \mathrm{U} \big(
	\sigma, \, \sigma -1, \, -\rho \big) \, - \,
	\rho \sigma
\mathrm{U} \big(\sigma +1, \, \sigma, \, -\rho \big)},
	\]
	where
	\[
	\mathrm{U}\big(\alpha, \,  \beta, \, \xi \big) \, = \, \frac{1}{\Gamma(\alpha)} \int_0^\infty
	e^{-\xi t} \, t^{\alpha -1} \, (1 +t)^{\beta - \alpha -1} \, dt
	\]
	is the Tricomi function.

	The Tricomi initial condition
	yields the solution
	\begin{equation}
		\label{eq:Tricomi-solution}
		\mathcal{M}(z; \rho, \sigma) =
		\frac{\mathrm{U} \big( \sigma, \, \sigma - 1/z, \, -\rho \big)}
		{(1 + z - z \rho) \ssp
		\mathrm{U} \big( \sigma, \, \sigma -1 - 1/z, \, -\rho \big) \, - \,
		z \ssp \rho \ssp \sigma\ssp
	\mathrm{U}\big( \sigma + 1, \, \sigma - 1/z, \, -\rho \big)}.
	\end{equation}
	The fact that \eqref{eq:Tricomi-solution}
	yields a solution to \eqref{eq:shifted-Charlier-continued-fraction-functional-equation}
	can be checked using contiguous relations
	similarly to the proof of \Cref{prop:shifted_Charlier_moments}.

	Solution \eqref{eq:Tricomi-solution} also arises by first decoupling equation
	\eqref{eq:shifted-Charlier-continued-fraction-functional-equation}
	through the Ansatz
	\begin{equation}
	\label{eq:decoupled-system}
	\begin{split}
		\mathcal{P}(z; \rho, \sigma+1)
		& = \, \big(1 - (\sigma + \rho -1) \ssp z \big)\mathcal{P}(z; \rho, \sigma) \, - \, \mathcal{Q}(z; \rho, \sigma), \\
		\mathcal{Q}(z; \rho, \sigma+1)
		& = \, \rho \ssp \sigma z^2
		\mathcal{P}(z; \rho, \sigma),
	\end{split}
	\end{equation}
	assuming that
	\(\mathcal{M}(z;\rho, \sigma) = \mathcal{P}(z; \rho, \sigma)/ \mathcal{Q}(z; \rho, \sigma)\). We then apply the Fourier transform to
	\eqref{eq:decoupled-system}, solve the resulting
	\(2 \times 2\) system of ordinary differential
	equations, and finally apply the inverse Fourier transform
	to return to the original function.

	It remains unclear whether other solutions to
	\eqref{eq:shifted-Charlier-continued-fraction-functional-equation}
	exist, or how they might be classified.
\end{remark}

\subsection{Stieltjes moments for the shifted Charlier specialization}

Here we describe the
moments $a_n(\rho,\sigma)$ for the shifted Charlier specialization in terms of bivariate statistics on set partitions.

\begin{definition}
\label{def:shifted_Charlier_statistics}
	For a set partition $\pi \in \Pi(n)$,
	let $\overline{g}_1(\pi)$ count
	the number of closers
	$i \in \mathbcal{C}(\pi)$ such that
	$\gamma_i(\pi) = 1$.
	Let $\#\ssp\mathrm{blocks}^\star(\pi)
	$ denote the number of non-singleton blocks in $\pi$, and $\#\ssp \mathbcal{S}(\pi)$ be the number
	of singletons in $\pi$.
\end{definition}

For the example
$\pi = 135 \big| 29 \big| 4 \big| 678$
(see \Cref{ex:arc-ensembles}, left),
we have
$\overline{g}_1(\pi)=1$,
$\#\ssp\mathrm{blocks}^\star(\pi)=3$,
and
$\#\ssp \mathbcal{S}(\pi)=1$.

\begin{proposition}
	\label{prop:shifted_Charlier_moments_combinatorial_interpretation}
	The $n$-th Stieltjes
	moment $a_n(\rho,\sigma)$ of the shifted Charlier specialization
	is given
	by the following
	variant of the Bell polynomial:
 \[
		a_n(\rho,\sigma) =
	\sum_{\pi \, \in \, \Pi(n)}
	\rho^{\# \mathrm{blocks}^\star(\pi)} \,
	\sigma^{\ssp \overline{g}_1(\pi) }\,
	\big( \rho + \sigma -1 \big)^{\#
	\mathbcal{S}(\pi)}.
	\]
\end{proposition}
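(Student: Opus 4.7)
The plan is to prove the formula combinatorially by lifting the moment computation through Motzkin paths to the Charlier histoire bijection, exactly as in the proofs of \Cref{prop:tp-moment-formula} and \Cref{prop:fp-divergent-moment-formula}. The key step is to exploit the factorization $y_k=\rho\,(\sigma+k-1)$ of the shifted Charlier parameters, which naturally splits the openers (contributing $\rho$) from the closers (contributing $\sigma+k-1$), and then to further split the additive contribution $\sigma+k-1=\sigma+(k-1)$ across the $k$ possible colors of a $\searrow$-step.

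First, I would start from the Motzkin-path expansion $a_n=\sum_{\mathfrak{m}\in\mathfrak{M}_n}\mathrm{wt}(\mathfrak{m})$, with the usual weights $x_{k+1}$ on a $\rightarrow$ at height $k$, $y_k$ on a $\nearrow$ ending at height $k$, and $1$ on a $\searrow$. Since every $\nearrow$ ending at height $k$ is matched with a unique $\searrow$ leaving height $k$, I can redistribute the weight $y_k=\rho(\sigma+k-1)$ as $\rho$ on the $\nearrow$ and $\sigma+k-1$ on the matched $\searrow$ without changing $\mathrm{wt}(\mathfrak{m})$.

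Next, using the projection $\mathrm{pr}_n\colon\mathfrak{H}_n\to\mathfrak{M}_n$, I lift this Motzkin weighting to a coloured weighting $\omega$ on Charlier histoires $\mathfrak{h}_\pi$ indexed by $\pi\in\Pi(n)$, defined as follows:
\begin{enumerate}[$\bullet$]
\item each $\nearrow$ step (corresponding to an opener of $\pi$) carries weight $\rho$;
\item each $\rightarrow$ step at height $k$ with colour $\chi=0$ (singleton) carries weight $\rho+\sigma-1$, while each $\rightarrow$ step with colour $\chi\in\{1,\dots,k\}$ (transient) carries weight $1$;
\item each $\searrow$ step from height $k$ with colour $\chi=1$ (closer with $\gamma_i(\pi)=1$) carries weight $\sigma$, while each $\searrow$ step with colour $\chi\in\{2,\dots,k\}$ carries weight $1$.
\end{enumerate}
A one-line check confirms the compatibility $\mathrm{wt}(\mathfrak{m})=\sum_{\mathrm{pr}_n(\mathfrak{h}_\pi)=\mathfrak{m}}\omega(\mathfrak{h}_\pi)$: summing colours at a $\rightarrow$-step gives $(\rho+\sigma-1)+k\cdot 1=x_{k+1}$, summing colours at a $\searrow$-step gives $\sigma+(k-1)=\sigma+k-1$, and the $\nearrow$-weight is $\rho$ by construction.

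Finally, reading off the product $\omega(\mathfrak{h}_\pi)$ for a fixed set partition $\pi$, each opener contributes $\rho$, each singleton contributes $\rho+\sigma-1$, each closer with $\gamma_i(\pi)=1$ contributes $\sigma$, and all other colored steps contribute $1$. Since non-singleton blocks of $\pi$ are in bijection with openers, this yields
\[
\omega(\mathfrak{h}_\pi)=\rho^{\#\mathrm{blocks}^{\star}(\pi)}\,\sigma^{\overline{g}_1(\pi)}\,(\rho+\sigma-1)^{\#\mathbcal{S}(\pi)},
\]
and summing over $\pi\in\Pi(n)$ gives the claim. There is no serious obstacle; the only point that requires mild care is the colour-by-colour redistribution on $\searrow$-steps, which is what forces the statistic $\overline{g}_1(\pi)$ (rather than $g_1(\pi)$) to appear, since the colour $\chi=1$ on a $\searrow$-step records precisely a closer with $\gamma_i(\pi)=1$.
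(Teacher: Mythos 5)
Your proposal is correct and follows essentially the same route as the paper: the same Charlier-histoire weighting ($\rho$ on $\nearrow$ steps, $\rho+\sigma-1$ on color-$0$ horizontal steps, $\sigma$ on color-$1$ $\searrow$ steps, and $1$ otherwise), the same compatibility check with the Motzkin weights, and the same identification of the resulting statistics. The only cosmetic difference is that you make explicit the redistribution of $y_k=\rho(\sigma+k-1)$ between an up-step and its matched down-step, which the paper leaves implicit.
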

\begin{proof}
	The proof is an adaptation of the
	method used to prove \Cref{prop:tp-moment-formula,prop:fp-divergent-moment-formula}.
	We start from the Motzkin path weighting in \Cref{sec:Stieltjes_moment_sequences},
	which assigns the weight $x_{k+1}=\rho+\sigma+k-1$ to a horizontal step
	at height $k$, and the weight $y_k=\rho(\sigma+k-1)$ to a $\nearrow$ step
	from height $k-1$ to height $k$.
	Then, we interpret this as a weighting scheme
	$\omega(\mathfrak{h})$ for Charlier histoires $\mathfrak{h} \in \mathfrak{H}_n$:
	\begin{enumerate}[$\bullet$]
		\item $\nearrow$ step is weighted by $\rho$;
		\item $\rightarrow$ step is weighted by
				$\rho + \sigma - 1$ if $\chi = 0$, or by $1$ if $\chi > 0$;
		\item $\searrow$ step is weighted by
			$\sigma$ if $\chi = 1$, or by $1$ if $\chi > 1$.
	\end{enumerate}
As before, let $\mathrm{pr}_n: \frak{H}_n \rightarrow \frak{M}_n$ be the projection map from
Charlier histoires to Motzkin paths which
simply forgets the histoire colors.
This projection yields the following weights for for the Motzkin paths:
\begin{enumerate}[$\bullet$]
\item each $\nearrow$ step at height $k$ is
weighted $\rho$;
\item each $\rightarrow$ step at height $k$ is
	weighted $\rho + \sigma +k  -1$;
\item each $\searrow$ step at height $k$
and is weighted $\sigma + k - 1$.
\end{enumerate}
Thus, our shifted Charlier moments have the form
$a_n(\rho, \sigma) =
\sum_{\frak{m} \in \frak{M}_n}
\mathrm{wt}(\frak{m})$.
Interpreting the weights through the statistics from
\Cref{def:shifted_Charlier_statistics}, we obtain the
desired result.
\end{proof}

\part{Random Fibonacci Words and Random Permutations}
\label{part:3}

In this part, we examine asymptotic behavior of clone
coherent measures on Fibonacci words arising from various
Fibonacci positive specializations introduced and discussed
in \Cref{part:1,part:2}.  In
\Cref{sec:initial_part_of_random_Fibonacci_word,sec:asymptotics_Charlier_specialization,sec:asymptotics_shifted_Plancherel_specialization},
we consider specializations under which the initial segment of
a random Fibonacci word has a large number of $1$'s or
$2$'s, and these numbers admit scaling limits. In
\Cref{sec:general_convergent_specializations}, we shift our
focus to specializations under which the random Fibonacci
words admit discrete-type asymptotics.  Finally, in
\Cref{sec:random_Fibonacci_words_to_random_permutations,sec:observables_from_Cauchy_identities},
we discuss how clone coherent measures on Fibonacci words
(and more general objects) can be used to define ensembles
of random permutations.  Using Cauchy identities for the
clone Schur functions, we extract asymptotic information
about the permutations.

\section{Initial Part of a Random Fibonacci Word}
\label{sec:initial_part_of_random_Fibonacci_word}

In this non-asymptotic section,
we obtain general identities
for the joint distributions (``correlations'') of sequences of $1$'s or $2$'s
in the initial segment of a random Fibonacci word
distributed according to an arbitrary clone coherent measure.

A Fibonacci word $w$ can be parsed in two
different ways. Looking at consecutive strings of 2's, define
$(h_1,h_2,\ldots )$ and $(\tilde h_1,\tilde h_2,\ldots )$ by
\begin{equation}
	\label{eq:hikes}
	w=2^{h_1}12^{h_2}1 \cdots 12^{h_m},
	\qquad
	h_j\in \mathbb{Z}_{\ge0};
	\qquad \qquad
	\tilde h_k\coloneqq
	\begin{cases}
		2h_k+1,& k\le m-1,\\
		2h_m,& k=m.
	\end{cases}
\end{equation}
The quantities $\tilde h_k$ appeared in \Cref{sub:Plancherel_measure_and_scaling} above.
Alternatively, we can look at consecutive strings
of 1's, and
define $(r_1,r_2,\ldots )$ and $(\tilde r_1,\tilde r_2,\ldots )$ by
\begin{equation}
	\label{eq:runs}
	w=1^{r_1}21^{r_2}2 \cdots 21^{r_p},\qquad r_j\in \mathbb{Z}_{\ge0};
	\qquad \qquad
	\tilde r_k\coloneqq
	\begin{cases}
		r_k+2,& k\le p-1,\\
		r_p,& k=p.
	\end{cases}
\end{equation}
In \eqref{eq:hikes} and \eqref{eq:runs}, the sequences $(h_1,h_2,\ldots )$ and $(r_1,r_2,\ldots )$
are called the \emph{hikes} and \emph{runs} of the word $w$, respectively.
We will use the shorthand notation
$r_{[i,j]}\coloneqq r_i+r_{i+1}+\ldots+r_j$, and similarly for
$\tilde r_{[i,j]}$, $h_{[i,j]}$, and $\tilde h_{[i,j]}$, and also for
open and half-open intervals.
In \eqref{eq:hikes} and \eqref{eq:runs}, the quantities $m$ and $p$ depend on $w$, and
we have
$\tilde h_{[1,m]}=\tilde r_{[1,p]}=|w|$.

Our goal is to obtain joint distributions for several initial runs or hikes
$r_j$ or $h_j$
under a clone coherent measure
\begin{equation}
	\label{eq:clone_coherent_measure}
	M_n(w)\coloneqq\dim (w) \cdot \varphi_{\vec x,\vec y} \, (w)
	=
	\dim(w) \cdot \frac{s_w(\vec x\mid \vec{y}\ssp)}{x_1\cdots x_{n} }
	,\qquad
	w\in \mathbb{YF}_n.
\end{equation}
As always, we assume that $x_i\ne 0$ for all $i$.
We start with runs:

\begin{proposition}
	\label{prop:runs_distribution}
	Fix $k\in \mathbb{Z}_{\ge1}$ and $r_1,\ldots,r_k\in \mathbb{Z}_{\ge0}$.
	Then for all $n \ge \tilde r_{[1,k]}$ we have
	\begin{equation}
		\label{eq:runs_distribution}
		M_n\left( w\colon r_1(w)=r_1, \ldots, r_k(w)=r_k \right)=
		\prod_{j=1}^{k}
		\frac{(n_j-r_j-1)\ssp B_{r_j}
		\bigl(n_j-r_j-2\bigr)}
		{x_{n_j}x_{n_j-1}\cdots x_{n_j-r_j-1}},
	\end{equation}
	where we denoted $n_j\coloneqq n-\tilde r_{[1,j)}$,
	and used the shorthand notation from \Cref{rmk:notation_Al_Blm}.
\end{proposition}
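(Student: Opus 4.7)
The plan is to reduce the calculation to two separate recursive factorizations (one for the clone Schur function $s_w(\vec x\mid\vec y\ssp)$ and one for the dimension $\dim(w)$) together with the identity \eqref{eq:familiar-identity}. Specifying that the first $k$ runs of $w\in\mathbb{YF}_n$ equal $r_1,\ldots,r_k$ is the same as writing $w=1^{r_1}2\,u_1$ where $u_1=1^{r_2}2\,u_2$, and so on, so that $w=1^{r_1}2\,1^{r_2}2\cdots 2\,1^{r_k}2\,v$ for an arbitrary Fibonacci word $v\in\mathbb{YF}_N$ with $N=n-\tilde r_{[1,k]}$ (including the possibility $v=\varnothing$ when $n=\tilde r_{[1,k]}$).

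First I would peel off the prefixes by iterating the defining recurrence \eqref{eq:clone_Schur_recurrence_def}. Each application of $s_{1^{r_j}2\,u_j}(\vec x\mid\vec y\ssp)=B_{r_j}(\vec x+|u_j|\mid\vec y+|u_j|)\cdot s_{u_j}(\vec x\mid\vec y\ssp)$ produces one $B_{r_j}$-factor with the shift equal to $|u_j|=n_j-r_j-2$, and leaves behind the tail $u_j$. Telescoping $k$ times yields
\begin{equation*}
s_w(\vec x\mid\vec y\ssp)=\Bigl(\prod_{j=1}^{k}B_{r_j}\bigl(n_j-r_j-2\bigr)\Bigr)\cdot s_v(\vec x\mid\vec y\ssp).
\end{equation*}
In parallel, I would run the dimension recursion \eqref{eq:dimension_recursion}: inserting a prefix $1^{r_j}$ leaves the dimension unchanged, while inserting a $2$ in front of a word of weight $|u_j|$ multiplies the dimension by $|u_j|+1=n_j-r_j-1$. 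Iterating $k$ times gives $\dim(w)=\prod_{j=1}^{k}(n_j-r_j-1)\cdot\dim(v)$.

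Combining these two factorizations in the definition \eqref{eq:clone_coherent_measure} of $M_n(w)$ and summing over all admissible tails $v\in\mathbb{YF}_N$, the terms $\prod_j(n_j-r_j-1)B_{r_j}(n_j-r_j-2)$ pull out of the sum, and I am left with
\begin{equation*}
\frac{1}{x_1\cdots x_n}\sum_{|v|=N}\dim(v)\,s_v(\vec x\mid\vec y\ssp).
\end{equation*}
By \eqref{eq:familiar-identity}, this sum equals $x_1\cdots x_N$, so the denominator collapses to $x_{N+1}x_{N+2}\cdots x_n$.

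The only remaining step, which is the one most likely to need a small bookkeeping check rather than a genuine obstacle, is to verify that the residual denominator $x_{N+1}\cdots x_n$ regroups exactly as the product $\prod_{j=1}^{k}(x_{n_j}x_{n_j-1}\cdots x_{n_j-r_j-1})$ appearing in \eqref{eq:runs_distribution}. For this I would note that the $j$-th factor covers the $r_j+2=\tilde r_j$ consecutive indices $[n_j-r_j-1,\,n_j]$, and that $n_{j+1}=n_j-\tilde r_j$, so these blocks are disjoint, contiguous, and together exhaust the interval $[N+1,n]$. This matches the right-hand side of \eqref{eq:runs_distribution} and completes the argument.
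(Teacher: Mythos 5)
Your proposal is correct and follows essentially the same route as the paper's proof: peel off the prefix $1^{r_1}2\cdots 1^{r_k}2$ via the recurrence \eqref{eq:clone_Schur_recurrence_def} to extract the $B_{r_j}$ factors, factor $\dim(w)=\prod_j(n_j-r_j-1)\dim(v)$, and sum over the generic tail $v$ using the normalization identity (the paper invokes the probability normalization of $M_N$, which is the same as \eqref{eq:familiar-identity}). Your final index bookkeeping for the denominator also matches the paper's grouping $x_{n-\tilde r_{[1,j)}}\cdots x_{n-\tilde r_{[1,j]}+1}$.
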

\begin{remark}
	\label{rmk:runs_distribution_not_sum_to_one}
	The sum over $r_1,\ldots,r_k $ of the quantities
	\eqref{eq:runs_distribution}
	is strictly less than $1$. Indeed, for example, if $k=1$,
	then the word $w$ must be of the form $1^{r_1}2u$,
	where the Fibonacci word $u$ is possibly empty.
	This
	excludes the possibility that $w=1^n$.
	See also
	\Cref{lemma:sum_to_one_rho} below for an explicit example.
\end{remark}

\begin{proof}[Proof of \Cref{prop:runs_distribution}]
	We have
	$M_n(w)=\dim (w)\ssp s_w(\vec x\mid \vec{y}\ssp)/(x_1\cdots x_n)$.
	Let $w=1^{r_1}2\cdots 1^{r_k}2 u$, where $u$ is a generic Fibonacci word with fixed weight $|u|=n-\tilde r_{[1,k]}\ge 0$.
	In particular, the event we consider in
	\eqref{eq:runs_distribution}
	requires the word $w$ to have at least $k$ letters~2, and $\tilde r_j=r_j+2$ for all $j=1,\ldots,k $.
	Using
	the recurrent definition \eqref{eq:clone_Schur_recurrence_def} of the clone Schur functions,
	we can write
	\begin{equation*}
		\frac{s_{w}(\vec x\mid \vec{y}\ssp)}{x_1\cdots x_n } \, = \,
		\frac{s_{u}(\vec x\mid \vec{y}\ssp)}{x_1\cdots x_{|u|} } \,
		\prod_{j=1}^{k} \,
		\frac{B_{r_j}
		\bigl(n-\tilde r_{[1,j]}\bigr)}
		{x_{n-\tilde r_{[1,j)}}\cdots \, x_{n-\tilde r_{[1,j]}+1} }.
	\end{equation*}
	Applying this relation to the Plancherel specialization and using
	\eqref{eq:Plancherel_clone_determinants}, we get
	\begin{equation*}
		\dim (w) \, = \,
		\dim (u) \cdot
        \prod_{j=1}^k
        \big(n- \tilde{r}_{[1,j]}+1 \big).
	\end{equation*}
	Summing $M_n(w)$ over all words $u$ eliminates the dependence on $u$ thanks to the probability normalization,
	and we obtain the desired product.
	Note that in the product in \eqref{eq:runs_distribution}, we changed the notation
	$n-\tilde r_{[1,j]}=n_j-r_j-2$.
\end{proof}

Let us turn to hikes.
Their joint distributions do not admit a simple product form
like \eqref{eq:runs_distribution}
due to runs of 1's arising for zero values of the hikes.
Let us denote
$d_j\coloneqq n-\tilde h_{[1,j)}$ (with $d_0=d_1=n$), and
recursively define
for $j=1,2,\ldots,m $:
\begin{equation}
	\label{eq:hikes_sequences_c_j}
	c_j\coloneqq
	\begin{cases}
		0,& \textnormal{if}\ j=1;\\
		c_{j-1}+1,& \textnormal{if}\ d_j=d_{j-1}-1;\\
		1,& \textnormal{otherwise}.
	\end{cases}
\end{equation}
The condition $d_j=d_{j-1}-1$ is equivalent to $h_{j-1}=0$.
For example, if $h=(2, 0, 0, 0, 0, 2, 0, 1)$,
then the word $w$ and
the sequences $d$ and $c$ have the following
form:
\begin{equation}
	\label{eq:hikes_example_sequences_c_d}
	w=221111122112,
	\qquad
	d=(17, 12, 11, 10, 9, 8, 3, 2),
	\qquad
	c=(0, 1, 2, 3, 4, 5, 1, 2).
\end{equation}

\begin{lemma}
	\label{lemma:hikes_product_for_Schur}
	Let a Fibonacci word $w=2^{h_1}1\cdots 2^{h_m}$
	be decomposed as in \eqref{eq:hikes}.
	Let $1\le k\le m$ be such that $h_k>0$. Then with the
	above notation $d_j,c_j$, we have
	\begin{equation}
		\label{eq:hikes_product_for_Schur}
			s_w(\vec x\mid \vec{y}\ssp)=s_u(\vec x\mid \vec{y}\ssp)\cdot
			\biggl( \, \prod_{i=1}^{k-1} \, \prod_{j=2}^{h_i} \, y_{d_i-2j+1}
            \! \biggr)
			\prod_{j=1}^{k}\frac{B_{c_j}\bigl(d_j-2\bigr)}
			{\mathbf{1}_{d_j\ne d_{j-1}-1}+
				B_{c_{j-1}}\bigl(d_{j}-1 \bigr)
				\ssp
				\mathbf{1}_{d_j=d_{j-1}-1}
			}
			,
	\end{equation}
	where $u=2^{h_{k}-1}1 2^{h_{k+1}}1\cdots 12^{h_m}$,
	and
	we used the shorthand notation from \Cref{rmk:notation_Al_Blm}.
\end{lemma}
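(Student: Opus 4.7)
The plan is to argue by induction on $k$ as it ranges over the valid indices $\{j \colon h_j > 0\}$, iterating the clone Schur recursion \eqref{eq:clone_Schur_recurrence_def} in the form $s_{1^r 2 v} = B_r(|v|)\ssp s_v$. For the base case, let $k = i_1$ denote the smallest such index; then $w$ begins with $1^{i_1 - 1} 2$ (formed by the separator $1$'s between the empty blocks $2^{h_1}, \ldots, 2^{h_{i_1 - 1}}$), and a single application of \eqref{eq:clone_Schur_recurrence_def} yields $s_w = B_{i_1 - 1}(d_{i_1} - 2) \cdot s_{u_{i_1}}$. The lemma's formula for $k = i_1$ collapses to exactly this: by \eqref{eq:hikes_sequences_c_j} one has $c_j = j - 1$ for $1 \le j \le i_1$, since each intermediate step satisfies $d_j = d_{j-1} - 1$, and the successive numerators and denominators telescope to leave only the final factor $B_{c_{i_1}}(d_{i_1} - 2)$. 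The $y$-product is empty because $h_i = 0$ for every $i < i_1$.

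For the inductive step, suppose the formula holds at some valid index $k_0$, and let $k_0'$ be the next valid index, so that $h_{k_0 + 1} = \cdots = h_{k_0' - 1} = 0$. Starting from $u_{k_0} = 2^{h_{k_0} - 1}\ssp 1\ssp 2^{h_{k_0 + 1}} \ssp 1 \cdots 1 \ssp 2^{h_m}$, I would peel off the $h_{k_0} - 1$ leading $2$'s one at a time; each application of \eqref{eq:clone_Schur_recurrence_def} with $r = 0$ contributes a factor $y_{|v| + 1}$, where $v$ is the then-remaining word, producing precisely the $y$-terms $y_{d_{k_0} - 2j + 1}$ for $j = 2, \ldots, h_{k_0}$. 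Because $h_{k_0 + 1}, \ldots, h_{k_0' - 1}$ all vanish, the word has now become $1^{k_0' - k_0}\ssp 2^{h_{k_0'}}\ssp 1 \cdots$, and one further application of \eqref{eq:clone_Schur_recurrence_def} with $r = k_0' - k_0$ strips the block $1^{k_0' - k_0}\ssp 2$, producing the factor $B_{k_0' - k_0}(d_{k_0'} - 2)$ and leaving the residual word $u_{k_0'}$. Combining these steps gives $s_{u_{k_0}} = \bigl(\prod_{j = 2}^{h_{k_0}} y_{d_{k_0} - 2j + 1}\bigr) \cdot B_{k_0' - k_0}(d_{k_0'} - 2) \cdot s_{u_{k_0'}}$.

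It then remains to verify that multiplying the $k_0$-formula by these new factors reproduces the $k_0'$-formula. The extra $y$-contribution appears only for $i = k_0$, since for $k_0 < i < k_0'$ the inner product over $j = 2, \ldots, h_i$ is empty. For the additional $B$-block $\prod_{j = k_0 + 1}^{k_0'} B_{c_j}(d_j - 2)/\Delta_j$, where $\Delta_j$ is the denominator appearing in the lemma, I would observe that $\Delta_{k_0 + 1} = 1$ (since $\tilde h_{k_0} \ge 2$ forces $d_{k_0 + 1} \ne d_{k_0} - 1$), while for $k_0 + 2 \le j \le k_0'$ one has $d_j = d_{j-1} - 1$, and the key identity $B_{c_{j-1}}(d_j - 1) = B_{c_{j-1}}(d_{j-1} - 2)$ makes each denominator equal to the numerator of the previous factor. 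The product telescopes to $B_{c_{k_0'}}(d_{k_0'} - 2) = B_{k_0' - k_0}(d_{k_0'} - 2)$, as required. The sequence \eqref{eq:hikes_sequences_c_j} is defined precisely to witness this cancellation: consecutive zero hikes increment $c$ by one, mirroring the fact that a single $B_r$-stripping in \eqref{eq:clone_Schur_recurrence_def} absorbs an entire run of $1$'s at once. The main obstacle I anticipate is the careful bookkeeping of the shifts $d_j$ and subscripts $c_j$ through long stretches of vanishing hikes; once this alignment is established, the telescoping cancellation is automatic.
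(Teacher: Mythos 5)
Your proof is correct and follows essentially the same route as the paper: iterate the clone Schur recursion \eqref{eq:clone_Schur_recurrence_def}, with the $B_0=y_{\cdot}$ factors accounting for repeated $2$'s and the telescoping of the $B_{c_j}$-product (driven by the resets and increments of the sequence $c$) absorbing the runs of $1$'s. Your write-up merely formalizes the paper's telescoping argument as an induction over the indices with $h_k>0$, and all the bookkeeping of $d_j$ and $c_j$ checks out.
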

For example, for the word in \eqref{eq:hikes_example_sequences_c_d}
and $k=6$, the last product in \eqref{eq:hikes_product_for_Schur} telescopes as
\begin{equation*}
	B_0(15)\ssp
	B_1(10)\ssp
	\frac{B_2(9)}{B_1(10)}\ssp
	\frac{B_3(8)}{B_2(9)}\ssp
	\frac{B_4(7)}{B_3(8)}\ssp
	\frac{B_5(6)}{B_4(7)}=B_0(15)B_5(6).
\end{equation*}
\begin{proof}[Proof of \Cref{lemma:hikes_product_for_Schur}]
	This is established similarly to the proof of \Cref{prop:runs_distribution}.
	The first
	product of the $y_j$'s in \eqref{eq:hikes_product_for_Schur} comes from the
	determinants $B_0$. The second product is telescoping
	to account for the recurrence involved in defining the clone Schur functions for words of the form
	$1^k2v$. Indeed, the entries of the sequence $c$
	are increasing by $1$ when there is a run of 1's in the word $w$
	(see the example in \eqref{eq:hikes_example_sequences_c_d}).
	This corresponds to the cases when $d_j=d_{j-1}-1$ in the denominator.
	Once the run of 1's ends, the next element of the sequence $c$ resets to $1$.
	Then $d_j\ne d_{j-1}-1$, the denominator is equal to $1$,
	and the index of the remaining determinant $B_{c_j}$ is precisely the length of the
	run of 1's in the word $w$.
	This completes the proof.
\end{proof}

\begin{proposition}
	\label{prop:hikes_distribution}
	Fix $k\in \mathbb{Z}_{\ge1}$ and $h_1,\ldots,h_k\in \mathbb{Z}_{\ge0}$.
	Then for all $n \ge \tilde h_{[1,k]}+2$ we have
	\begin{equation}
		\label{eq:hikes_distribution}
		\begin{split}
			&M_n\left( w\colon h_1(w)=h_1, \ldots, h_k(w)=h_k, h_{k+1}(w)>0 \right)
			\\&
			\hspace{5pt}=
			\biggl(\ssp\prod_{i=0}^{\tilde h_{[1,k]}+1}x_{n-i}^{-1}\biggr)
			\biggl(\prod_{i=1}^{k}\prod_{j=2}^{h_i}(d_i-2j+1)y_{d_i-2j+1}\biggr)
			\prod_{j=1}^{k+1}
			\frac{(d_j-1)B_{c_j}\bigl(d_j-2 \bigr)}
			{\mathbf{1}_{d_j\ne d_{j-1}-1}+
				d_j \ssp B_{c_{j-1}}\bigl(d_{j}-1 \bigr)
				\ssp
				\mathbf{1}_{d_j=d_{j-1}-1}
			},
		\end{split}
	\end{equation}
	where we use the notation $d_j,c_j$ introduced before
	\Cref{lemma:hikes_product_for_Schur}.
\end{proposition}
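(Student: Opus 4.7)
The plan is to reduce the joint probability to a sum over the free tail of the word and then collapse that sum using the normalization of the clone coherent measure. \Cref{lemma:hikes_product_for_Schur} will be used twice: once for $s_w(\vec x\mid\vec y)$, and once in its Plancherel specialization, which computes $\dim(w)$.

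First I would characterize the event. Fix $h_1,\ldots,h_k\ge 0$ and suppose $h_{k+1}>0$. A word $w\in\mathbb{YF}_n$ lies in the event if and only if
\[
w \;=\; \underbrace{2^{h_1}1\,2^{h_2}1\cdots 1\,2^{h_k}1}_{=:\,P}\cdot 2u,\qquad u\in\mathbb{YF}_{n-\tilde h_{[1,k]}-2}.
\]
Since $|P|=\tilde h_{[1,k]}$, this decomposition requires $n\ge\tilde h_{[1,k]}+2$, matching the hypothesis, and $u\mapsto P\cdot 2u$ is a bijection between $\mathbb{YF}_{n-\tilde h_{[1,k]}-2}$ and the event.

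Next I would apply \Cref{lemma:hikes_product_for_Schur} to $s_w$ with the lemma's peeling index taken to be $k+1$; the event hypothesis $h_{k+1}>0$ is exactly what the lemma requires. The lemma's tail $u'=2^{h_{k+1}-1}1\,2^{h_{k+2}}1\cdots 1\,2^{h_m}$ then coincides with the free $u$ introduced above, so the lemma produces a factorization $s_w(\vec x\mid\vec y)=s_u(\vec x\mid\vec y)\cdot \mathcal F_{\mathrm{cl}}$ in which the prefix factor $\mathcal F_{\mathrm{cl}}$ depends only on $h_1,\ldots,h_k$, $\vec x$, $\vec y$, and $n$. Running the very same identity through the Plancherel specialization $x_k=y_k=k$, and using $s_w^{\mathrm{PL}}=\dim(w)$ together with the evaluations $A_\ell\equiv 1$ and $B_{\ell-1}(r)=r+1$ from \eqref{eq:Plancherel_clone_determinants}, yields the parallel identity $\dim(w)=\dim(u)\cdot \mathcal F_{\mathrm{PL}}$, where $\mathcal F_{\mathrm{PL}}$ is obtained from $\mathcal F_{\mathrm{cl}}$ by replacing each $y_{d_i-2j+1}$ by $d_i-2j+1$ and each $B_{c_j}(d_j-2)$ by $d_j-1$.

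Finally I would multiply and sum:
\[
M_n(E) \;=\; \frac{\mathcal F_{\mathrm{cl}}\,\mathcal F_{\mathrm{PL}}}{x_1\cdots x_n}\sum_{u\in\mathbb{YF}_{n-\tilde h_{[1,k]}-2}}\dim(u)\,s_u(\vec x\mid\vec y).
\]
The normalization $\sum_{u\in\mathbb{YF}_m}M_m(u)=1$, applied at level $m=n-\tilde h_{[1,k]}-2$, gives $\sum_u \dim(u)\, s_u(\vec x\mid\vec y)=x_1\cdots x_m$, so the sum cancels the initial $m$ factors of $x_1\cdots x_n$ and leaves exactly the $\tilde h_{[1,k]}+2$ reciprocals $\prod_{i=0}^{\tilde h_{[1,k]}+1}x_{n-i}^{-1}$ appearing in \eqref{eq:hikes_distribution}. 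Assembling $\mathcal F_{\mathrm{cl}}\cdot\mathcal F_{\mathrm{PL}}$ factor by factor yields the two remaining products, provided that the two denominators coming from \Cref{lemma:hikes_product_for_Schur} combine correctly: the clone denominator $\mathbf 1_{d_j\ne d_{j-1}-1}+B_{c_{j-1}}(d_j-1)\mathbf 1_{d_j=d_{j-1}-1}$ and its Plancherel counterpart $\mathbf 1_{d_j\ne d_{j-1}-1}+d_j\mathbf 1_{d_j=d_{j-1}-1}$ multiply to $\mathbf 1_{d_j\ne d_{j-1}-1}+d_j\,B_{c_{j-1}}(d_j-1)\mathbf 1_{d_j=d_{j-1}-1}$, which is exactly the denominator stated in the proposition. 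The principal obstacle is bookkeeping: keeping the shift conventions in \Cref{lemma:hikes_product_for_Schur} aligned across the clone and Plancherel applications, and confirming that the recursive definitions of $c_j,d_j$ from \eqref{eq:hikes_sequences_c_j} — which must accommodate both vanishing and non-vanishing earlier hikes — really are produced by iterating the lemma with peeling index $k+1$. Once that is tracked, the argument is routine.
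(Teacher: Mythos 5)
Your proposal is correct and follows essentially the same route as the paper: decompose the event as a fixed prefix $2^{h_1}1\cdots 2^{h_k}1\ssp 2$ followed by a free tail $u$, apply \Cref{lemma:hikes_product_for_Schur} with peeling index $k+1$ twice (once for $s_w(\vec x\mid\vec y\ssp)$ and once in the Plancherel specialization to factor $\dim(w)$), then sum over $u$ using the probability normalization. The bookkeeping you flag — the complementary indicators multiplying to the stated denominator and the surviving $\tilde h_{[1,k]}+2$ reciprocal factors of $x$ — checks out exactly as you describe.
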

\begin{proof}
	Let $w=2^{h_1}1\cdots 1 2^{h_k}12v$, where $v=2^{h_{k+1}-1}1\cdots 12^{h_m}$.
	Here $v$ is a generic Fibonacci word with fixed weight $|v|=n-\tilde h_{[1,k]}-2\ge 0$.
	In particular, the event we consider in \eqref{eq:hikes_distribution}
	requires $w$ to have at least $k$ letters~1,
	and the number of hikes $m$ in
	\eqref{eq:hikes} satisfies $m\ge k+1$.

	Applying \Cref{lemma:hikes_product_for_Schur} twice --- once for
	$s_w(\vec x\mid \vec{y}\ssp)/(x_1\cdots x_n)$,
	and once for
	$s_w(\Pi)=\dim (w)$,
	we obtain the desired product times $\dim (v) \cdot s_v(\vec x\mid \vec{y}\ssp)/(x_1\cdots x_{|v|})$.
	Summing over the generic word $v$ eliminates the dependence on $v$ thanks to the probability normalization,
	and we obtain \eqref{eq:hikes_distribution}.
\end{proof}

Unlike for the runs in \Cref{prop:runs_distribution},
the result of \Cref{prop:hikes_distribution} does not uniquely
determine the joint distribution of the hikes $h_1,\ldots,h_k$.
Let us obtain an expression for the probability of the event $h_1=0$,
which will be useful for the scaling limit in
\Cref{sec:asymptotics_shifted_Plancherel_specialization} below.

\begin{lemma}
	\label{lemma:first_hike_distribution}
	For an arbitrary clone Schur measure $M_n$, we have
	\begin{equation*}
		M_n(w\colon h_1(w)=0)= \,
        1 \, - \, \frac{ (n-1) \ssp y_{n-1}}{x_{n-1}x_n}.
	\end{equation*}
\end{lemma}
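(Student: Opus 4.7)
The plan is to compute the complementary probability $M_n(w\colon h_1(w)>0)$, which is the probability that $w$ starts with the letter $2$. Any such word has the form $w=2u$ for some Fibonacci word $u\in\mathbb{YF}_{n-2}$, and conversely each $u\in\mathbb{YF}_{n-2}$ gives rise to a unique such $w$.

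First, I would unfold the definitions. Applying the recursion \eqref{eq:clone_Schur_recurrence_def} for $s_w(\vec x\mid\vec y)$ with $w=1^0\cdot 2u$ (that is, $k=0$), we obtain
\begin{equation*}
s_{2u}(\vec x\mid \vec y\ssp)=B_0(\vec x+|u|\mid\vec y+|u|)\cdot s_u(\vec x\mid\vec y\ssp)=y_{n-1}\,s_u(\vec x\mid\vec y\ssp),
\end{equation*}
since $B_0(\vec x+m\mid\vec y+m)=y_{m+1}$ by the definition in \Cref{sub:clone_Schur}. Similarly, the dimension recursion \eqref{eq:dimension_recursion} gives $\dim(2u)=(|u|+1)\dim(u)=(n-1)\dim(u)$.

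Next, I would assemble these into the formula for $M_n$:
\begin{equation*}
M_n(2u)=\dim(2u)\,\frac{s_{2u}(\vec x\mid\vec y\ssp)}{x_1\cdots x_n}=\frac{(n-1)\,y_{n-1}}{x_{n-1}x_n}\cdot\dim(u)\,\frac{s_u(\vec x\mid\vec y\ssp)}{x_1\cdots x_{n-2}}=\frac{(n-1)\,y_{n-1}}{x_{n-1}x_n}\,M_{n-2}(u).
\end{equation*}
Summing over all $u\in\mathbb{YF}_{n-2}$ and using that $M_{n-2}$ is a probability measure (i.e., it sums to $1$, which is precisely the harmonicity/normalization of $\varphi_{\vec x,\vec y}$), we get
\begin{equation*}
M_n(w\colon h_1(w)>0)=\sum_{u\in\mathbb{YF}_{n-2}}M_n(2u)=\frac{(n-1)\,y_{n-1}}{x_{n-1}x_n},
\end{equation*}
and the desired identity follows by taking the complement. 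There is no real obstacle here; the only care required is bookkeeping the indices in $B_0$ (evaluated at shift $m=|u|=n-2$, producing the factor $y_{n-1}$) and recognizing that the total $M_{n-2}$-mass is $1$, which holds for any choice of $(\vec x,\vec y)$ with $x_i\ne 0$ even without any positivity assumption.
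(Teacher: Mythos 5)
Your proof is correct and follows essentially the same route as the paper: both compute $M_n(2v)=\frac{(n-1)\,y_{n-1}}{x_{n-1}x_n}\,M_{n-2}(v)$ from the clone Schur recursion and the dimension recursion, then sum over $v\in\mathbb{YF}_{n-2}$ and take the complement. Your version just spells out the index bookkeeping (the factor $B_0$ at shift $|u|=n-2$ giving $y_{n-1}$, and $\dim(2u)=(n-1)\dim(u)$) that the paper leaves implicit.
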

\begin{proof}
	From the recurrent definition \eqref{eq:clone_Schur_recurrence_def} of the clone Schur functions,
	we get
	for any $v\in \mathbb{YF}_{n-2}$:
	\begin{equation*}
		M_n(w=2v)=\frac{(n-1)\ssp y_{n-1}}{x_{n-1}x_n}\ssp M_{n-2}(v).
	\end{equation*}
	Summing over all $v$ gives the probability that $h_1(w)>0$, and the result follows.
\end{proof}

\section{Asymptotics under the Charlier Specialization}
\label{sec:asymptotics_Charlier_specialization}

Consider the Charlier specialization
(\Cref{def:positive_specializations_class_II})
\begin{equation}
	\label{eq:defomed_Plancherel_specialization}
	x_k = k + \rho - 1 \quad \text{and} \quad y_k = k \rho ,\qquad  \rho \in (0, 1].
\end{equation}

\begin{definition}
	\label{def:eta_rho}
	For any $0<\rho<1$,
	let $\eta_{\rho}$ be a random variable on $[0,1]$ with the distribution
	\begin{equation}
			\label{eq:limit_distribution_deformed_Plancherel}
			\rho \ssp\delta_0(\alpha) + (1 - \rho)\ssp \rho (1 - \alpha)^{\rho - 1} \ssp d\alpha,\qquad
			\alpha\in[0,1].
	\end{equation}
	In words, $\eta_\rho$ is
	the convex combination of the point mass at $0$ and the Beta
	random variable $\mathrm{beta}(1, \rho)$, with weights
	$\rho$ and $1 - \rho$.
\end{definition}

Recall the run statistics $r_k(w)$ \eqref{eq:runs}, where $w$ is a Fibonacci word.

\begin{theorem}
	\label{thm:defomed_Plancherel_scaling}
	Let $w\in \mathbb{YF}_n$ be a random Fibonacci word distributed
	according to the deformed Plancherel measure $M_n$ \eqref{eq:clone_coherent_measure},
	\eqref{eq:defomed_Plancherel_specialization} with $0<\rho<1$.
	For any fixed $k\ge 1$, the joint distribution of the
	runs $(r_1(w), \ldots, r_k(w))$ has the scaling limit
	\begin{equation*}
		\frac{r_j(w)}{n-\sum_{i=1}^{j-1}r_i(w)}\xrightarrow[n\to\infty]{d}\eta_{\rho;j},\qquad j=1,\ldots,k,
	\end{equation*}
	where $\eta_{\rho;j}$ are independent copies of $\eta_\rho$.
\end{theorem}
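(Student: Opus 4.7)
The plan is to combine the exact joint formula from \Cref{prop:runs_distribution} with a Laplace-type asymptotic analysis of the tridiagonal determinants $B_r(m)$ under the Charlier specialization, and then assemble the marginal limits into a joint statement via the product structure. Setting
\[
q_n(r) \coloneqq \frac{(n-r-1)\, B_r(n-r-2)}{x_{n-r-1}\, x_{n-r}\cdots x_n},
\]
\Cref{prop:runs_distribution} gives the exact factorization
\[
M_n\bigl(r_1(w)=r_1,\ldots,r_k(w)=r_k\bigr) \;=\; \prod_{j=1}^k q_{n_j}(r_j),\qquad n_j=n-\tilde r_{[1,j)}.
\]
Since $n-\sum_{i<j}r_i(w) = n_j + 2(j-1)$ differs from $n_j$ by a bounded amount, the scaling limits under both normalizations agree, so it suffices to prove joint convergence of $(r_1/n_1,\ldots,r_k/n_k)$ to i.i.d.\ copies of $\eta_\rho$.

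The first step is to obtain precise asymptotics for the single factor $q_n(r)$. I would use the recursion for the $B$-determinants in the $(\vec u,\vec t)$-parametrization with $t_k=k/\rho$ (so $t_{m+1}-t_m-1=(1-\rho)/\rho$) together with $B_r(m)=\rho^{r+2}B_r^{(u,t)}(m)$ to derive the closed form
\[
B_r(m) = \rho^{r+1}(m+1) + (1-\rho)(m+2)\sum_{i=0}^{r-1}\rho^{\,r-i}(m+3)_i,
\]
where $(m+3)_i$ is the Pochhammer symbol. Since $\rho\in(0,1)$, each ratio $(m+i+3)/\rho$ exceeds one, so the sum is dominated by its last term, yielding
$B_r(m)\sim (1-\rho)\rho\,(m+r+1)!/(m+1)!$
uniformly for $m+r\to\infty$. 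Combining this with $\Gamma(x+\rho)/\Gamma(x)\sim x^\rho$ applied to the denominator $\prod_{k=n-r-1}^{n}(k+\rho-1)=\Gamma(n+\rho)/\Gamma(n-r-2+\rho)$, and substituting $m=n-r-2$, I would obtain
\[
n\cdot q_n\bigl(\lfloor\alpha n\rfloor\bigr)\;\longrightarrow\;(1-\rho)\rho\,(1-\alpha)^{\rho-1}
\]
uniformly on compact subsets of $(0,1)$. This identifies the continuous part of the limit density in \Cref{def:eta_rho}.

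To complete the marginal weak convergence I would account for the missing mass. The total probability sums to $\sum_{r=0}^{n-2}q_n(r)=1-M_n(1^n)$, and $M_n(1^n)=\rho^n\,\Gamma(\rho)/\Gamma(n+\rho)\to 0$. Since the limit density integrates to exactly $1-\rho$ on $[0,1]$ (with only an integrable singularity at $\alpha=1$ when $\rho<1$), the remaining mass $\rho$ must accumulate at $r/n\to 0$, producing the atom of weight $\rho$ at zero and thus marginal weak convergence to $\eta_\rho$. For the joint statement I would proceed by induction on $k$: the factorization above shows that conditional on $(r_1,\ldots,r_{j-1})$ the law of $r_j$ depends on the past only through $n_j$; as long as each scaled run $r_i/n_i$ stays in a compact subset of $[0,1)$ (which occurs with probability tending to one), the index $n_j$ tends to infinity and the single-step asymptotic applies with the same density. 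Joint convergence of $(r_1/n_1,\ldots,r_k/n_k)$ to $k$ independent copies of $\eta_\rho$ then follows from the continuous mapping theorem applied to the product form.

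The principal obstacle will be establishing the limit $n\,q_n(\lfloor\alpha n\rfloor)\to(1-\rho)\rho(1-\alpha)^{\rho-1}$ with sufficient uniformity to simultaneously handle the bulk regime $\alpha\in[\epsilon,1-\epsilon]$, the atom regime $\alpha\to 0$, and the integrable singularity at $\alpha\to 1$. The last requires a careful Riemann-sum argument to verify $\sum_{r\geq(1-\delta)n}q_n(r)\to(1-\rho)\delta^\rho$, which in turn requires control of the subleading terms both in Stirling's expansion of $\Gamma(n+\rho)/\Gamma(n-r-2+\rho)$ and in the dominant-term approximation of $B_r(m)$; these remainders are of order $O(\rho/n)$ uniformly on compact subsets of $(0,1)$, but a separate, finer analysis is needed near the endpoints. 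Once these three regimes are reconciled, the marginal convergence to $\eta_\rho$ is complete, and the joint statement assembles from the product factorization and induction.
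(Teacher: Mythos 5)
Your proposal is correct and follows essentially the same route as the paper: the exact product formula of \Cref{prop:runs_distribution}, asymptotics of the single factor $(n-r-1)B_r(n-r-2)/(x_{n-r-1}\cdots x_n)$ at $r=\lfloor\alpha n\rfloor$ yielding the density $\rho(1-\rho)(1-\alpha)^{\rho-1}$, mass accounting via $M_n(1^n)\to0$ for the atom of weight $\rho$ at zero, and the product structure for joint independence in the limit. The only difference is technical: you solve the three-term recurrence \eqref{eq:Bk_deformed_Plancherel_recurrence} as an explicit finite sum of Pochhammer terms dominated by its last summand (a formula I checked against the recurrence for small $k$), whereas the paper expresses the solution through analytically continued exponential integrals $E_r(-\rho)$ and their asymptotics --- your form is equivalent and, if anything, more elementary.
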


Before proving \Cref{thm:defomed_Plancherel_scaling},
observe that we can
reformulate
this statement
in terms of the
residual allocation (stick-breaking) process, as in \Cref{def:GEM}:
\begin{equation*}
	\Bigl( \frac{r_1(w)}{n}, \frac{r_2(w)}{n}, \ldots \Bigr)
	\stackrel{d}{\longrightarrow}
	X=(X_1,X_2,\ldots ),
\end{equation*}
where $X_1=U_1$, $X_k=(1-U_1)\cdots (1-U_{k-1})U_k $ for $k\ge2$, and $U_k$
are independent copies of~$\eta_\rho$ (see \Cref{def:eta_rho}).
Unlike in the classical $\mathrm{GEM}$ distribution family, here
the variables $U_k$ can be equal to zero with positive probability $\rho$.
Thus, the random Fibonacci word
under the Charlier (deformed Plancherel) measure asymptotically
develops hikes of 2's of bounded length
(namely, these lengths are geometrically distributed with parameter~$\rho)$.
On the other hand, if we remove all
zero entries from the sequence
$X=(X_1,X_2,\ldots )$, then the resulting sequence is
distributed simply as $\mathrm{GEM}(\rho)$.

Note also that
for $\rho=1$, we have $U_k=1$ almost surely. This corresponds
to the fact that the deformed Plancherel measure reduces to
the usual Plancherel measure.
By \cite{gnedin2000plancherel} (see
\Cref{sub:Plancherel_measure_and_scaling}),
random Fibonacci words under the usual Plancherel measure have only a few $1$'s.
Thus, for $\rho=1$, the
scaling limit of the runs of $1$'s
is trivial, and instead one must consider the scaling limit of the hikes of $2$'s.
This is the subject of the next \Cref{sec:asymptotics_shifted_Plancherel_specialization}.

\medskip

In the rest of this section, we prove \Cref{thm:defomed_Plancherel_scaling}.
First, by \Cref{prop:runs_distribution}, we can express
the joint distribution of finitely many initial runs of 1's in a
random Fibonacci word in terms of a discrete
distribution
$\eta^{(m)}_\rho$ on $\left\{ 0,1,\ldots,m-1  \right\}$:
\begin{equation}
	\label{eq:runs_distribution_deformed_Plancherel_factor_eta_rho}
	\operatorname{\mathbb{P}}( \eta^{(m)}_\rho=r )=
	\begin{cases}\displaystyle
		\frac{(m-r-1)\ssp B_{r}
		\bigl(m-r-2\bigr)\ssp \Gamma(m+\rho-r-2)}
		{\Gamma(m+\rho)},& r=0,1,\ldots,m-2 ,\\\displaystyle
		\frac{\rho^m \Gamma(\rho)}{\Gamma(m+\rho)},& r=m-1.
	\end{cases}
\end{equation}
Here $B_r(m)$ are the determinants \eqref{eq:A_B_dets} with shifts (we use the notation of \Cref{rmk:notation_Al_Blm}).
By \Cref{lemma:sum_to_one_rho} which we establish below, we have
\begin{equation}
	\label{eq:runs_distribution_deformed_Plancherel_sum_not_one}
	\sum_{r=0}^{m-2}\frac{(m-r-1)\ssp B_{r}
	\bigl(m-r-2\bigr)\ssp \Gamma(m+\rho-r-2)}
	{\Gamma(m+\rho)}
	=1-\frac{\rho^m \Gamma(\rho)}{\Gamma(m+\rho)},
\end{equation}
so \eqref{eq:runs_distribution_deformed_Plancherel_factor_eta_rho}
indeed defines a probability distribution.

\Cref{prop:runs_distribution} states that the joint distribution
of a finite number of initial runs of 1's under the deformed Plancherel
measure has the product form
\begin{equation}
	\label{eq:runs_distribution_deformed_Plancherel_product_form}
	M_n\left( w\colon r_1(w)=r_1, \ldots, r_k(w)=r_k \right)=
	\prod_{j=1}^{k}
	\operatorname{\mathbb{P}}( \eta^{(n_j)}_\rho=r_j),
\end{equation}
where $n_j=n-\tilde r_{[1,j)}=
n-(2j-2)-r_1-\ldots-r_{j-1}$,
and $0\le r_j\le n_j-2$ for all $j=1,\ldots,k$.
By \eqref{eq:runs_distribution_deformed_Plancherel_sum_not_one},
we know that the sum of the probabilities
\eqref{eq:runs_distribution_deformed_Plancherel_product_form}
over all $r_j$ with $0\le r_j\le n_j-2$ is strictly less than $1$
(see also \Cref{rmk:runs_distribution_not_sum_to_one} and the proof of \Cref{lemma:sum_to_one_rho} below).
To get honest probability distributions,
we have artificially assigned the remaining
probability weights $\rho^{n_j} \Gamma(\rho)/\Gamma(n_j+\rho)$
to $r_j=n_j-1$.
Since
\begin{equation*}
	\rho^{n_j} \frac{\Gamma(\rho)}{\Gamma(n_j+\rho)}=
	\frac{\rho^{n_j}}{\rho(\rho+1)\cdots(\rho+n_j-1)}
\end{equation*}
rapidly
decays to $0$ as $n_j\to\infty$, these additional probability weights
can be ignored in the scaling limit.
More precisely,
by \Cref{lemma:B_def_Pl_scaling_limit} which we establish below, each random variable $\eta_\rho^{(n_j)}$, scaled by
$n_j^{-1}$, converges in distribution to $\eta_\rho$.
Thanks to the product form
of \eqref{eq:runs_distribution_deformed_Plancherel_product_form},
the scaled random variables
$r_j(w)/n_j$ become independent in the limit, and each of them converges in distribution to $\eta_\rho$.
This completes the proof of \Cref{thm:defomed_Plancherel_scaling}
modulo \Cref{lemma:sum_to_one_rho,lemma:B_def_Pl_scaling_limit}
which we now establish.

\begin{lemma}
	\label{lemma:sum_to_one_rho}
	Let $B_r(m)$ be the determinants \eqref{eq:A_B_dets} with shifts
	(\Cref{rmk:notation_Al_Blm}), and consider the Charlier
	specialization \eqref{eq:defomed_Plancherel_specialization} of the variables
	$x_i, y_i$.
	Then for any $m\ge2$,
	we have
	\begin{equation}
		\label{eq:runs_distribution_deformed_Plancherel_sum_to_one}
		\sum_{r=0}^{m-2}\frac{(m-r-1)\ssp B_{r}
		\bigl(m-r-2\bigr)\ssp \Gamma(m+\rho-r-2)}
		{\Gamma(m+\rho)}
		=1-\frac{\rho^m \Gamma(\rho)}{\Gamma(m+\rho)}.
	\end{equation}
\end{lemma}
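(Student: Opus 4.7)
The plan is to deduce this telescoping identity directly from identity \eqref{eq:Important-Identity} of the preceding section, specialized to the Charlier parameters. Recall that \eqref{eq:Important-Identity} asserts
\begin{equation*}
s_{1^n}(\vec{x} \mid \vec{y} \ssp)
+ \sum_{m = 0}^{n-2} (m+1) \ssp (x_1 \cdots x_m) \ssp
s_{1^{n-m-2}2}(\vec{x} + m \mid \vec{y} + m)
= x_1 \cdots x_n,
\end{equation*}
which holds universally and, in particular, for any specialization of the variables $\vec{x}, \vec{y}$.

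Under the Charlier specialization $x_k = k+\rho-1$, $y_k = k\rho$, the computations are straightforward: first, $x_1\cdots x_n = (\rho)_n = \Gamma(n+\rho)/\Gamma(\rho)$, the usual Pochhammer symbol. Second, by \Cref{prop:c_t_from_x_y} the auxiliary sequence is $c_k = \rho$ for all $k$, and from the proof of that proposition $A_n(\vec{x}\mid \vec{y}\ssp) = c_1 \cdots c_n = \rho^n$, so $s_{1^n}(\vec{x}\mid \vec{y}\ssp) = \rho^n$. Third, by \Cref{def:clone_Schur}, $s_{1^{n-m-2}2}(\vec{x}+m \mid \vec{y}+m) = B_{n-m-2}(m)$ in the shorthand of \Cref{rmk:notation_Al_Blm}.

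Substituting these into \eqref{eq:Important-Identity}, and then dividing through by $\Gamma(n+\rho)$, yields
\begin{equation*}
\frac{\rho^n\,\Gamma(\rho)}{\Gamma(n+\rho)}
+ \sum_{m=0}^{n-2} (m+1)\,\frac{\Gamma(m+\rho)}{\Gamma(n+\rho)}\,B_{n-m-2}(m)
= 1.
\end{equation*}
Finally, reindexing the sum via $r = n-m-2$ (so that $m+1 = n-r-1$ and $\Gamma(m+\rho) = \Gamma(n+\rho-r-2)$) rewrites this identity in the form asserted by the lemma, after renaming $n$ to $m$ to match the statement.

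Since \eqref{eq:Important-Identity} is already established in the paper and all three computations (the product $x_1\cdots x_n$, the value $s_{1^n} = \rho^n$, and the identification of $s_{1^{n-m-2}2}$ with $B_{n-m-2}(m)$) are immediate from the definitions, there is no real obstacle; the proof is essentially a specialization and reindexing exercise. The only place to be careful is keeping track of the shift convention in $B_{n-m-2}(m)$ and the Pochhammer/Gamma translation.
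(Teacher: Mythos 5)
Your proof is correct. It takes a somewhat different route from the paper: you specialize the already-established algebraic identity \eqref{eq:Important-Identity} to the Charlier parameters and reindex, whereas the paper argues probabilistically — it invokes \Cref{prop:runs_distribution} to recognize the $r$-th summand as the probability $M_m(w = 1^r 2u)$ under the Charlier clone coherent measure, observes that these events are disjoint and exhaust everything except $\{w = 1^m\}$, and then computes $M_m(1^m) = \rho^m\Gamma(\rho)/\Gamma(m+\rho)$ directly. The two arguments are ultimately underpinned by the same expansion $x_1\cdots x_n = \sum_{|w|=n}\dim(w)\ssp s_w(\vec x\mid\vec y\ssp)$, so neither is deeper than the other; your version has the merit of being a pure substitution-and-reindexing exercise that bypasses the probabilistic interpretation (and in particular does not need \Cref{prop:runs_distribution}), while the paper's version makes transparent \emph{why} the identity holds — it is the normalization of a probability distribution, which is exactly how the lemma is used in the surrounding argument. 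All three of your supporting computations ($x_1\cdots x_n = \Gamma(n+\rho)/\Gamma(\rho)$, $s_{1^n} = A_n = c_1\cdots c_n = \rho^n$, and $s_{1^{n-m-2}2}(\vec x+m\mid\vec y+m) = B_{n-m-2}(m)$) check out against \Cref{prop:c_t_from_x_y}, \Cref{def:clone_Schur}, and \Cref{rmk:notation_Al_Blm}.
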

\begin{proof}
	Consider the random word $w\in \mathbb{YF}_m$ under the deformed Plancherel measure $M_n$.
	From \Cref{prop:runs_distribution}, we know that the $r$-th summand in the left-hand side
	of \eqref{eq:runs_distribution_deformed_Plancherel_sum_to_one} is the probability
	that this word has the form $1^r2u$, for a (possibly empty) Fibonacci word $u$.
	Summing all these probabilities over $r=0,1,\ldots,m-2$, we obtain
	$1-M_n(w=1^m)$.
	We have
	\begin{equation*}
		M_n(w=1^m)=\frac{s_{1^m}(\Pi)\ssp s_{1^m}(\vec x\mid \vec{y}\ssp)}{\rho(\rho+1)\cdots(\rho+m-1) }=
		\frac{A_m(\Pi)\ssp A_m(\vec x\mid \vec{y}\ssp)}{\rho(\rho+1)\cdots(\rho+m-1)}=
		\frac{1\cdot \rho^m}{\rho(\rho+1)\cdots(\rho+m-1)}.
	\end{equation*}
	This completes the proof.
\end{proof}

\begin{lemma}
	\label{lemma:B_def_Pl_scaling_limit}
	Let $0<\rho<1$. Recall the distribution
	$\eta^{(m)}_\rho$
	\eqref{eq:runs_distribution_deformed_Plancherel_factor_eta_rho}.
	We have
	\begin{equation*}
		\frac{\eta^{(m)}_\rho}{m}\xrightarrow{d}\eta_\rho,\qquad m\to \infty,
	\end{equation*}
	where $\eta_\rho$ is described in \Cref{def:eta_rho}.
\end{lemma}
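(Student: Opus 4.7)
The proof proceeds in three stages: deriving an explicit closed form for $B_r(m)$ under the Charlier specialization, establishing a pointwise local limit for the mass function $p_m(r)$, and upgrading to convergence in distribution via a dominated convergence argument.

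First I would use the Charlier identity $A_n(\vec{x}\mid\vec{y}\ssp)=\rho^n$ together with the three-term recurrence $A_{r+1}(m)=(r+m+\rho)A_r(m)-(r+m)\rho\,A_{r-1}(m)$ (with initial data $A_0(m)=1$, $A_1(m)=m+\rho$) to prove by induction on $r$ that
\[
A_r(m) \, = \, \sum_{k=0}^{r}\rho^k\,(m)_{r-k}, \qquad (m)_j := m(m+1)\cdots(m+j-1).
\]
Feeding this into the cofactor expansion $B_r(m)=y_{m+1}A_r(m+2)-x_{m+1}y_{m+2}A_{r-1}(m+3)$ and using $(m+2)_{r-k}=(m+2)(m+3)_{r-1-k}$ to combine the two sums yields the closed form
\[
B_r(m) \, = \, (m+1)\rho^{r+1} \, + \, (1-\rho)\sum_{k=1}^{r}\rho^{k}(m+2)_{r+1-k},
\]
which is the principal technical input for the asymptotic analysis.

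The atom at zero and the artificial endpoint mass are then immediate: $p_m(0)=(m-1)^2\rho/\bigl((m+\rho-1)(m+\rho-2)\bigr)\to\rho$ from $B_0(m-2)=(m-1)\rho$, while $p_m(m-1)=\rho^m\Gamma(\rho)/\Gamma(m+\rho)$ decays super-polynomially. For any sequence $r_m/m\to\alpha\in(0,1)$ I would substitute the explicit formula for $B_r(m-r-2)$ into $p_m(r)$ and observe that the term $(m-r-1)\rho^{r+1}$ contributes exponentially small mass (since $\rho<1$). Writing $(m-r)_{r+1-k}=\Gamma(m-k+1)/\Gamma(m-r)$, the ratio of the $(k{+}1)$-st to the $k$-th summand is of order $\rho/m$, so only $k=1$ contributes at leading order and $B_r(m-r-2)\sim(1-\rho)\rho\,(m-1)(m-2)\cdots(m-r)$. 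Pairing the numerator factors against the Gamma ratio $\Gamma(m+\rho-r-2)/\Gamma(m+\rho)=\prod_{j=1}^{r+2}(m+\rho-j)^{-1}$ and isolating the single unpaired factor gives
\[
p_m(r_m) \, \sim \, \frac{(1-\rho)\rho}{m+\rho-r_m-2}\prod_{j=1}^{r_m+1}\frac{m-j}{m+\rho-j},
\]
and a logarithmic estimate shows $\prod_{j=1}^{r_m+1}(m-j)/(m+\rho-j)\to(1-\alpha)^\rho$ while $(m+\rho-r_m-2)^{-1}\sim 1/\bigl((1-\alpha)m\bigr)$, producing the pointwise local limit $m\,p_m(r_m)\to(1-\rho)\rho(1-\alpha)^{\rho-1}$.

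Finally, to deduce weak convergence, I would approximate $\mathbb{P}\bigl(\eta_\rho^{(m)}/m\le\alpha\bigr)-p_m(0)$ by the Riemann sum $\frac{1}{m}\sum_{r=1}^{\lfloor\alpha m\rfloor}m\,p_m(r)$ and invoke dominated convergence: the closed form for $B_r(m-r-2)$ delivers a uniform bound $m\,p_m(r)\le C_\alpha(1-r/m)^{\rho-1}$ on $1\le r\le\alpha m$, and the dominating function $(1-u)^{\rho-1}$ is integrable on $[0,\alpha]$ since $\alpha<1$. Integration then recovers $\mathbb{P}(\eta_\rho^{(m)}/m\le\alpha)\to 1-(1-\rho)(1-\alpha)^\rho$ at every $\alpha\in(0,1)$, which is the CDF of $\eta_\rho$. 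The main obstacle will be the bookkeeping in the pointwise asymptotic: the product $(m-1)(m-2)\cdots(m-r-1)$ must be matched carefully against the Gamma-ratio denominator so that the factor $(1-\alpha)^\rho$ emerges cleanly, and the exponential decay of the $(m-r-1)\rho^{r+1}$ term and the geometric decay of higher-$k$ summands in the $B$-formula need to be controlled uniformly in the ratio $r/m$ on compact subintervals of $(0,1)$.
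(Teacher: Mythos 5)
Your proposal is correct, and it follows the same overall skeleton as the paper (discard the super-polynomially small endpoint mass at $r=m-1$, compute the atom at $0$ from $B_0$, obtain a closed form for $B_r(m)$, and extract the local limit $m\,p_m(r_m)\to(1-\rho)\rho(1-\alpha)^{\rho-1}$), but the technical core is genuinely different. The paper solves the three-term recurrence $B_k(m)=(k+m+\rho+1)B_{k-1}(m)-\rho(k+m+1)B_{k-2}(m)$ in closed form using analytically continued exponential integrals $E_r(-\rho)$, and then needs the asymptotics $E_r(-\rho)\to0$ and $rE_{r+1}(-\rho)\to e^{\rho}$ to isolate the dominant term. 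Your route instead proves $A_r(m)=\sum_{k=0}^{r}\rho^k(m)_{r-k}$ by induction and feeds it through the cofactor expansion of $B_r(m)$, giving the elementary finite-sum identity $B_r(m)=(m+1)\rho^{r+1}+(1-\rho)\sum_{k=1}^{r}\rho^k(m+2)_{r+1-k}$ (which I checked against $B_0,B_1$ and the paper's recurrence; it is correct, and the successive-term ratio $\rho/(m-k)$ does isolate $k=1$ as the dominant summand uniformly on $r\le\alpha m$). This buys you a self-contained argument with no special functions. You also add a step the paper elides: the paper stops at the pointwise local limit plus the observation that the limiting masses sum to $1$ (implicitly a Scheff\'e-type argument), whereas you make the passage to convergence of the CDF explicit via a Riemann sum with a dominating bound on $[0,\alpha]$, $\alpha<1$ — where in fact $(1-u)^{\rho-1}$ is bounded, so a constant majorant suffices. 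Both approaches are sound; yours is more elementary in its inputs and more explicit in its conclusion.
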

\begin{proof}
	Since
	$\operatorname{\mathbb{P}}(\eta^{(m)}_\rho=m-1)$
	rapidly decays to zero as $m\to\infty$, we can ignore
	this probability
	it in the limit.
	For an arbitrary specialization
	$(\vec{x},\vec{y}\ssp)$, the
	determinants $B_k(m)$
	satisfy the three-term recurrence:
	\begin{equation}
		\label{eq:B_k_general_three_term_recurrence}
		B_k(m)=x_{m+k+2}B_{k-1}(m)-y_{m+k+1}B_{k-2}(m), \quad k\ge2,
	\end{equation}
	with initial conditions $B_0(m)=y_{m+1}$ and $B_1(m)=x_{m+3}\ssp y_{m+1}-x_{m+1}\ssp y_{m+2}$.
	Substituting \eqref{eq:defomed_Plancherel_specialization}, we obtain
	\begin{equation}
		\label{eq:Bk_deformed_Plancherel_recurrence}
		\begin{split}
			B_k(m)&=(k+m+\rho+1)B_{k-1}(m)-\rho(k+m+1)B_{k-2}(m),
			\\
			B_0(m)&=\rho(m+1), \qquad B_1(m)=\rho(m+2-\rho).
		\end{split}
	\end{equation}
	This recurrence has a unique solution which
	has the form
	\begin{equation}
		\label{eq:Bk_deformed_Plancherel_via_Exp_integral}
		\begin{split}
			B_k(m)&=\rho^{k+1}(m+1)-\rho^{k+1}(1-\rho)(m+2)e^{-\rho}\ssp E_{m+3}(-\rho)
			\\&\hspace{80pt}+\rho(1-\rho)\frac{(m+k+2)!}{(m+1)!}\ssp e^{-\rho}\ssp E_{m+k+3}(-\rho),
		\end{split}
	\end{equation}
	where $E_r(z)$ is the exponential integral
	\begin{equation}
		\label{eq:exponential_integral_traditional}
		E_r(z)=\int_{1}^{\infty}t^{-r}e^{-zt}dt,\qquad r\ge0, \quad \Re(z)>0.
	\end{equation}
	Since our $z=-\rho<0$, formula
	\eqref{eq:exponential_integral_traditional} needs to be analytically continued
	\cite[\href{https://dlmf.nist.gov/8.19.E8}{(8.19.8)}]{NIST:DLMF}:
	\begin{equation}
		\label{eq:ExpIntegral_series}
		E_{r}(z)=\frac{(-z)^{r-1}}{(r-1)!}(\psi(r)-\ln z)-\sum_{
		\begin{subarray}{c}k=0\\
		k\neq r-1\end{subarray}}^{\infty}\frac{(-z)^{k}}{k!\ssp (1-r+k)},
		\qquad r=1,2,3,\ldots.
	\end{equation}
	Here $\psi(r)=\Gamma'(r)/\Gamma(r)$ is the digamma function.
	The logarithm $\ln z=\ln(-\rho)=\mathbf{i}\pi+\ln \rho$ has a branch cut,
	but all the summands in the remaining series are entire functions of $z$.
	Thus, formula \eqref{eq:ExpIntegral_series} produces the desired analytic continuation of $E_r(-\rho)$.

	Now,
	using the series representation
	\eqref{eq:ExpIntegral_series} for the exponential integral,
	we can show that
	\begin{equation}
		\label{eq:Exp_integral_asymptotics}
		\lim_{r\to+\infty}E_r(-\rho)=0,\qquad
		\lim_{r\to+\infty} r\ssp E_{r+1}(-\rho)=e^{\rho}.
	\end{equation}
	Indeed,
	$\psi(r)$ grows logarithmically with $r$,
	so the first summand is negligible as $r\to+\infty$ (even after multiplication by $r-1$).
	The series in \eqref{eq:ExpIntegral_series} converges uniformly in $z$, so
	we can take the limit of the individual terms and conclude that
	$E_r(-\rho)\to 0$ as $r\to+\infty$.
	The second limit in \eqref{eq:Exp_integral_asymptotics} follows from the
	recurrence
	$r\ssp E_{r+1}(z) + z\ssp E_r(z) = e^{-z}$
	\cite[(8.9.12)]{NIST:DLMF}.

	Assume that $r=\lfloor \alpha m \rfloor $, where $\alpha\in(0,1)$.
	By the standard Stirling asymptotics, the ratio
	$\Gamma(m+\rho-r-2)/\Gamma(m+\rho)$ in
	\eqref{eq:runs_distribution_deformed_Plancherel_factor_eta_rho} decays to zero as $e^{-\alpha\ssp m\ln m}$.
	Using \eqref{eq:Bk_deformed_Plancherel_via_Exp_integral}, \eqref{eq:Exp_integral_asymptotics},
	we see that
	\begin{equation*}
		(m-r-1)B_r(m-r-2)\sim
		(m-r-1)^2\rho^{r+1}-
		(m-r-1)
		\rho^{r+1}(1-\rho)
		+\rho(1-\rho)\frac{(m-1)!}{(m-r-2)!}.
	\end{equation*}
	The first two summands decay exponentially and are thus negligible since $\rho<1$.
	We have for the third summand:
	\begin{equation*}
		\frac{(m-1)!}{(m-r-2)!}\frac{\Gamma(m+\rho-r-2)}
		{\Gamma(m+\rho)}
		\ssp\rho(1-\rho)\sim
		m^{-1}\ssp
		\rho(1-\rho)\ssp(1-\alpha)^{\rho-1}
		.
	\end{equation*}
	The prefactor $m^{-1}$ corresponds to the scaling of our random variable $m^{-1}\eta^{(m)}_\rho$.
	Note that
	\begin{equation*}
		\int_0^1\rho(1-\rho)\ssp(1-\alpha)^{\rho-1} \ssp d\alpha=1-\rho,
	\end{equation*}
	and the remaining mass is concentrated at 0
	in the limit:
	\begin{equation*}
		\operatorname{\mathbb{P}}(\eta_\rho^{(m)}=0)=
		\frac{(m-1)^2 \rho }{(m+\rho -2) (m+\rho -1)}\to \rho, \qquad	m\to \infty.
	\end{equation*}
	This completes the proof of \Cref{lemma:B_def_Pl_scaling_limit},
	and finalizes the proof of \Cref{thm:defomed_Plancherel_scaling}.
\end{proof}

\section{Asymptotics under the Shifted Plancherel Specialization}
\label{sec:asymptotics_shifted_Plancherel_specialization}

Here we consider
both versions of the shifted Plancherel specialization
(\Cref{def:fake_shifted_Charlier,def:shifted_Plancherel_and_Charlier}):
\begin{equation}
	\label{eq:shifted_Plancherel_specialization_recall_two_at_the_same_time}
	y_k=k+\sigma-1,\qquad x_k=\begin{cases}
		k+\sigma-1,& k\ge 2,\\
		\textnormal{$1$ or $\sigma$},& k=1,
	\end{cases}
\end{equation}
where $\sigma\in [1,\infty)$
is a parameter.
Recall that the fake shifted Charlier specialization with $x_1=1$ is of divergent type,
while the case $x_1=\sigma$ is of convergent type when $\sigma>1$.

\subsection{Limiting distribution: Dependent stick-breaking}

First, let us introduce the family of random variables
which will serve as the scaling limit of the hikes of 2's
under the shifted Plancherel specialization.

\begin{definition}
	\label{def:xi_sigma}
	Let
	\begin{equation}
		\label{eq:beta_1_sigma_2_density}
		G(\alpha)\coloneqq 1-(1-\alpha)^{\frac{\sigma}{2}},\qquad
		g(\alpha)\coloneqq \frac{\sigma}{2}(1-\alpha)^{\frac{\sigma}{2}-1},
		\qquad
		\alpha\in [0,1],
	\end{equation}
	be the cumulative and density functions of the Beta distribution $\mathrm{beta}(1,\sigma/2)$.
	For any $\sigma\ge1$,
	let $\xi_{\sigma;1},\xi_{\sigma;2},\ldots $ be the sequence of random variables
	with the following joint cumulative distribution function (cdf):
	\begin{equation}
		\label{eq:joint_cdf}
		\operatorname{\mathbb{P}}
		\left( \xi_{\sigma;1}\le \alpha_1,\ldots,\xi_{\sigma;n}\le \alpha_n  \right)\coloneqq
		\sigma^{-n+1}G(\alpha_1)\cdots G(\alpha_n)
		+
		(\sigma-1)
		\sum_{j=1}^{n-1}\sigma^{-n+j}G(\alpha_1)\cdots G(\alpha_{n-j}).
	\end{equation}
	Denote the right-hand side of \eqref{eq:joint_cdf} by
	$F_n^{(\sigma)}(\alpha_1,\ldots,\alpha_n )$.
\end{definition}
\begin{lemma}
	\label{lemma:joint_cdf}
	The joint cdfs $F_n^{(\sigma)}$ for all $n\ge1$ are
	consistent, and uniquely define the distribution of
	$\xi_{\sigma;1},\xi_{\sigma;2},\ldots $.
	The marginal distribution of each $\xi_{\sigma;k}$ is
	\begin{equation*}
		( 1-\sigma^{-k+1} )\ssp\delta_0(\alpha)+\sigma^{-k+1}g(\alpha)\ssp d\alpha.
	\end{equation*}
	In particular, $\xi_{\sigma;1}$ is absolutely continuous and
	has the Beta distribution $\mathrm{beta}(1,\sigma/2)$,
	while $\xi_{\sigma;k}$ for each $k\ge0$ has an atom at $0$
	of mass $1-\sigma^{-k+1}$,
	and the remaining mass is distributed according to $\mathrm{beta}(1,\sigma/2)$.
\end{lemma}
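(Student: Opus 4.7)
\smallskip\noindent
The plan is to establish all three assertions at once by constructing an explicit probability space on which the sequence $\xi_{\sigma;1},\xi_{\sigma;2},\ldots$ lives and by verifying that its finite-dimensional distributions are exactly $F_n^{(\sigma)}$. The starting observation is that, after the reindexing $m=n-j$, the formula \eqref{eq:joint_cdf} can be rewritten as
\begin{equation*}
F_n^{(\sigma)}(\alpha_1,\ldots,\alpha_n)
\; = \; \sigma^{-n+1}\prod_{i=1}^{n}G(\alpha_i)
\; + \; \sum_{m=1}^{n-1}(\sigma-1)\sigma^{-m}\prod_{i=1}^{m}G(\alpha_i),
\end{equation*}
which exhibits $F_n^{(\sigma)}$ as a mixture of products of $G$'s with mixing weights $p_m\coloneqq(\sigma-1)\sigma^{-m}$ for $m\ge1$, together with a residual weight $q_n\coloneqq\sigma^{-n+1}=\sum_{m\ge n}p_m$ on the full product.

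First I would treat the case $\sigma>1$; the case $\sigma=1$ reduces to $F_n^{(1)}=\prod_{i=1}^{n}G(\alpha_i)$, i.e.\ i.i.d.\ $\mathrm{beta}(1,1/2)$, and is trivial. For $\sigma>1$ the weights $\{p_m\}_{m\ge1}$ sum to $1$, so they define the law of a random variable $N$ valued in $\mathbb{Z}_{\ge1}$. On an auxiliary probability space, introduce such an $N$ together with an independent sequence $B_1,B_2,\ldots$ of i.i.d.\ $\mathrm{beta}(1,\sigma/2)$ variables, and set $\xi_{\sigma;k}\coloneqq B_k\,\mathbf{1}_{\{k\le N\}}$. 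Conditioning on the value of $N$ then yields
\begin{equation*}
\operatorname{\mathbb{P}}\bigl(\xi_{\sigma;1}\le\alpha_1,\ldots,\xi_{\sigma;n}\le\alpha_n\bigr)
\; = \; \sum_{m=1}^{n-1}p_m\prod_{i=1}^{m}G(\alpha_i)
\; + \; q_n\prod_{i=1}^{n}G(\alpha_i),
\qquad \alpha_i\in[0,1],
\end{equation*}
which matches the reindexed form of $F_n^{(\sigma)}$. In particular this verifies that each $F_n^{(\sigma)}$ is a bona fide multivariate cdf: monotonicity, right-continuity, and nonnegativity of all rectangle increments come for free from the construction.

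Consistency of the family $\{F_n^{(\sigma)}\}_{n\ge1}$ is then automatic, since $F_{n+1}^{(\sigma)}(\alpha_1,\ldots,\alpha_n,1)$ is precisely the joint cdf of $(\xi_{\sigma;1},\ldots,\xi_{\sigma;n})$ obtained by marginalizing $\xi_{\sigma;n+1}$; it can also be checked algebraically by using $G(1)=1$ and absorbing the $j=1$ term into the leading product. Uniqueness of the resulting distribution on $[0,1]^{\mathbb{N}}$ then follows from the Daniell--Kolmogorov extension theorem. Finally, the marginal formula for $\xi_{\sigma;k}$ is obtained by setting $\alpha_j=1$ for $j\ne k$ in $F_k^{(\sigma)}$; a geometric sum collapses the result to $\sigma^{-k+1}G(\alpha)+(1-\sigma^{-k+1})$, which yields the atom of mass $1-\sigma^{-k+1}$ at $0$ together with the absolutely continuous part with density $\sigma^{-k+1}g(\alpha)$ on $(0,1)$, as claimed. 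The only mildly non-trivial step is recognizing the geometric mixture structure behind \eqref{eq:joint_cdf}; once it is in hand, everything else reduces to bookkeeping.
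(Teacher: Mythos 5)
Your proof is correct, and it takes a genuinely different route from the paper's. The paper verifies directly that each $F_n^{(\sigma)}$ is a valid joint cdf: it checks continuity, monotonicity and the boundary values, establishes consistency algebraically, and proves nonnegativity of rectangle probabilities by computing the mixed partial derivative $\partial_{\alpha_1,\ldots,\alpha_n}F_n^{(\sigma)}=\sigma^{-n+1}g(\alpha_1)\cdots g(\alpha_n)$ on full-dimensional rectangles and $\partial_{\alpha_1,\ldots,\alpha_m}F_n^{(\sigma)}(\alpha_1,\ldots,\alpha_m,0,\ldots,0)=(\sigma-1)\sigma^{-m}g(\alpha_1)\cdots g(\alpha_m)$ on the lower-dimensional faces, before invoking the Kolmogorov extension theorem. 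You instead exhibit an explicit random sequence --- i.i.d.\ $\mathrm{beta}(1,\sigma/2)$ variables truncated at an independent time $N$ with the geometric law $\mathbb{P}(N=m)=(\sigma-1)\sigma^{-m}$ --- and verify by conditioning on $N$ that its finite-dimensional cdfs reproduce \eqref{eq:joint_cdf}; the cdf axioms, consistency, and the marginal formula then all come for free. This is essentially promoting the paper's own \Cref{rmk:xi_sigma_construction} into the proof, and it is arguably the cleaner argument (what it costs is the explicit description of the rectangle densities, which the paper reuses later in the proof of \Cref{thm:shifted_Plancherel_scaling}). One caveat worth flagging: your law $\mathbb{P}(N\ge m)=\sigma^{-m+1}$ is the one actually encoded by \Cref{def:xi_sigma} --- it is forced by the marginal atom of mass $1-\sigma^{-k+1}$ asserted in the lemma --- whereas \Cref{rmk:xi_sigma_construction} states $\mathbb{P}(N=n)=\sigma^{-\binom n2}(1-\sigma^{-n})$, which disagrees with this for $n\ge3$; your version is the internally consistent one, so do not be thrown off by the mismatch with the remark.
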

When $\sigma=1$, the random variables $\xi_{\sigma;k}$
reduce to a collection of independent identically distributed
$\mathrm{beta}(1,1/2)$ random variables.
\begin{proof}[Proof of \Cref{lemma:joint_cdf}]
	Each $F^{(\sigma)}_n$ is a cdf, that is, it is continuous, increasing
	in each argument, satisfies the boundary conditions $F^{(\sigma)}_n(0,\ldots,0)=0$ and $F^{(\sigma)}_n(1,\ldots,1)=1$.
	The consistency
	\begin{equation*}
		F_n^{(\sigma)}(\alpha_1,\ldots,\alpha_{n-1},1 )=F_{n-1}^{(\sigma)}(\alpha_1,\ldots,\alpha_{n-1})
	\end{equation*}
	is straightforward.

	Let us check the nonnegativity of the rectangle probabilities
	under $F_n^{(\sigma)}$. If a rectangle is $n$-dimensional, then
	we can use the fact that
	\begin{equation}
		\label{eq:joint_cdf_derivative}
		\partial_{\alpha_1,\ldots,\alpha_n }F_n^{(\sigma)}(\alpha_1,\ldots,\alpha_n )=
		\sigma^{-n+1}\ssp g(\alpha_1)\cdots g(\alpha_n ),
	\end{equation}
	which produces nonnegative rectangle probabilities under $F_n^{(\sigma)}$
	by integration of \eqref{eq:joint_cdf_derivative}.
	If the rectangle
	$[a_1,b_1]\times \cdots \times [a_n,b_n]$
	is of lower dimension, then it must contain
	zero values $a_m=b_m=0$
	for each non-full axis,
	since under $F_n^{(\sigma)}$,
	there are no other lower-dimensional coordinate subspaces of positive mass.
	Observe that $F_n(\alpha_1,\ldots,\alpha_m,0,\alpha_{m+2},
	\ldots,\alpha_n  )$ does not depend on $\alpha_{m+2},\ldots,\alpha_n $.
	Thus, it suffices to check the nonnegativity for
	each $m$-dimensional rectangle
	of the form
	\begin{equation*}
		[a_1,b_1]\times \cdots\times[a_m,b_m]\times\left\{ 0 \right\}\times
		\cdots \times \{0\}.
	\end{equation*}
	We have
	\begin{equation*}
		\partial_{\alpha_1,\ldots,\alpha_m }F_n^{(\sigma)}(\alpha_1,\ldots,\alpha_m,0,\ldots,0 )=(\sigma-1)\ssp\sigma^{-m}\ssp g(\alpha_1)\cdots g(\alpha_m ),
	\end{equation*}
	which implies the nonnegativity.

	We have shown that $F_n^{(\sigma)}$, $n\ge1$, is a
	consistent family of cdfs, so by the Kolmogorov extension theorem,
	they uniquely determine the distribution
	of the family of random variables $\xi_{\sigma;1},\xi_{\sigma;2},\ldots $.
	The marginal distribution of each $\xi_{\sigma;k}$
	readily follows from its cdf
	$F_k^{(\sigma)}(1,\ldots,1,x_k )$, and so we are done.
\end{proof}
\begin{remark}
	\label{rmk:xi_sigma_construction}
	Alternatively, the random variables
	$\xi_{\sigma;k}$ can be constructed iteratively as follows.
	Toss a sequence of independent coins with probabilities of
	success $1,\sigma^{-1},\sigma^{-2},\ldots $.
	Let $N$ be the (random) number of successes until the first
	failure. We have
	\begin{equation}
		\label{eq:xi_sigma_construction_N_random_variable}
		\operatorname{\mathbb{P}}(N=n)=\sigma^{-\binom n2}\ssp(1-\sigma^{-n}),\qquad n\ge1.
	\end{equation}
	Then, sample $N$ independent
	$\mathrm{beta}(1,\sigma/2)$ random variables.
	Set $\xi_{\sigma;k}$,
	$k=1,\ldots,N $, to be these random variables,
	while $\xi_{\sigma;k}=0$ for $k>N$.
	It is worth noting
	that the random variables $\xi_{\sigma;k}$ are not
	independent, but $\xi_{\sigma;1},\ldots,\xi_{\sigma;n} $
	are conditionally independent given $N=n$.
\end{remark}

\subsection{Scaling limit under the shifted Plancherel specialization}

Recall the hike statistics $h_k(w)$ and $\tilde h_k(w)$ \eqref{eq:hikes}, where $w$ is a Fibonacci word.

\begin{theorem}
	\label{thm:shifted_Plancherel_scaling}
	Let $w\in \mathbb{YF}_n$ be a random Fibonacci word with distributed
	according to the clone coherent measure $M_n$
	\eqref{eq:clone_coherent_measure}
	with either of the shifted Plancherel specializations
	\eqref{eq:shifted_Plancherel_specialization_recall_two_at_the_same_time}.
	For any fixed $k\ge 1$, the joint distribution of the
	hikes $(\tilde h_1(w), \ldots, \tilde h_k(w))$ has the scaling limit
	\begin{equation*}
		\frac{\tilde h_j(w)}{n-\sum_{i=1}^{j-1}\tilde h_i(w)}\xrightarrow[n\to\infty]{d}\xi_{\sigma;j},\qquad j=1,\ldots,k,
	\end{equation*}
	where $\xi_{\sigma;j}$ are given by \Cref{def:xi_sigma}.
\end{theorem}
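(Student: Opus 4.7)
The plan is to invoke the exact joint distribution formula of \Cref{prop:hikes_distribution} and extract the leading asymptotics, exploiting the dramatic simplifications that the shifted Plancherel specialization affords. Specifically, with $x_k = y_k = k + \sigma - 1$ for $k \ge 2$ and the $k$-independence of the determinants $B_k(m) = m + \sigma$ from \eqref{eq:B_for_shifted_Plancherel_explicit}, the intricate product in \eqref{eq:hikes_distribution} collapses dramatically: the auxiliary run-length counters $c_j$ drop out, and the joint probability $M_n(h_1(w) = a_1, \ldots, h_k(w) = a_k, h_{k+1}(w) > 0)$ reduces to a Gamma-ratio prefactor $\Gamma(d_{k+1} + \sigma - 2)/\Gamma(n + \sigma)$ multiplied by a product of ``odd'' and ``$\sigma$-shifted odd'' linear factors depending only on $\sigma$ and the sequence $d_1 = n, d_2, \ldots, d_{k+1}$ with $d_j = n - \tilde h_{[1, j)}$. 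The two choices $x_1 \in \{1, \sigma\}$ of \eqref{eq:shifted_Plancherel_specialization_recall_two_at_the_same_time} differ only in the weight assigned to the extremal word $w = 1^n$, which contributes vanishing mass in the limit, so both versions can be treated simultaneously.

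Next I would study the $n \to \infty$ asymptotics with $\tilde h_j / d_j \to \alpha_j \in [0, 1]$ by splitting the joint event according to the subset $\mathcal{Z} \subseteq \{1, \ldots, k\}$ of ``small-hike'' indices (those $j$ for which $h_j$ remains $O(1)$ rather than growing linearly in $n$). For $j \notin \mathcal{Z}$, Stirling's approximation converts the combined ``odd'' and ``$\sigma$-shifted odd'' products into $(1 - \alpha_j)^{\sigma/2 - 1}$ factors, reproducing the $\mathrm{beta}(1, \sigma/2)$ density $g(\alpha_j)$ from \eqref{eq:beta_1_sigma_2_density}. For $j \in \mathcal{Z}$, the indicator $\mathbf{1}_{d_j = d_{j-1} - 1}$ in \eqref{eq:hikes_distribution} activates a denominator $d_j(d_j + \sigma - 1)$ that cancels adjacent numerator factors, and careful bookkeeping of the discrepancy between $\prod_i x_{n-i}^{-1}$ and the remaining numerator products yields an extra weight $1 - \sigma^{-1}$ per small-hike transition. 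Summing over the patterns $\mathcal{Z}$ and integrating against rectangles $[0, \alpha_1] \times \cdots \times [0, \alpha_k]$ then assembles the mixture structure of \Cref{def:xi_sigma}: each pattern $\mathcal{Z}$ contributes a weight $\sigma^{-(k - |\mathcal{Z}|)} (1 - \sigma^{-1})^{|\mathcal{Z}|}$, which matches the coin-toss probabilities of \Cref{rmk:xi_sigma_construction} for failing exactly at positions $\mathcal{Z}$.

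The main obstacle will be the careful telescoping in the second step: one must align the Gamma-ratio prefactor $\Gamma(d_{k+1} + \sigma - 2)/\Gamma(n + \sigma)$ against the ``odd/$\sigma$-shifted odd'' numerator products so that all exponentially large factors of size $e^{\Theta(n)}$ cancel, leaving exactly the $k$-dimensional density $\prod_{j \notin \mathcal{Z}} g(\alpha_j)$ with the prescribed mixture weight. A secondary technical point is verifying the tail uniformity needed to upgrade pointwise density convergence to convergence of the joint cumulative distribution functions, which should follow from uniform integrability estimates for the $\mathrm{beta}(1, \sigma/2)$ family together with the product structure of \eqref{eq:hikes_distribution}. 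At $\sigma = 1$ the mixture degenerates (every coin succeeds), so the conclusion specializes to the Gnedin--Kerov convergence to $\mathrm{GEM}(1/2)$ of \cite{gnedin2000plancherel}, providing a useful sanity check on the asymptotic bookkeeping.
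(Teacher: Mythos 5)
Your first step coincides with the paper's: start from \Cref{prop:hikes_distribution}, use the fact that for the shifted Plancherel specialization the determinants $B_k(m)=m+\sigma$ are independent of $k$ (so the counters $c_j$ drop out of \eqref{eq:hikes_distribution}), note that the two choices of $x_1$ in \eqref{eq:shifted_Plancherel_specialization_recall_two_at_the_same_time} only affect asymptotically negligible events, and apply Stirling to the macroscopic hikes to produce the $\mathrm{beta}(1,\sigma/2)$ densities $g(\alpha_j)$. All of that is sound and is exactly how the paper proceeds.

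The gap is in your treatment of the small hikes, and it is fatal. You decompose over \emph{arbitrary} subsets $\mathcal{Z}\subseteq\{1,\dots,k\}$ of small-hike positions and assign each pattern the weight $\sigma^{-(k-|\mathcal{Z}|)}(1-\sigma^{-1})^{|\mathcal{Z}|}$. That is an i.i.d.\ structure: each coordinate independently carries an atom of mass $1-\sigma^{-1}$ at $0$ and otherwise is $\mathrm{beta}(1,\sigma/2)$. This is the Charlier-type limit of \Cref{thm:defomed_Plancherel_scaling}, not the law of \Cref{def:xi_sigma}: under $F_k^{(\sigma)}$ the zero coordinates always form a terminal segment (once $\xi_{\sigma;j}=0$, every later coordinate vanishes), and the marginal atom of $\xi_{\sigma;j}$ at $0$ equals $1-\sigma^{-j+1}$, which grows with $j$ rather than being the constant $1-\sigma^{-1}$ your mixture produces. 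The exact formula in fact kills your non-suffix patterns: in \eqref{eq:hikes_distribution}, whenever $h_{j-1}=0$ the $j$-th factor's denominator switches from $1$ to $d_j\ssp B_{c_{j-1}}(d_j-1)=d_j(d_j+\sigma-1)$, so that factor is $O(1)$ instead of $\asymp d_j^2$; this loss of $d_j^{-2}$ is never recouped if a later hike is again macroscopic. Hence any configuration with a small hike followed by a large one has vanishing probability, the entire probability deficit $1-\sigma^{-k}$ accumulates on the event $\{h_{k+1}(w)=0\}$, and the limit is the \emph{dependent} stick-breaking law. Relatedly, \Cref{prop:hikes_distribution} only gives the joint law restricted to $\{h_{k+1}(w)>0\}$; you never explain how to recover the full joint distribution, whereas the paper does this by tracking where the deficit mass lands and reconstructing the joint cdf by iterated integration of the consistency relations between levels $k-1$ and $k$. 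Note finally that your $\sigma=1$ sanity check cannot detect this error, since the atom weight vanishes there and both the independent and the dependent structures collapse to $\mathrm{GEM}(1/2)$.
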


\begin{remark}
	\label{rmk:shifted_Plancherel_stick_breaking}
	In terms of the stick-breaking process, \Cref{thm:shifted_Plancherel_scaling}
	states that
	\begin{equation*}
		\Bigl( \frac{\tilde h_1(w)}{n}, \frac{\tilde h_2(w)}{n}, \ldots \Bigr)
		\stackrel{d}{\longrightarrow}
		X=(X_1,X_2,\ldots ),
	\end{equation*}
	where $X_1=U_1$, $X_k=(1-U_1)\cdots (1-U_{k-1})U_k $ for $k\ge2$, and $U_k=\xi_{\sigma;k}$ are
	\emph{dependent} random variables if $\sigma>1$.
	Due to the dependence structure of the $\xi_{\sigma;k}$'s
	(see \Cref{rmk:xi_sigma_construction}),
	a single zero in the sequence
	$\{X_j\}_{\ge1}$ makes all subsequent $X_j$'s zero.
	Thus, a growing random Fibonacci word under the shifted Plancherel measure
	has a growing number of $2$'s in (almost surely) finitely many
	initial hikes of lengths proportional to $n$.
	These initial hikes are then followed by a growing tail of $1$'s.
	We refer to
	\Cref{sub:shiifted_Plancherel_number_of_2s_discussion}
	for another approach to the asymptotics of the
	shifted Plancherel measure, and a detailed discussion of the
	limiting behavior of the total number of $2$'s.

	When $\sigma=1$, the $\xi_{\sigma;k}$'s
	do not have the point mass at $0$, and are
	independent and
	identically distributed as $\mathrm{beta}(1,1/2)$.
	The sequence $X$
	almost surely has no zeroes, and
	is distributed simply as $\mathrm{GEM}(1/2)$.
	In this special case, our
	\Cref{thm:shifted_Plancherel_scaling}
	reduces to the result of
	\cite{gnedin2000plancherel}
	recalled in \Cref{sub:Plancherel_measure_and_scaling}.
\end{remark}

\begin{proof}[Proof of \Cref{thm:shifted_Plancherel_scaling}]
	\textbf{Step 1.}
	We use \Cref{prop:hikes_distribution} which expresses
	the joint distribution of initial hikes $h_j(w)$ (where $j=1,\ldots,k$) as a product
	\eqref{eq:hikes_distribution}. This product involves the determinants
	$B_k(m)$ which for the shifted Plancherel specialization
	take the same simple form for all sizes, see
	\eqref{eq:B_for_shifted_Plancherel_explicit}, where $\gamma\equiv \sigma-1$.
	In the case of the fake shifted Charlier specialization,
	$B_k(m)$ for $m\ge1$ are the same as in \eqref{eq:B_for_shifted_Plancherel_explicit}.
	Note however that as $n\to\infty$, the quantity $B_k(0)$
	(which differs for the two specializations)
	does not enter the joint distribution of a fixed number of initial hikes.
	Thus, the limiting behavior of the initial hikes under the two
	specializations is the same, and we may consider the case $x_1=\sigma$,
	which is more convenient due to the uniformity of notation.

	For the shifted Plancherel specialization,
	the factors in the product \eqref{eq:hikes_distribution} are
	equal to
	\begin{equation}
		\label{eq:hikes_distribution_shifted_Plancherel_proof1}
		\begin{split}
			&\ssp\prod_{i=0}^{\tilde h_{[1,k]}+1}x_{n-i}^{-1}
			=
			\frac{\Gamma(n+\sigma-\tilde h_{[1,k]}-2)}{\Gamma(n+\sigma)}
			=
			\prod_{i=1}^{k}
			\frac{
			\Gamma(\sigma+d_{i+1}-2\cdot \mathbf{1}_{i=k})}
			{\Gamma(\sigma+d_{i})}
			,
			\\
			&\prod_{i=1}^{k}\prod_{j=2}^{h_i}(d_i-2j+1)\ssp y_{d_i-2j+1}
			=
			\prod_{i=1}^{k}
			\frac{2^{2h_i-2}\ssp \Gamma (\frac{d_i}{2} -
			\frac{1}{2}) \ssp\Gamma (\frac{d_i}{2} +
			\frac{\sigma}{2} - 1)}{\Gamma (\frac{d_i}{2} -
			h_i + \frac{1}{2})\ssp \Gamma (\frac{d_i}{2} - h_i +
			\frac{\sigma}{2})}
			,
		\end{split}
	\end{equation}
	and the last product involving the determinants $B_{c_j}$ is equal to
	\begin{equation}
		\label{eq:hikes_distribution_shifted_Plancherel_proof2}
		\prod_{i=1}^{k+1}
		\frac{(d_i-1)( d_i+\sigma-2 )}
		{\mathbf{1}_{h_{i-1}>0\text{ or }i=1}+
			d_i \ssp ( d_i+\sigma-1 )
			\ssp
			\mathbf{1}_{h_{i-1}=0}\ssp\mathbf{1}_{i>1}
		}.
	\end{equation}
	Here and in \eqref{eq:hikes_distribution_shifted_Plancherel_proof1},
	we used the notation $d_i=n-\tilde h_{[1,i)}$,
	and
	the fact
	that the condition $d_{i}=d_{i-1}-1$ in \eqref{eq:hikes_distribution}
	is equivalent to $h_{i-1}=0$ (for $i=1$, we have $d_1\ne d_0-1$,
	so
	$\mathbf{1}_{h_0>0}=1$).
	Note that for the shifted Plancherel specialization,
	the dependence on the quantities~$c_j$
	\eqref{eq:hikes_sequences_c_j} disappeared.

	\medskip\noindent
	\textbf{Step 2.}
	Let us now consider the asymptotic behavior of
	\eqref{eq:hikes_distribution_shifted_Plancherel_proof1}, \eqref{eq:hikes_distribution_shifted_Plancherel_proof2}
	as the $d_j$'s grow to infinity.
	We examine two cases depending on whether
	the hike is zero or is also growing.
	We have
	for the factors in \eqref{eq:hikes_distribution_shifted_Plancherel_proof2}
	for $i>1$:
	\begin{equation}
		\label{eq:hikes_distribution_shifted_Plancherel_proof_as1}
		\frac{(d_i-1)( d_i+\sigma-2 )}
		{\mathbf{1}_{h_{i-1}>0}+
			d_i \ssp ( d_i+\sigma-1 )
			\ssp
			\mathbf{1}_{h_{i-1}=0}
		}\sim
		\begin{cases}
			d_i^{2}, &h_{i-1}>0,\\
			1, & h_{i-1}=0.
		\end{cases}
	\end{equation}
	For the two products in \eqref{eq:hikes_distribution_shifted_Plancherel_proof1}, we have
	\begin{equation}
		\label{eq:hikes_distribution_shifted_Plancherel_proof_as2}
			\frac{\Gamma(\sigma+d_{i+1}-2\cdot \mathbf{1}_{i=k})}
			{\Gamma(\sigma+d_{i})}\ssp
			\frac{2^{2h_i-2}\ssp \Gamma (\frac{d_i}{2} -
			\frac{1}{2}) \ssp\Gamma (\frac{d_i}{2} +
			\frac{\sigma}{2} - 1)}{\Gamma (\frac{d_i}{2} -
			h_i + \frac{1}{2})\ssp \Gamma (\frac{d_i}{2} - h_i +
			\frac{\sigma}{2})}
			\sim
			d_i^{-3-2\cdot \mathbf{1}_{i=k}}(1-\alpha_i)^{\frac{\sigma}{2}-1-2\cdot \mathbf{1}_{i=k}}
			,
	\end{equation}
	where
	$h_i=\lfloor \alpha_i d_i/2 \rfloor $,
	$0\le \alpha_i<1$.
	Note that
	we inserted the factor $1/2$ since
	hikes count the 2's,
	so $\tilde h_i\sim \alpha_i d_i$.

	\medskip\noindent
	\textbf{Step 3.}
	Consider first the situation when all $\alpha_1,\ldots,\alpha_k$ are strictly positive.
	Then the product of the quantities \eqref{eq:hikes_distribution_shifted_Plancherel_proof_as1}, \eqref{eq:hikes_distribution_shifted_Plancherel_proof_as2} over all $i$ (which is asymptotically
	equivalent to \eqref{eq:hikes_distribution}) has the following behavior
	as $n\to\infty$:
	\begin{equation}
		\label{eq:hikes_distribution_shifted_Plancherel_proof_as3}
		M_n\left( w\colon
		h_1(w)=\Bigl\lfloor \frac{\alpha_1d_1}2 \Bigr\rfloor ,
		\ldots,
		h_k(w)=\Bigl\lfloor \frac{\alpha_k d_k}2 \Bigr\rfloor , h_{k+1}(w)>0 \right)
		\sim
		\sigma^{-k}\prod_{i=1}^{k}(d_i/2)^{-1}\cdot
		\frac{\sigma}2\ssp (1-\alpha_i)^{\frac{\sigma}{2}-1}.
	\end{equation}
	Here we used the fact that
	$d_{k+1}=d_k-2h_k-1$, so
	$d_{k+1}^2
	d_k^{-2}(1-\alpha_k)^{-2}\sim 1$.
	Since the density $\frac{\sigma}{2}(1-u)^{\frac{\sigma}{2}-1}$
	of $\mathrm{beta}(1, \sigma/2)$ integrates to $1$
	over $(0,1)$, we see that there is an asymptotic deficit of the probability
	mass equal to $1-\sigma^{-k}$.
	This deficit mass is supported by
	the event
	\begin{equation*}
		\bigcup_{i=1}^{k+1}\{w\colon h_i(w)=0\}.
	\end{equation*}

	By \Cref{lemma:first_hike_distribution}, we have
	\begin{equation*}
		M_n(w\colon h_1(w)=0)=1-\frac{n-1}{n+\sigma-1}\to 0, \qquad n\to \infty.
	\end{equation*}
	In particular, $M_n(w\colon h_1(w)>0)\to 1$, and the event
	$\left\{ h_1=0 \right\}$ is asymptotically negligible.

	Now consider the case when some of the $\alpha_i$'s are zero in the
	left-hand side of
	\eqref{eq:hikes_distribution_shifted_Plancherel_proof_as3}.
	Then, due to \eqref{eq:hikes_distribution_shifted_Plancherel_proof_as1},
	there is an extra factor of $d_j^{-2}$ for each $j\ge2$ with $\alpha_{j-1}=0$.
	This means that the probability that
	at least one of the $h_j(w)$'s is zero (for some $1\le j\le k$)
	while $h_{k+1}(w)>0$ is negligible in the limit.
	Therefore,
	we conclude that
	\begin{equation*}
		\lim_{n\to\infty}M_n(w\colon h_{k+1}(w)>0)
		=
		\lim_{n\to\infty}M_n(w\colon h_{1}(w)>0,\ldots,h_{k}(w)>0,h_{k+1}(w)>0)
		=\sigma^{-k}
	\end{equation*}
	for all $k\ge0$.
	This implies that for all $k\ge0$, all the deficit probability mass
	$1-\sigma^{-k}$
	from the left-hand side of \eqref{eq:hikes_distribution_shifted_Plancherel_proof_as3}
	is supported on
	the event $\left\{ h_{k+1}=0 \right\}$.

	\medskip\noindent
	\textbf{Step 4.}
	Define for each $k\ge1$ the joint cdf
	\begin{equation*}
		F_k(\alpha_1,\ldots, \alpha_k)\coloneqq
		\lim_{n\to\infty}
		M_n\left( w\colon h_1(w)\le \lfloor \alpha_1d_1/2 \rfloor , \ldots,
		h_k(w)\le \lfloor \alpha_k d_k/2 \rfloor \right)
	\end{equation*}
	of the scaled hikes $(\tilde h_1(w)/d_1, \ldots, \tilde h_k(w)/d_k)$.
	The observations in Step 3 imply that
	\begin{equation}
		\label{eq:scaling_limit_hikes_cdf}
		F_{k-1}(\alpha_1,\ldots,\alpha_{k-1})-F_k(\alpha_1,\ldots,\alpha_{k-1},0 )
		=
		\sigma^{-k+1} G(\alpha_1)\cdots G(\alpha_{k-1}),
	\end{equation}
	where $G(\cdot)$ is the cdf
	of the $\mathrm{beta}(1, \sigma/2)$ random variable
	given by \eqref{eq:beta_1_sigma_2_density}.
	Note that $G(0)=0$.
	We
	see that it remains to find the functions
	$F_k(\alpha_1,\ldots,\alpha_{k-1},0 )$ for all $k\ge1$.
	Iterating \eqref{eq:scaling_limit_hikes_cdf}, we see that these functions are
	consistent as long as there is at least one zero,
	that is,
	\begin{equation*}
		F_k(\alpha_1,\ldots,\alpha_{k-2},0,0 )=
		F_{k-1}(\alpha_1,\ldots,\alpha_{k-2},0),
	\end{equation*}
	and so on.

	Differentiate \eqref{eq:scaling_limit_hikes_cdf} in
	$\alpha_{1},\ldots,\alpha_{k-1} $.
	Then, because of the probability mass deficit at level $k-1$, we have
	\begin{equation*}
		\partial_{\alpha_1,\ldots,\alpha_{k-1} }\ssp
		F_{k-1}(\alpha_1,\ldots,\alpha_{k-1})=
		\sigma^{-k+2}
		g(\alpha_1)\cdots g(\alpha_{k-1}),
	\end{equation*}
	where $g(\alpha)$ is the density of the $\mathrm{beta}(1, \sigma/2)$
	random variable \eqref{eq:beta_1_sigma_2_density}.
	Therefore,
	\begin{equation}
		\label{eq:scaling_limit_hikes_cdf_final}
		\partial_{\alpha_1,\ldots,\alpha_{k-1} }
		\ssp
		F_{k}(\alpha_1,\ldots,\alpha_{k-1},0)=
		(\sigma-1)\ssp \sigma^{-k+1}\ssp g(\alpha_1)\cdots g(\alpha_{k-1}).
	\end{equation}

	Using \eqref{eq:scaling_limit_hikes_cdf_final}, we can now
	compute $F_k(\alpha_1,\ldots,\alpha_{k-1},0 )$ by induction on $k$ and
	iterative integration. We have $F_1(0)=0$,
	then
	\begin{equation*}
		\partial_{\alpha_1}
		F_2(\alpha_1,0)=
		(\sigma-1)\sigma^{-1}g(\alpha_1)
		\quad\Rightarrow\quad
		F_2(\alpha_1,0)=
		(\sigma-1)\sigma^{-1}G(\alpha_1)+F_2(0,0),
	\end{equation*}
	but by consistency, $F_2(0,0)=F_1(0)=0$.
	For general $k$, the first integration
	in $\alpha_{k-1}$ yields
	\begin{equation*}
		\partial_{\alpha_1,\ldots,\alpha_{k-2}}
		\ssp
		F_k(\alpha_1,\ldots,\alpha_{k-1},0)=
		(\sigma-1)\sigma^{-k+1}g(\alpha_1)\cdots g(\alpha_{k-2})
		G(\alpha_{k-1})+
		F_{k-1}(\alpha_1,\ldots,\alpha_{k-2},0).
	\end{equation*}
	This procedure of iterative integration yields the unique solution
	for $F_k(\alpha_1,\ldots,\alpha_{k-1},0 )$,
	and this leads to the formula for the joint cdf
	$F_k(\alpha_1,\ldots,\alpha_k)=
	F_k^{(\sigma)}(\alpha_1,\ldots,\alpha_k)$
	\eqref{eq:joint_cdf} of the limit of the scaled hikes.
	This completes the proof of \Cref{thm:shifted_Plancherel_scaling}.
\end{proof}

\section{General Convergent Specializations and Type-I Components}
\label{sec:general_convergent_specializations}

In this section, we
consider Fibonacci positive specializations under which random Fibonacci
words have a growing prefix of 1's, followed by a Fibonacci word
of almost surely finite size.
This leads to asymptotic behavior of discrete nature, different from the cases
discussed in \Cref{sec:asymptotics_Charlier_specialization,sec:asymptotics_shifted_Plancherel_specialization} above.

\subsection{Type-I components of coherent measures}
\label{sub:type_I_components}

The next definition follows \cite{KerovGoodman1997}:

\begin{definition}
\label{def:typeI-fibonacci-words}
A \emph{Type-I} Fibonacci word\footnote{Not to be confused with
Type-I Al-Salam--Carlitz polynomials or similar specializations,
as these objects are unrelated. The term ``Type-I'' in
Fibonacci words comes from connections to
Type-I factor representations of AF-algebras
associated to the branching graph \cite[Section~4]{KerovGoodman1997}.}
is an infinite Fibonacci word
formed by appending a prefix consisting of infinitely many
digits~$1$ to a Fibonacci word. A Type-I Fibonacci word
can be uniquely expressed as either $1^\infty$ or $1^\infty 2w$,
where $w$ is a finite suffix in $\mathbb{YF}$.
Denote the (countable) set of all Type-I words by $1^\infty\mathbb{YF}$,
and the subset of all Type-I words of the form $1^\infty 2w$,
$w\in \mathbb{YF}$,
by $1^\infty 2\mathbb{YF}\subset 1^\infty\mathbb{YF}$.
\end{definition}

A Type-I word $1^\infty w$ can be viewed as the equivalence
class of infinite saturated chains
$v_0 \nearrow v_1 \nearrow v_2 \nearrow \cdots$, starting
at $v_0 = \varnothing$, with $v_n = 1^{n - m}w$ for all
$n \geq m$, where $|w| = m$. We call infinite saturated chains of
this kind \emph{lonely paths}.

\begin{definition}
\label{def:measures-typeI-fibonacci-words}
If $\varphi\colon \mathbb{YF} \rightarrow \mathbb{R}_{\scriptscriptstyle \geq 0}$
is a nonnegative, normalized harmonic function and $1^\infty w$ is a Type-I word,
we define:
\[
	\muI(1^\infty w) := \lim_{n \rightarrow \infty} M_{m+n}(1^n w),
\]
where $w \in \mathbb{YF}_m$, and $M_{k}$ denotes the coherent measure
on $\mathbb{YF}_{k}$ associated to $\varphi$ by
\eqref{eq:coherent_measure_for_harmonic_function}.
We call the (in general, sub-probability)
measure $\muI(\cdot)$ on $1^\infty \mathbb{YF}$ the
\emph{Type-I component} of the harmonic function $\varphi$.

Note that
$0\le M_{n+m}(1^n w)\le 1 $. Moreover,
$ M_{n+m}(1^n w) $ is weakly decreasing in $n$,
and so the limit $\muI(1^\infty w) \in [0,1]$ exists
for all $w \in \mathbb{YF}$.
\end{definition}

Let $(\vec{x}, \vec{y} \, )$
be a Fibonacci positive specialization
\begin{equation*}
	x_k=c_k\ssp (1+t_{k-1}),\qquad y_k=c_k\ssp c_{k+1}\ssp t_k, \qquad k\ge1.
\end{equation*}
Here
$\vec{t}$ is a sequence
of either convergent or divergent type
(\Cref{def:finite_infinite_type_specializations}), and and
$\vec{c}$ is any sequence of positive real numbers.
Let $\varphi_{\vec{x}, \vec{y}}$ be the corresponding clone harmonic function,
and let $\muI$ be the associated Type-I component on $1^\infty\mathbb{YF}$.

\begin{lemma}
	\label{lemma:measure_of_all_ones}
	We have
	\begin{equation}
		\label{eq:measure_of_all_ones_infinite_product}
		\muI(1^\infty)=
		\prod_{i \ssp = \ssp 0}^{\infty} \, (1+t_i)^{-1}.
	\end{equation}
	Moreover, if $\vec t$ is of divergent type,
	then $\muI(1^\infty)$ vanishes.
\end{lemma}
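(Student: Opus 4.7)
The plan is to compute the finite-level quantity $M_n(1^n)$ in closed form and then pass to the limit. Since $\dim(1^n)=1$ (from the recursion \eqref{eq:dimension_recursion}) and the clone Schur function at the all-ones word is $s_{1^n}(\vec x\mid \vec y\ssp)=A_n(\vec x\mid \vec y\ssp)$ by \Cref{def:clone_Schur}, one immediately has
\begin{equation*}
M_n(1^n)\;=\;\dim(1^n)\,\varphi_{\vec x,\vec y}(1^n)\;=\;\frac{A_n(\vec x\mid \vec y\ssp)}{x_1\,x_2\cdots x_n}.
\end{equation*}

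Next, I invoke the scaling identity \eqref{eq:AB_from_cd}, which factors out the $\vec c$-sequence: $A_n(\vec x\mid \vec y\ssp)=c_1\cdots c_n\cdot A_n(\vec u\mid \vec t\ssp\ssp)$, where $u_k=1+t_{k-1}$ and $t_0=0$. As observed in the proof of \Cref{prop:c_t_from_x_y}, the tridiagonal determinant $A_n(\vec u\mid \vec t\ssp\ssp)$ collapses to $1$ (this is also directly visible from the three-term recurrence, since $A_1=1+t_0=1$ and $A_\ell=(1+t_{\ell-1})A_{\ell-1}-t_{\ell-1}A_{\ell-2}$ yields $A_\ell\equiv 1$ by induction). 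Combined with $x_k=c_k(1+t_{k-1})$, the $c_k$'s cancel and I obtain the exact finite-$n$ formula
\begin{equation*}
M_n(1^n)\;=\;\prod_{i=0}^{n-1}(1+t_i)^{-1}.
\end{equation*}
Taking $n\to\infty$ gives \eqref{eq:measure_of_all_ones_infinite_product}, noting that the $i=0$ factor is trivially $1$ since $t_0=0$.

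For the divergent-type claim, I use the defining inequality $t_{m+1}\ge 1+t_m$ for all $m\ge 0$ from \Cref{def:finite_infinite_type_specializations}, which by induction gives $t_m\ge m$. Hence $(1+t_i)^{-1}\le (1+i)^{-1}$, so
\begin{equation*}
0\;\le\;\prod_{i=1}^{n-1}(1+t_i)^{-1}\;\le\;\frac{1}{n!}\;\xrightarrow[n\to\infty]{}\;0,
\end{equation*}
which forces $\muI(1^\infty)=0$. The whole argument is essentially bookkeeping: there is no genuine obstacle, since the key reduction $A_n(\vec u\mid \vec t\ssp\ssp)=1$ has already been recorded and \Cref{def:measures-typeI-fibonacci-words} guarantees the limit exists by monotonicity.
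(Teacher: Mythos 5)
Your proof is correct and follows essentially the same route as the paper: both reduce to the exact finite-level formula $M_n(1^n)=\prod_{i=1}^{n-1}(1+t_i)^{-1}$ via $\dim(1^n)=1$ and $s_{1^n}(\vec x\mid \vec y\ssp)=c_1\cdots c_n$ (equivalently, $A_n(\vec u\mid \vec t\ssp\ssp)=1$), and then pass to the limit. The only immaterial difference is in the divergent case, where you bound the partial products below by $n!$ using $t_i\ge i$, while the paper instead compares $\prod_{i=1}^\infty(1+t_i)$ with the divergent series $A_\infty(1)$; both yield $\muI(1^\infty)=0$.
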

\begin{proof}
	We have
	$\dim(1^n)=1$
	and
	$s_{1^n}(\vec x\mid \vec y\ssp )
	= c_1 \cdots c_n$
	for all $n$.
	Using \eqref{eq:clone_Schur_normalization}, we see that
	\begin{equation*}
		M_n(1^n)=\frac{s_{1^n}\bigl(\vec x\mid \vec y\ssp \bigr)}{x_1\cdots x_n }\ssp
		\dim (1^n)=
		\prod_{i \ssp  = \ssp 1}^{n-1}\frac{1}{1+t_i},
	\end{equation*}
	which
	converges to the desired infinite product
	\eqref{eq:measure_of_all_ones_infinite_product}.

	For a divergent type sequence $\vec{t}$, we have
	(using the notation \eqref{eq:A_infty_B_infty_series_definitions})
	\begin{equation*}	\infty=A_\infty(1)=1+t_1+t_1t_2+t_1t_2t_3+\ldots
		\ssp \le \ssp
		\prod_{i=1}^\infty(1+t_i),
	\end{equation*}
	where we used the fact that the $t_i$'s are nonnegative.
	As the reciprocal of the product in \eqref{eq:measure_of_all_ones_infinite_product} goes to infinity,
	we have $\muI(1^\infty)=0$.
\end{proof}

For a convergent type sequence $\vec{t}$, we either have
$\muI(1^\infty)=0$ or $0<\muI(1^\infty)<1$.
The next statement discusses the latter case.

\begin{proposition}
	\label{prop:measure_on_type_I_words}
	Let $\vec t$ be of convergent type,
	and let
	$\muI(1^\infty)>0$.
Then
\begin{equation}
	\label{eq:measure_of_all_ones_infinite_product_in_proposition}
	\sum_{w\in \mathbb{YF}_m}
	\muI(1^\infty2w)  =
	(m+1) \ssp B_\infty(m)
	\prod_{i \ssp = \ssp m}^{\infty}  (1+t_i)^{-1},
	\qquad m\ge0,
\end{equation}
where $B_\infty(m)$ is defined in
\eqref{eq:A_infty_B_infty_series_definitions}.
Moreover,
$\muI(1^\infty 2w) > 0$ for all $w\in \mathbb{YF}$.

In contrast, if $\muI(1^\infty)=0$, then $\muI(1^\infty\mathbb{YF})=0$.
\end{proposition}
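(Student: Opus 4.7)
The plan is to derive a closed-form expression for each individual $\muI(1^\infty 2 w)$ and then sum over $w \in \mathbb{YF}_m$. By \Cref{def:measures-typeI-fibonacci-words}, for $w \in \mathbb{YF}_m$ we have $\muI(1^\infty 2 w)=\lim_{n\to\infty} M_{n+m+2}(1^n 2 w)$. The clone Schur recursion of \Cref{def:clone_Schur} gives $s_{1^n 2 w}(\vec x \mid \vec y\,)=B_n(\vec x+m\mid\vec y+m)\,s_w(\vec x \mid \vec y\,)$; the dimension recursion \eqref{eq:dimension_recursion} gives $\dim(1^n 2 w)=(m+1)\dim(w)$; and passing to the $(\vec u,\vec t\,)$-parametrization via \eqref{eq:AB_from_cd} cancels all the $c_k$ factors to yield the pre-limit identity
\begin{equation*}
M_{n+m+2}(1^n 2 w) \;=\; (m+1)\,\dim(w)\cdot
\frac{B_n(\vec u + m\mid\vec t + m)\;s_w(\vec u\mid\vec t\,)}{\prod_{k=0}^{n+m+1}(1+t_k)},
\end{equation*}
valid for all $n\ge 0$ and all $w$.

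I would then pass to the limit $n\to\infty$ under the convergent-type hypothesis. Formula \eqref{eq:B_ell_m_finite_type_proof} rewrites $B_n(\vec u+m\mid\vec t+m)=t_{m+1}-(1+t_m-t_{m+1})\,t_{m+2}\,A_{n-1}(m+3)$, and $A_{n-1}(m+3)\to A_\infty(m+3)<\infty$ by \Cref{def:finite_infinite_type_specializations}, so $B_n(m)\to B_\infty(m)$. When $\muI(1^\infty)>0$, \Cref{lemma:measure_of_all_ones} further yields that the denominator converges to the finite positive number $\prod_{k\ge 0}(1+t_k)=\muI(1^\infty)^{-1}$, so
\begin{equation*}
\muI(1^\infty 2 w) \;=\; (m+1)\,\dim(w)\,B_\infty(m)\,\frac{s_w(\vec u\mid\vec t\,)}{\prod_{k=0}^{\infty}(1+t_k)}.
\end{equation*}
Summing this expression over $w\in\mathbb{YF}_m$ and invoking the clone Kostka identity \eqref{eq:familiar-identity} in the form $\sum_{|w|=m}\dim(w)\,s_w(\vec u\mid\vec t\,)=h_{1^m}(\vec u\mid\vec t\,)=u_1\cdots u_m=\prod_{k=0}^{m-1}(1+t_k)$ (using $u_k=1+t_{k-1}$ with $t_0=0$) produces the claimed identity \eqref{eq:measure_of_all_ones_infinite_product_in_proposition}.

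Strict positivity of each $\muI(1^\infty 2 w)$ reduces, through the closed-form expression above, to verifying $B_\infty(m)>0$ for every $m\ge 0$, since $\dim(w)>0$ and $s_w(\vec u\mid\vec t\,)>0$ by Fibonacci positivity. When $t_{m+1}\ge 1+t_m$, one has $B_\infty(m)\ge t_{m+1}>0$ immediately. In the complementary case, $B_\infty(m)=t_{m+1}-(1+t_m-t_{m+1})\,t_{m+2}\,A_\infty(m+3)$ with $1+t_m-t_{m+1}>0$, and the convergent-type definition forces only the non-strict $B_\infty(m)\ge 0$. Excluding the boundary equality $B_\infty(m)=0$ is the main technical obstacle; my plan is to exploit the identity $A_\infty(m+1)-A_\infty(m+2)=t_m\,t_{m+2}\,A_\infty(m+3)+B_\infty(m)$ from \Cref{lemma:monotonic-decreasing-Astuff}, together with the monotonicity and valley constraints of \Cref{proposition:valleys,proposition:cannot_nondecrease,proposition:limit_zero_of_convergent_type}, and to propagate strict positivity across indices via the left-shift operation of \Cref{propositon:shifting-fibonacci-positive-specializations}.

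For the final assertion, still assume $\vec t$ of convergent type but now $\muI(1^\infty)=0$, equivalently $\prod_{k\ge 0}(1+t_k)=\infty$ by \Cref{lemma:measure_of_all_ones}. The pre-limit identity above has numerator $(m+1)\dim(w)\,B_n(m)\,s_w(\vec u\mid\vec t\,)$ converging to a finite limit (since $B_n(m)\to B_\infty(m)<\infty$ in convergent type) while its denominator diverges, forcing $M_{n+m+2}(1^n 2 w)\to 0$ for every $w\in\mathbb{YF}$. Hence $\muI(1^\infty 2 w)=0$ for all $w$, and combined with $\muI(1^\infty)=0$ this gives $\muI(1^\infty\mathbb{YF})=0$.
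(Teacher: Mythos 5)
Your proposal follows the paper's own proof essentially step for step: the same pre-limit product formula for $M_{n+m+2}(1^n2w)$ obtained from the clone Schur recursion and \eqref{eq:dimension_recursion}, the same passage to the limit via $B_n(m)\to B_\infty(m)$ and $\prod(1+t_k)<\infty$, the same summation over $w\in\mathbb{YF}_m$ using the normalization $\sum_w \dim(w)\,s_w(\vec u\mid\vec t\,)=\prod_{k=0}^{m-1}(1+t_k)$, and the same divergent-denominator argument when $\muI(1^\infty)=0$. The one step you flag as unresolved --- ruling out the boundary case $B_\infty(m)=0$ to get \emph{strict} positivity of $\muI(1^\infty 2w)$ --- is not argued in the paper either (it simply asserts the limiting expression is positive), so your write-up is at least as complete as the paper's on that point, and your explicit identification of this subtlety is a fair observation rather than a defect relative to the source.
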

\begin{proof}

	For any $w\in \mathbb{YF}_m$, we have by \eqref{eq:clone_Schur_recurrence_def}:
	\begin{equation*}
		s_{1^n 2w}(\vec x\mid \vec y\ssp )=B_{n-1}(m)\ssp s_w(\vec x\mid \vec y\ssp ).
	\end{equation*}
	Therefore, by \eqref{eq:dimension_recursion}, we can write
	\begin{equation}
		\label{eq:B_infty_proof}
		M_{m+n+2}(1^n2w)=
		\biggl(
			\, \prod_{i \ssp = \ssp 1}^{m}x_i
		\biggr)
		\big(m+1 \big)\ssp
		M_{m}(w)
		B_{n-1} (m)
		\prod_{i \ssp = \ssp 1}^{m+n+2}x_i^{-1}.
	\end{equation}
	The factor $B_{n-1}(m)$
	converges as $n\to \infty$ to $B_\infty(m)$
	\eqref{eq:A_infty_B_infty_series_definitions}.
	Assuming that $\muI(1^\infty)>0$,
	we see that the limit as $n\to \infty$ of \eqref{eq:B_infty_proof} is
	\begin{equation}
		\label{eq:B_infty_proof2}
		\muI(1^\infty 2w)=
		\lim_{n\to \infty}M_{m+n+2}(1^n2w)=
		\Bigg(
			\, \prod_{i \ssp = \ssp 1}^{m-1}(1+t_i)
		\Bigg)
		\big(m+1 \big)\ssp
		M_{m}(w) \ssp
		\frac{B_{\infty}(m)}{\prod_{i=1}^{\infty}(1+t_i)},
	\end{equation}
	which is positive.
	This means that $\muI(1^\infty 2w)>0$ for all $w\in \mathbb{YF}$.
	Summing \eqref{eq:B_infty_proof2} over all $w\in \mathbb{YF}_m$
	(which is a finite sum),
	we get
	the desired
	claim \eqref{eq:measure_of_all_ones_infinite_product_in_proposition}.

	In contrast, if $\muI(1^\infty)=0$,
	then the limit of \eqref{eq:B_infty_proof}
	is determined by the growing product of the $x_i^{-1}$'s,
	which goes to zero as $n\to \infty$. Therefore,
	the probability weight of each Type-I word vanishes in this case.
\end{proof}

\begin{remark}
	\label{rmk:type_I_components_examples_discussion_no_type_I_support}
	Both the shifted Plancherel specialization and the power specialization with $\upalpha=1$
	(see \Cref{sub:shifted_Charlier_defn,sub:power_spec_defn} for the definitions)
	are of convergent type, but are \emph{not} supported on Type-I words
	since $\muI(1^\infty)=0$.
	This is not surprising for the shifted Plancherel specialization,
	since by \Cref{thm:shifted_Plancherel_scaling},
	the corresponding random Fibonacci word
	starts with a growing prefix of~$2$'s.
	We briefly discuss the (conjectural)
	asymptotics of random Fibonacci words under the power specialization
	with $\upalpha=1$ in \Cref{sub:type_I_components_examples_power}
	below.
\end{remark}

\begin{proposition}
	\label{prop:full_type_I_support}
	Under the conditions of \Cref{prop:measure_on_type_I_words},
	if $\muI(1^\infty)>0$,
	then we have
	$\muI(1^\infty\mathbb{YF})=1$.
\end{proposition}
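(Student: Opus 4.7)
The plan is to reduce the desired equality to the numerical identity
\[
1 \;+\; \sum_{m\ge 0}(m+1)\ssp B_{\infty}(m)\prod_{k=0}^{m-1}(1+t_k)\;=\;\prod_{k=0}^{\infty}(1+t_k),
\]
which is the specialization at $z=1$ of the formal identity \eqref{eq:infinite-tversion-Important-Identity}, and to obtain it as the $n\to\infty$ limit of \eqref{eq:finite-tversion-Important-Identity}. First, using \Cref{rmk:connection_to_harmonic_functions_and_non_uniqueness_of_c_parameters}, I would normalize to $\vec{c}=(1,1,\ldots)$ so that $x_k=1+t_{k-1}$ and $y_k=t_k$; under the hypothesis $\muI(1^\infty)>0$, set $P:=\prod_{i=0}^{\infty}(1+t_i)\in(0,\infty)$. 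Combining \Cref{lemma:measure_of_all_ones} and \Cref{prop:measure_on_type_I_words} gives
\[
\muI(1^\infty\mathbb{YF}) \;=\; P^{-1}\Bigl[1+\sum_{m\ge 0}(m+1)\ssp B_{\infty}(m)\prod_{k=0}^{m-1}(1+t_k)\Bigr],
\]
so the desired equality $\muI(1^\infty\mathbb{YF})=1$ is equivalent to the displayed identity.

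Next I would use the finite identity \eqref{eq:finite-tversion-Important-Identity}, which is the trivial restatement that $M_n$ is a probability measure on $\mathbb{YF}_n$ decomposed according to the length of the initial run $r_1(w)$ of $1$'s. Since $B_{n-m-2}(m)\to B_{\infty}(m)$ termwise as $n\to\infty$ and $\prod_{k=0}^{n-1}(1+t_k)\nearrow P$, Fatou's lemma applied to the nonnegative summands yields $\muI(1^\infty\mathbb{YF})\le 1$ at once. The substantive content of the proof is the reverse inequality, which amounts to showing that no mass escapes: the random variable $n-r_1(W_n)$ is tight under $M_n$ as $n\to\infty$.

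For tightness, I would use the expansion
\[
B_{L}(m) \;=\; t_{m+1} - (1+t_m-t_{m+1})\ssp t_{m+2}\ssp A_{L-1}(m+3),
\]
obtained by iterating the three-term recurrence for the $B_L(m)$'s (cf.\ \eqref{eq:B_ell_m_finite_type_proof}). By Propositions~\ref{proposition:cannot_nondecrease} and~\ref{proposition:limit_zero_of_convergent_type}, for convergent type the sequence $\vec{t}$ eventually strictly decreases to zero, so there exists $M_0$ such that $1+t_m-t_{m+1}\ge \tfrac{1}{2}$ for $m\ge M_0$, and consequently $L\mapsto B_{L}(m)$ is strictly decreasing on $L\ge 0$ with limit $B_{\infty}(m)\ge 0$. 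Together with $\sum_k t_k<\infty$ (equivalent to $P<\infty$), this monotonicity, combined with the explicit difference $B_L(m)-B_{\infty}(m)=(1+t_m-t_{m+1})\ssp t_{m+2}\bigl[A_{\infty}(m+3)-A_{L-1}(m+3)\bigr]$, furnishes a uniform-in-$n$ upper bound on the tail $\sum_{m>M}(m+1)B_{n-m-2}(m)\prod_{k<m}(1+t_k)$ that tends to $0$ as $M\to\infty$, upgrading Fatou's inequality to an equality.

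The hard part will be producing this uniform tail estimate. The naive bound $B_L(m)\le t_{m+1}$ gives only $(m+1)t_{m+1}$, which need not be summable under $\sum t_k<\infty$ alone (indeed $\limsup mt_m\le 1$ by \Cref{proposition:bounded_limit__of_convergent_type} permits $(m+1)t_{m+1}$ to be of order $1$). The sharper estimate splits the range $M<m<n-2$ into the \emph{bulk} where $L=n-m-2$ is large (so $B_L(m)$ is close to $B_{\infty}(m)$, and summability of $\sum(m+1)B_{\infty}(m)\prod_{k<m}(1+t_k)$ itself follows from the Fatou upper bound by $P-1$), and the \emph{edge} where $m$ is close to $n$ (so $L$ is small, but the prefactor $\prod_{k<m}(1+t_k)\le P$ combined with the decay $B_L(m)=O(t_{m+1}-t_{m+2})$ in this regime keeps the contribution small). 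Handling the edge carefully, using the eventual decrease of $\vec{t}$ and the fact that $(m+1)(t_{m+1}-t_{m+2})$ telescopes into a summable tail, is where the technical work lies and where I expect to spend most of the effort.
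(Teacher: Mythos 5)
Your proposal follows essentially the same route as the paper: both reduce the claim to the $n\to\infty$ limit of the finite identity \eqref{eq:finite-tversion-Important-Identity} and then justify interchanging the limit with the sum by controlling $B_{n-m-2}(m)-B_\infty(m)$ uniformly for large $m$, using the eventual monotone decay of $\vec t$ together with the fact that $m\ssp t_m\to 0$ --- a fact you should state explicitly, since \Cref{proposition:bounded_limit__of_convergent_type} only gives $\limsup m\ssp t_m\le 1$, whereas the stronger statement follows from $\sum_k t_k<\infty$ and eventual monotonicity via Cauchy condensation, exactly as in the paper. Your Fatou-plus-uniform-tail framing is a repackaging of the paper's uniform-convergence-plus-dominated-convergence argument, and your bulk/edge split matches the paper's split at $m=n-K-3$; the only slip is the edge claim $B_L(m)=O(t_{m+1}-t_{m+2})$, which fails for $L=0$ (where $B_0(m)=t_{m+1}$), but those finitely many edge terms each vanish anyway because $m\ssp t_m\to 0$.
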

\begin{proof}
	Multiply identity
	\eqref{eq:finite-tversion-Important-Identity}
	from
	\Cref{rem:finite-tversion-Important-Identity} by
	$\prod_{k = 0}^n (1 + t_k)^{-1}$ (recall that $t_0=0$):
	\begin{equation}
		\label{eq:Cauchy_identity_in_proposition_proof_uniform_limit}
		\prod_{k \ssp = \, 0}^{n-1}
		\ssp (1 + t_k)^{-1}+
		\sum_{m \ssp = \ssp 0}^{n-2}
		\ssp (m+1) \ssp B_{n-m-2}(m)
		\prod_{k \ssp = \ssp m}^{n-1}
		\ssp (1 + t_k)^{-1}
		=1.
	\end{equation}
	We aim to take the limit as $n \rightarrow \infty$
	inside the sum in the left-hand side of
	\eqref{eq:Cauchy_identity_in_proposition_proof_uniform_limit}.
	This would yield
	\begin{equation}
		\label{eq:Cauchy_identity_in_proposition_proof3}
		\begin{split}
			1 &= \prod_{k = 0}^\infty (1 + t_k)^{-1}
			+ \sum_{m = 0}^{\infty} (m+1) B_{\infty}(m)
			\prod_{k = m}^{\infty} (1 + t_k)^{-1} \\
			&= \prod_{k = 0}^\infty (1 + t_k)^{-1}
			+ \sum_{m \geq 0} \sum_{|w| = m} (m+1) M_m(w) B_{\infty}(m)
			\prod_{k = m}^{\infty} (1 + t_k)^{-1} \\
			&= \muI(1^\infty) + \sum_{w \in \mathbb{YF}} \muI(1^\infty 2w),
		\end{split}
	\end{equation}
	which is the desired result.

	However,
	in order to justify
	the passage from
	\eqref{eq:Cauchy_identity_in_proposition_proof_uniform_limit}
	to \eqref{eq:Cauchy_identity_in_proposition_proof3}
	(the interchange of the limit and the summation),
	we need the convergence
	\begin{equation*}
		(m+1)B_{n-m-2}(m)\prod_{k=m}^{n-1}(1+t_k)^{-1}\to
		(m+1)B_{\infty}(m)\prod_{k=m}^{\infty}(1+t_k)^{-1},
		\qquad n\to\infty,
	\end{equation*}
	to be uniform in $m$ (and then apply the dominated convergence theorem for
	series).
	Note that in \eqref{eq:Cauchy_identity_in_proposition_proof_uniform_limit},
	we have $n\ge m+2$. That is, we can
	already turn the sum over $0\le m\le n-2$ in \eqref{eq:Cauchy_identity_in_proposition_proof_uniform_limit}
	into an infinite sum, by adding the zero terms for $m>n-2$.

	For the products, we have
	\begin{equation}
		\label{eq:uniform_convergence_products_in_proof}
		\left|
			\prod_{k=m}^{n-1}(1+t_k)^{-1}
			-
			\prod_{k=m}^{\infty}(1+t_k)^{-1}
		\right|
		=
		\left|
			\prod_{k=1}^{n-1}(1+t_k)^{-1}
			-
			\prod_{k=1}^{\infty}(1+t_k)^{-1}
		\right|
		\prod_{k=1}^{m-1}(1+t_k),
	\end{equation}
	where
	$\prod_{k=1}^{m-1}(1+t_k)$ is bounded in $m$,
	so \eqref{eq:uniform_convergence_products_in_proof}
	converges to zero uniformly in $m$.

	It remains to establish the uniform convergence in $m$ of
	(recall that $n\ge m+2$)
	\begin{equation*}
		(m+1)B_{n-m-2}(m)=
		\begin{cases}
			0,& n< m+2;\\
			(m+1)t_{m+1},& n=m+2;\\
			(m+1)\left(
				t_{m+1}-(1+t_m-t_{m+1})t_{m+2}A_{n-m-3}(m+3)
			\right),& n> m+2,
		\end{cases}
	\end{equation*}
	as $n\to\infty$.
	Observe that
	$m\ssp t_m\to 0$ as $m\to\infty$.

	Indeed, this follows from the convergence of the infinite product
	$\prod_{k=1}^{\infty}(1+t_k)$,
	which is equivalent to the convergence of the series
	$\sum_{k=1}^{\infty}t_k$ (since the $t_k$'s are nonnegative).
	Moreover, since the $t_m$'s eventually weakly decrease to zero
	(\Cref{proposition:cannot_nondecrease,proposition:limit_zero_of_convergent_type}),
	we can use Cauchy condensation test to conclude that
	$m\ssp t_m\to 0$.

	The convergence
	$m\ssp t_m\to 0$ implies that we can discard
	finitely many
	terms with $n-K-3< m\le n-2$
	from the sum in \eqref{eq:Cauchy_identity_in_proposition_proof_uniform_limit},
	as they converge to zero. Let us fix some $K>5$ once and for all.
	For the remaining terms, we can write
	\begin{equation*}
		\begin{split}
			&\bigl|
				(m+1)B_{n-m-2}(m)
				-
			(m+1)\left(
				t_{m+1}-(1+t_m-t_{m+1})t_{m+2}A_{\infty}(m+3)
			\right)\bigr|
			\\&\hspace{70pt}=
			(m+1)(1+t_m-t_{m+1})t_{m+2}
			\left(
				t_{m+3}\cdots t_{n}
				+
				t_{m+3}\cdots t_{n}t_{n+1}+\ldots
			\right).
		\end{split}
	\end{equation*}
	The factor $(m+3)t_{m+2}$ is bounded.
	Let the sequence $t_m$ eventually decrease starting from $m= m_0$.
	Pick $\varepsilon>0$, and find $N\ge m_0+K$ such that $t_{N-K}\le \varepsilon$.
	Then for all $n\ge N+3$ and $m\ge m_0$ (with the condition $m\le n-K-3$):
	\begin{equation*}
		t_{m+3}\cdots t_{n}
		+
		t_{m+3}\cdots t_{n}t_{n+1}+\ldots
		\le
		t_{N-K}\cdots t_{N+3}+
		t_{N-K}\cdots t_{N+3}t_{N+4}+\ldots
		\le \frac{\varepsilon}{1-\varepsilon},
	\end{equation*}
	which is small.
	This completes the proof.
\end{proof}

Let us restate the results of
\Cref{lemma:measure_of_all_ones}
and
\Cref{prop:measure_on_type_I_words}
in terms of the boundary
of the Young--Fibonacci lattice \cite{KerovGoodman1997},
\cite{BochkovEvtushevsky2020}, \cite{Evtushevsky2020PartII}.
Recall from \Cref{sub:YF_lattice}
that the
extremal (Martin)
boundary $\Upsilon_{\mathrm{ext}}(\mathbb{YF})$
is the set of all nonnegative, normalized, extremal harmonic functions
on $\mathbb{YF}$.
An arbitrary
nonnegative, normalized harmonic function $\varphi$
on $\mathbb{YF}$ can be represented as a Choquet integral
\eqref{eq:harmonic_function_as_integral_Choquet}
with respect to a probability measure $\mu$
on
$\Upsilon_{\mathrm{ext}}(\mathbb{YF})$.
The measure $\mu$ is uniquely determined by $\varphi$.

The set
$1^\infty \mathbb{YF}$ of all Type-I words (\Cref{def:typeI-fibonacci-words})
constitutes
a part of the boundary
$\Upsilon_{\mathrm{ext}}(\mathbb{YF})$.
Indeed, the extremal \emph{Type-I harmonic functions}
corresponding to Type-I words of the form $1^\infty 2w$
are given by \cite[Proposition~4.2]{KerovGoodman1997}:
\[
\Phi_{1^\infty2w}(v) \coloneqq
\begin{cases}
	\displaystyle \frac{\dim(v, 1^k 2w)}{\dim(2w)}, & \text{if $v \unlhd 1^k w$ for some $k \geq 0$}, \\[5pt]
	0, & \text{otherwise}.
\end{cases}
\]
Here $w\in\mathbb{YF}$ is fixed,
$\unlhd$ denotes the partial
order on $\mathbb{YF}$,
and $\dim(u,v)$ is the number of saturated chains in the Young--Fibonacci lattice
beginning at $u$ and ending at $v$.
Likewise, $\Phi_{1^\infty}(v)$
takes the values $1$ if $v=1^k$
for some $k \geq 0$, and
$0$ otherwise.

\begin{corollary}
\label{cor:typeI-expansion-convergent}
Let $(\vec{x}, \vec{y} \, )$
be a Fibonacci positive specialization
of
convergent type
such that $\muI(1^\infty) > 0$.
Then the measure $\mu$ on the boundary $\Upsilon_{\mathrm{ext}}(\mathbb{YF})$
coincides with its Type-I component~$\muI$.
Consequently, the
Choquet integral representation of the
clone harmonic function $\varphi_{\vec{x}, \vec{y}}$
involves only Type-I harmonic functions, and has the form:
\begin{equation*}
\varphi_{\vec x, \vec y}
\, = \,
\muI(1^\infty) \ssp \Phi_{1^\infty}
\, + \, \sum_{w \ssp \in \ssp \mathbb{YF}}
\, \muI(1^\infty2w) \ssp \Phi_{1^\infty2w}.
\end{equation*}
\end{corollary}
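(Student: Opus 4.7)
The plan is to show that the nonnegative harmonic function
\[
\tilde\varphi(v) \coloneqq \muI(1^\infty)\ssp \Phi_{1^\infty}(v) + \sum_{u \in \mathbb{YF}} \muI(1^\infty 2u)\ssp \Phi_{1^\infty 2u}(v)
\]
coincides with $\varphi_{\vec x, \vec y}$, so that by uniqueness of the Choquet integral representation on the Martin boundary (which equals $\Upsilon_{\mathrm{ext}}(\mathbb{YF})$ by \cite{BochkovEvtushevsky2020,Evtushevsky2020PartII}), one concludes $\mu = \muI$. The function $\tilde\varphi$ is a well-defined nonnegative harmonic function on $\mathbb{YF}$: each $\Phi_{1^\infty}$ and $\Phi_{1^\infty 2u}$ is a nonnegative, normalized, extremal harmonic function by \cite[Proposition~4.2]{KerovGoodman1997}, and under the hypothesis $\muI(1^\infty) > 0$ the measure $\muI$ is a probability measure on $1^\infty\mathbb{YF}$ by \Cref{prop:full_type_I_support}. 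Moreover $\tilde\varphi(\varnothing) = \muI(1^\infty\mathbb{YF}) = 1$, using $\Phi_{1^\infty}(\varnothing) = 1$ and $\Phi_{1^\infty 2u}(\varnothing) = \dim(1^k 2u)/\dim(2u) = 1$ (since $\dim(1^k 2u) = \dim(2u)$ by \eqref{eq:dimension_recursion}).

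The next step would be to use the iterated harmonicity identity
\[
\varphi_{\vec x, \vec y}(v) = \sum_{\substack{w \in \mathbb{YF}_N \\ v \ssp \unlhd \ssp w}} \frac{\dim(v,w)}{\dim(w)}\ssp M_N(w),\qquad N \ge |v|,
\]
and partition $\mathbb{YF}_N$ into $\{1^N\}$ and words of the form $1^{N - |2u|} 2u$ for $u \in \mathbb{YF}$ with $|u| \le N-2$. Two combinatorial facts drive the analysis: $\dim(1^k 2u) = \dim(2u)$ from \eqref{eq:dimension_recursion}, and for $k \ge 1$ the only down-cover of $1^k 2u$ in $\mathbb{YF}$ is $1^{k-1} 2u$ (verified by inspecting the branching rules {\bf F1}--{\bf F3}). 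Consequently every saturated chain from $v$ up to $1^k 2u$ must either factor through $2u$ (when $v \unlhd 2u$) or consist of prepending $1$'s to $v = 1^j 2u$, which shows that $\dim(v, 1^k 2u)/\dim(1^k 2u)$ stabilizes to $\Phi_{1^\infty 2u}(v)$ for $k$ large, and that $\dim(1^m, 1^N) = 1$. Combined with the weak monotonicity $M_{m+n}(1^n w) \searrow \muI(1^\infty w)$ from \Cref{def:measures-typeI-fibonacci-words}, truncating the sum to $|u| \le K$ and sending $N \to \infty$ yields the pointwise bound
\[
\muI(1^\infty)\ssp \Phi_{1^\infty}(v) + \sum_{|u| \le K} \muI(1^\infty 2u)\ssp \Phi_{1^\infty 2u}(v) \le \varphi_{\vec x, \vec y}(v),
\]
and then $K \to \infty$ gives $\tilde\varphi \le \varphi_{\vec x, \vec y}$ pointwise on $\mathbb{YF}$.

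The difference $\varphi_{\vec x, \vec y} - \tilde\varphi$ is thus a nonnegative harmonic function on $\mathbb{YF}$ vanishing at $\varnothing$, and the iterated harmonicity identity $0 = (\varphi_{\vec x, \vec y} - \tilde\varphi)(\varnothing) = \sum_{w \in \mathbb{YF}_n} \dim(w)\ssp(\varphi_{\vec x, \vec y} - \tilde\varphi)(w)$ with all nonnegative summands forces $\varphi_{\vec x, \vec y} \equiv \tilde\varphi$ on all of $\mathbb{YF}$. The main technical obstacle lies in the truncation argument: carefully justifying the term-by-term passage to the limit as $N \to \infty$ for fixed $K$, and then ensuring that no mass escapes when $K \to \infty$. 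Both rely crucially on \Cref{prop:full_type_I_support} to guarantee $\muI(1^\infty\mathbb{YF}) = 1$, confirming that the Type-I component absorbs all of the asymptotic mass of the coherent measures $M_N$. Uniqueness of the Choquet representation then completes the identification $\mu = \muI$ and produces the claimed expansion of $\varphi_{\vec x, \vec y}$.
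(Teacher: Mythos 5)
Your proposal is correct, but it takes a genuinely different route from the paper. The paper's proof is a two-line appeal to soft theory: it combines $\muI(1^\infty\mathbb{YF})=1$ (\Cref{prop:full_type_I_support}), the Choquet representation \eqref{eq:harmonic_function_as_integral_Choquet}, and the ergodicity of the Martin boundary from \cite{BochkovEvtushevsky2020}, \cite{Evtushevsky2020PartII}, leaving the identification $\mu=\muI$ to the general machinery of boundary theory (the representing measure is the law of the a.s.\ limit of the trajectory, and the Type-I component is exactly that law restricted to Type-I points). You instead build the candidate $\tilde\varphi$ explicitly and prove the pointwise identity $\tilde\varphi=\varphi_{\vec x,\vec y}$ by hand: the iterated coherence identity, the observation that $1^{k}2u$ covers only $1^{k-1}2u$ from below (so the ratios $\dim(v,1^k2u)/\dim(1^k2u)$ stabilize), the monotone convergence $M_{m+n}(1^nw)\searrow\muI(1^\infty w)$, and finally the standard fact that a nonnegative harmonic function vanishing at $\varnothing$ vanishes identically. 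This is where \Cref{prop:full_type_I_support} actually enters your argument --- it gives $\tilde\varphi(\varnothing)=1$ so that the difference vanishes at the root --- rather than in controlling escape of mass as $K\to\infty$ (the truncation in $K$ is monotone and needs no such control). The payoff of your route is that the expansion of $\varphi_{\vec x,\vec y}$ is derived self-containedly from the paper's own lemmas; the ergodicity results are still needed, but only at the very end, to know that the $\Phi$'s are extreme points so that uniqueness of the representing measure yields $\mu=\muI$. The paper's route is shorter but outsources more to the cited preprints.
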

\begin{proof}
	This result follows from the fact that
	$\muI(1^\infty\mathbb{YF})=1$ (\Cref{prop:measure_on_type_I_words}),
	the Choquet integral
	representation
	\eqref{eq:harmonic_function_as_integral_Choquet},
	and the ergodicity of the
	Martin boundary
	established in the preprints
	\cite{BochkovEvtushevsky2020}, \cite{Evtushevsky2020PartII}.
\end{proof}

In the rest of this section, we consider two particular examples of
type-I component computations.

\subsection{Type-I component of the Al-Salam--Chihara specialization}
\label{sub:type_I_components_examples_al_salam_chihara}

Recall the Al-Salam--Chihara specialization
from
\Cref{def:positive_specializations_class_II};
it is of divergent type.
Recall that the parameters are
$x_k=\rho+[k-1]_q$, $y_k=\rho[k]_q$,
$k\ge1$,
where $0<\rho\le 1$ and $q\ge1$.
Note that as $q\to1$, we recover the Charlier specialization,
and the asymptotic behavior of the corresponding
clone coherent measures was considered in
\Cref{sec:asymptotics_Charlier_specialization}.
Let $M_n$ denote the coherent measures
corresponding to the Al-Salam--Chihara specialization.

By \Cref{lemma:first_hike_distribution}, we have
\begin{equation*}
	M_n(w\colon h_1(w)=0)=
	1-
	\frac{(n-1)(q-1) q^2 \rho \left(q^n - q\right)}{\left(q^n +
	\rho q^2 - (\rho + 1) q\right)\left(q^n + \rho\ssp q^3 - (\rho +1) q^2\right)}.
\end{equation*}
One readily checks that this expression converges to $1$ as $n\to\infty$
exponentially fast.
This means that with
probability exponentially close to $1$,
the growing random word $w\in \mathbb{YF}_n$
under the
Al-Salam--Chihara
clone coherent measure
does not start with a $2$.

For simplicity, here we only consider the case $\rho=1/q$.
\begin{proposition}
	\label{prop:Al-Salam-Chihara_Type-I}
	For the Al-Salam--Chihara specialization with $\rho=1/q$,
	we have
	\begin{equation*}
		\muI(1^\infty)=0
		\qquad \textnormal{and} \qquad
		\muI(1^\infty 2\mathbb{YF})=1.
	\end{equation*}
	This means that in the growing random word
	there will be finitely many (but at least one)
	occurrences of the digit $2$.
\end{proposition}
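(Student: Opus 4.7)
The plan is to exploit the special structure of the Al-Salam--Chihara specialization when $\rho = 1/q$. A brief computation (using $[k]_q = (q^k - 1)/(q-1)$) shows that $x_k = y_k = [k]_q/q$, and the canonical parameters from \Cref{thm:Fibonacci_positivity} become $c_k = 1/q$ and $t_k = q[k]_q$, so that $1 + t_k = [k+1]_q$ and $\prod_{k=1}^{n}(1+t_{k-1}) = [n]_q!$. The first assertion $\muI(1^\infty) = 0$ is immediate from \Cref{lemma:measure_of_all_ones}, since the specialization is of divergent type. For the second assertion, I will compute $\sum_{|u|=m}\muI(1^\infty 2 u)$ explicitly for each fixed $m\ge 0$ and then sum over $m$.

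Using the scaling \eqref{eq:clone_Schur_action} with $\vec{\gamma} = (1/q, 1/q, \ldots)$ to pass between the ``our'' and the canonical specializations, combined with the clone recurrence $s_{1^{n-m-2} 2 u} = B_{n-m-2}(m)\, s_u$ and the identity $\sum_{|u|=m} \dim(u)\, s_u^{\mathrm{can}}(\vec{u}\mid \vec{t}\ssp) = [m]_q!$ (from \eqref{eq:familiar-identity} applied in canonical form, where $u_k = [k]_q$), one readily obtains
\begin{equation*}
\sum_{|u| = m} M_n\bigl(1^{n-m-2} 2 u\bigr) \, = \, (m+1)\,[m]_q!\,\frac{B_{n-m-2}^{\mathrm{can}}(m)}{[n]_q!},
\end{equation*}
where $B_{\ell}^{\mathrm{can}}(m)$ denotes the $B$-determinant built from the shifted canonical sequences $(\vec{u} + m, \vec{t} + m)$. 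Since $\mathbb{YF}_m$ is finite, taking the limit $n \to \infty$ inside the sum yields $\sum_{|u| = m}\muI(1^\infty 2 u)$ on the left.

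The main technical step, which is the main obstacle, is the asymptotic analysis of $B_{n-m-2}^{\mathrm{can}}(m)$ as $n \to \infty$. Substituting $1 + t_m - t_{m+1} = -(q-1)[m+1]_q$ into \eqref{eq:B_ell_m_finite_type_proof} gives
\begin{equation*}
B_{\ell-1}^{\mathrm{can}}(m) \, = \, q[m+1]_q\bigl[\ssp 1 + (q-1)[m+2]_q\, A_{\ell-2}^{\mathrm{can}}(m+3)\bigr],
\end{equation*}
and in the expansion $A_{\ell-2}^{\mathrm{can}}(m+3) = \sum_{r=0}^{\ell-2} q^r [m+3]_q\cdots[m+r+2]_q$, consecutive terms differ by the factor $q\,[m+r+2]_q$, which grows geometrically in $r$ for $q > 1$. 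Hence the top term $r = \ell - 2 = n-m-3$ dominates, yielding
\begin{equation*}
B_{n-m-2}^{\mathrm{can}}(m) \, = \, q^{n-m-2}(q-1)\,\frac{[n-1]_q!}{[m]_q!}\,\bigl(1 + O(q^{-n})\bigr),
\end{equation*}
so that $[m]_q!\,B_{n-m-2}^{\mathrm{can}}(m)/[n]_q! \sim q^{n-m-2}(q-1)/[n]_q \to (q-1)^2\,q^{-m-2}$ by $[n]_q = (q^n - 1)/(q-1)$. Therefore $\sum_{|u|=m}\muI(1^\infty 2 u) = (m+1)(q-1)^2\,q^{-m-2}$.

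Summing these nonnegative contributions over $m \ge 0$ gives
\begin{equation*}
\muI(1^\infty 2\mathbb{YF})
\, = \, \frac{(q-1)^2}{q^2} \sum_{m \ge 0}(m+1)\,q^{-m}
\, = \, \frac{(q-1)^2}{q^2} \cdot \frac{q^2}{(q-1)^2}
\, = \, 1,
\end{equation*}
which together with $\muI(1^\infty) = 0$ establishes the claim. The $q$-factorial reduction makes the dominant-term estimate elementary, but this estimate is nonetheless the only delicate point in the argument.
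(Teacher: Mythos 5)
Your proof is correct and follows essentially the same route as the paper's: both establish $\muI(1^\infty)=0$ from the divergent type of $\vec t=(q[k]_q)$, compute $\sum_{|u|=m}\muI(1^\infty 2u)$ via the run-distribution/\,$B$-determinant formula, extract the dominant top term of the geometrically growing series $A_{\ell}(m+3)$ to get the limit $(m+1)(q-1)^2q^{-m-2}$, and sum over $m$ to obtain total mass $1$. The only cosmetic difference is that you phrase the dominant-term estimate via $q$-factorials while the paper reverses the summation index; the content is identical.
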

\begin{proof}
	Recall from the proof of \Cref{prop:class_II_positive_specializations,proposition:infinite_type_scaling}
	that we can take the $\vec t$-parameters to be
	$t_k=[k]_q/\rho=q\ssp [k]_q$, and then we must modify
	$x_k=1+t_{k-1}=[k]_q$ and $y_k=[k]_q$.
	By \Cref{lemma:measure_of_all_ones},
	we have $\muI(1^\infty)=\prod_{k=1}^{\infty}(1+q[k]_q)^{-1}$,
	which clearly diverges to zero.
	This proves the first claim.

	Let us find the limiting distribution of $r_1(w)$, the initial run of $1$'s.
	By \Cref{prop:runs_distribution}, we have
	\begin{equation*}
		M_{n+m+2} (1^n2\mathbb{YF}_m )
		=
		M_{n+m+2}(w\colon r_1(w)=n)
		=  (m+1)  B_n(m)  \prod_{k =  m+1}^{n+m+2} x_k^{-1}.
	\end{equation*}
	Using \eqref{eq:B_ell_m_finite_type_proof}, we have
	\begin{equation*}
		\begin{split}
			&M_{n+m+2} (1^n2\mathbb{YF}_m )
			=
			(m+1)
			\left( t_{m+1}-(1+t_m-t_{m+1})t_{m+2}\ssp A_{n-1}(m+3) \right)
			\prod_{k =  m+1}^{n+m+2} x_k^{-1}
			\\
			&\hspace{20pt}
			=
			(m+1)\ssp [m+1]_q
			\left(
				q+q(q-1)[m+2]_q
				\Bigl(
					1  +  \sum_{i=1}^{n-1}
					q^i  [m+3]_q \cdots [m+2+i]_q
				\Bigr)
			\right)
			\prod_{k =  m+1}^{n+m+2} [k]_q^{-1}.
		\end{split}
	\end{equation*}
	The product over $k$ from $m+1$ to $n+m+2$ diverges to zero
	as $n\to \infty$. Therefore, the only possible nonzero contribution
	must include the sum over $i$:
	\begin{equation}
		\label{eq:Al-Salam-Chihara_Type-I_proof_0}
		(m+1)\ssp q(q-1)\ssp [m+1]_q [m+2]_q
		\Biggl(\ssp\prod_{k =  m+1}^{n+m+2} [k]_q^{-1}\Biggr)
		\sum_{i=1}^{n-1}
		q^i  [m+3]_q \cdots [m+2+i]_q.
	\end{equation}
	We aim to show that
	\begin{equation}
		\label{eq:Al-Salam-Chihara_Type-I_proof}
		\lim_{n\to \infty}
		\Biggl(\ssp\prod_{k =  m+1}^{n+m+2} [k]_q^{-1}\Biggr)
		\sum_{i=1}^{n-1}
		q^i  [m+1]_q[m+2]_q \cdots [m+2+i]_q=(q-1)\ssp q^{-m-3}
	\end{equation}
	for all $m\ge0$.
	Indeed, after cancelling out, the sum in \eqref{eq:Al-Salam-Chihara_Type-I_proof}
	becomes
	\begin{equation*}
		\sum_{i\, =\,1}^{n-1}q^i\prod_{j\,=\,i+1}^{n}\frac{1}{[m+2+j]_q}
		=
		\sum_{i \, = \, 1}^{n-1}
		q^{n-i}\prod_{j\,=\,n-i+1}^{n}\frac{1}{[m+2+j]_q}
	\end{equation*}
	All terms in the latter sum except the first one decay to zero
	exponentially fast as $n\to\infty$.
	This is because for $i\ge2$, there are at least
	two factors of the form $[n+\mathrm{const}]_q$
	in the denominator, which cannot be compensated
	by $q^{n-i}$ in the numerator.
	Therefore, we can exchange the summation and the limit,
	and immediately obtain the desired outcome
	\eqref{eq:Al-Salam-Chihara_Type-I_proof}.

	Combined with
	\eqref{eq:Al-Salam-Chihara_Type-I_proof_0},
	observe that the limiting quantities sum to $1$ over $m$
	\begin{equation*}
		\sum_{m=0}^{\infty}(m+1)\ssp q\ssp (q-1)^2\ssp q^{-m-3}=1.
	\end{equation*}
	This implies that
	$\muI(1^\infty 2\mathbb{YF})=1$, as desired.
\end{proof}

\subsection{Type-I component of the power specialization with $\upalpha=1$}
\label{sub:type_I_components_examples_power}

Recall the power specializations $t_k = \varkappa /
k^{\upalpha}$ with $\upalpha \in \{1,2\}$, introduced in
\Cref{sub:power_spec_defn}; these are of
convergent type.
Set $t_k=\varkappa/k^\upalpha$, where $0<\varkappa\leq\varkappa_1^{(\upalpha)}$
(see \Cref{prop:convergent_type_two_examples}),
and $x_k=1+t_{k-1}$, $y_k=t_k$.

For $\upalpha=1$, we have by \Cref{lemma:measure_of_all_ones}:
\begin{equation*}
	\muI(1^\infty)=\prod_{k=1}^{\infty}\left(1+\frac{\varkappa}{k}\right)^{-1},
\end{equation*}
which diverges to zero for all $\varkappa$, $0<\varkappa\leq\varkappa_1^{(1)}\approx  0.844637$.
Thus, $\muI$
is identically zero, and there
is no Type-I support, as in the case of the shifted Plancherel specialization
(cf. \Cref{rmk:type_I_components_examples_discussion_no_type_I_support}).

On the other hand, for the power specialization with $\upalpha=1$,
we expect that
the run statistics $r_k$ \eqref{eq:runs}
admit a scaling limit, similarly to the Charlier specialization
considered in \Cref{sec:asymptotics_Charlier_specialization}.
	This is suggested by the characteristic quantity
\[ { y_{k} \over {x_k x_{k+1} }}
 =  {(k-1) \varkappa
 \over {\big( k + \varkappa -1 \big) \big( k + \varkappa \big)}},
\]
which has a very similar form to the corresponding quantity
in the Charlier case:
\[ { y_{k} \over {x_k x_{k+1} }}
 =  { k \rho
\over {\big( k + \rho -1 \big) \big( k + \rho \big)}}. \]
The difference is only in the shift of the index $k$, and
the renaming of $\rho$ to $\varkappa$.
We do not pursue the analysis of this scaling limit in the present paper.

\medskip

Turning to the case $\upalpha=2$,
we have by \Cref{lemma:measure_of_all_ones}:
\begin{equation*}
	\muI(1^\infty)=\prod_{k=1}^{\infty}\left(1+\frac{\varkappa}{k^2}\right)^{-1}
	=
	\frac{\pi  \sqrt{\varkappa }}{\sinh (\pi  \sqrt{\varkappa })}>0,
\end{equation*}
which means that \Cref{prop:measure_on_type_I_words}
applies for $\upalpha=2$, and $\muI(1^\infty\mathbb{YF})=1$.
We now state explicitly the probability normalization identity for this measure $\muI$.

We have by \eqref{eq:A_infty_B_infty_series_definitions}:
\begin{equation*}
	A_\infty(m)=
	1+\sum_{r=1}^{\infty}\frac{\varkappa^r}{m^2 (m+1)^2\cdots(m+r-1)^2 }
	=
	\sum_{r=0}^{\infty}\frac{r!}{(m)_r (m)_r r!}\ssp \varkappa^r
	=
	{}_1F_2(1;m,m;\varkappa).
\end{equation*}
Thus,
\begin{equation*}
	B_\infty(0)=
	\varkappa-\frac{\varkappa(1-\varkappa)}{4}\ssp
	{}_1F_2(1;3,3;\varkappa)
	=
	\frac{1}{\varkappa}+\frac{\varkappa-1}{\varkappa}
	\ssp I_0(2\sqrt{\varkappa})
\end{equation*}
(where $I_0$ is the modified Bessel function of the first kind),
and $B_\infty(m)$ for $m\ge 1$ is similarly defined by
\eqref{eq:A_infty_B_infty_series_definitions}.
We see that the normalization identity $\muI(1^\infty\mathbb{YF})=1$
(equivalent to
\eqref{eq:Cauchy_identity_in_proposition_proof3})
takes the form
\begin{equation}
	\label{eq:weird_hypergeometric_identity}
	\begin{split}
		&\sum_{m=1}^{\infty}
		\biggl[
		\frac{\varkappa}{(m+1)}
		\left( 1-
			\frac{m^2(m+1)^2+(2m+1)\varkappa}{m^2(m+2)^2}\ssp\ssp
			{}_1F_2(1;m+3,m+3;\varkappa)
		\right)
		\prod_{k=1}^{m-1}\left( 1+\frac{\varkappa}{k^2} \right)
		\\&\hspace{220pt}
		+
		\frac{(\varkappa-1)\varkappa^{m-2}}{(m-1)!^2}
		\biggr]
		=
		\frac{\sinh (\pi  \sqrt{\varkappa })}{\pi  \sqrt{\varkappa }}-
		\frac{\varkappa+1}{\varkappa},
	\end{split}
\end{equation}
where we used the standard series representation
for the Bessel function.
Let us emphasize that \eqref{eq:weird_hypergeometric_identity}
follows from \Cref{prop:measure_on_type_I_words}.
It is not clear how to prove this
identity directly,
without
referring to the parameters $\vec{t}$ of the
Fibonacci positive specialization.

\section{From Random Fibonacci Words to Random Permutations}
\label{sec:random_Fibonacci_words_to_random_permutations}

In this section, we develop a model
of random permutations and involutions
based on the Young--Fibonacci
Robinson--Schensted correspondence.
This model incorporates transition and
cotransition probabilities determined
by clone Schur measures.
These probability models exploit
specific features of the Young--Fibonacci lattice
that are absent in the
Young lattice.
Specifically, we introduce a system
of \emph{cotransition probabilities} defined by
an arbitrary positive harmonic
function $\varphi: \mathbb{YF} \to
\mathbb{R}_{> 0}$, and employ them to construct
measures on permutations and involutions.

We emphasize that, for a branching graph, cotransition
probabilities are usually defined canonically and do not
allow a parametrization by a harmonic function. When our
construction is applied to the Young lattice, the resulting
cotransition probabilities depending on a harmonic function
$\varphi$ are meaningful only in the classical case when
$\varphi$ is the Plancherel harmonic function. This reflects
an extra flexibility of the Young--Fibonacci lattice, which,
to the best of our knowledge, has not been observed
previously.

\subsection{The Young--Fibonacci Robinson--Schensted correspondence}
\label{sub:UD_YF_operators}

Both the
Young--Fibo\-nacci lattice
$\mathbb{YF}$ and the Young lattice $\mathbb{Y}$
of integer partitions
are examples of $1$-{\it differential posets}
\cite{stanley1988differential}, \cite{fomin1994duality}.
That is, they are:
\begin{enumerate}[\bfseries 1.]
    \item Ranked, locally finite posets
    $\big(\mathbb{P}, \unlhd\big)$
    with a unique minimal element
    $\varnothing \in \mathbb{P}$.
    \item Possess the \textit{up} and \textit{down} operators,
    denoted by
    $\boldsymbol{\mathcal{U}}$
    and
    $\boldsymbol{\mathcal{D}}$, respectively,
		which
    satisfy the Weyl commutation relation
		$
    \big[\boldsymbol{\mathcal{D}}, \boldsymbol{\mathcal{U}}\big]
		= \mathbf{Id}$.
    Here $\boldsymbol{\mathcal{U}}, \, \boldsymbol{\mathcal{D}}$
    act on the vector space $\mathbb{C}[\mathbb{P}]$
    of complex-valued functions on $\mathbb{P}$ as follows:
		\begin{equation*}
			\boldsymbol{\mathcal{U}} \delta_v
			\coloneqq
			\sum_{\substack{w \rhd v \\ |w| = |v| + 1}} \delta_w,
			\qquad
			\boldsymbol{\mathcal{D}} \delta_v
			\coloneqq
			\sum_{\substack{u \lhd v \\ |u| = |v| - 1}} \delta_u,
		\end{equation*}
		where $\delta_v \colon \mathbb{P} \to \mathbb{C}$
		is the indicator function supported at
		$v \in \mathbb{P}$.
\end{enumerate}

An immediate consequence of the Weyl commutation relation is
that $\boldsymbol{\mathcal{D}}^n \ssp \boldsymbol{\mathcal{U}}^n \ssp \delta_\varnothing = n! \ssp \delta_\varnothing$.
This is equivalent to the assertion that
\begin{equation}
\label{eq:sum_of_dimensions_squared}
\sum_{|w| = n} \dim_{\ssp \mathbb{P}}^2(w) = n!,
\end{equation}
where $|w|$ denotes the rank of $w \in \mathbb{P}$, and
$\dim_{\ssp \mathbb{P}}(w)$ represents the number of saturated chains
$w_0 \lhd \cdots \lhd w_n$ in $\mathbb{P}$, beginning at $w_0 = \varnothing$
and terminating at $w_n = w$. Formula \eqref{eq:sum_of_dimensions_squared}
suggests a potential bijection between the set of saturated chains
terminating at rank level $\mathbb{P}_n$,
and permutations $\boldsymbol\upsigma\in\mathfrak{S}_n$.
In the case of the Young lattice $\mathbb{Y}$, such a bijection exists
and is given by the celebrated Robinson--Schensted (RS) correspondence.

The theory of differential posets provides a framework that extends the
RS correspondence beyond the combinatorics of integer partitions. Fomin
\cite{fomin1994duality}, \cite{fomin1995schensted}
demonstrated this generalization
showing that an RS correspondence can be constructed for any differential poset
using his concept of growth processes.
Specifically, an explicit RS correspondence
for the Young--Fibonacci lattice
$\mathbb{YF}$ was developed in \cite{fomin1995schensted},
and later
reformulated into a theory of \emph{standard tableaux} by
Roby \cite{Roby91ThesisRSK}.
A subsequent variant was introduced by Nzeutchap in
\cite{nzeutchap2009young}, which circumvents the Fomin growth process.
In this subsection, we briefly review Nzeutchap's version of
the Young--Fibonacci RS correspondence,
and employ it to get random permutations and involutions.
We remark that other versions of the RS correspondence for $\mathbb{YF}$
are equally applicable for these purposes.
\medskip

Like a partition, a Fibonacci word $w = a_1 \cdots a_k$ of
rank $|w| = a_1+\ldots+a_k  = n$
can be visualized as an arrangement of boxes
called a \emph{Young--Fibonacci diagram}. This diagram
consists of $n$ boxes arranged from left to right into $k$
adjacent columns, where the $i$-th column consists of $a_i$
vertically stacked boxes.
The following example, where $w = 12112211$,
illustrates this concept in \Cref{fig:Young--Fibonacci-diagram}.
\begin{figure}[h]
\[
	\begin{young}
, &  & , & , &  &   \\
& &  &  &  &  & & \\
, 1 & , 2  & , 1 &  , 1 &  , 2 & , 2 & , 1 & , 1
\end{young}
\]
\caption{Young--Fibonacci diagram of $w = 12112211$.} \label{fig:Young--Fibonacci-diagram}
\end{figure}

A \emph{standard Young--Fibonacci tableau} (SYFT) of shape
$w \in \mathbb{YF}_n$ is a labeling of the boxes of the
Young--Fibonacci diagram associated with $w$ using indices from
$\{1, \dots, n\}$ such that:
(i) each index is used exactly once,
(ii) box entries are strictly increasing in columns,
and (iii) the top entry of any column has no entry
greater than itself to is right.
See \Cref{SYFT-example} for an example.

\begin{figure}[h]
\[
\begin{young}
, & 10 & , & , & 6  & 4   \\
11 &  8 & 9 & 7  & 5  & 2 & 3 & 1
\end{young}
\]
\caption{
	Example of a standard Young--Fibonacci tableaux of shape
	$w = 12112211$.}
\label{SYFT-example}
\end{figure}

\begin{remark}
\label{remark:linear-extentions}
A Fibonacci word $w = a_1 \cdots a_k$ can equivalently be depicted
by its {\it rooted tree} $\Bbb{T}_w$. This tree consists of a
horizontal spine with $k$ nodes, where the left-most node serves as
the root. Additionally, a vertical leaf-node is attached to the
$i$-th node on the spine whenever $a_i = 2$. Each edge of the tree
is oriented towards the root, thereby inducing a partial order
$\sqsubseteq$ on the nodes of the tree $\Bbb{T}_w$. Specifically,
$a \sqsubset b$ represents a covering relation if and only if the
nodes $a, b \in \Bbb{T}_w$ are joined by an edge directed from $b$
to $a$.
For example, the tree associated with $w = 12112211$ is illustrated
in \Cref{Young--Fibonacci-tree}.
\begin{figure}[h]
\begin{equation}
\xymatrix{
   & &*+[F]{} \ar[d] & & &*+[F]{} \ar[d] &*+[F]{} \ar[d]   \\
   &*+[F]{} &*+[F]{} \ar[l]  &*+[F]{} \ar[l] &*+[F]{} \ar[l] &*+[F]{} \ar[l]&*+[F]{} \ar[l] &*+[F]{} \ar[l] &*+[F]{} \ar[l]  }
\end{equation}
\caption{Rooted tree $\Bbb{T}_w$ for $w = 12112211$} \label{Young--Fibonacci-tree}
\end{figure}

From this perspective, a SYFT $T$ of shape $w$ corresponds to a
\emph{linear extension} of $\Bbb{T}_w$. This is achieved by
superimposing the cells (and entries) of $T$ onto the nodes of
$\Bbb{T}_w$, replacing each entry~$i$ with $n+1-i$, and then
interchanging the top and bottom entries in each column of height
two. The correspondence between SYFTs $T$ of shape $w$ and 
linear extensions of $\Bbb{T}_w$ is bijective.
See \Cref{Young--Fibonacci-tree-extension} for an illustration.
\begin{figure}[h]
\begin{equation}
\centerline{
\xymatrixrowsep{0.15in}
\xymatrixcolsep{0.2in}
\xymatrix{
   & &*+[F]{4} \ar[d] & & &*+[F]{7} \ar[d] &*+[F]{10} \ar[d]   \\
   &*+[F]{1} &*+[F]{2} \ar[l]  &*+[F]{3} \ar[l] &*+[F]{5} \ar[l] &*+[F]{6} \ar[l]&*+[F]{8} \ar[l] &*+[F]{9} \ar[l] &*+[F]{11} \ar[l]  }}
\end{equation}
\caption{A Llinear extension of $\Bbb{T}_w$
associated to the SYFT in
Figure \ref{SYFT-example}.} \label{Young--Fibonacci-tree-extension}
\end{figure}
\end{remark}

Equation (\ref{eq:dimension_formula}) for $\dim(w)$ is a
restatement of the general \emph{hook-length} formula for counting
linear extensions of a finite, rooted tree,
applied to $\mathbb{T}_w$.
Consequently, the number
of SYFTs of shape $w$ equals $\dim(w)$.
This result can also be understood by constructing a bijection
between SYFTs of shape $w \in \mathbb{YF}_n$ and saturated
chains $w_0 \nearrow \cdots \nearrow w_n$, where $w_0 =
\varnothing$ and $w_n = n$.
Nzeutchap defines such a bijection using an \emph{elimination
map} $\mathcal{E}_n$. This map sends a SYFT $T$ of shape
$w \in \mathbb{YF}_n$ to a SYFT $\mathcal{E}_n[T]$ of shape
$v \in \mathbb{YF}_{n-1}$ such that $w \searrow v$.
See
\Cref{fig:FRSK-example} for an illustration.
For details,
see \cite{nzeutchap2009young}.
\begin{figure}[htpb]
	\centering
	\[
	\begin{array}{ccccccccccccccc}
	\varnothing
	&\nearrow
	&1
	&\nearrow
	&2
	&\nearrow
	&12
	&\nearrow
	&22
	&\nearrow
	&212
	&\nearrow
	&222
	&\nearrow
	&2212 \\ \\
	\varnothing
	&\stackrel{\ssp \mathcal{E}_1}{\longleftarrow}
	&\text{\scriptsize \begin{young}
	\raisebox{.4mm}{1}
	\end{young}}
	&\stackrel{\ssp \mathcal{E}_2}{\longleftarrow}
	&\text{\scriptsize \begin{young}
	\raisebox{.4mm}{2} \\
	\raisebox{.4mm}{1}
	\end{young}}
	&\stackrel{\ssp \mathcal{E}_3}{\longleftarrow}
	&\text{\scriptsize \begin{young}
	, & \raisebox{.4mm}{2} \\ \raisebox{.4mm}{3} &
	\raisebox{.4mm}{1}
	\end{young}}
	&\stackrel{\ssp \mathcal{E}_4}{\longleftarrow}
	&\text{\scriptsize \begin{young}
	\raisebox{.4mm}{4} & \raisebox{.4mm}{2}
	\\ \raisebox{.4mm}{3} & \raisebox{.4mm}{1}
	\end{young}}
	&\stackrel{\ssp \mathcal{E}_5}{\longleftarrow}
	&\text{\scriptsize \begin{young}
	\raisebox{.4mm}{5} & , & \raisebox{.4mm}{2}
	\\ \raisebox{.4mm}{3} & \raisebox{.4mm}{4}
	& \raisebox{.4mm}{1}
	\end{young}}
	&\stackrel{\ssp \mathcal{E}_6}{\longleftarrow}
	&\text{\scriptsize \begin{young}
	\raisebox{.4mm}{6}
	& \raisebox{.4mm}{5} & \raisebox{.4mm}{2}
	\\ \raisebox{.4mm}{3} & \raisebox{.4mm}{4}
	& \raisebox{.4mm}{1}
	\end{young}}
	&\stackrel{\ssp \mathcal{E}_7}{\longleftarrow}
	&\text{\scriptsize \begin{young}
	\raisebox{.4mm}{7}
	& \raisebox{.4mm}{6} & , & \raisebox{.4mm}{2}
	\\ \raisebox{.4mm}{3} & \raisebox{.4mm}{4}
	& \raisebox{.4mm}{5} & \raisebox{.4mm}{1}
	\end{young}}
	\end{array}
	\]
	\caption{Example of elimination maps.}
	\label{fig:FRSK-example}
\end{figure}

The \emph{RS correspondence} for the Young--Fibonacci lattice $\mathbb{YF}$ is a bijection that maps a permutation $\boldsymbol\upsigma \in \mathfrak{S}_n$ to an ordered pair $\mathrm{P}(\boldsymbol\upsigma) \times \mathrm{Q}(\boldsymbol\upsigma)$ of SYFTs, both sharing the same shape $w \in \mathbb{YF}_n$.

Given a permutation $\boldsymbol\upsigma = (\boldsymbol\upsigma_1, \ldots, \boldsymbol\upsigma_n) \in \mathfrak{S}_n$, the \emph{insertion tableau} $\mathrm{P}(\boldsymbol\upsigma)$ and \emph{recording tableau} $\mathrm{Q}(\boldsymbol\upsigma)$ are constructed as follows:
\begin{enumerate}[\bf1.\/]
	\item Read $\boldsymbol\upsigma$ from right to left.
	\item
		For each index $\boldsymbol\upsigma_k$ (proceeding from right to
		left), match it with the maximal unmatched index to its
		left (including itself if no such index exists).
	\item To construct $\mathrm{P}(\boldsymbol\upsigma)$, place the matched indices
		into columns of height two (or leave single unmatched
		indices as columns of height one), in the order of reading
		$\boldsymbol\upsigma$ from right to left.
		Place the larger
		value of each pair at the top of its column. Assemble
		these columns from left to right in the tableau.
	\item To construct $\mathrm{Q}(\boldsymbol\upsigma)$, replace each entry $\boldsymbol\upsigma_k\in \mathrm{P}(\boldsymbol\upsigma)$ with its index $k$.
		For columns of height two, swap the entries between the
		top and bottom positions.
\end{enumerate}
\Cref{FRSK-example} illustrates this process for $\boldsymbol\upsigma = (2, 7, 1, 5, 6, 4, 3)$.

\begin{figure}[h]
	\raisebox{14.5pt}{\begin{minipage}[b]{0.45\linewidth}
\centering
\[
\xymatrixrowsep{0.10in}
\xymatrixcolsep{0.2in}
\xymatrix{
2 & 7  & 1  \ar `d/2pt[dl] `/4pt[ll] [ll] & 5  \ar@(dr,dl) & 6 & 4  \ar `d/2pt[dl] `/4pt[l] [l]  & 3  \ar `d/2pt[ddl]  `/4pt[lllll] [lllll]  \\
& & & & & & &  \\  & & & & & & & } \]
\end{minipage}}
\hspace{.5cm}
\begin{minipage}[b]{0.45\linewidth}
\centering
\[
\underbrace{\begin{young}  7 & 6 & , & 2     \\ 3 & 4 & 5 & 1  \end{young}}_{\text{$\mathrm{P}(\boldsymbol\upsigma)$-tableau}} \quad
\underbrace{\begin{young}  7 & 6 & , & 3     \\ 2 & 5 & 4 & 1  \end{young}}_{\text{$\mathrm{Q}(\boldsymbol\upsigma)$-tableau}}
\]
\end{minipage}
\caption{The Young--Fibonacci RS correspondence
for $\boldsymbol\upsigma =  (2,7,1,5,6,4,3)$.}
\label{FRSK-example}
\end{figure}

The Young--Fibonacci RS correspondence enjoys many of the
features of the classical RS correspondence
together with many novel features. For
example, we have \cite{nzeutchap2009young}:

\begin{enumerate}[\bf 1.\/]
    \item $\mathrm{P}(\boldsymbol\upsigma^{-1}) = \mathrm{Q}(\boldsymbol\upsigma)$ and $\mathrm{Q}(\boldsymbol\upsigma^{-1}) = \mathrm{P}(\boldsymbol\upsigma)$.

    \item $\mathrm{P}(\boldsymbol\upsigma) = \mathrm{Q}(\boldsymbol\upsigma)$ if and only if $\boldsymbol\upsigma$ is an involution. Furthermore, the cycle decomposition of an involution $\boldsymbol\upsigma$ can be determined from the columns of $\mathrm{P}(\boldsymbol\upsigma)$ as follows:
    \begin{enumerate}[\bf a.\/]
        \item The number of two-cycles in $\boldsymbol\upsigma$ is $\mathbcal{h}(w)$, the total number of digits $2$ in $w$.
        \item The number of fixed points of $\boldsymbol\upsigma$ is $\mathbcal{r}(w)$, the total number of digits $1$ in $w$.
    \end{enumerate}
    Here, $w \in \mathbb{YF}$ is the shape of $\mathrm{P}(\boldsymbol\upsigma) = \mathrm{Q}(\boldsymbol\upsigma)$.

	\item $\mathrm{Q}(\boldsymbol\upsigma') = \mathcal{E}_n \big[\mathrm{Q}(\boldsymbol\upsigma)\big]$, where $\boldsymbol\upsigma = (\boldsymbol\upsigma_1, \dots, \boldsymbol\upsigma_n)$, and $\boldsymbol\upsigma'$ is the \emph{standardization} of $(\boldsymbol\upsigma_1, \dots, \boldsymbol\upsigma_{n-1})$
		(that is, the permutation $\boldsymbol\upsigma'\in \mathfrak{S}_{n-1}$
		preserves the relative order of the entries of $\boldsymbol\upsigma$).

    \item The lexicographically minimal, reduced factorization $s_{j_1 \downarrow r_1} \cdots \ssp s_{j_k \downarrow r_k}$ of $\boldsymbol\upsigma$ can be derived from the (appropriately defined) \emph{inversions} in the tableaux $\mathrm{P}(\boldsymbol\upsigma)$ and $\mathrm{Q}(\boldsymbol\upsigma)$. Here we use the notations
	$s_j=(j,j+1)$ and $s_{j \downarrow r} = s_j \cdots s_{j-r+1}$.
	For further details, see \cite{hivert_scott2024diagram}.
\end{enumerate}

\subsection{Transition and cotransition measures for the Young--Fibonacci lattice}
\label{sub:transition-cotransition-measures}

We first recall the standard construction of cotransition and
transition probabilities
for the Young--Fibonacci lattice.
These notions are associated
with general branching graphs
(e.g., see \cite{borodin2016representations}).

Let
$M_n$ on $\mathbb{YF}_n$
be a coherent family of measures associated
with a positive normalized harmonic function $\varphi$ by \eqref{eq:coherent_measure_for_harmonic_function}.
The coherence property \eqref{eq:coherence_property_standard_down_transitions}
is equivalent to the fact that the $M_n$'s
are compatible with the (\emph{standard}) \emph{cotransition probabilities}
\begin{equation}
	\label{eq:standard_cotransition_probabilities}
	\mu_{\scriptscriptstyle\mathrm{CT}}^{\scriptscriptstyle\mathrm{std}}(w , v)
	\coloneqq \frac{\dim v}{\dim w}, \qquad
	v\in \mathbb{YF}_{n-1},\ w\in \mathbb{YF}_n.
\end{equation}
If $w\not \searrow v$, we set
$\mu_{\scriptscriptstyle\mathrm{CT}}^{\scriptscriptstyle\mathrm{std}}$
to zero.
Note that
$\mu_{\scriptscriptstyle\mathrm{CT}}^{\scriptscriptstyle\mathrm{std}}$ do not depend $\varphi$.

Using the cotransition probabilities
\eqref{eq:standard_cotransition_probabilities}, we can define the
joint distribution on $\mathbb{YF}_{n-1}\times \mathbb{YF}_n$
with marginals $M_{n-1}$ and $M_n$, whose
conditional distribution from level $n$ to $n-1$ is
given by $\mu_{\scriptscriptstyle\mathrm{CT}}^{\scriptscriptstyle\mathrm{std}}$.
The conditional distribution in the other direction is,
by definition, given by the \emph{transition probabilities}, which
now depend on~$\varphi$:
\begin{equation}
	\label{eq:transition_probabilities}
	\mu_{\scriptscriptstyle\mathrm{T}}^{\scriptscriptstyle\varphi}(v , w)
	\coloneqq \frac{\varphi(w)}{\varphi(v)}, \qquad
	v\in \mathbb{YF}_{n-1},\ w\in \mathbb{YF}_n
\end{equation}
(and this is zero if $w\not \searrow v$).

Using the transition probabilities \eqref{eq:transition_probabilities},
we can define probability distributions on arbitrary saturated chains
from $w_0=\varnothing$ to $\mathbb{YF}_n$:
\begin{equation}
	\label{eq:transition_probabilities_on_saturated_chains}
	\overline{\mu}_{\scriptscriptstyle\mathrm{T}}
	^{\ssp\scriptscriptstyle\varphi}\big(w_0 \nearrow \cdots \nearrow w_n\big)
	\coloneqq \prod_{k=1}^n \mu_{\scriptscriptstyle\mathrm{T}}
	^{\scriptscriptstyle\varphi}\big(w_{k-1} , w_k\big)
	= \varphi(w_n),\qquad w_n\in \mathbb{YF}_n.
\end{equation}
Note that the
distribution
\eqref{eq:transition_probabilities_on_saturated_chains}
is uniform for all chains that end at the same Fibonacci word $w_n$.
This is known as the \emph{centrality} property in the
works of Vershik and Kerov (e.g., see \cite{VK81AsymptoticTheory}).
The transition probabilities associated with a harmonic function
$\varphi$ define an infinite random walk on the Young--Fibonacci lattice
starting from $\varnothing$. The probability
that the random walk passes through a given Fibonacci word $w\in \mathbb{YF}_n$
is equal to $M_n(w)=\dim w\cdot \varphi(w)$.

\medskip

Let us now define a new family of cotransition probabilities
which are associated to a given positive normalized harmonic function $\varphi$.
We emphasize that this construction is specific to the
``reflective'' nature of the Young--Fibonacci lattice;
namely, that $v \searrow u$
if and only if $2u \searrow v$.

\begin{definition}[Cotransition probabilities for an arbitrary harmonic function]
	\label{def:cotransition_probabilities_for_harmonic_function}
	For $w\in \mathbb{YF}_n$ and $v\in \mathbb{YF}_{n-1}$,
	let us define the
	(\emph{generalized})
	\emph{cotransition probabilities}
	\begin{equation}
		\label{eq:cotransition_probabilities_for_harmonic_function}
		\mu_{\scriptscriptstyle\mathrm{CT}}
		^{\scriptscriptstyle\varphi}(w , v)
		\coloneqq
		\begin{cases}
			1, & \text{if $w = 1v$}, \\
			\displaystyle\frac{\varphi(v)}{\varphi(u)}, & \text{if $w = 2u$},\\
			0,& \text{if $w\not \searrow v$}.
		\end{cases}
	\end{equation}
	One can readily check that in the Plancherel case $\varphi
	=
	\varphi_{_\mathrm{PL}}$ \eqref{eq:Plancherel_harmonic_function},
	the cotransition probabilities
	\eqref{eq:cotransition_probabilities_for_harmonic_function}
	become the standard ones
	from \eqref{eq:standard_cotransition_probabilities}.
\end{definition}

\begin{proposition}
	\label{proposition:cotransition_probabilities_for_harmonic_function}
	Expression \eqref{eq:cotransition_probabilities_for_harmonic_function}
	indeed defines probabilities, that is,
	\begin{equation}
		\label{eq:cotransition_probabilities_for_harmonic_function_sums_to_one}
		\sum_{v\in \mathbb{YF}_{n-1}} \mu_{\scriptscriptstyle\mathrm{CT}}
		^{\scriptscriptstyle\varphi}(w , v) = 1,
		\qquad
		w\in \mathbb{YF}_n.
	\end{equation}
\end{proposition}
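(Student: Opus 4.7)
The plan is to split the argument into the two mutually exclusive cases determined by the first letter of a nonempty Fibonacci word $w \in \mathbb{YF}_n$, and then use the recursive definition of the covering relation $\nearrow$ (from \Cref{def:YF_lattice}, or equivalently the recursive definition given in the Introduction) to identify all down-neighbors $v \in \mathbb{YF}_{n-1}$ of $w$. Recall that, by that recursion, $v \nearrow w$ holds precisely when either (i) $w = 1v$, or (ii) $w = 2a$ for some $a$ with $a \nearrow v$. Since the first letter of $w$ is either $1$ or $2$ but not both, exactly one of these two alternatives can contribute to a given $w$.

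First, suppose $w = 1 v_0$ starts with $1$. Then alternative (ii) is excluded because $w$ does not start with $2$, while alternative (i) forces $v = v_0$. Consequently the unique down-neighbor of $w$ is $v_0$, and the sum \eqref{eq:cotransition_probabilities_for_harmonic_function_sums_to_one} reduces to the single nonzero term $\mu_{\scriptscriptstyle\mathrm{CT}}^{\scriptscriptstyle\varphi}(w, v_0) = 1$, as prescribed by the first line of \eqref{eq:cotransition_probabilities_for_harmonic_function}.

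Next, suppose $w = 2u$ starts with $2$. Now alternative (i) is excluded, and alternative (ii) forces $a = u$, so the down-neighbors of $w$ are exactly the Fibonacci words $v$ satisfying $u \nearrow v$. Substituting the second line of \eqref{eq:cotransition_probabilities_for_harmonic_function}, the sum becomes
\begin{equation*}
\sum_{v \,\colon\, u \nearrow v} \frac{\varphi(v)}{\varphi(u)}
\;=\; \frac{1}{\varphi(u)} \sum_{v \,\colon\, u \nearrow v} \varphi(v)
\;=\; \frac{\varphi(u)}{\varphi(u)} \;=\; 1,
\end{equation*}
where the middle equality is exactly the harmonicity relation of \Cref{def:harmonic_on_YF} applied at $u$. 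This closes the second case and completes the proof.

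I do not anticipate a serious obstacle here: the only point that must be handled carefully is verifying that the two alternatives in the recursive definition of $\nearrow$ are genuinely exhaustive and mutually exclusive for each fixed $w$, so that the down-set of $w$ is correctly enumerated in each of the two cases — but this is immediate from the fact that a nonempty Fibonacci word has a unique first letter. Once that is in hand, the identity is a direct consequence of the definitions plus the single application of harmonicity in the $w = 2u$ case.
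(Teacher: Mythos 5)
Your proposal is correct and follows essentially the same route as the paper's proof: a case split on the first letter of $w$, with the $w=1v$ case contributing a single term equal to $1$ and the $w=2u$ case reducing, via the bijection between edges $w\searrow v$ and $v\searrow u$, to the harmonicity relation $\sum_{v\colon v\searrow u}\varphi(v)=\varphi(u)$. No issues.
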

\begin{proof}
	If $w$ starts with $1$, then there is only one possibility for $v$
	corresponding to $w=1v$, and
	\eqref{eq:cotransition_probabilities_for_harmonic_function_sums_to_one}
	is evident.
	Otherwise, for $w=2u$, the edges $w\searrow v$
	are in one-to-one correspondence with the edges $u\nearrow v$.
	The harmonicity of $\varphi$ implies that
	\begin{equation*}
		\sum_{v\in \mathbb{YF}_{n-1}} \varphi(v) = \varphi(u),
	\end{equation*}
	which is equivalent to
	\eqref{eq:cotransition_probabilities_for_harmonic_function_sums_to_one}.
	This completes the proof.
\end{proof}

Similarly to
$\overline{\mu}_{\scriptscriptstyle\mathrm{T}}^{\scriptscriptstyle\varphi}$
\eqref{eq:transition_probabilities_on_saturated_chains},
we can define the cotransition probabilities on
all saturated chains that start at a fixed Fibonacci
word $w_n\in \mathbb{YF}_n$ and terminate at $w_0=\varnothing$:
\begin{equation}
	\label{eq:cotransition_probabilities_on_saturated_chains}
	\overline{\mu}^{\ssp \scriptscriptstyle \varphi}_{\scriptscriptstyle \mathrm{CT}} (
	w_n \searrow \cdots
	\searrow w_1 \searrow w_0 )
	\coloneqq\, \prod_{k=1}^n \,
	\mu^{\scriptscriptstyle \varphi}_{\scriptscriptstyle \mathrm{CT}}
	(w_k , w_{k-1} ).
\end{equation}
The measure \eqref{eq:cotransition_probabilities_on_saturated_chains}
is uniform on all chains if and only if
$\varphi=\varphi_{\scriptscriptstyle\mathrm{PL}}$, the
Plancherel harmonic function.
Let us write
$\overline{\mu}^{\ssp \scriptscriptstyle \varphi}_{\scriptscriptstyle \mathrm{CT}}(T)
= \overline{\mu}^{\ssp \scriptscriptstyle \varphi}_{\scriptscriptstyle
\mathrm{CT}}(w_n \searrow \cdots \searrow w_1 \searrow w_0)$
whenever $T$ is the SYFT associated
to the saturated chain
$w_0 \nearrow \cdots \nearrow w_n$
as in the example in \Cref{fig:Young--Fibonacci-diagram}.
We refer to \cite{nzeutchap2009young} for details,
and examples of the generalized cotransition probabilities
are given in
\Cref{fig:cotransition-weights}.

\begin{figure}[htpb]
	\centering
	\begin{equation*}
\xymatrixrowsep{0.43in}
\xymatrixcolsep{0.40in}
\xymatrix{ \varnothing \ar@{-}[d] & & & & & & &  \\
1 \ar@{-}[d] \ar@{-}[drrrrrrr] & & & & & & & \\
11  \ar@{-}[d] \ar@{-}[drrrr]^{\color{red}
\varphi(11) } & & & & & & & 2  \ar@{-}[d] \ar@{-}[dlll]_{\color{red}
\varphi(2)} \\
111  \ar@{-}[d] \ar@{-}[drr]^{\color{red}
\frac{\varphi(111)}{\varphi(11)}} & & & & 21 \ar@{-}[d] \ar@{-}[dll]_{\color{red}
\frac{\varphi(21)}{\varphi(11)}}
\ar@{-}[dr]^{\color{red}
\frac{\varphi(21)}{\varphi(2)} } & & &
12  \ar@{-}[d] \ar@{-}[dll]_{\color{red}
\frac{\varphi(12)}{\varphi(2)}} \\
1111  \ar@{-}[d]  \ar@{-}[dr]^{\color{red} \frac{\varphi(1111)}{\varphi(111)}}  & &211  \ar@{-}[d] \ar@{-}[dl]_{\color{red} \frac{\varphi(211)}{\varphi(111)} }
\ar@{-}[dr]^{\color{red} \! \frac{\varphi(211)}{\varphi(21)} } & &121
\ar@{-}[d] \ar@{-}[dl]_{\color{red} \!\frac{\varphi(121)}{\varphi(21)} }  &22 \ar@{-}[d]  \ar@{-}[dll]^(.35){\color{red} \ \ \frac{\varphi(22)}{\varphi(21)} }|\hole
\ar@{-}[dr]^{\color{red} \frac{\varphi(22)}{\varphi(12)} }
& &112  \ar@{-}[d] \ar@{-}[dl]_{\color{red} \frac{\varphi(112)}{\varphi(12)} }  \\
11111 & 2111 & 1211 & 221 & 1121 & 122 & 212 & 1112 }
\end{equation*}
\vspace{25pt}
\[
\begin{array}{rllllc}
\text{\begin{young}
3 & 2 & 1
\end{young}}
&: \, \big( \varnothing
&\nearrow  \, 1
&\nearrow  \, 11
&\nearrow  \, 111 \big) \, :
&1 \\ \\
\text{\begin{young}
, & 2 \\ 3  & 1
\end{young}}
&: \, \big( \varnothing
&\nearrow \, 1
&\nearrow \, 2
&\nearrow \, 12 \ \, \big) \, :
&1 \\ \\
\text{\begin{young}
3 & , \\ 2  & 1
\end{young}}
&: \, \big( \varnothing
&\nearrow \, 1
&\nearrow \, 11
&\nearrow \, 21 \ \, \big) \, :
&\displaystyle \varphi(11) \\ \\
\text{\begin{young}
3 & , \\ 1  & 2
\end{young}}
&: \, \big( \varnothing
&\nearrow \, 1
&\nearrow \, 2
&\nearrow \, 21 \ \, \big) \, :
&\displaystyle \varphi(2)
\end{array}
\]
\caption{Top: Nonzero cotransition weights (in red with $1$'s omitted).
Bottom: The four
saturated chains which
terminate in $\mathbb{YF}_3$, together
with their associated
SYFTs and cotransition weights.}
\label{fig:cotransition-weights}
\end{figure}

We will typically be interested
in the case when $\varphi = \varphi_{\vec x, \vec y}$
is a clone harmonic function coming from a
Fibonacci positive specialization
$(\vec{x}, \vec{y} \ssp )$.

\subsection{Building random permutations and involutions}
\label{sub:random-permutation-involutions}

To construct a random permutation in $\mathfrak{S}_n$, we observe
that the RS correspondence
from \Cref{sub:UD_YF_operators}
uniquely determines a permutation
$\boldsymbol\upsigma$ by three components, namely, a random shape $w \in \mathbb{YF}_n$,
and two random saturated chains in $\mathbb{YF}$, both terminating
at $w$. This construction proceeds as follows:
\begin{enumerate}[\bf{}1.\/]
	\item
		First, select a Fibonacci word $w \in \mathbb{YF}_n$ with probability
		$M_n(w)$, determined by a positive harmonic function $\uppi$.
	\item
		Next, generate two saturated chains
		terminating at $w$
		using the cotransition probabilities $\overline{\mu}^{\ssp
		\scriptscriptstyle \varphi}_{\scriptscriptstyle
		\mathrm{CT}}$ and $\overline{\mu}^{\ssp
		\scriptscriptstyle \psi}_{\scriptscriptstyle
		\mathrm{CT}}$, associated with two (possibly different)
		positive harmonic
		functions $\varphi$ and $\psi$.
		The chains are conditioned to end at the previously
		chosen Fibonacci word $w$.
	\item From these two chains (viewed as SYFTs),
		construct a permutation $\boldsymbol\upsigma$ using the RS correspondence.
\end{enumerate}
In this way, the triad $(\uppi, \varphi, \psi)$ of
positive harmonic functions
determines a random permutation $\boldsymbol\upsigma\in \mathfrak{S}_n$ for
every $n \geq 1$.

Similarly, to construct a random involution in $\mathfrak{S}_n$,
we pick $w\in \mathbb{YF}_n$ according to $M_n(w)$ (determined
by $\uppi$), and generate a single saturated chain terminating at $w$,
sampled according to the cotransition probabilities
$\overline{\mu}^{\ssp \scriptscriptstyle \varphi}
_{\scriptscriptstyle \mathrm{CT}}$.

Summarizing, we have the following
probability
measures on
permutations and involutions
in $\mathfrak{S}_n$
denoted by $\mu_n$ and $\nu_n$, respectively:
\begin{equation}
	\label{eq:random_permutation_involutions}
	\begin{split}
		\mu_n(\boldsymbol\upsigma)
		=
		\mu_n(\boldsymbol\upsigma\mid \uppi, \varphi, \psi)
		&\coloneqq
		\dim(w) \ssp \uppi(w) \,
		\overline{\mu}^{\ssp \scriptscriptstyle
		\varphi}_{\scriptscriptstyle \mathrm{CT}} \big(
		\mathrm{P}(\boldsymbol\upsigma)\big) \, \overline{\mu}^{\ssp
		\scriptscriptstyle \psi}_{\scriptscriptstyle \mathrm{CT}}
		\big( \mathrm{Q}(\boldsymbol\upsigma)\big),\\
		\nu_n(\boldsymbol\upsigma)
		=
		\nu_n(\boldsymbol\upsigma\mid \uppi, \varphi)
		&\coloneqq
		\dim(w) \ssp \uppi(w) \,
		\overline{\mu}^{\ssp \scriptscriptstyle \varphi}
		_{\scriptscriptstyle \mathrm{CT}} \big(
		\mathrm{P}(\boldsymbol\upsigma)\big).
	\end{split}
\end{equation}

For example, the distribution $\mu_3$ on $\mathfrak{S}_3$ has the form
(writing permutations in the one-line notation):
\begin{equation*}
	\begin{array}{rclrcl}
		\mu_3(123) &=& \uppi(111),& \mu_3(213) &=& \uppi(12),\\
		\mu_3(132) &=& 2 \ssp \uppi(21) \ssp \varphi(11) \ssp \psi(11),
		& \mu_3(321) &=& 2 \ssp \uppi(21) \ssp \varphi(2) \ssp \psi(2),\\
		\mu_3(312) &=& 2 \ssp \uppi(21) \ssp \varphi(11) \ssp \psi(2),
		& \mu_3(231) &=& 2 \ssp \uppi(21) \ssp \varphi(2) \ssp \psi(11).
	\end{array}
\end{equation*}

\begin{remark}[Plancherel cases]
	When $\uppi=\varphi=\psi=\varphi_{\scriptscriptstyle\mathrm{PL}}$,
	$\mu_n$ is simply the uniform measure on $\mathfrak{S}_n$.
	More generally, when $\varphi=\psi=\varphi_{\scriptscriptstyle\mathrm{PL}}$,
	each permutation $\boldsymbol\upsigma \in \mathfrak{S}_n$
	with the RS shape $w \in \mathbb{YF}_n$
	occurs with probability $\mu_n(\boldsymbol\upsigma)=\uppi(w)/\dim(w)$.

	For the model of
	random involutions,
	when $\varphi=\varphi_{\scriptscriptstyle\mathrm{PL}}$,
	each involution $\boldsymbol\upsigma \in \frak{S}_n$
	with the RS shape $w \in \mathbb{YF}_n$
	occurs with probability $\nu_n(\boldsymbol\upsigma)=\uppi(w)$.
\end{remark}

\section{Observables from Cauchy Identities and their Asymptotics}
\label{sec:observables_from_Cauchy_identities}

Here we illustrate the connection between random
permutations (and involutions) and clone Schur
functions discussed in \Cref{sec:random_Fibonacci_words_to_random_permutations} above.
Specifically, we compute the
expected numbers of fixed points and of two-cycles in a
random involution $\boldsymbol\upsigma \in \mathfrak{S}_n$
distributed according to the measure $\nu_n$ defined by
\eqref{eq:random_permutation_involutions}.
We subsequently apply this formula to the shifted Plancherel
specialization, providing another perspective on the
scaling limit of the corresponding random Fibonacci words,
which complements the results of \Cref{sec:asymptotics_shifted_Plancherel_specialization}.

\subsection{Quadridiagonal determinantal formula for a generating function}
\label{sub:quadridiagonal_determinantal_formula}

In the definition of the measure $\nu_n$ in
\eqref{eq:random_permutation_involutions},
let us set
$\varphi=\varphi_{\scriptscriptstyle\mathrm{PL}}$, and take
$\uppi=\varphi_{\vec{x},\vec{y}}$ for some
Fibonacci positive specialization $(\vec{x},\vec{y})$.

Recall from \Cref{sub:UD_YF_operators} that
the fixed points and two-cycles of an involution
$\boldsymbol\upsigma \in \mathfrak{S}_n$ correspond, respectively, to the
digits $1$ and $2$
in the shape
$w \in \mathbb{YF}_n$ associated with $\boldsymbol\upsigma$ under the
Young--Fibonacci RS correspondence.
Let $\mathbcal{r}(w)$ and $\mathbcal{h}(w)$,
respectively, denote the total number of $1$'s and $2$'s in $w$.
Rather than directly computing the expectations
of $\mathbcal{r}(w)$ and $\mathbcal{h}(w)$,
we introduce an auxiliary parameter $\tau$
and calculate the expectation of
$\tau^{\mathbcal{h}(w)}=\tau^{\# \ssp \mathrm{two \text{-} cycles}(\boldsymbol\upsigma)}$.
This approach leverages the first clone Cauchy identity
from \Cref{sub:clone-cauchy}.
Since $\mathbcal{r}(w)+2\mathbcal{h}(w)=n$, knowing the distribution of $\mathbcal{h}(w)$ would, in principle, determine that of $\mathbcal{r}(w)$ as well; however, we do not consider $\mathbcal{r}(w)$ further here.

\begin{proposition}
	\label{proposition:expectation-cycles-involution}
	The expected value of
	$\tau^{\# \ssp \mathrm{two \text{-} cycles}(\boldsymbol\upsigma) }$
	for a random
	involution $\boldsymbol\upsigma \in \frak{S}_n$
	distributed according to
	$\nu_n(\boldsymbol\upsigma\mid \varphi_{\vec{x},\vec{y}},
	\varphi_{\scriptscriptstyle\mathrm{PL}})$
	\eqref{eq:random_permutation_involutions}
	is given by the following quadridiagonal determinant:
	\begin{equation}
		\label{eq:expectation-cycles-involution_new}
		\operatorname{\mathbb{E}}_{\nu_n}\bigl[ \tau^{\# \ssp
		\mathrm{two \text{-} cycles}(\boldsymbol\upsigma) } \bigr]
		=
		(x_1 \cdots x_n)^{-1} \ssp
		\det \underbrace{\begin{pmatrix}
		x_1
		&(1 - \tau )  y_1
		& - \tau x_1 y_2
		&0
		&\cdots \\
		1
		&x_2
		&(1  - 2 \tau)y_2
		&- 2 \tau x_2 y_3
		& \\
		0
		&1
		&x_3
		&(1  - 3 \tau )y_3
		& \\
		0
		&0
		&1
		&x_4
		& \\
		\vdots & & & &\ddots
		\end{pmatrix}}_{n \times n \ \mathrm{quadridiagonal \, matrix}} .
	\end{equation}
\end{proposition}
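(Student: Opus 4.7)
The plan is to reduce the expectation on the left-hand side to the first clone Cauchy identity (\Cref{prop:first-clone-cauchy}) under a carefully chosen specialization of the auxiliary parameters $(\vec p, \vec q\ssp)$.

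First I will unpack the measure $\nu_n(\boldsymbol\upsigma \mid \varphi_{\vec x, \vec y}, \varphi_{\scriptscriptstyle\mathrm{PL}})$. When $\varphi = \varphi_{\scriptscriptstyle\mathrm{PL}}$, the generalized cotransition probabilities of \Cref{def:cotransition_probabilities_for_harmonic_function} coincide with the standard ones \eqref{eq:standard_cotransition_probabilities}, so $\overline{\mu}^{\,\scriptscriptstyle\varphi_{\scriptscriptstyle\mathrm{PL}}}_{\scriptscriptstyle\mathrm{CT}}$ is uniform on the $\dim(w)$ saturated chains terminating at a given $w \in \mathbb{YF}_n$. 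Under the Young--Fibonacci RS correspondence these chains biject with the involutions of shape $w$ (since $\mathrm P = \mathrm Q$ for involutions), so every such involution receives weight $\nu_n(\boldsymbol\upsigma) = \varphi_{\vec x, \vec y}(w)$. Summing over the $\dim(w)$ involutions of each shape, together with the fact that $\# \ssp \mathrm{two\text{-}cycles}(\boldsymbol\upsigma) = \mathbcal{h}(w)$, yields
\begin{equation*}
\operatorname{\mathbb{E}}_{\nu_n}\bigl[\tau^{\# \ssp \mathrm{two\text{-}cycles}(\boldsymbol\upsigma)}\bigr]
\,=\, \sum_{w \in \mathbb{YF}_n} \dim(w)\, \varphi_{\vec x, \vec y}(w)\, \tau^{\mathbcal{h}(w)}
\,=\, (x_1 \cdots x_n)^{-1} \sum_{|w|=n} \dim(w)\, \tau^{\mathbcal{h}(w)}\, s_w(\vec x \mid \vec y \ssp ).
\end{equation*}

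The key observation is that this last sum is precisely the kernel $H_n(\vec x, \vec y\,;\vec p, \vec q\ssp)$ from \eqref{eq:first-clone-cauchy} under the specialization $p_k = 1$ and $q_k = \tau\, k$. Indeed, with this choice the defining recursion \eqref{eq:clone-homogeneous-function} becomes $h_{1v}(\vec p \mid \vec q\ssp) = h_v$ and $h_{2v}(\vec p \mid \vec q\ssp) = \tau\,(|v|+1)\, h_v$, which mirrors the recursion \eqref{eq:dimension_recursion} for $\dim$ twisted by an extra factor of $\tau$ per letter $2$. A one-line induction on $|w|$ then gives the identity
\begin{equation*}
h_w(\vec p \mid \vec q\ssp) \,=\, \tau^{\mathbcal{h}(w)}\, \dim(w), \qquad w \in \mathbb{YF}.
\end{equation*}

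Finally, substituting $p_k = 1$ and $q_k = \tau\, k$ into the quadridiagonal entries of \eqref{eq:first-clone-cauchy} yields $\mathrm A'_k = p_k x_k = x_k$, $\mathrm B'_k = y_k(p_k p_{k+1} - q_k) = (1 - \tau k)\, y_k$, and $\mathrm C'_k = q_k x_k y_{k+1} p_{k+2} = \tau k\, x_k\, y_{k+1}$. After the overall factor $(x_1 \cdots x_n)^{-1}$, the resulting determinant matches \eqref{eq:expectation-cycles-involution_new} term by term. I do not foresee any genuine obstacle here: the whole argument reduces to recognizing the correct specialization of $(\vec p, \vec q\ssp)$ that realizes $\dim(w)\, \tau^{\mathbcal{h}(w)}$ as a clone homogeneous function, and this recognition is forced by matching the two recursions; everything else is either bookkeeping or a direct invocation of \Cref{prop:first-clone-cauchy}.
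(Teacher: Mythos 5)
Your proof is correct and follows essentially the same route as the paper: rewrite the expectation as $\sum_{|w|=n}\dim(w)\,\tau^{\mathbcal{h}(w)}\,\varphi_{\vec x,\vec y}(w)$ and invoke the first clone Cauchy identity with a specialization of $(\vec p,\vec q\ssp)$ that realizes $\dim(w)\,\tau^{\mathbcal{h}(w)}$ as a clone homogeneous function. The only (cosmetic) difference is that the paper takes $p_k=x_k^{-1}$, $q_k=k\tau x_k^{-1}x_{k+1}^{-1}$ so that the prefactor $(x_1\cdots x_n)^{-1}$ is absorbed into $h_w$, whereas your choice $p_k=1$, $q_k=\tau k$ lands directly on the stated matrix entries, which is if anything slightly cleaner.
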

\begin{proof}
	The left-hand side of \eqref{eq:expectation-cycles-involution_new}
	can be rewritten using clone Schur functions as
	\begin{equation*}
		\sum_{|w| = n}
		\dim(w) \ssp
		\varphi_{\vec{x}, \vec{y}} \ssp (w) \ssp
		\tau^{\mathbcal{h}(w)}
		=
		(x_1 \cdots x_n)^{-1} \ssp
		\sum_{|w| = n}
		\dim(w) \ssp
		s_w(\vec{x} \mid \vec{y} \ssp) \ssp
		\tau^{\mathbcal{h}(w)}.
	\end{equation*}
	Setting
	$p_k = x_k^{-1}$ and $q_k = k \tau x_{k}^{-1} x_{k+1}^{-1}$
	in the clone Cauchy identity
	\eqref{eq:first-clone-cauchy}
	and noticing that
	$h_w (\vec{p} \mid \vec{q} \ssp)
	= (x_1 \cdots x_n)^{-1}\dim(w) \ssp \tau^{\mathbcal{h} (w)}$
	under this specialization
	implies the desired quadridiagonal determinant.
\end{proof}

\begin{remark}
	If we set $p_k=\tau x_{k}^{-1}$ and $q_k=k x_k^{-1} x_{k+1}^{-1}$ in the proof of
	\Cref{proposition:expectation-cycles-involution},
	we would get the expected number of fixed points
	of a random involution
	distibuted according to
	$\nu_n(\boldsymbol\upsigma\mid
	\varphi_{\vec{x},\vec{y}},\varphi_{\scriptscriptstyle\mathrm{PL}})$.
\end{remark}

The expected number of two-cycles can be computed
in a standard way, by differentiating:
\begin{equation*}
	\operatorname{\mathbb{E}}_{\nu_n}
	\left[
		\# \ssp \mathrm{two \text{-} cycles}(\boldsymbol\upsigma)
	\right]=
	\frac{\partial}{\partial \tau}\bigg|_{\tau =1}
	\operatorname{\mathbb{E}}_{\nu_n}
	\bigl[
		\tau^{\# \ssp \mathrm{two \text{-} cycles}(\boldsymbol\upsigma)}
	\bigr].
\end{equation*}
This differentiation
of a quadridiagonal determinant
\eqref{eq:expectation-cycles-involution_new}
is not explicit
for a general Fibonacci
positive specialization
$(\vec{x}, \vec{y} \ssp )$. In the next
\Cref{sub:two_cycles},
we consider the particular case of
the shifted Plancherel specialization
$x_k=y_k=k+\sigma-1$ for $\sigma \in [1, \infty)$
(\Cref{def:shifted_Plancherel_and_Charlier} with $\rho=1$).

\subsection{Number of two-cycles under the shifted Plancherel specialization}
\label{sub:two_cycles}

Consider
\begin{equation}
	\label{eq:shifted-plancherel-expectation-calH-n-def}
	H_n(\sigma, \tau) \coloneqq \operatorname{\mathbb{E}}_{\nu_n}
	\bigl[ \tau^{\# \ssp \mathrm{two \text{-} cycles}(\boldsymbol\upsigma)} \bigr],\qquad
	\nu_n=\nu_n(\cdot\mid \varphi_{\vec{x},\vec{y}},\varphi_{\scriptscriptstyle\mathrm{PL}}),
	\qquad
	x_k=y_k=k+\sigma-1,
\end{equation}
where $\sigma \in [1, \infty)$ is the parameter of the shifted Plancherel specialization
(not to be confused with the random involution $\boldsymbol\upsigma$).
Denote also
\begin{equation*}
	G_n(\sigma)\coloneqq \operatorname{\mathbb{E}}_{\nu_n}
	\left[ \# \ssp \mathrm{two \text{-} cycles}(\boldsymbol\upsigma) \right].
\end{equation*}

We side-step the differentiation of the quadridiagonal determinant, and instead work directly
with the $H_n(\sigma,\tau)$'s, and their generating function
\begin{equation}
	\label{eq:shifted-plancherel-expectation-generating-function}
	H(\sigma, \tau ; z) \coloneqq \sum_{n \geq 0} H_n(\sigma, \tau) \ssp z^n.
\end{equation}

\begin{lemma}
	\label{lemma:calH_n_recurrence}
	In the case of the shifted Plancherel specialization,
	the quantities $H_n(\sigma, \tau)$ \eqref{eq:shifted-plancherel-expectation-calH-n-def}
	satisfy the inhomogeneous, two-step recurrence
	\begin{equation}
		\label{two-step-recurrence-shifted-plancherel}
		(n + \sigma - 1) \ssp H_n
		= H_{n-1}
		+ \tau(n - 1) H_{n-2}
		+ (n + \sigma - 1) \ssp
		\varphi_{\vec{x}, \vec{y}} \ssp (1^n)
		- \varphi_{\vec{x}, \vec{y}} \ssp (1^{n-1}).
	\end{equation}
\end{lemma}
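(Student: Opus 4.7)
The plan is to derive a closed-form partial-sum identity for $H_n(\sigma,\tau)$ by decomposing $\mathbb{YF}_n$ according to the position of the first letter $2$ in the Fibonacci word, and then extract the two-step recurrence by subtracting the identities at indices $n$ and $n-1$.

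Every Fibonacci word $w\in\mathbb{YF}_n$ is either $1^n$ or uniquely of the form $1^k 2 u$ with $k\ge 0$ and $u\in\mathbb{YF}_{n-k-2}$. The former contributes $\varphi_{\vec x,\vec y}(1^n)$ to $H_n$; for the latter, the recursions in \eqref{eq:dimension_recursion} and \Cref{def:clone_Schur}, together with the trivial identity $\mathbcal{h}(1^k 2u) = 1+\mathbcal{h}(u)$, give $\dim(1^k 2u) = (n-k-1)\dim(u)$ and $s_{1^k 2u} = B_k(\vec x+|u|\mid\vec y+|u|)\,s_u$. The key simplification in the shifted Plancherel regime $x_k = y_k = k+\sigma-1$ is that the tridiagonal determinants collapse to a single linear function, $B_k(\vec x+m\mid\vec y+m) = m+\sigma$ for all $k,m\ge0$. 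This follows by a quick induction on $k$ from the three-term recurrence $B_k(m) = x_{m+k+2} B_{k-1}(m) - y_{m+k+1} B_{k-2}(m)$, and is parallel to the evaluation \eqref{eq:B_for_shifted_Plancherel_explicit}. Substituting, reindexing by $\ell = n-k-2$, and using $(\ell+1)(\ell+\sigma) = (\ell+1)\,x_{\ell+1}$, I obtain the identity
\[
(x_1\cdots x_n)\bigl(H_n - \varphi_{\vec x,\vec y}(1^n)\bigr) \;=\; \tau\sum_{\ell=0}^{n-2}(\ell+1)(x_1\cdots x_{\ell+1})\,H_\ell.
\]

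Denoting the right-hand sum by $S_n$, the one-step telescope $S_n - S_{n-1} = (n-1)(x_1\cdots x_{n-1})H_{n-2}$ is immediate. Subtracting the displayed identity at index $n-1$ from the one at index $n$, the common factor $(x_1\cdots x_{n-1})$ cancels, leaving
\[
x_n H_n - x_n\,\varphi_{\vec x,\vec y}(1^n) - H_{n-1} + \varphi_{\vec x,\vec y}(1^{n-1}) \;=\; \tau(n-1)\,H_{n-2}.
\]
Since $x_n = n+\sigma-1$, this is precisely \eqref{two-step-recurrence-shifted-plancherel}, and the initial cases are trivial from $H_0 = H_1 = 1$. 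The only substantive computation is the closed-form evaluation $B_k(m) = m+\sigma$, which is a telescoping consequence of $y_k = x_k$; all other steps are bookkeeping.
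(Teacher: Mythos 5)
Your proof is correct, but it is organized differently from the paper's. The paper splits $H_n=\sum_{|w|=n}\dim(w)\ssp\varphi_{\vec x,\vec y}(w)\ssp\tau^{\mathbcal{h}(w)}$ according to the \emph{first letter} of $w$ (the three cases $w=1^n$, $w=1u$ with $u\neq 1^{n-1}$, and $w=2v$), and uses the pointwise identities $\varphi_{\vec x,\vec y}(1w)=\varphi_{\vec x,\vec y}(w)/x_n$ and $\varphi_{\vec x,\vec y}(2v)=\varphi_{\vec x,\vec y}(v)/x_n$ — both consequences of the collapse $B_k(m)=m+\sigma$ — to read off the two-step recurrence in a single pass. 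You instead split according to the position of the first $2$, which yields the intermediate partial-sum identity $(x_1\cdots x_n)\bigl(H_n-\varphi_{\vec x,\vec y}(1^n)\bigr)=\tau\sum_{\ell=0}^{n-2}(\ell+1)(x_1\cdots x_{\ell+1})H_\ell$, and then recover the recurrence by differencing consecutive instances. Both arguments hinge on exactly the same determinant evaluation $B_k(\vec x+m\mid\vec y+m)=m+\sigma$ (which is \eqref{eq:B_for_shifted_Plancherel_explicit} with $\gamma=\sigma-1$, so you could simply cite it rather than re-derive it); the paper's route is more economical, while yours produces as a byproduct a closed summation formula that at $\tau=1$ specializes to the normalization identity \eqref{eq:finite-tversion-Important-Identity}, which is a nice consistency check. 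Your bookkeeping (the dimension factor $(n-k-1)$, the reindexing $(\ell+1)(\ell+\sigma)=(\ell+1)x_{\ell+1}$, and the cancellation of $x_1\cdots x_{n-1}$ in the differencing step) all checks out, and the base cases are handled correctly.
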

\begin{proof}
	A crucial property of the shifted Plancherel specialization is that
	\begin{equation}
		\label{eq:shifted-plancherel-specialization-1w}
		\varphi_{\vec x, \vec y} \ssp (1w) =\frac{ \varphi_{\vec x, \vec y} \ssp (w) }{ x_n},\qquad
		\varphi_{\vec x, \vec y} \ssp (2v) =\frac{ \varphi_{\vec x, \vec y} \ssp (v) }{ x_n},
	\end{equation}
	for any Fibonacci word $w \in
	\mathbb{YF}_{n-1}$ which does not consist entirely of
	$1$-digits, and any Fibonacci word $v\in \mathbb{YF}_{n-2}$. Indeed, this is because for the shifted
	Plancherel specialization, we have for the second determinant
	in \eqref{eq:A_B_dets}:
	\begin{equation*}
		B_k(m)=m+\sigma=x_{m+1}, \qquad k\ge 0.
	\end{equation*}
	Moreover, $\dim (1w)=\dim(w)$. This implies \eqref{eq:shifted-plancherel-specialization-1w}.
	Now,
	\begin{equation*}
		 H_n(\sigma, \tau) =
		\sum_{|w|  =  n}
		\dim(w) \ssp
		\varphi_{\vec x, \vec y} \ssp (w)
		\ssp \tau^{\mathbcal{h}(w)}.
	\end{equation*}
	Split the sum into three parts: $w=1^n$, $w=1u$, and $w=2v$.
	Rewriting the second two sums in terms of $H_{n-1}$ and $H_{n-2}$, respectively,
	yields the desired recurrence \eqref{two-step-recurrence-shifted-plancherel}.
\end{proof}

Note that for the shifted Plancherel specialization, we have
\begin{equation}
	\label{eq:shifted-plancherel-specialization-1}
	(n+\sigma-1)\ssp \varphi_{\vec{x}, \vec{y}} \ssp (1^n) -
	\varphi_{\vec{x}, \vec{y}} \ssp (1^{n-1}) = \sigma-1, \qquad n \geq 1.
\end{equation}

\begin{lemma}
	\label{lemma:ode-evgf-shifted-plancherel}
	The generating function $H(\sigma, \tau ; z)$
	\eqref{eq:shifted-plancherel-expectation-generating-function}
	satisfies the first order ODE:
	\begin{equation}
		\label{eq:ode-H-sigma-tau-z}
		z(1 - \tau z^2) \ssp \partial_z \ssp H(\sigma, \tau ; z) \, + \,
		(\sigma - 1 - z - \tau z^2) \ssp H(\sigma,\tau ; z)
		\, = \, {\sigma -1 \over {1-z}}
	\end{equation}
\end{lemma}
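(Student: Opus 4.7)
The plan is to convert the two-step recurrence of \Cref{lemma:calH_n_recurrence} into the ODE by standard generating-function calculus. First, I would substitute identity \eqref{eq:shifted-plancherel-specialization-1} directly into \eqref{two-step-recurrence-shifted-plancherel} to replace the ``$\varphi_{\vec x,\vec y}$'' terms by the constant $\sigma-1$, obtaining the cleaner inhomogeneous recurrence
\begin{equation*}
(n+\sigma-1)\,H_n \;=\; H_{n-1} \;+\; \tau(n-1)\,H_{n-2} \;+\; (\sigma-1), \qquad n\ge 1,
\end{equation*}
with the conventions $H_0 = 1$ and $H_{-1} = 0$ (the latter makes the $n=1$ case consistent since the coefficient $n-1$ vanishes).

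Next, I would multiply the recurrence by $z^n$ and sum over $n \ge 1$. The left-hand side contributes
\begin{equation*}
\sum_{n\ge1}(n+\sigma-1)H_n z^n \;=\; z\,\partial_z H(\sigma,\tau;z) \;+\; (\sigma-1)\bigl(H(\sigma,\tau;z)-1\bigr),
\end{equation*}
using $\sum_{n\ge1} nH_n z^n = zH'$ and $\sum_{n\ge1} H_n z^n = H-1$ (since $H_0=1$). On the right-hand side, the three sums are evaluated by index shifts: $\sum_{n\ge1} H_{n-1}z^n = zH$; then with $m=n-2$,
\begin{equation*}
\sum_{n\ge1}\tau(n-1)H_{n-2}z^n \;=\; \tau z^2\sum_{m\ge0}(m+1)H_m z^m \;=\; \tau z^2\,\partial_z\bigl(zH\bigr) \;=\; \tau z^2 H + \tau z^3 H';
\end{equation*}
and $\sum_{n\ge1}(\sigma-1)z^n = (\sigma-1)\,z/(1-z)$.

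Finally, I would collect terms: grouping $H'$-contributions gives $z(1-\tau z^2)H'$, grouping $H$-contributions gives $(\sigma-1-z-\tau z^2)H$, and the constant $-(\sigma-1)$ from the left combines with $(\sigma-1)z/(1-z)$ on the right to form
\begin{equation*}
(\sigma-1) + \frac{(\sigma-1)z}{1-z} \;=\; \frac{\sigma-1}{1-z},
\end{equation*}
which is exactly \eqref{eq:ode-H-sigma-tau-z}. There is no serious obstacle here; the only bookkeeping subtlety is correctly accounting for the $n=1$ boundary contribution and combining the $-(\sigma-1)$ constant with the geometric series into the single pole at $z=1$.
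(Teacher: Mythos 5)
Your proof is correct and is precisely the computation the paper leaves implicit: the paper's proof of this lemma is a one-line remark that the ODE "immediately follows" from the recurrence of \Cref{lemma:calH_n_recurrence} together with \eqref{eq:shifted-plancherel-specialization-1}, and your generating-function manipulation (including the boundary conventions $H_0=1$, $H_{-1}=0$ and the merging of $-(\sigma-1)$ with $(\sigma-1)z/(1-z)$ into $(\sigma-1)/(1-z)$) supplies exactly those omitted details.
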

\begin{proof}
	This immediately follows from the recurrence
	in \Cref{lemma:calH_n_recurrence}
	and the identity \eqref{eq:shifted-plancherel-specialization-1}.
\end{proof}

Consider first the case $\sigma=1$
(the usual Plancherel specialization $\uppi=\varphi_{\scriptscriptstyle\mathrm{PL}}$).
Then
the ODE in
\Cref{lemma:ode-evgf-shifted-plancherel}
admits an explicit solution:
\begin{equation*}
	H(1, \tau \ssp ; z)
	= \frac{1}{\sqrt{1 - \tau z^2}}
	\Bigg( \frac{1 + \sqrt{\tau}z}{1 - \sqrt{\tau}z} \Bigg)^{\frac{1}{2 \sqrt{\tau}}}.
\end{equation*}
Taking the $\tau$-derivative of the above expression
at $\tau=1$, we see that
\begin{equation}
	\label{eq:H_1_z}
	\sum_{n\ge0} G_n(1) \ssp z^n =
	{z \over {2 (1-z)^2}}  +
	{1 \over {4(1-z)}} \log \left(
	{1 -z \over {1+z}} \right),\qquad |z|<1.
\end{equation}
The dominating singularity of this function is $z=1$.
The first summand expands as $\sum_{n\ge0}\frac n2\ssp z^n$, and one can readily check that the
coefficients of the second summand are asymptotically bounded in $n$.
We conclude that
\begin{equation}
	\label{eq:limit-two-cycles-plancherel}
	\lim_{n\to\infty}\frac{G_n(1)}{n}=\frac{1}{2}.
\end{equation}

\begin{remark}
	The limit
	\eqref{eq:limit-two-cycles-plancherel}
	aligns with the result of \cite{gnedin2000plancherel}
	(cited in \Cref{sub:Plancherel_measure_and_scaling}),
	which states that under the Plancherel measure, the
	frequency of hikes of $2$'s in the random Fibonacci word
	(the number of two-cycles in the corresponding random permutation $\boldsymbol\upsigma\in \mathfrak{S}_n$)
	scales proportionally to $n$. Moreover, asymptotically, $1$'s (fixed points of $\boldsymbol\upsigma$)
	do not have a significance presence.
\end{remark}

Consider now the general case $\sigma \in [1, \infty)$.
It is not clear to the authors how to
express solutions to the ODE of \Cref{lemma:ode-evgf-shifted-plancherel},
even in terms of hypergeometric
functions. Nevertheless,
after differentiating
the ODE in $\tau$, setting $\tau=1$,
and using the
fact
$H(\sigma, 1; z) = (1-z)^{-1}$,
we obtain an ODE
for
\begin{equation*}
	G(\sigma;z)\coloneqq \partial_\tau |_{\tau =1} \ssp H(\sigma, \tau ; z)
	= \sum_{n \geq 0} G_n(\sigma) \ssp z^n,
	\qquad
	G_n(\sigma)=\operatorname{\mathbb{E}}_{\nu_n}
	\left[ \# \ssp \mathrm{two \text{-} cycles}(\boldsymbol\upsigma) \right].
\end{equation*}
The new ODE has the form:
\begin{equation}
	\label{eq:ode-H-sigma-z}
	z(1 - z^2) \ssp \partial_z G(\sigma;z)
	\, + \,
	(\sigma - 1 - z - z^2) \ssp
	G(\sigma;z)
	=  {z^2 \over {(1-z)^2}},
\end{equation}
whose solution can be expressed through the hypergeometric functions:
\begin{equation}
	\label{eq:H-sigma-z}
	\begin{split}
		G(\sigma;z)&=
		\frac{z^{1-\sigma} \ssp (1+z)^\sigma}
				 { 2^\sigma
                 (1 + \sigma) \ssp (1-z)^2} \,
		{}_2F_1 \left(
				-\frac{1 + \sigma}{2},
				-\sigma;
				\frac{1 - \sigma}{2};
				\frac{1-z}{1+z}
		\right)
		\\&\hspace{90pt}
		- \frac{\Gamma(1 + \sigma) \,
		\Gamma\big(\frac{1}{2} - \frac{\sigma}{2}\big)}
		{2^\sigma (1+\sigma) \ssp
		\Gamma\big(\frac{1}{2} + \frac{\sigma}{2}\big)} \,
		z^{1-\sigma} (1-z)^{-1} (1-z^2)^{(\sigma-1)/2}.
\end{split}
\end{equation}
The hypergeometric function makes sense unless $\sigma$ is an odd positive integer.
In the latter case, the singularities in the first and the second summand
cancel out, and the whole function $G(\sigma;z)$ is well-defined for all $\sigma\in[1,\infty)$.
We have $G(\sigma;0)=0$.
One can also verify that as $\sigma\to 1$, the solution
\eqref{eq:H-sigma-z} reduces to the right-hand side of \eqref{eq:H_1_z}.
Together with the known differentiation formula for the hypergeometric function, this implies that
\eqref{eq:H-sigma-z} is indeed a solution to \eqref{eq:ode-H-sigma-z}.

\begin{proposition}
	\label{proposition:limit-two-cycles-shifted-plancherel}
	The coefficients
	at $z=0$
	of the generating function $G(\sigma;z)$
	\eqref{eq:H-sigma-z}
	scale as follows:
	\begin{equation*}
		\lim_{n\to\infty}\frac{G_n(\sigma)}{n}=\frac{1}{\sigma+1},
		\qquad
		\sigma\in [1,\infty).
	\end{equation*}
\end{proposition}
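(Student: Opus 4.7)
The plan is to apply singularity analysis to the generating function $G(\sigma;z)$ given by the explicit formula \eqref{eq:H-sigma-z}, locate its dominant singularity at $z=1$, extract the leading singular behavior, and then conclude via a transfer theorem. The key observation is that the answer $\tfrac{1}{\sigma+1}$ must arise as the coefficient of the $(1-z)^{-2}$ singularity, since $n$-th Taylor coefficients of $(1-z)^{-2}$ grow like $n+1$.

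First I would verify that the radius of convergence of $G(\sigma;z)$ is exactly $1$. Since $G_n(\sigma)\le n$ (the number of two-cycles is bounded by $n$), the series converges in $|z|<1$; meanwhile, the formula \eqref{eq:H-sigma-z} together with the driving term $z^2(1-z)^{-2}$ on the right-hand side of the ODE \eqref{eq:ode-H-sigma-z} makes it clear that $z=1$ is a genuine singularity and that no singularity of $G(\sigma;z)$ lies in $|z|<1$ (the factors $z^{1-\sigma}$ and $(1+z)^\sigma$ have no singularities there, and the branch-point behavior of $(1-z^2)^{(\sigma-1)/2}$ is only at $z=\pm1$).

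Next, I would expand each of the two summands in \eqref{eq:H-sigma-z} as $z\to 1^{-}$. In the first summand, the argument $\tfrac{1-z}{1+z}$ of the hypergeometric function tends to $0$, so
\[
{}_2F_1\!\left(-\tfrac{1+\sigma}{2},-\sigma;\tfrac{1-\sigma}{2};\tfrac{1-z}{1+z}\right)\;=\;1+O(1-z),
\]
and the prefactor $z^{1-\sigma}(1+z)^\sigma\bigl(2^\sigma(1+\sigma)(1-z)^2\bigr)^{-1}$ tends to $\bigl((\sigma+1)(1-z)^2\bigr)^{-1}$. Thus the first summand contributes
\[
\frac{1}{(\sigma+1)(1-z)^2}\;+\;O\!\bigl((1-z)^{-1}\bigr),\qquad z\to 1^{-}.
\]
The second summand is proportional to $z^{1-\sigma}(1-z)^{-1}(1-z)^{(\sigma-1)/2}(1+z)^{(\sigma-1)/2}$, i.e., is of order $(1-z)^{(\sigma-3)/2}$; since $\sigma\ge1$, the exponent $(\sigma-3)/2$ strictly exceeds $-2$, so this is strictly less singular than $(1-z)^{-2}$. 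When $\sigma$ is an odd integer, the formal singularities of the hypergeometric and the second summand cancel (as noted right after \eqref{eq:H-sigma-z}), and the same local expansion holds by continuity in $\sigma$; I would note this but the coefficient extraction is insensitive to that cancellation.

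Finally, I would apply the standard transfer theorem (Flajolet--Sedgewick, \emph{Analytic Combinatorics}, Theorem VI.3) on a suitable $\Delta$-domain around $z=1$: from the singular expansion
\[
G(\sigma;z)\;=\;\frac{1}{\sigma+1}\,(1-z)^{-2}\;+\;O\!\bigl((1-z)^{-\alpha}\bigr),\qquad \alpha<2,
\]
one deduces
\[
[z^n]\,G(\sigma;z)\;=\;\frac{n+1}{\sigma+1}\;+\;O(n^{\alpha-1}),
\]
and dividing by $n$ yields the claimed limit. The only step that requires any care is the verification that $G(\sigma;z)$ extends analytically to a $\Delta$-domain at $z=1$; this is the main (mild) obstacle, and I would handle it either by appealing to the analytic structure of the hypergeometric and power factors in \eqref{eq:H-sigma-z}, or, alternatively, by working directly with the ODE \eqref{eq:ode-H-sigma-z}: substituting the ansatz $G(\sigma;z)=\tfrac{A}{(1-z)^2}+\text{(less singular)}$ into \eqref{eq:ode-H-sigma-z} and matching the most singular terms forces $4A+(\sigma-3)A=1$, giving $A=\tfrac{1}{\sigma+1}$ independently of the explicit solution, so that even without the closed form \eqref{eq:H-sigma-z} one arrives at the same conclusion.
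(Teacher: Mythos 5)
Your overall strategy is the same as the paper's: read off the dominant $(1-z)^{-2}$ singularity of the closed form \eqref{eq:H-sigma-z} at $z=1$ with coefficient $\tfrac{1}{\sigma+1}$, check that the second summand contributes only $(1-z)^{(\sigma-3)/2}$, and transfer to coefficient asymptotics. That part of your argument is correct, and your ODE ansatz $G=A(1-z)^{-2}+\cdots$ giving $(4+\sigma-3)A=1$ is a nice independent confirmation of the constant.

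However, there is a genuine gap in your treatment of the circle of convergence: you assert that the only branch points on $|z|\le 1$ come from the elementary factors $z^{1-\sigma}$, $(1+z)^\sigma$, and $(1-z^2)^{(\sigma-1)/2}$, and on that basis you dismiss $z=-1$. This overlooks the hypergeometric factor itself. Its argument $\tfrac{1-z}{1+z}$ tends to $\infty$ as $z\to-1$, and ${}_2F_1(a,b;c;w)$ generically grows like $w^{-a}$ or $w^{-b}$ as $w\to\infty$; here that means a potential blow-up of order $(1+z)^{-(1+\sigma)/2}$ or $(1+z)^{-\sigma}$, which is a singularity \emph{on} the unit circle that could a priori compete with, or even dominate, the $(1-z)^{-2}$ behavior at $z=1$ (for instance $(1+z)^{-\sigma}$ is worse than $(1-z)^{-2}$ once $\sigma>2$, were it actually present). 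One must check that these apparent singularities are cancelled by the prefactor $(1+z)^\sigma$. The paper does exactly this, occupying half of its proof: it applies the linear transformation \cite[(15.8.1)]{NIST:DLMF} to rewrite the ${}_2F_1$ with argument $\tfrac{z-1}{2z}$, after which the combination of the transformation factor $\bigl(\tfrac{2z}{1+z}\bigr)^{(\sigma+1)/2}$ with the prefactor is seen to be regular (in fact bounded) at $z=-1$. Without some such verification, the transfer theorem cannot be applied on a $\Delta$-domain, since singularity analysis requires control of \emph{all} singularities on the circle of convergence, not just the one at $z=1$. Your fallback via the ODE does not escape this either: $z=-1$ is a singular point of \eqref{eq:ode-H-sigma-z} (a zero of $z(1-z^2)$), so one would still have to rule out a strong local exponent there for the particular solution analytic at the origin.
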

\begin{proof}
	We need to analyze the singularities of $G(\sigma;z)$ in $z$.
	There are two singularities closest to the origin,
	$z=1$ and $z=-1$. At
	$z=1$, the
	first summand in \eqref{eq:H-sigma-z} clearly
	has a pole of order $2$
	and behaves as
	$(\sigma+1)^{-1}(z-1)^{-2}$.
	To complete the proof, it suffices to show that this is the dominant behavior.
	At $z=1$, the second summand in \eqref{eq:H-sigma-z}
	behaves as $\mathrm{const}\cdot (z-1)^{\frac{\sigma-3}{2}}$, which is
	less singular than $(z-1)^{-2}$.

	Consider now the singularity at $z=-1$.
	The second summand in \eqref{eq:H-sigma-z} is regular at $z=-1$.
	For the first summand, transform the hypergeometric function as
	\cite[(15.8.1)]{NIST:DLMF}
	\begin{equation*}
		{}_2F_1 \left(
				-\frac{1 + \sigma}{2},
				-\sigma;
				\frac{1 - \sigma}{2};
				\frac{1-z}{1+z}
		\right)
		=
		\left( \frac{2z}{1+z} \right)^{\frac{\sigma+1}{2}}
		{}_2F_1 \left(
				-\frac{1 + \sigma}{2},
				\frac{1+\sigma}{2};
				\frac{1 - \sigma}{2};
				\frac{z-1}{2z}
			\right).
	\end{equation*}
	Now, the hypergeometric function becomes regular at $z=-1$.
	The power $(1+z)^{-\frac{\sigma+1}{2}}$, combined with the
	prefactor, is also regular. This implies that the singularity
	at $z=-1$ does not contribute to the leading behavior of
	the coefficients, and so we are done.
\end{proof}

\subsection{Reconciling with initial hikes under the shifted Plancherel distribution}
\label{sub:shiifted_Plancherel_number_of_2s_discussion}

We see that for $\sigma>1$, the
asymptotic expected proportion of the $2$'s in a
random Fibonacci word with the shifted Plancherel
distribution is strictly less than $\frac{1}{2}$.
This phenomenon agrees with the scaling limit of initial
hikes of $2$'s in this Fibonacci word obtained in
\Cref{thm:shifted_Plancherel_scaling}.
Indeed, recall the random variables $\xi_{\sigma;k}$, $k\ge1$, from
\Cref{def:xi_sigma}. See also \Cref{rmk:xi_sigma_construction}
for an alternative description using conditional independence
on $N=n$, where $N$ is defined by
\eqref{eq:xi_sigma_construction_N_random_variable}.
Fix $k$. By \Cref{thm:shifted_Plancherel_scaling},
the scaling limit of the sum of the first $k$ initial hikes
in a random Fibonacci word with the shifted Plancherel
distribution is given by
\begin{equation}
	\label{eq:shifted_Plancherel_scaling_limit_remark_compute_via_xi}
	\lim_{n\to\infty}\frac{h_1(w)+\cdots+h_k(w) }{n}
	\to \frac{1}{2}\left( X_1+\cdots+X_k  \right)=
	\frac{1}{2}-\frac{1}{2}
	\prod_{j=1}^{k}(1-\xi_{\sigma;j}),
\end{equation}
where the $X_j$'s are obtained from the $\xi_{\sigma;j}$'s by the
stick-breaking construction
(see \Cref{rmk:shifted_Plancherel_stick_breaking}).
Let us compute the expectation of the right-hand side of
\eqref{eq:shifted_Plancherel_scaling_limit_remark_compute_via_xi}
with $k=\infty$
(one readily sees that
the limit of \eqref{eq:shifted_Plancherel_scaling_limit_remark_compute_via_xi}
as $k\to\infty$ is well-defined).
We have, using the fact that
$\operatorname{\mathbb{E}}(\mathrm{beta}(1,\sigma/2))=\frac{2}{2+\sigma}$:
\begin{equation*}
	\operatorname{\mathbb{E}}\biggl[\,
	\prod_{j=1}^{\infty}(1-\xi_{\sigma;j})
	\biggr]
	=
	\sum_{m=1}^{\infty}
	\operatorname{\mathbb{P}}(N=m)\ssp
	\Bigl( \frac{\sigma}{2+\sigma} \Bigr)^m
	=
	\sum_{m=1}^{\infty}
	\sigma^{-\binom m2}(1-\sigma^{-m})
	\Bigl( \frac{\sigma}{2+\sigma} \Bigr)^m.
\end{equation*}
One can check that
\begin{equation}
	\label{eq:expectation_to_compare}
	\frac{1}{2}\operatorname{\mathbb{E}}\biggl[\,\sum_{j=1}^{\infty}X_j\biggr]
	\le \frac{1}{\sigma+1},
\end{equation}
with equality at $\sigma=1$,
where the difference between the two sides of the inequality
is at most $\approx 0.015$, and vanishes as $\sigma\to\infty$.
See \Cref{fig:exp_compare} for an illustration
of the two sides of the inequality.
\begin{figure}[htpb]
	\centering
	\includegraphics[width=0.6\textwidth]{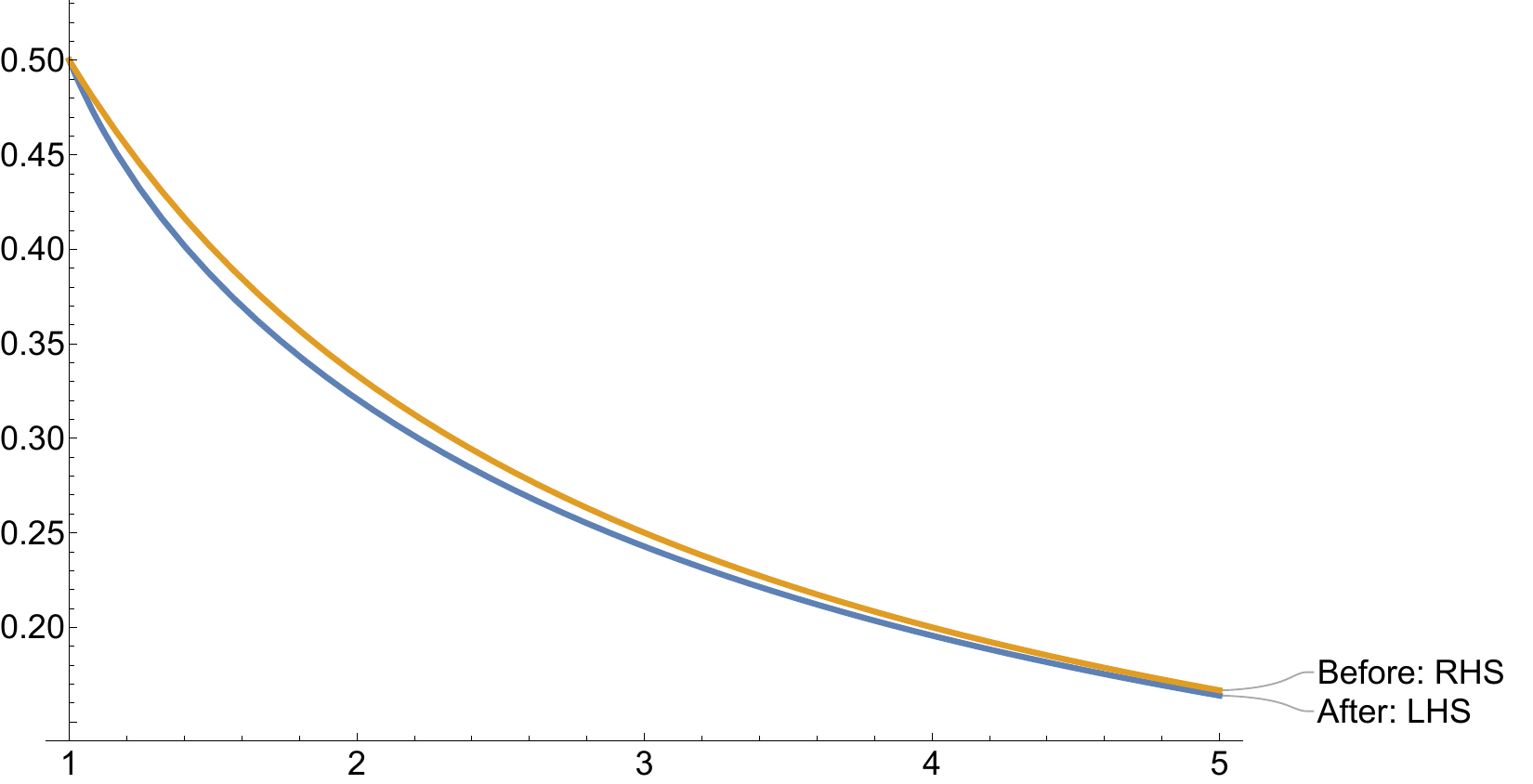}
	\caption{Comparing
		the two sides of
		\eqref{eq:expectation_to_compare},
		before
		and after
		the scaling limit
		of the initial hikes, for $1\le \sigma\le 5$.}
	\label{fig:exp_compare}
\end{figure}
The discrepancy between the two sides of
\eqref{eq:expectation_to_compare}
is due to the interchange of the limits
in~$n$ and~$k$ in \eqref{eq:shifted_Plancherel_scaling_limit_remark_compute_via_xi}.
Specifically, taking the limit $k \to \infty$ first
accounts for the total number of $2$'s in the random Fibonacci word,
whereas sending $n \to \infty$ first
considers only a finite number of initial hikes of $2$'s.
This reveals that additional digits $2$ remain hidden
in the growing random Fibonacci word
after long sequences of $1$'s.
These extra $2$'s contribute to the right-hand side of
\eqref{eq:expectation_to_compare}
but are absent from the left-hand side.

\subsection{The fake case}

One can perform the same computations as above for the
fake shifted Plancherel specialization
(\Cref{def:fake_shifted_Charlier} with $\rho=1$).
Equations
\eqref{eq:ode-H-sigma-tau-z} and \eqref{eq:H-sigma-z}
take the form, respectively,
\begin{equation*}
	z(1-\tau z^2)\partial_z \tilde H(\sigma,\tau;z)
	+
	(\sigma-1-z-\tau z^2)\ssp \tilde H(\sigma,\tau;z)
	=
	(\sigma-1)\frac{1+(\tau-1)z^2}{1-z},
\end{equation*}
and
\begin{equation*}
z(1-z^2)\partial_z \tilde G(\sigma;z) + (\sigma-1 - z - z^2 )\ssp\tilde G(\sigma;z)=
\frac{z^2 (\sigma-(\sigma-1)z )}{(1-z)^2}.
\end{equation*}
One can find the solution $\tilde G(\sigma;z)$ with $\tilde G(\sigma;0)=0$
in terms of hypergeometric functions.
Moreover, one sees that $\tilde G(1;z)=G(1;z)$ (given by \eqref{eq:H_1_z}),
as it should be.
Considering
the asymptotic behavior of the Taylor coefficients at $z=0$
reveals that they scale as $\sim\frac{n}{\sigma+1}$, exactly as for
the true shifted Plancherel specialization.
Therefore, the scaled lenghts of the initial
hikes of $2$'s
(\Cref{thm:shifted_Plancherel_scaling}),
\emph{and} the expected total number of $2$'s
in the random Fibonacci word
under both variants of the shifted Plancherel specialization
are asymptotically \emph{the same}.
It is not clear to the authors which asymptotic statistic
can distinguish between these two variants of the
shifted Plancherel specialization.

\newpage
\addtocontents{toc}{\protect\vspace{1em}}
\section{Concluding Remarks}
\label{sec:prospectives}

We conclude by outlining several directions for future work.
We first recall three questions already posed earlier in the
text ---
\Cref{problem:combinatorial_interpretation_coefficients_Fibonacci_positive_via_epsilons},
concerning the monomial expansion of clone Schur functions
in the $\epsilon$-variables;
\Cref{conj:alternative_q_Charlier}, on the Fibonacci positivity
of the alternative $q$-Charlier specialization;
\Cref{conj:splitting+dominance+upper-triangular} and
\Cref{prob:counting-N}, concerning the multiplicities $N(\pmb{\varkappa})$ arising in the
Stieltjes moment problem;
\Cref{problem:moment-sequences-tridiagonal-matrices-Fibonacci},
on understanding Borel measures associated to Fibonacci
positive specializations; \Cref{problem:Toda-flow},
which asks about the behavior of the Toda flow on Fibonacci
positive specializations; and
\Cref{problem:discrete_support} on whether the
support of the orthogonality measure associated to a
Fibonacci positive specialization is always discrete.
Beyond these, a number of other
intriguing directions remain open, and we highlight a few of
them below.

\subsection{Sch\"utzenberger promotion and combinatorial ergodicity}
Standard
Young--Fibo\-nacci tableaux (SYFTs) $T$ of shape $w \in
\mathbb{YF}$ are in bijection with linear extensions of a
binary, rooted tree $\mathbb{T}_w$ constructed from $w$
(see
\Cref{remark:linear-extentions}).
Like for any finite poset, there is a $\mathbb{Z}$-action on
the set of linear extensions of $\mathbb{T}_w$, which
implements Sch\"utzenberger \emph{promotion}
\cite{schutzenberger1972promotion, stanley2009promotion}.
Thus, one gets a $\mathbb{Z}$-action on the set of
SYFTs of shape $w \in \mathbb{YF}$ or, equivalently, on
saturated chains $ w_0 \nearrow \cdots \nearrow w_n $
starting at $w_0 = \varnothing$ and terminating at $w_n =
w$.  It would be very interesting to study the interplay
between this action and the probability distributions
$\mu^{\ssp \scriptscriptstyle \varphi}_{\scriptscriptstyle
\mathrm{CT}}$ and $\overline{\mu}^{\ssp \scriptscriptstyle
\varphi}_{\scriptscriptstyle \mathrm{CT}}$ associated to a
positive harmonic function $\varphi$.  This study
becomes particularly intriguing when viewed through the lens
of J.~Propp and T.~Roby's notion of \emph{combinatorial
ergodicity} \cite{propp2015homomesy}.

A related question concerns promotion and Type-I harmonic functions.
Recall that for $v \in \Bbb{YF}_k$ and $w \in \Bbb{YF}_n$
with $k \leq n$, the measure $M_k(v) = \dim(v) \ssp
\Phi_{1^\infty 2w}(v)$ represents the probability that $w_k
= v$, where $w_0 \nearrow \cdots \nearrow w_n$ is a
uniformly sampled random saturated chain starting at $w_0 =
\varnothing$ and terminating at $w_n = w$.
Now fix a saturated chain $\mathbf{u} = u_0 \nearrow \cdots
\nearrow u_n$ which terminates at $u_n = w$. For $v \in
\Bbb{YF}_k$ with $k \ll n$, consider the probability
$\zeta_{\mathbf{u} ; k}(v)$ that $w_k = v$, where $w_0
\nearrow \cdots \nearrow w_n$ is a uniformly sampled random
saturated chain from the promotion orbit
$\mathcal{O}_\mathbf{u}$ of~$\mathbf{u}$.
The measures $\zeta_{\mathbf{u} ; k}$ are not coherent.
However, in light of combinatorial ergodicity, one expects
that $\zeta_{\mathbf{u} ; k}$ approximates $M_k$
as $n \to \infty$.

\subsection{Truncations of the Young--Fibonacci lattice}

The theory of biserial clone Schur functions, along with the constructions introduced in \Cref{sub:clone-Kostka-numbers,sub:transition-cotransition-measures}, can be adapted to the $k$-th \emph{truncation} $\Bbb{YF}^{(k)}$ of the Young--Fibonacci lattice; see \cite{hivert_scott2024diagram}.
From a representation-theoretic perspective, $\mathbb{YF}^{(k)}$
is the Young-Fibonacci counterpart of the poset $\mathbb{Y}^{(k)}$,
which consists of partitions $\lambda \in \mathbb{Y}$
with at most $k$ parts.
Without going into detail, $\Bbb{YF}^{(k)}$ is an infinite, ranked poset that is part of an infinite filtration:
\[
\Bbb{YF}^{(1)} \subset \Bbb{YF}^{(2)} \subset \Bbb{YF}^{(3)} \subset \cdots \subset \Bbb{YF},
\]
where the Hasse diagram of $\Bbb{YF}^{(k)}$ sits inside $\Bbb{YF}^{(k+1)}$ as an induced subgraph.
The first two truncations, $\Bbb{YF}^{(1)}$ and $\Bbb{YF}^{(2)}$, are respectively the half-Pascal (Dyck) and Pascal lattices.
The next truncation, $\Bbb{YF}^{(3)}$, is illustrated
in \Cref{fig:YF_lattice_truncation}.

\begin{figure}[htpb]
	\centering
	\begin{equation*}
	\adjustbox{scale=0.8}{
		\xymatrixrowsep{0.3in}
		\xymatrixcolsep{0.15in}
		\xymatrix{ \varnothing \ar@{-}[d] & & & & & & &  \\
		1 \ar@{-}[d] \ar@{-}[drrrrrrr] & & & & & & & \\
		11  \ar@{-}[d] \ar@{-}[drrrr] & & & & & & & 2  \ar@{-}[d] \ar@{-}[dlll] \\
		111  \ar@{-}[d] \ar@{-}[drr] & & & & 21 \ar@{-}[d] \ar@{-}[dll] \ar@{-}[dr] & & & 12  \ar@{-}[d] \ar@{-}[dll] \\
		1111  \ar@{-}[d] \ar@{-}[dr]  & &211  \ar@{-}[dl] \ar@{-}[dr] & &121 \ar@{-}[d] \ar@{-}[dl]  &22 \ar@{-}[dll]|(0.52)\hole \ar@{-}[dr]
		& &112  \ar@{-}[d] \ar@{-}[dl]  \\
		11111 \ar@{-}[d]  \ar@{-}[dr] & 2111  \ar@{-}[dr] \ar@{-}[d] &  & 221 \ar@{-}[d] \ar@{-}[dl] \ar@{-}[dr]|\hole
        &1121 \ar@{-}[dr] \ar@{-}[dl] & & 212 \ar@{-}[d] \ar@{-}[dll]|(0.665)\hole & 1112 \ar@{-}[d]  \ar@{-}[dl] \\
        111111 &21111 & 2211 &2121 &222 &11121
        &2112 & 11112}
				}
	\end{equation*}
	\caption{The truncated poset $\Bbb{YF}^{(3)}$ up to level $n=6$
	(compare to the full Young--Fibonacci lattice
	in \Cref{fig:YF_lattice}).}
	\label{fig:YF_lattice_truncation}
\end{figure}

None of the truncations is a differential poset. Clone harmonic functions $\varphi_{\vec{x}, \vec{y}}$ on the Young--Fibonacci lattice restrict to $\Bbb{YF}^{(k)}$ and remain harmonic, provided that the specialization $(\vec{x},\vec{y})$ stabilizes suitably. Fibonacci positive specializations, as well as positive normalized harmonic functions for $\Bbb{YF}^{(k)}$, are defined in the usual way. The space of Fibonacci positive specializations for $\Bbb{YF}^{(k)}$ is finite-dimensional and is expected to admit a simple description.

In general, saturated chains in $\Bbb{YF}^{(k)}$ terminating
at a fixed endpoint are not known to be in bijection with
the set of linear extensions of any poset; hence,
Sch\"utzenberger promotion is not available. Nevertheless, a
natural $\mathbb{Z}$-action, realized by the adic shift,
exists and permits the investigation of combinatorial
ergodicity in the truncated setting.

Each truncation $\Bbb{YF}^{(k)}$ supports a restricted
version of the Young--Fibonacci RS correspondence, which
involves pattern-avoiding permutations. Accordingly, random
pattern-avoiding permutations can be studied using the
framework set up in
\Cref{sec:random_Fibonacci_words_to_random_permutations}.

\subsection{Martin boundary, Fibonacci positive specializations, and stick-breaking}

The relationship between the Martin boundary
$\Upsilon_{\mathrm{Martin}}(\mathbb{YF})$ of the Young--Fibonacci lattice
and the space of Fibonacci positive specializations remains only partially
understood. Given a Fibonacci positive specialization
$(\vec{x}, \vec{y})$ and its associated Type-I component $\muI$ (see \Cref{def:measures-typeI-fibonacci-words}),
we expect that
$\muI(1^{\infty}\mathbb{YF})$ takes the value $0$ or $1$.
Verifying this claim, or finding a counterexample, would already be informative.

A related problem is to determine whether those Fibonacci positive specializations
$(\vec{x}, \vec{y})$ satisfying $\muI(1^{\infty}\mathbb{YF}) = 0$
are precisely the ones whose clone measures, when projected to either initial
runs of $1$'s or initial hikes of $2$'s, lead to continuous models of random sequences in
$[0,1]^{\infty}$. At present we have two such continuous scaling limits,
both of stick-breaking nature:
\Cref{thm:defomed_Plancherel_scaling} with independent,
and
\Cref{thm:shifted_Plancherel_scaling} with conditional stick-breaking.
It would be interesting to know whether further continuous models of random
sequences in $[0,1]^{\infty}$ can be obtained from clone measures through the
$n \to \infty$ limit of joint initial run and/or hike distributions.

\subsection{Random permutation models}

In \cite{gnedin2000plancherel}, Gnedin and Kerov introduced
a surjection called the \emph{Fibonacci solitaire} (see also \cite{gnedin2002fibonacci})
which
maps permutations in $\mathfrak{S}_n$ to saturated chains
terminating in $\mathbb{YF}_n$. This solitaire is distinct
from the surjection $\upsigma \mapsto \mathrm{P}(\upsigma)$
obtained by simply forgetting the recording tableaux
$\mathrm{Q}(\upsigma)$ in Nzeutchap's RS correspondence.
The push-forward of the uniform measure on $\mathfrak{S}_n$
under the Fibonacci solitaire was shown in
\cite{gnedin2000plancherel} to be the Plancherel measure,
i.e., $\nu_n(\boldsymbol\upsigma\mid \uppi, \varphi)$ with
$\uppi = \varphi = \varphi_{\mathrm{PL}}$ in the notation of
\Cref{sub:random-permutation-involutions}. Beyond
this example, it is not clear which measures on
$\mathfrak{S}_n$ realize $\nu_n(\boldsymbol\upsigma\mid
\uppi, \varphi)$ as push-forward measures, even when
$\varphi = \varphi_{\mathrm{PL}}$, $\uppi =
\varphi_{\vec{x}, \vec{y}}$, and where $(\vec{x},
\vec{y}\ssp)$ is a Fibonacci positive
specialization.

The probability models for random permutations and
involutions introduced in
\Cref{sub:random-permutation-involutions} arise from the
Young--Fibonacci side of the RS correspondence. For an
arbitrary Fibonacci positive specialization (and even for
concrete examples such as the Charlier specialization) it
would be valuable to identify natural multivariate
statistics on permutations and involutions that make the
corresponding measures Gibbsian, i.e., make the probability
of a permutation proportional to the exponential of a
certain linear combination of these statistics.

Another key question is whether the distributions
$\mu_n(\boldsymbol\upsigma \mid \uppi, \varphi, \psi)$ and
$\nu_n(\boldsymbol\upsigma \mid \uppi, \varphi)$ can be
understood in connection with the Stieltjes moment problem.
For instance, the Plancherel specialization corresponds to
the uniform distribution on permutations
$\boldsymbol\upsigma \in \mathfrak{S}_n$, as this is
realized by $\mu_n(\boldsymbol\upsigma \mid \uppi, \varphi,
\psi)$ when $\uppi = \varphi = \psi =
\varphi_{\scriptscriptstyle \mathrm{PL}}$. It is known
\cite{fulman2024fixed}, \cite{arratia1992cycle} that the
distribution of fixed points of a uniformly sampled random
permutation $\boldsymbol\upsigma \in \mathfrak{S}_n$ tends
to the Poisson distribution $\upnu_{\scriptscriptstyle
\mathrm{Pois}}^{\scriptscriptstyle (1)}(dt)$ on $[0,
\infty)$ as $n \to \infty$. Notably,
$\upnu_{\scriptscriptstyle
\mathrm{Pois}}^{\scriptscriptstyle (1)}(dt)$ is the Borel
measure associated with the Plancherel Fibonacci positive
specialization by \Cref{thm:stieltjes-moments-theorem}.
It is natural to ask how this coincidence may be extended to
other Fibonacci positive specializations.

To investigate this link, one may first study the asymptotic
distribution of fixed points in permutations
$\boldsymbol\upsigma\in\mathfrak{S}_n$ drawn from
$\mu_n(\boldsymbol\upsigma\mid\uppi,\varphi,\psi)$ with
$\varphi=\psi=\varphi_{\scriptscriptstyle\mathrm{PL}}$ and
$\uppi=\varphi_{\vec{x},\vec{y}}$ an arbitrary clone
harmonic function.  Combinatorially, this amounts to
counting permutations with exactly $k\ge0$ fixed points
whose RS shape is a prescribed Fibonacci word
$w\in\mathbb{YF}_n$.  Such counts should be accessible,
since fixed points are easily detected in the
$\mathbb{YF}$-RS correspondence.  For the Charlier
specialization $(\vec{x},\vec{y}\ssp)$, one can then test
whether the limiting fixed-point distribution matches the
Poisson law
$\upnu_{\scriptscriptstyle\mathrm{Pois}}^{\scriptscriptstyle(\rho)}(dt)$,
the Borel measure in the Charlier case.

\subsection{Clone Cauchy identities and Okada's noncommutative theory}

The clone Cauchy identities from \Cref{sub:clone-cauchy} give rise to
two Gibbs measures on the Young--Fibonacci lattice~$\mathbb{YF}$:
\begin{equation}
	\label{eq:clone-prob-measures}
	\mathrm{prob}_{\scriptscriptstyle H}(w)
	:= \frac{h_w(\vec{p}\mid\vec{q})\,s_w(\vec{x}\mid\vec{y})}
												{H(\vec{x},\vec{y}\,;\vec{p},\vec{q})},
	\qquad
	\mathrm{prob}_{\scriptscriptstyle S}(w)
	:= \frac{s_w(\vec{p}\mid\vec{q})\,s_w(\vec{x}\mid\vec{y})}
												{S(\vec{x},\vec{y}\,;\vec{p},\vec{q})},
	\qquad w\in\mathbb{YF}.
\end{equation}
Here $H$ and $S$ are the normalizing constants obtained by summing,
the right-hand sides of the first and second clone
Cauchy identities
(\Cref{prop:first-clone-cauchy,prop:second-clone-cauchy}) over $n\ge0$, and
$(\vec{x},\vec{y})$ and $(\vec{p},\vec{q})$ are two
Fibonacci positive specializations.
The measures
\eqref{eq:clone-prob-measures} may be viewed as clone analogues of the
Schur measures on partitions introduced in~\cite{okounkov2001infinite}.
The next natural goal is to define and study \emph{clone Schur
processes} --- probability measures on sequences of Fibonacci words whose
joint distributions are expressed through suitable skew versions of
clone Schur functions.

The connection between the clone measures
\eqref{eq:clone-prob-measures} and Okada's noncommutative theory
\cite{okada1994algebras} is also worth exploring.
In the noncommutative
setting, clone Schur functions
$\mathrm{s}_w(\mathbf{x}\mid\mathbf{y})$ form a basis of the free
algebra $\mathbb{C}\langle\mathbf{x},\mathbf{y}\rangle$ generated by
two noncommuting variables $\mathbf{x},\mathbf{y}$.  To reproduce the
Cauchy identities one introduces another pair of noncommuting variables
$\mathbf{p},\mathbf{q}$ that commute with both $\mathbf{x}$ and
$\mathbf{y}$.  The noncommutative counterpart of the quadridiagonal
matrix in \eqref{eq:second-clone-cauchy} is
\[
S_n(\mathbf{x},\mathbf{y};\mathbf{p},\mathbf{q})
:=\underbrace{\begin{pmatrix}
\mathbf{A}&\mathbf{B}&\mathbf{C}&0&\cdots\\
1&\mathbf{A}&\mathbf{B}&\mathbf{C}&\\
0&1&\mathbf{A}&\mathbf{B}&\\
0&0&1&\mathbf{A}&\\
\vdots&&&&\ddots
\end{pmatrix}}_{n\times n},
\]
whose entries lie in the free algebra
$\mathfrak{A}\langle\mathbf{x},\mathbf{y}\rangle$ with coefficient ring
$\mathfrak{A}=\mathbb{C}\langle\mathbf{p},\mathbf{q}\rangle$:
\[
\mathbf{A}=\mathbf{p}\mathbf{x},\qquad
\mathbf{B}=\mathbf{q}(\mathbf{x}^2-\mathbf{y})+(\mathbf{p}^2-\mathbf{q})\mathbf{y},
\qquad
\mathbf{C}=\mathbf{q}\mathbf{p}\mathbf{y}\mathbf{x}.
\]
We conjecture that
every matrix minor (quasi-determinant) of
$S_n(\mathbf{x},\mathbf{y};\mathbf{p},\mathbf{q})$ is
\emph{coefficient-wise clone Schur positive}: the coefficient of
$\mathrm{s}_w(\mathbf{x}\mid\mathbf{y})$ in such a minor lies in the
positive cone spanned by the functions
$\mathrm{s}_v(\mathbf{p}\mid\mathbf{q})$ with $v\in\mathbb{YF}$.  This
would provide a new manifestation of total positivity, invisible under
the biserial specialization considered in
\Cref{sec:Fibonacci_positivity}.

\subsection{Quasisymmetric versions of clone Schur functions}

Quasisymmetric functions arise naturally from Nzeutchap's
Robinson--Schensted--Knuth correspondence for the Young--Fibonacci
lattice.  The correspondence is an injection from positive integer
sequences $\mathbb{N}^{\infty}$ to pairs $(\mathrm{P},\mathrm{Q})$ of
standard and semistandard Young--Fibonacci tableaux of the same shape
in~$\mathbb{YF}$.  An obvious quasisymmetric analogue
of $\mathrm{s}_w(\mathbf{x}\mid\mathbf{y})$ is  
the generating function $Q_w$ 
of all semistandard tableaux of shape $w$.
However, this function no longer obeys the $\mathbb{YF}$ branching rule
and its expansion in Gessel's fundamental quasisymmetric functions does
not match the expected clone version of
\cite[Theorem~7.19.7]{Stanley1999}.

A more promising approach exploits the graded Hopf-algebra duality
between noncommutative symmetric functions ($\mathbf{NSym}$) and
quasisymmetric functions ($\mathrm{QSym}$); see
\cite{gelfand1995noncommutative}, \cite{gessel1984multipartite}, \cite{malvenuto1995duality}.
We view $\mathbf{NSym}$ as the free algebra
$\mathbb{C}\langle\Psi_1,\Psi_2,\Psi_3,\ldots\rangle$ with multiplicative
basis $\Psi_{\alpha}:=\Psi_1^{\alpha_1}\cdots\Psi_k^{\alpha_k}$
indexed by compositions $\alpha=(\alpha_1,\ldots,\alpha_k)$.  These
$\Psi_{\alpha}$ are the \emph{Type-I noncommutative power sums}.  The
Hopf structure is the usual one for a free algebra; in particular,
each $\Psi_k$ is primitive:
$\Delta(\Psi_k)=1\otimes\Psi_k+\Psi_k\otimes1$.

The dual basis in $\mathrm{QSym}$ consists of the \emph{Type-I
quasisymmetric power sums} $\{\psi_{\alpha}\}$
\cite{ballantine2020quasisymmetric}.  Their product and coproduct are
given by shuffling and de-concatenation,
\[
\psi_{\alpha}\psi_{\beta}=
\sum_{\gamma\in\alpha\shuffle\beta}!\psi_{\gamma},\qquad
\Delta(\psi_{\gamma})=
\sum_{\alpha\cdot\beta=\gamma}\psi_{\alpha}\otimes\psi_{\beta},
\]
and each expands into monomial quasisymmetric functions via
\[
\psi_{\alpha}=\sum_{\beta\succeq\alpha}\frac{M_{\beta}}{\pi(\alpha,\beta)},
\]
where the sum ranges over compositions $\beta\models n$ coarsening
$\alpha$.

Okada's clone ring $\mathbb{C}\langle\mathbf{x},\mathbf{y}\rangle$
embeds into $\mathbf{NSym}$ by sending
$\mathbf{x},\mathbf{y}\mapsto\Psi_1,\Psi_2$; the induced Hopf structure
coincides with the standard one on the rank-two free algebra
$\mathbb{C}\langle\mathbf{x},\mathbf{y}\rangle$.  Interpreting a
Fibonacci word $w\in\mathbb{YF}$ as a composition with parts $1$ and
$2$ gives a multiplicative basis
$\{\Psi_w:w\in\mathbb{YF}\}$ for the
Okada's clone ring, whose expansion in clone Schur functions
is governed by the clone Kostka numbers:
\[
\Psi_w=\sum_{|v|=|w|}K_{v,w}\,
\mathrm{s}_v(\mathbf{x}\mid\mathbf{y}).
\]

The quasisymmetric subalgebra generated by $\psi_1,\psi_2$ (equivalently,
by the $\psi_w$ with $w\in\mathbb{YF}$) is dual to
$\mathbb{C}\langle\mathbf{x},\mathbf{y}\rangle$.  Functions dual to the
clone Schur basis are therefore
\[
Q_w^{(\mathrm{I})}:=\sum_{|v|=|w|}K_{v,w}\,\psi_{v}.
\]
Each $Q_w^{(\mathrm{I})}$ expands into monomial quasisymmetric
functions $M_{\beta}$, where the compositions $\beta$ may contain parts
larger than~$2$. One may view $Q_w^{(\mathrm{I})}$ as a quasisymmetric
analogue of $\mathrm{s}_w$.

A second, Type-II, power-sum theory yields another family
$Q_w^{(\mathrm{II})}$ of quasisymmetric analogues.  Understanding the
relationships among $Q_w$, $Q_w^{(\mathrm{I})}$, and
$Q_w^{(\mathrm{II})}$ could clarify the Young--Fibonacci RSK
correspondence and suggest quasisymmetric versions of the clone Cauchy
identities.  Hopf duality may also shed light on common branching
rules and on clone Littlewood--Richardson coefficients for these
functions.

\newpage

\begin{bibdiv}
\begin{biblist}
\bib{ahbli2023new}{article}{
      author={Ahbli, K.},
       title={{New results on the associated Meixner, Charlier, Laguerre, and
  Krawtchouk polynomials}},
        date={2023},
     journal={arXiv preprint},
        note={arXiv:2306.05371 [math.CA]},
}

\bib{AESW51}{article}{
      author={Aissen, M.},
      author={Edrei, A.},
      author={Schoenberg, I.~J.},
      author={Whitney, A.},
       title={On the generating functions of totally positive sequences},
        date={1951},
        ISSN={0027-8424},
     journal={Proc. Nat. Acad. Sci. U. S. A.},
      volume={37},
       pages={303\ndash 307},
}

\bib{ASW52}{article}{
      author={Aissen, M.},
      author={Schoenberg, I.~J.},
      author={Whitney, A.},
       title={{On the generating functions of totally positive sequences I}},
        date={1952},
     journal={J. Analyse Math.},
      volume={2},
       pages={93\ndash 103},
}

\bib{anshelevich2005linearization}{article}{
      author={Anshelevich, M.},
       title={{Linearization coefficients for orthogonal polynomials using
  stochastic processes}},
        date={2005},
     journal={Ann. Probab.},
      volume={33},
      number={1},
       pages={114\ndash 136},
        note={arXiv:math/0301094 [math.CO]},
}

\bib{arratia1992cycle}{article}{
      author={Arratia, R.},
      author={Tavar{\'e}, S.},
       title={{The Cycle Structure of Random Permutations}},
        date={1992},
     journal={Ann. Probab.},
      volume={20},
       pages={1567\ndash 1591},
}

\bib{Askey1989}{incollection}{
      author={Askey, R.},
       title={{Continuous q-Hermite Polynomials when $q > 1$}},
        date={1989},
   booktitle={{$q$-Series and Partitions}},
      editor={Stanton, D.},
      series={The IMA Volumes in Mathematics and Its Applications},
      volume={18},
   publisher={Springer, New York, NY},
       pages={151\ndash 158},
}

\bib{ballantine2020quasisymmetric}{article}{
      author={Ballantine, C.},
      author={Daugherty, Z.},
      author={Hicks, A.},
      author={Mason, S.},
      author={Niese, E.},
       title={{Quasisymmetric power sums}},
        date={2020},
     journal={J. Combin. Theory Ser. A},
      volume={175},
      number={1},
       pages={105273},
        note={arXiv:1710.11613 [math.CO]},
}

\bib{bell1934exponential}{article}{
      author={Bell, E.~T.},
       title={{Exponential polynomials}},
        date={1934},
     journal={Ann. Math.},
      volume={35},
      number={2},
       pages={258\ndash 277},
}

\bib{blitvic2021permutations}{article}{
      author={Blitvi\'{c}, N.},
      author={Steingr\'{i}msson, E.},
       title={{Permutations, moments, measures}},
        date={2021},
     journal={Trans. AMS},
      volume={374},
      number={8},
       pages={5473\ndash 5508},
        note={arXiv:2001.00280 [math.CO]},
}

\bib{BochkovEvtushevsky2020}{article}{
      author={Bochkov, I.},
      author={Evtushevsky, V.},
       title={{Ergodicity of the Martin boundary of the Young--Fibonacci graph.
  I}},
        date={2020},
     journal={arXiv preprint},
        note={arXiv:2012.07447 [math.CO]},
}

\bib{BorFerr2008DF}{article}{
      author={Borodin, A.},
      author={Ferrari, P.},
       title={{Anisotropic growth of random surfaces in 2+1 dimensions}},
        date={2014},
     journal={Commun. Math. Phys.},
      volume={325},
       pages={603\ndash 684},
        note={arXiv:0804.3035 [math-ph]},
}

\bib{BorodinGorinSPB12}{incollection}{
      author={Borodin, A.},
      author={Gorin, V.},
       title={Lectures on integrable probability},
        date={2016},
   booktitle={{Probability and Statistical Physics in St.\ Petersburg}},
      series={Proceedings of Symposia in Pure Mathematics},
      volume={91},
   publisher={AMS},
       pages={155\ndash 214},
        note={arXiv:1212.3351 [math.PR]},
}

\bib{borodin2006meixner}{article}{
      author={Borodin, A.},
      author={Olshanski, G.},
       title={{Meixner polynomials and random partitions}},
        date={2006},
     journal={Moscow Mathematical Journal},
      volume={6},
      number={4},
       pages={629\ndash 655},
        note={arXiv:math/0609806 [math.PR]},
}

\bib{borodin2016representations}{book}{
      author={Borodin, A.},
      author={Olshanski, G.},
       title={{Representations of the Infinite Symmetric Group}},
      series={Cambridge Studies in Advanced Mathematics},
   publisher={Cambridge University Press},
        date={2016},
      volume={160},
}

\bib{BorodinPetrov2013Lect}{article}{
      author={Borodin, A.},
      author={Petrov, L.},
       title={{Integrable probability: From representation theory to Macdonald
  processes}},
        date={2014},
     journal={Probab. Surv.},
      volume={11},
       pages={1\ndash 58},
        note={arXiv:1310.8007 [math.PR]},
}

\bib{chen2021coefficientwise}{article}{
      author={Chen, X.},
      author={Deb, B.},
      author={Dyachenko, A.},
      author={Gilmore, T.},
      author={Sokal, A.~D.},
       title={{Coefficientwise total positivity of some matrices defined by
  linear recurrences}},
        date={2021},
     journal={S{\'e}m. Lothar. Combin.},
      volume={85B},
      number={30},
       pages={12},
        note={arXiv:2012.03629 [math.CO]},
}

\bib{Chihara1978}{book}{
      author={Chihara, T.~S.},
       title={{An Introduction to Orthogonal Polynomials}},
   publisher={Gordon and Breach},
        date={1978},
}

\bib{christiansen2004indeterminate}{thesis}{
      author={Christiansen, J.~S.},
       title={{Indeterminate Moment Problems within the Askey-scheme}},
        type={Ph.D. Thesis},
     address={Institute for Mathematical Sciences, University of Copenhagen},
        date={2004},
}

\bib{cigler2011curious}{article}{
      author={Cigler, J.},
      author={Zeng, J.},
       title={{A curious q-analogue of Hermite polynomials}},
        date={2011},
     journal={J. Comb. Theory Ser. A},
      volume={118},
      number={1},
       pages={9\ndash 26},
        note={arXiv:0905.0228 [math.CO]},
}

\bib{CorteelKimStanton2016}{incollection}{
      author={Corteel, S.},
      author={Kim, J.~S.},
      author={Stanton, D.},
       title={{Moments of orthogonal polynomials and combinatorics}},
        date={2016},
   booktitle={Recent trends in combinatorics},
       pages={545\ndash 578},
}

\bib{corwin2014brownian}{article}{
      author={Corwin, I.},
      author={Hammond, A.},
       title={{Brownian Gibbs property for Airy line ensembles}},
        date={2014},
     journal={Invent. math.},
      volume={195},
      number={2},
       pages={441\ndash 508},
        note={arXiv:1108.2291 [math.PR]},
}

\bib{deMedicisStantonWhite1995}{article}{
      author={de~Medicis, A.},
      author={Stanton, D.},
      author={White, D.},
       title={{The Combinatorics of q-Charlier Polynomials}},
        date={1995},
     journal={J. Combin. Theory Ser. A},
      volume={69},
       pages={87\ndash 114},
        note={arXiv:math/9307208 [math.CA]},
}

\bib{deb2023coefficientwise}{article}{
      author={Deb, B.},
      author={Sokal, A.~D.},
       title={{Coefficientwise Hankel-total positivity of the Schett
  polynomials}},
        date={2023},
     journal={arXiv preprint},
        note={arXiv:2311.11747 [math.CO]},
}

\bib{Edrei1952}{article}{
      author={Edrei, A.},
       title={On the generating functions of totally positive sequences. {II}},
        date={1952},
        ISSN={0021-7670},
     journal={J. Analyse Math.},
      volume={2},
       pages={104\ndash 109},
}

\bib{Edrei53}{article}{
      author={Edrei, A.},
       title={On the generating function of a doubly infinite, totally positive
  sequence},
        date={1953},
     journal={Trans. AMS},
      volume={74},
       pages={367\ndash 383},
}

\bib{Evtushevsky2020PartII}{article}{
      author={Evtushevsky, V.},
       title={{Ergodicity of the Martin boundary of the Young--Fibonacci graph.
  II}},
        date={2020},
     journal={arXiv preprint},
        note={arXiv:2012.08107 [math.CO]},
}

\bib{fallat2017total}{article}{
      author={Fallat, S.},
      author={Johnson, C.~R.},
      author={Sokal, A.~D.},
       title={{Total positivity of sums, Hadamard products and Hadamard powers:
  Results and counterexamples}},
        date={2017},
     journal={Linear Algebra Appl.},
      volume={520},
       pages={242\ndash 259},
        note={arXiv:1612.02210 [math.AC]},
}

\bib{flajolet1980combinatorial}{article}{
      author={Flajolet, P.},
       title={{Combinatorial aspects of continued fractions}},
        date={1980},
     journal={Discrete Math.},
      volume={32},
       pages={125\ndash 161},
}

\bib{fomin1994duality}{article}{
      author={Fomin, S.},
       title={{Duality of graded graphs}},
        date={1994},
     journal={J. Algebr. Comb.},
      volume={3},
      number={4},
       pages={357\ndash 404},
}

\bib{fomin1995schensted}{article}{
      author={Fomin, S.},
       title={{Schensted algorithms for dual graded graphs}},
        date={1995},
     journal={J. Algebr. Comb.},
      volume={4},
      number={1},
       pages={5\ndash 45},
}

\bib{FominZelevinsky1999}{article}{
      author={Fomin, S.},
      author={Zelevinsky, A.},
       title={{Double Bruhat cells and total positivity}},
        date={1999},
     journal={J. Amer. Math. Soc.},
      volume={12},
      number={2},
       pages={335\ndash 380},
        note={arXiv:math/9802056 [math.RT]},
}

\bib{FZTP2000}{article}{
      author={Fomin, S.},
      author={Zelevinsky, A.},
       title={{Total positivity: Tests and parametrizations}},
        date={2000},
     journal={The Mathematical Intelligencer},
      volume={22},
      number={1},
       pages={23\ndash 33},
        note={arXiv:math/9912128 [math.RA]},
}

\bib{Fomin1986}{article}{
      author={Fomin, S.~V.},
       title={{Generalized Robinson-Schensted-Knuth correspondence}},
        date={1988},
     journal={J. Math. Sci.},
      volume={41},
      number={2},
       pages={979\ndash 991},
        note={Translated from Zap. Nauchn. Semin. LOMI, vol. 155, pp. 156--175,
  1986},
}

\bib{fulman2024fixed}{article}{
      author={Fulman, J.},
       title={{Fixed points of non-uniform permutations and representation
  theory of the symmetric group}},
        date={2024},
     journal={arXiv preprint},
        note={arXiv:2406.12139 [math.RT]},
}

\bib{GekhtmanShapiro1997}{article}{
      author={Gekhtman, M.~I.},
      author={Shapiro, M.~Z.},
       title={{Completeness of real Toda flows and totally positive matrices}},
        date={1997},
     journal={Math. Z.},
      volume={226},
       pages={51\ndash 66},
}

\bib{gelfand1995noncommutative}{article}{
      author={Gelfand, I.},
      author={Krob, D.},
      author={Lascoux, A.},
      author={Leclerc, B.},
      author={Retakh, V.},
      author={Thibon, J.-Y.},
       title={{Noncommutative symmetric functions}},
        date={1995},
     journal={Adv. Math.},
      volume={112},
       pages={218\ndash 348},
        note={arXiv:hep-th/9407124},
}

\bib{gessel1984multipartite}{article}{
      author={Gessel, I.},
       title={{Multipartite P-partitions and inner products of skew Schur
  functions}},
        date={1984},
     journal={Contemp. Math.},
      volume={34},
       pages={289\ndash 301},
}

\bib{gnedin2000plancherel}{article}{
      author={Gnedin, A.},
      author={Kerov, S.},
       title={{The Plancherel measure of the Young--Fibonacci graph}},
        date={2000},
     journal={Math. Proc. Camb. Philos. Soc.},
      volume={129},
      number={3},
       pages={433\ndash 446},
}

\bib{gnedin2002fibonacci}{article}{
      author={Gnedin, A.},
      author={Kerov, S.},
       title={{Fibonacci solitaire}},
        date={2002},
     journal={Random Struct. Algorithms},
      volume={20},
      number={1},
       pages={71\ndash 88},
}

\bib{KerovGoodman1997}{article}{
      author={Goodman, F.},
      author={Kerov, S.},
       title={{The Martin Boundary of the Young-Fibonacci Lattice}},
        date={2000},
     journal={Jour. Alg. Comb.},
      volume={11},
       pages={17\ndash 48},
        note={arXiv:math/9712266 [math.CO]},
}

\bib{hivert_scott2024diagram}{article}{
      author={Hivert, F.},
      author={Scott, J.},
       title={{Diagram Model for the Okada Algebra and Monoid}},
        date={2024},
     journal={S{\'e}m. Lothar. Combin.},
        note={Proceedings of the 36th Conference on Formal Power Series and
  Algebraic Combinatorics (FPSAC 2024), Bochum, Article 25, 12 pp.
  arXiv:2404.16733 [math.RT]},
}

\bib{ismail1988linear}{article}{
      author={Ismail, M.},
      author={Letessier, J.},
      author={Valent, G.},
       title={{Linear Birth and Death Models and Associated Laguerre and
  Meixner Polynomials}},
        date={1988},
     journal={J. Approx. Theory},
      volume={55},
       pages={337\ndash 348},
}

\bib{Johnson1997}{book}{
      author={Johnson, N.~L.},
      author={Kotz, S.},
      author={Balakrishnan, N.},
       title={{Discrete multivariate distributions}},
   publisher={Wiley},
     address={New York},
        date={1997},
      volume={165},
}

\bib{Karlin1968}{book}{
      author={Karlin, S.},
       title={{Total positivity, Vol.1}},
   publisher={Stanford},
        date={1968},
}

\bib{kasraoui2006distribution}{article}{
      author={Kasraoui, A.},
      author={Zeng, J.},
       title={{Distribution of crossings, nestings and alignments of two edges
  in matchings and partitions}},
        date={2006},
     journal={Electron. J. Combin.},
      volume={13},
      number={1},
       pages={R33},
        note={arXiv:math/0601081 [math.CO]},
}

\bib{KimStantonZeng2006}{article}{
      author={Kim, D.},
      author={Stanton, D.},
      author={Zeng, J.},
       title={{The Combinatorics of the Al-Salam-Chihara q-Charlier
  polynomials}},
        date={2006},
     journal={S{\'e}m. Lothar. Combin.},
      volume={54},
       pages={article B54i},
}

\bib{Koekoek1996}{article}{
      author={Koekoek, R.},
      author={Swarttouw, R.F.},
       title={{The Askey-scheme of hypergeometric orthogonal polynomials and
  its q-analogue}},
        date={1996},
     journal={Technical report, Delft University of Technology and Free
  University of Amsterdam},
        note={arXiv:math/9602214 [math.CA], report no. OP-SF 20 Feb 1996.
  Updated version available at
  \url{https://fa.ewi.tudelft.nl/~koekoek/documents/as98.pdf}},
}

\bib{koornwinder2004onsalamchihara}{article}{
      author={Koornwinder, T.H.},
       title={{On $q^{-1}$-Al-Salam-Chihara polynomials}},
        date={2004},
        note={Informal note, version of January 29, 2004, available at:
  \url{https://staff.fnwi.uva.nl/t.h.koornwinder/art/informal/ASC.pdf}},
}

\bib{malvenuto1995duality}{article}{
      author={Malvenuto, C.},
      author={Reutenauer, C.},
       title={{Duality between quasi-symmetric functions and the Solomon
  descent algebra}},
        date={1995},
     journal={J. Algebra},
      volume={177},
       pages={967\ndash 982},
}

\bib{NakamuraZhedanov2004}{article}{
      author={Nakamura, Y.},
      author={Zhedanov, A.},
       title={{Toda Chain, Sheffer Class of Orthogonal Polynomials and
  Combinatorial Numbers}},
        date={2004},
     journal={Proc. Inst. Math. NAS Ukraine},
      volume={50},
       pages={450\ndash 457},
}

\bib{NIST:DLMF}{misc}{
      author={NIST},
       title={{NIST Digital Library of Mathematical Functions}},
        date={2024},
         url={https://dlmf.nist.gov/},
        note={Olver, F. W. J., {Olde Daalhuis}, A. B., Lozier, D. W.,
  Schneider, B. I., Boisvert, R. F., Clark, C. W., Miller, B. R., Saunders, B.
  V., Cohl, H. S., McClain, M. A., eds. Release 1.2.0 of 2024-03-15. Available
  at: \url{https://dlmf.nist.gov/}},
}

\bib{nzeutchap2009young}{article}{
      author={Nzeutchap, J.},
       title={{Young-Fibonacci insertion, tableauhedron, and Kostka numbers}},
        date={2009},
     journal={J. Comb. Theory, Ser. A},
      volume={116},
       pages={143\ndash 167},
}

\bib{okada1994algebras}{article}{
      author={Okada, S.},
       title={{Algebras associated to the Young-Fibonacci lattice}},
        date={1994},
     journal={Trans. Amer. Math. Soc.},
      volume={346},
       pages={549\ndash 568},
}

\bib{okounkov2001infinite}{article}{
      author={Okounkov, A.},
       title={{Infinite wedge and random partitions}},
        date={2001},
     journal={Selecta Math.},
      volume={7},
      number={1},
       pages={57\ndash 81},
        note={arXiv:math/9907127 [math.RT]},
}

\bib{okounkov2006gromov}{article}{
      author={Okounkov, A.},
      author={Pandharipande, R.},
       title={{Gromov-Witten theory, Hurwitz theory, and completed cycles}},
        date={2006},
     journal={Ann. Math.},
      volume={163},
      number={2},
       pages={517\ndash 560},
        note={arXiv:math/0204305 [math.AG]},
}

\bib{okounkov2003correlation}{article}{
      author={Okounkov, A.},
      author={Reshetikhin, N.},
       title={{Correlation function of Schur process with application to local
  geometry of a random 3-dimensional Young diagram}},
        date={2003},
     journal={Jour. AMS},
      volume={16},
      number={3},
       pages={581\ndash 603},
        note={arXiv:math/0107056 [math.CO]},
}

\bib{okounkov2006quantum}{incollection}{
      author={Okounkov, A.},
      author={Reshetikhin, N.},
      author={Vafa, C.},
       title={{Quantum Calabi-Yau and Classical Crystals}},
        date={2006},
   booktitle={Progress in mathematics},
      editor={Etingof, P.},
      editor={Retakh, V.},
      editor={Singer, I.~M.},
      volume={244},
   publisher={Birkh{\"a}user Boston},
       pages={597\ndash 618},
        note={arXiv:hep-th/0309208},
}

\bib{propp2015homomesy}{article}{
      author={Propp, J.},
      author={Roby, T.},
       title={{Homomesy in products of two chains}},
        date={2015},
     journal={Electron. J. Comb.},
      volume={22},
      number={3},
        note={arXiv:1310.5201 [math.CO]},
}

\bib{petreolle2023lattice}{book}{
      author={P{\'e}tr{\'e}olle, M.},
      author={Sokal, A.~D.},
      author={Zhu, B.-X.},
       title={{Lattice Paths and Branched Continued Fractions: An Infinite
  Sequence of Generalizations of the Stieltjes--Rogers and Thron--Rogers
  Polynomials, with Coefficientwise Hankel-Total Positivity}},
      series={Mem. Amer. Math. Soc.},
   publisher={AMS},
        date={2023},
      volume={291},
      number={1450},
        ISBN={978-1-4704-6268-0},
        note={arXiv:1807.03271 [math.CO]},
}

\bib{Roby91ThesisRSK}{thesis}{
      author={Roby, T.},
       title={{Applications and Extensions of Fomin's Generalization of
  Robinson-Schensted Correspondence to Differential Posets}},
        type={Ph.D. Thesis},
     address={MIT},
        date={1991},
}

\bib{Schoenberg1988}{article}{
      author={Schoenberg, I.J.},
       title={{Contributions to the Problem of Approximation of Equidistant
  Data by Analytic Functions. Part A.---On the Problem of Smoothing or
  Graduation. A First Class of Analytic Approximation Formulae}},
        date={1988},
     journal={I.J. Schoenberg Selected Papers},
       pages={3\ndash 57},
}

\bib{schutzenberger1972promotion}{article}{
      author={Sch{\"u}tzenberger, M.-P.},
       title={{Promotion des morphismes d'ensembles ordonn\'es}},
        date={1972},
     journal={Discrete Math.},
      volume={2},
       pages={73\ndash 94},
}

\bib{sokal2014coefficientwise}{misc}{
      author={Sokal, A.~D.},
       title={{Coefficientwise total positivity (via continued fractions) for
  some Hankel matrices of combinatorial polynomials}},
        date={2014},
        note={Talk at the S\'eminaire de Combinatoire Philippe Flajolet,
  Institut Henri Poincar\'e, Paris, 5 June 2014; transparencies available at
  \url{http://semflajolet.math.cnrs.fr/index.php/Main/2013-2014}},
}

\bib{sokal2020euler}{article}{
      author={Sokal, A.~D.},
       title={{The Euler and Springer numbers as moment sequences}},
        date={2020},
     journal={Expo. Math.},
      volume={38},
      number={1},
       pages={1\ndash 26},
        note={arXiv:1804.04498 [math.CO]},
}

\bib{somos2022mathse}{misc}{
      author={Somos, M.},
       title={{Math.StackExchange answer to "Strange polynomial analog of the
  Bell numbers"}},
         how={Mathematics Stack Exchange},
        date={2022},
         url={https://math.stackexchange.com/q/4541543},
        note={\url{https://math.stackexchange.com/q/4541543} (version:
  2022-10-01)},
}

\bib{stanley1988differential}{article}{
      author={Stanley, R.},
       title={Differential posets},
        date={1988},
     journal={Jour. AMS},
      volume={1},
      number={4},
       pages={919\ndash 961},
}

\bib{Stanley1999}{book}{
      author={Stanley, R.},
       title={Enumerative {C}ombinatorics. {V}ol. 2},
   publisher={Cambridge University Press},
     address={Cambridge},
        date={2001},
        note={With a Foreword by Gian-Carlo Rota and Appendix 1 by Sergey
  Fomin},
}

\bib{stanley2009promotion}{article}{
      author={Stanley, R.~P.},
       title={{Promotion and Evacuation}},
        date={2009},
     journal={Electron. J. Comb.},
      volume={16},
      number={2},
        note={arXiv:0806.4717 [math.CO]},
}

\bib{Thoma1964}{article}{
      author={Thoma, E.},
       title={Die unzerlegbaren, positive-definiten {K}lassenfunktionen der
  abz\"ahlbar unendlichen, symmetrischen {G}ruppe},
        date={1964},
     journal={Math. Zeitschr},
      volume={85},
       pages={40\ndash 61},
}

\bib{VK81AsymptoticTheory}{article}{
      author={Vershik, A.},
      author={Kerov, S.},
       title={Asymptotic theory of the characters of the symmetric group},
        date={1981},
     journal={Funktsional. Anal. i Prilozhen.},
      volume={15},
      number={4},
       pages={15\ndash 27, 96},
}

\bib{viennot1983theorie}{book}{
      author={Viennot, X.~G.},
       title={{Une th{\'e}orie combinatoire des polyn{\^o}mes orthogonaux
  g{\'e}n{\'e}raux}},
   publisher={UQAM, Montr{\'e}al},
     address={Montr{\'e}al, Canada},
        date={1983},
        note={Lecture Notes, 219 pages},
}

\bib{wachs1991pq}{article}{
      author={Wachs, M.},
      author={White, D.},
       title={{p, q-Stirling numbers and set partition statistics}},
        date={1991},
     journal={J. Combin. Theory Ser. A},
      volume={56},
      number={1},
       pages={27\ndash 46},
}

\bib{Zeng1993}{article}{
      author={Zeng, J.},
       title={{The q-Stirling numbers, continued fractions and the q-Charlier
  and q-Laguerre polynomials}},
        date={1995},
     journal={J. Comput. Appl. Math.},
      volume={57},
      number={3},
       pages={413\ndash 424},
}

\bib{josuat2011crossings}{article}{
      author={{Josuat-Verg\`es, M. and Rubey, M.}},
       title={{Crossings, Motzkin paths, and moments}},
        date={2011},
     journal={Discret. Math.},
      volume={311},
      number={18-19},
       pages={2064\ndash 2078},
        note={arXiv:1008.3093 [math.CO]},
}
\end{biblist}
\end{bibdiv}

\medskip

\textsc{L. Petrov, University of Virginia, Charlottesville, VA, USA}

E-mail: \texttt{lenia.petrov@gmail.com}

\medskip

\textsc{J. Scott, University of Minnesota, Minneapolis, MN, USA}

E-mail: \texttt{scot1526@umn.edu}

\end{document}